\newtheorem{theorem}{Theorem}[section]
\newtheorem{lemma}[theorem]{Lemma}
\newtheorem{proposition}[theorem]{Proposition}
\theoremstyle{definition}
\newtheorem{remark}[theorem]{Remark}
\numberwithin{equation}{section}
\newcommand{\R}{\mathbb{R}}
\newcommand{\N}{\mathbb{N}}
\newcommand{\eps}{\varepsilon}
\newcommand{\e}{\varepsilon}
\newcommand{\A}{\mathcal{A}}
\newcommand{\hs}{{\mathcal H}}
\newcommand{\ds}{\displaystyle}
\newcommand{\Sph}{{\mathbb S}}
\def\diam{{\rm diam}}
\newcommand{\ie}{{; \it i.e., }}
\newcommand{\Rd}{{\R}^d}
\newcommand{\Sd}{{\mathbb{S}}^{d-1}}
\newcommand{\dx}{\, \mathrm{d} x}
\newcommand{\hd}{\mathcal{H}^{d-1}}
\newcommand{\Ld}{{\mathcal{L}}^d}
\newcommand{\BBB}{\color{black}} 
\newcommand{\UUU}{\color{blue}}
\newcommand{\EEE}{\color{black}}
\numberwithin{equation}{section}
\begin{document}

\title[Free-discontinuity problems  with $p(\cdot)$-growth]{Integral representation and $\Gamma$-convergence for  free-discontinuity problems  with $p(\cdot)$-growth}

\author{Giovanni Scilla}
\address[Giovanni Scilla]{Department of Mathematics and Applications ``Renato Caccioppoli'', University of Naples ``Federico II'', Via Cintia, Monte S. Angelo
80126 Naples, Italy}
\email[Giovanni Scilla]{giovanni.scilla@unina.it}

\author{Francesco Solombrino}
\address[Francesco Solombrino]{Department of Mathematics and Applications ``Renato Caccioppoli'', University of Naples ``Federico II'', Via Cintia, Monte S. Angelo
80126 Naples, Italy}
\email{francesco.solombrino@unina.it}

\author{Bianca Stroffolini}
\address[Bianca Stroffolini]{Department of Mathematics and Applications ``Renato Caccioppoli'', University of Naples ``Federico II'', Via Cintia, Monte S. Angelo
80126 Naples, Italy}
\email{bstroffo@unina.it}

\subjclass[2020]{49J45, 46E30, 49M20}

\keywords{free-discontinuity problems, $p(x)$-growth, $\Gamma$-convergence, integral representation}

\begin{abstract}
 An integral representation result for free-discontinuity energies defined on
the space $GSBV^{p(\cdot)}$ of generalized special functions of bounded variation with variable exponent is proved, under the assumption of log-H\"older continuity for the variable exponent $p(x)$. Our analysis is based on a variable exponent version of the global method for relaxation devised in \cite{BFLM} for a constant exponent. 
We prove $\Gamma$-convergence of sequences of energies of the same type, we identify the limit integrands in terms of asymptotic cell formulas and prove a non-interaction property between bulk and surface contributions.
\end{abstract}

\maketitle


\section{Introduction}
Originally introduced in the setting of image restoration \cite{MS}, free-discontinuity functionals are by now ubiquitous   in the mathematical modeling of elastic solids with surface discontinuities, including phenomena as fracture, damage,   or material voids.
If $u$ is the variable of the problem (representing, e.g., the output image or the deformation of the solid), these problems are characterized  by the competition between a ``bulk'' energy, usually taking the form of a variational integral
\begin{equation}\label{introeq:bulk}
\int_\Omega f(x, u(x), \nabla u(x))\,\mathrm{d}x
\end{equation}
where $\Omega$ is a reference configuration, and a ``surface" energy of the form
\begin{equation}\label{introeq:surface}
\int_{J_u} g\big(x,  u^+(x), u^-(x), \nu_u(x)\big) \, {\rm d} \mathcal{H}^{d-1} (x) 
\end{equation}
where $J_u$ is the set of discontinuities of $u$ with normal $\nu_u$. This latter term is, for instance, accounting for the ``cost''  of an interface in the image (enforcing optimal segmentation), or for the energy spent to produce a crack (\cite{Griffith:1921, FrancfortMarigo:1998}).
If one imposes a $p$-growth assumption of the form
\[
c|\xi|^p \le f(x,u, \xi)\le C(1+|\xi|^p)
\]
with $p>1$ on the bulk integrand $f$, and $g \geq \alpha >0$, then the existence of minimizers is guaranteed in the space of Generalized Special functions of Bounded Variation ($GSBV$) whenever $f$ is quasiconvex and $g$ $BV$-elliptic, see \cite{Ambrosio-Fusco-Pallara:2000}. In particular, compactness of minimizing sequences with respect to the convergence in measure can be recovered by {\sc Ambrosio}'s results (\cite{A89, Amb}) if some lower order fidelity terms are included in the problem, or some boundary data are considered (see \cite{Fr}). 

A wide attention has been also paid, over the last two decades, to the theory of variational limits of free-discontinuity functionals, with  applications  in various contexts, such as homogenization, dimension reduction, or  atomistic-to-continuum approximations.  Starting from the first results  on the subspace $SBV$ of special functions of bounded variation \cite{BFLM, BraChP96, Braides-Defranceschi-Vitali}  and on piecewise constant functions \cite{AmbrosioBraides}, this analysis has been further improved to deal with
functionals  and variational limits  on   $GSBV^p$  (generalized   special  functions of bounded variation  with $p$-integrable bulk density), see, e.g., \cite{BacBraZep18, BacCicRuf19, barfoc,  BarLazZep16, CDMSZ, focgelpon07, Fr}. Most of these results are based on the so-called \emph{global method for relaxation}, which has been    developed by {\sc Bouchitt\'{e}, Fonseca, Leoni, and Mascarenhas}       in \cite{BFLM, BFM}.  This very powerful method is essentially based on comparing asymptotic Dirichlet problems
on small balls with different boundary data depending on the local properties of the functions and allows one to characterize limit energy densities in terms of cell formulas.  Recently, it has also been used for analyzing the limit behavior of free-discontinuity problems  in the space $GSBD$ of generalized functions of bounded deformations,  involving the symmetric gradient, see for example \cite{CCS, ChCri, CFI, CFS}, with applications to crack energies in linear elastic materials.

The topic of the present paper are, instead, free discontinuity problems in variable exponent spaces. These spaces were originally considered by the Russian school, see  \cite{SH} and the Czech one  \cite{KR}. Subsequently, motivated by models for the behavior of composite materials, {\sc Zhikov}  initiated the so-called theory of variational integrals with non standard growth in the mid 80's. Since then, the subject of variable exponent spaces has undergone a large interest, both from the standpoint of regularity theory (see \cite{Z2} for the scalar case and \cite{CosM, AM}  for the vectorial one) and  in view of applications ranging from electrorheological fluids  right up to homogenization, see \cite{RZ, R, Zh, ZhKO}, and the references in \cite{CF, DHHR}. Motivated by the aforementioned applications, in \cite{CM} {\sc Coscia and Mucci} analyzed the $\Gamma$-convergence of variational integrals of the form \eqref{introeq:bulk} with a $p(x)$-growth condition
\begin{equation}\label{introeq:growth}
c|\xi|^{p(x)} \le f(x,u, \xi)\le C(1+|\xi|^{p(x)})\,,
\end{equation}
where $p(x)\geq 1+\delta>1$ is a variable exponent, in the Sobolev space $W^{1, p(\cdot)}(\Omega; \R^m)$. They proved that the $\Gamma$-limit of these energies is still an integral functional of the same type and growth,  under a key assumption on the modulus of continuity of the variable exponent, the so-called \emph{log-H\"older continuity}, see \eqref{eq:loghold} below. In some sense, this condition says that we can freeze the exponent on small balls around a point, as pointed out in \cite[Lemma~3.2]{Dieni} (see also Lemma \ref{lem:dieni3.2} below). As such, it is particularly suitable for blow-up methods: for instance, in \cite{ABF} it allows the authors to prove the singular part of the measure representation of relaxed functionals with growth \eqref{introeq:growth} disappears. More in general, log-H\"older continuity plays a central role in the theory of functionals with $p(x)$-growth, as  {\sc Zhikov} proved in \cite{Z2} that such functionals exhibit the Lavrentiev phenomenon if it is violated.

In recent years, variational problems in spaces of functions of bounded variation with variable integrability exponent on the gradient have been proposed, especially in the setting of image restoration.  In the pioneering paper \cite{CLR} {\sc Chen, Levine and Rao} proposed  for the first time a model   considering a kind of intermediate regime between the $TV$ model and the isotropic diffusion away from the edges (see also \cite{Hasto} for a related model, \cite{Li} for simulations, and \cite{Hasto2} for a $\Gamma$-convergence result). Observe that in these models, the value $p(x)=1$ is allowed. A related, but different, point of view takes instead into account the coupling of a {\it strictly superlinear} bulk energy \eqref{introeq:bulk} under the growth conditions \eqref{introeq:growth} with a surface energy \eqref{introeq:surface}, which can be seen as a variable-exponent version of Mumford-Shah-type functionals\footnote{Indeed, for such functionals, also in the case of a constant integrability exponent it is customary to assume $p>1$, in order to get the scale separation effect we describe later on.}. This kind of functionals will constitute the object of the present paper. From an analytical point of view, they were considered in \cite{DCLV}. There, provided the bulk integrand is quasiconvex and the exponent is log-H\"older continuous, a lower semicontinuity result for sequences with bounded energy has been proved, which entails well-posedness of such variational problems in the subspace $SBV^{p(\cdot)}$ of $SBV$ functions with $p(\cdot)$-integrable gradients (again, if some lower order terms are added to the problem in order to apply Ambrosio's compactness Theorem).

\noindent \emph{Description of our results.} This leads us to the purpose of the present paper.  Our focus is to study the $\Gamma$-convergence (with respect to the convergence in measure) for functionals $\mathcal{F}_j\colon GSBV^{p(\cdot)}(\Omega;\R^m) \to [0,+\infty)$ of the form 
\begin{align}\label{eq:energyintro}
\mathcal{F}_j(u) = \int_{\Omega} f_j\big(x, \nabla u(x) \big) \, {\rm d}x +  \int_{J_u} g_j\big(x,  [u](x),  \nu_u(x)\big) \, {\rm d} \mathcal{H}^{d-1} (x) 
\end{align} 
for each $u \in GSBV^{p(\cdot)}(\Omega;\R^m)$, where $[u](x):=u^+(x)-u^-(x)$. The variable exponent $p(\cdot)$ is assumed to be log-H\"older continuous, with $p(x) \geq p^- > 1$ for all $x$ (see \ref{assP1}).	
We assume that the bulk integrands $f_j$ satisfy \eqref{introeq:growth} uniformly in $j$, while the surface integrands $g_j$
satisfy
\[
0<\alpha\le g_j(x, \zeta, \nu)\le \beta\,.
\]  
Under a fairly general set of assumptions, devised in \cite{CDMSZ}, we are able to show that the $\Gamma$-limit is again an integral functional of the same form (Theorem \ref{th: gamma}). Furthermore, as shown in Section \ref{sec:identification},  due to the assumption $p(x) \geq p^- > 1$ a {\it separation of scales} effect takes place, exactly as in the case of a constant exponent: bulk and surface effects decouple in the limit.  Namely, the bulk limit density $f_\infty$ is completely determined by taking the $\Gamma$-limit of the functionals \eqref{introeq:bulk} in the Sobolev space $W^{1,p(\cdot)}$, while the surface limit density $g_\infty$ can be recovered from the sole $g_j$'s via an asymptotic cell formula on piecewise constant functions, that is $GSBV$ functions whose gradient is a.e.\ equal to $0$ .

As we mentioned, for the proof of Theorem \ref{th: gamma} we follow quite closely the  {\it global method for relaxation} of \cite{BFLM}. The main point is recovering an integral representation for functionals 
$$\mathcal{F}\colon GSBV^{p(\cdot)}(\Omega;\R^m) \times \mathcal{B}(\Omega) \to  [0,+\infty)$$ 
(here $\mathcal{B}(\Omega)$  denote the Borel subsets of $\Omega$) that satisfy the standard abstract conditions to be Borel measure in the second argument, lower semicontinuity with respect to the convergence in measure, and  local in the first argument. In addition, we require a coercivity and control condition of variable exponent type:  there exist $0 < \alpha  < \beta $ such that for any $u \in GSBV^{p(\cdot)}(\Omega;\R^m)$ and $B \in \mathcal{B}(\Omega)$  we have
\begin{small}
$$
\alpha \bigg(\int_{ B  } |\nabla u|^{p(x)}  \dx    +   \mathcal{H}^{d-1}(J_u \cap B)\bigg) \le \mathcal{F}(u,B) \le \beta \bigg(\int_{ B } (1 + |\nabla u|^{p(x)})    \dx  +   \mathcal{H}^{d-1}(J_u \cap B)\bigg).
$$ 
\end{small}
The result is proved in Theorem \ref{thm: int-representation-gsbv}. The proof strategy recovers the integral bulk and surface densities as blow-up limits of cell minimization formulas, as a consequence of the  estimates in Lemmas \ref{lem:lemma2fonseca} and \ref{lem:Lemma3fon}. In particular, in this latter the interplay between the asymptotic estimates and the variable exponent setting causes some nontrivial difficulties, which are overcome by means of assumption \eqref{eq:loghold}. It allows us to estimate the asymptotic distance between a suitable modification of $u$ and its blow-up at jump points in some variable exponent space, keeping bounded some constants which depend on the oscillation of $p(\cdot)$ in a small ball around the blow-up point (see equation \eqref{eq:crucial}).
The log-H\"older continuity assumption plays also a crucial role in Theorem \ref{thm: fsob}, where separation of scales for the bulk energy is shown. There, a Lusin-type approximation for $SBV$ functions is used to reduce the asymptotic minimization problems defining the cell formula for the bulk energy to the (variable exponent) Sobolev setting. Again, via \eqref{eq:loghold} it is possible to estimate the rest term coming from this approximation (see equations \eqref{eq:stimapk}--\eqref{eq:7.22quater}).

Our results can be also adapted to the case where the surface integrands $g_j$'s satisfy a more general growth condition, as in \cite{CDMSZ}, namely
\[
\alpha\le g_j(x,\zeta, \nu)\le \beta(1+|\zeta|)\,.
\]
This can be done by first establishing  the integral representation in the  $SBV^{p(\cdot)}$ case for functionals which satisfy
\begin{small}
\[
\begin{split}
\alpha \bigg(\int_{ B  } |\nabla u|^{p(x)}  \dx    +   \int_{J_u \cap B}(1+|[u]|)\,\mathrm{d}\mathcal{H}^{d-1}\bigg) &\le \mathcal{F}(u,B)
\\
& \le \beta \bigg(\int_{ B } (1 + |\nabla u|^{p(x)})    \dx  +  \int_{J_u \cap B}(1+|[u]|)\,\mathrm{d}\mathcal{H}^{d-1}\bigg).
\end{split}
\]
\end{small}
The analysis can be reconducted to this setting by a \emph{perturbation trick}:  one considers a small perturbation of the functional, depending on the jump opening, to represent functionals on  $SBV^{p(\cdot)}$.  Then, by letting the perturbation parameter vanish and by truncating functions suitably,  the representation can be extended to  $GSBV^{p(\cdot)}$. In order to do this, one can follow quite closely the arguments in \cite{CDMSZ}, with some minor changes due to the variable exponent setting: for the sake of completeness and self-containedness, statements and proofs are given in Appendix \ref{sec:appendix}.

\emph{Outline of the paper.} The paper is structured as follows. In Section~\ref{sec: prel} we fix the basic notation and recall some basic facts about Lebesgue spaces with variable exponent (Section~\ref{sec:variableexp}). Then, in Section~\ref{sec:gsbvp(x)} we introduce the space $GSBV^{p(\cdot)}$, and prove some regularity and compactness properties useful in the sequel. Section~\ref{sec: main} is entirely devoted to the proof of the integral representation result in $GSBV^{p(\cdot)}$. Specifically, in Section~\ref{sec:fundamental} we prove a fundamental estimate, which is a key tool for the global method, Section~\ref{sec: global method}. The proofs of the necessary blow-up properties are postponed to Sections~\ref{sec:bulk} and \ref{sec:surf}. In Section~\ref{sec:gammaconv} we prove a $\Gamma$-convergence result for sequences of free-discontinuity functionals defined on $GSBV^{p(\cdot)}$. The identification of the $\Gamma$-limit is contained in Section~\ref{sec:identification}. Eventually, in Appendix~\ref{sec:appendix}, we develop the analysis of Sections~\ref{sec: main} and \ref{sec:gammaconv} for free-discontinuity energies with a weaker growth condition from above in the surface term.

\section{Basic notation and preliminaries}\label{sec: prel}

 We start with some basic notation.   Let $\Omega \subset \R^d$  be  open, bounded with Lipschitz boundary. Let $\mathcal{A}(\Omega)$ be the  family of open subsets of $\Omega$,  and denote by   $\mathcal{B}(\Omega)$  the family of Borel sets contained in $\Omega$.  For every $x\in \Rd$ and $\eps>0$ we indicate by $B_\eps(x) \subset \Rd$ the open ball with center $x$ and radius $\eps$. If $x=0$, we will often use the shorthand $B_\eps$. For $x$, $y\in \Rd$, we use the notation $x\cdot y$ for the scalar product and $|x|$ for the  Euclidean  norm.   Moreover, we let  $\Sd:=\{x \in \Rd \colon |x|=1\}$, we denote  by $\mathbb{R}^{m \times d}$ the set of $m\times d$ matrices and by $\R^d_0$ the set $\R^d\backslash\{0\}$. The $m$-dimensional Lebesgue measure of the unit ball in $\R^m$ is indicated by $\gamma_m$ for every $m \in \N$.   We denote by $\Ld$ and $\mathcal{H}^k$ the $d$-dimensional Lebesgue measure and the $k$-dimensional Hausdorff measure, respectively.  For $A \subset \R^d$, $\eps>0$, and $x_0 \in \R^d$ we set
\begin{align}\label{eq: shift-not}
A_{\eps,x_0} := x_0 + \eps (A - x_0).    
\end{align} 
The closure of $A$ is denoted by $\overline{A}$. The diameter of $A$ is indicated by ${\rm diam}(A)$.  Given two sets $A_1,A_2 \subset \R^d$, we denote their symmetric difference by $A_1 \triangle A_2$.   We write $\chi_A$ for the  characteristic  function of any $A\subset  \R^d$, which is 1 on $A$ and 0 otherwise.  If $A$ is a set of finite perimeter, we denote its essential boundary by $\partial^* A$,   see  \cite[Definition 3.60]{Ambrosio-Fusco-Pallara:2000}.   
The notation $L^0(E; \R^m)$ will be used for the space of Lebesgue measurable function from some measurable set $E\subset \R^n$ to $\R^m$, endowed with the convergence in measure.

\subsection{Variable exponent Lebesgue spaces.}\label{sec:variableexp}

We briefly recall the notions of variable exponents and variable exponent Lebesgue spaces. We refer the reader to \cite{DHHR} for a comprehensive treatment of the topic. 

A measurable function $p:\Omega\to[1,+\infty)$ will be called a \emph{variable exponent}. Correspondingly, for every $A\subset\Omega$ we define
\begin{equation*}
p^+_A:=\mathop{{\rm ess}\,\sup}_{x\in A} p(x)\,\mbox{\,\, and \,\,}p^-_A:=\mathop{{\rm ess}\,\inf}_{x\in A} p(x)\,,
\end{equation*}
while $p^+_\Omega$ and $p^-_\Omega$ will be denoted by $p^+$ and $p^-$, respectively.

For a measurable function $u:\Omega\to\R^m$ we define the \emph{modular} as
\begin{equation*}
\varrho_{p(\cdot)}(u):=\int_\Omega|u(x)|^{p(x)}\,\mathrm{d}x
\end{equation*}
and the (Luxembourg) \emph{norm}
\begin{equation*}
\|u\|_{L^{p(\cdot)}(\Omega)}:=\inf\{\lambda>0:\,\, \varrho_{p(\cdot)}(u/\lambda)\leq1\}\,.
\end{equation*}
The \emph{variable exponent Lebesgue space} $L^{p(\cdot)}(\Omega)$ is defined as the set of measurable functions $u$ such that $\varrho_{p(\cdot)}(u/\lambda)<+\infty$ for some $\lambda>0$. In the case $p^+<+\infty$, $L^{p(\cdot)}(\Omega)$ coincides with the set of functions such that $\varrho_{p(\cdot)}(u)$ is finite. It can be checked that $\|\cdot\|_{L^{p(\cdot)}(\Omega)}$ is a norm on $L^{p(\cdot)}(\Omega)$. Moreover, if $p^+<+\infty$, it holds that
\begin{equation}
\varrho_{p(\cdot)}(u)^\frac{1}{p^+} \leq \|u\|_{L^{p(\cdot)}(\Omega)} \leq \varrho_{p(\cdot)}(u)^\frac{1}{p^-}
\label{eq:normineq}
\end{equation}
if $\|u\|_{L^{p(\cdot)}(\Omega)}>1$, while an analogous inequality holds by exchanging the role of $p^-$ and $p^+$ if $0\leq \|u\|_{L^{p(\cdot)}(\Omega)}\leq 1$. Another useful property of the modular, in the case $p^+<+\infty$, is the following one: 
\begin{equation}
\min\{\lambda^{p^+}, \lambda^{p^-}\}\varrho_{p(\cdot)}(u) \leq \varrho_{p(\cdot)}(\lambda u) \leq \max\{\lambda^{p^+}, \lambda^{p^-}\}\varrho_{p(\cdot)}(u)
\label{eq:normineq2}
\end{equation}
for all $\lambda>0$.

We say that a function $p:\Omega\to\R$ is \emph{log-H\"older continuous} on $\Omega$ if
\begin{equation}
\exists C>0 \mbox{ \,\, such that \,\, } |p(x)-p(y)|\leq \frac{C}{-\log |x-y|}\,,\quad \forall x,y\in\Omega\,,\, |x-y|\leq\frac{1}{2}\,.
\label{eq:loghold}
\end{equation}

We recall the following geometric meaning of the $p$ log-H\"older continuity (see, e.g., \cite[Lemma~3.2]{Dieni}).
\begin{lemma}\label{lem:dieni3.2}
Let $p:\Omega\to[1,+\infty)$ be a bounded, continuous variable exponent. The following conditions are equivalent:
\begin{enumerate}
\item[(i)] $p$ is log-H\"older continuous;
\item[(ii)] for all open balls $B$, we have
\begin{equation*}
\mathcal{L}^d(B)^{(p^-_B-p^+_B)}\leq C_1\,.
\end{equation*}
\end{enumerate}
\end{lemma}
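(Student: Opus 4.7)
The plan is to show each implication by essentially taking the logarithm of the inequality in (ii) and identifying it, up to constants absorbing the dimension and the lower bound on $p$, with the log-H\"older condition in (i). Throughout, I will denote by $r_B$ the radius of $B$ and exploit the identity $\mathcal{L}^d(B)=\omega_d r_B^d$ with $\omega_d=\mathcal{L}^d(B_1(0))$, so that
\[
\mathcal{L}^d(B)^{p^-_B-p^+_B}=\exp\Bigl((p^+_B-p^-_B)\bigl(d\log(1/r_B)-\log\omega_d\bigr)\Bigr).
\]

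For (i)$\Rightarrow$(ii), I would distinguish between balls with small and large radius. For $r_B\leq 1/4$, any two points of $\overline B$ are at distance at most $2r_B\leq 1/2$, so by continuity of $p$ and the log-H\"older condition one has $p^+_B-p^-_B\leq C/(-\log(2r_B))$. Inserting this in the exponential identity above gives
\[
(p^+_B-p^-_B)\bigl(d\log(1/r_B)-\log\omega_d\bigr)\leq \frac{C\bigl(d\log(1/r_B)-\log\omega_d\bigr)}{-\log(2r_B)},
\]
and the right-hand side is uniformly bounded as $r_B\to 0^+$ because $\log(1/r_B)/\log(1/(2r_B))\to 1$. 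For $r_B\geq 1/4$, the exponent $p^+_B-p^-_B$ is controlled by the (finite) oscillation $p^+-p^-$, while $\mathcal{L}^d(B)$ lies in a compact subinterval of $(0,\mathcal{L}^d(\Omega)]$, so the estimate is trivial. Taking $C_1$ to be the maximum of the two regimes yields (ii).

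For (ii)$\Rightarrow$(i), given $x,y\in\Omega$ with $0<|x-y|\leq 1/2$, I would apply the assumption to the ball $B:=B_{2|x-y|}(x)$, which contains both $x$ and $y$. By continuity of $p$, $|p(x)-p(y)|\leq p^+_B-p^-_B$, and taking logarithms in (ii) gives
\[
(p^+_B-p^-_B)\bigl(d\log(1/(2|x-y|))-\log\omega_d\bigr)\leq \log C_1.
\]
For $|x-y|\leq r_0$, with $r_0$ sufficiently small so that $d\log(1/(2|x-y|))-\log\omega_d\geq \tfrac{d}{2}(-\log|x-y|)$, this yields the required bound $|p(x)-p(y)|\leq C/(-\log|x-y|)$; for $r_0<|x-y|\leq 1/2$ one uses the boundedness of $p^+-p^-$ together with the fact that $-\log|x-y|$ is bounded away from $0$.

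The main technical obstacle is making sure that the passage between $\mathcal{L}^d(B)\sim r_B^d$ and $|x-y|\sim r_B$ does not spoil the logarithmic denominators; this is essentially a matter of absorbing the multiplicative constants $d$, $\log 2$ and $\log\omega_d$ into a single constant by separating the small-radius from the large-radius regime, as sketched above. Once this is done, both implications follow from elementary manipulations of the identity displayed at the beginning of the proof.
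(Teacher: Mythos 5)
The paper does not prove this lemma; it simply cites it from the reference \cite[Lemma~3.2]{Dieni}, so there is no in-paper proof to compare against. Your self-contained argument is correct: the identity $\mathcal{L}^d(B)^{p^-_B-p^+_B}=\exp\bigl((p^+_B-p^-_B)(d\log(1/r_B)-\log\omega_d)\bigr)$ reduces both implications to elementary estimates, the small-radius/large-radius split handles the multiplicative constants $d$, $\log 2$, $\log\omega_d$ cleanly, and the limiting ratio $\log(1/r_B)/\log(1/(2r_B))\to 1$ correctly supplies the uniform bound in the direction (i)$\Rightarrow$(ii).

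Two cosmetic remarks. First, the paper denotes the measure of the unit ball in $\R^d$ by $\gamma_d$ rather than $\omega_d$; since you define $\omega_d$ inline this causes no ambiguity, but matching notation would be cleaner. Second, in the direction (ii)$\Rightarrow$(i) you apply (ii) to $B=B_{2|x-y|}(x)$; if the hypothesis in (ii) is read as restricted to balls $B\subset\Omega$, this ball may protrude beyond $\Omega$ for $x$ near $\partial\Omega$. The intended reading — consistent with Diening's original formulation on $\R^d$ and with the definition $p^\pm_A:=\mathrm{ess\,sup/ess\,inf}_{A}\,p$ applied to $A=B\cap\Omega$ — is that (ii) holds for all open balls meeting $\Omega$, under which your choice of $B$ is admissible. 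It would be worth stating this interpretation explicitly to avoid the appearance of a gap.
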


The following lemma provides an extension to the variable exponent setting of the well-known embedding property of classical Lebesgue spaces (see, e.g., \cite[Corollary~3.3.4]{DHHR}).

\begin{lemma}\label{embedding}
Let $p,q$ be measurable variable exponents on $\Omega$, and assume that $\mathcal{L}^d(\Omega)<+\infty$. Then $L^{p(\cdot)}(\Omega)\hookrightarrow L^{q(\cdot)}(\Omega)$ if and only if $q(x)\leq p(x)$ for $\mathcal{L}^d$-a.e. $x$ in $\Omega$. The embedding constant is less or equal to the minimum between $2(1+\mathcal{L}^d(\Omega))$ and $2\max\{\mathcal{L}^d(\Omega)^{(\frac{1}{q}-\frac{1}{p})^+}, \mathcal{L}^d(\Omega)^{(\frac{1}{q}-\frac{1}{p})^-}\}$.
\label{lem:embed}
\end{lemma}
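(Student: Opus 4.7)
The statement has two independent parts: the necessity and sufficiency of $q\le p$ a.e.\ for the embedding, and two quantitative bounds on the embedding constant. I would handle the three claims in separate steps.

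\textbf{Step 1 (sufficiency, constant $2(1+\mathcal{L}^d(\Omega))$).} Assume $q(x)\le p(x)$ a.e.\ Given $u\in L^{p(\cdot)}(\Omega)$, set $\lambda:=\|u\|_{L^{p(\cdot)}(\Omega)}$, so $\varrho_{p(\cdot)}(u/\lambda)\le 1$. I would split the integral defining $\varrho_{q(\cdot)}(u/\mu)$ with $\mu:=2(1+\mathcal{L}^d(\Omega))\lambda$ on $\{|u/\lambda|\ge 1\}$ and its complement. On the first set $|u/\lambda|^{q(x)}\le |u/\lambda|^{p(x)}$ because $q\le p$; on the second set $|u/\lambda|^{q(x)}\le 1$. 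Hence $\int_\Omega|u/\lambda|^{q(x)}\,\mathrm{d}x\le 1+\mathcal{L}^d(\Omega)$. Since $q(x)\ge 1$ and $\lambda/\mu\le 1/2 \le 1$, one has $(\lambda/\mu)^{q(x)}\le \lambda/\mu$, hence
\begin{equation*}
\varrho_{q(\cdot)}(u/\mu)\le \frac{\lambda}{\mu}\int_\Omega \Bigl|\frac{u}{\lambda}\Bigr|^{q(x)}\mathrm{d}x\le \frac{1+\mathcal{L}^d(\Omega)}{2(1+\mathcal{L}^d(\Omega))}=\tfrac{1}{2}\le 1,
\end{equation*}
so $\|u\|_{L^{q(\cdot)}(\Omega)}\le 2(1+\mathcal{L}^d(\Omega))\|u\|_{L^{p(\cdot)}(\Omega)}$ by definition of the Luxembourg norm.

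\textbf{Step 2 (sufficiency, constant via Hölder).} For the second bound I would use the generalized H\"older inequality for variable exponents: defining $s(x)$ by $\tfrac{1}{s(x)}=\tfrac{1}{q(x)}-\tfrac{1}{p(x)}\ge 0$, one has $\|u\|_{L^{q(\cdot)}}\le 2\|u\|_{L^{p(\cdot)}}\|1\|_{L^{s(\cdot)}}$ (see, e.g., \cite{DHHR}). It then remains to estimate $\|1\|_{L^{s(\cdot)}(\Omega)}$. Setting $t(x)=1/s(x)$, a direct check using monotonicity of $a\mapsto a^r$ shows that the choice $\lambda=\mathcal{L}^d(\Omega)^{t^-}$ works when $\mathcal{L}^d(\Omega)\le 1$ and $\lambda=\mathcal{L}^d(\Omega)^{t^+}$ works when $\mathcal{L}^d(\Omega)\ge 1$, so that $\varrho_{s(\cdot)}(1/\lambda)\le 1$. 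In both cases
\begin{equation*}
\|1\|_{L^{s(\cdot)}(\Omega)}\le \max\bigl\{\mathcal{L}^d(\Omega)^{(1/q-1/p)^+}, \mathcal{L}^d(\Omega)^{(1/q-1/p)^-}\bigr\},
\end{equation*}
which, combined with the H\"older estimate, gives the second announced bound. Taking the minimum of the two constants concludes the sufficiency.

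\textbf{Step 3 (necessity).} Suppose, for a contradiction, that $A:=\{x\in\Omega:q(x)>p(x)\}$ has positive measure. By a standard approximation argument, I would find $\delta>0$ and a measurable set $B\subset A$ with $0<\mathcal{L}^d(B)<+\infty$ on which $p$ is bounded and $q(x)\ge p(x)+\delta$. I would then decompose $B$ into a countable disjoint family $\{B_n\}$ with $\mathcal{L}^d(B_n)>0$ summing to $\mathcal{L}^d(B)$, and consider $u=\sum_n \lambda_n \chi_{B_n}$ with the scalars $\lambda_n\to+\infty$ chosen so that $\sum_n \lambda_n^{p^+_B}\mathcal{L}^d(B_n)<+\infty$ but $\sum_n \lambda_n^{p^+_B+\delta}\mathcal{L}^d(B_n)=+\infty$ (for instance $\lambda_n$ of order $\mathcal{L}^d(B_n)^{-1/(p^+_B+\delta/2)}$). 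Then $\varrho_{p(\cdot)}(u)<+\infty$ while $\varrho_{q(\cdot)}(u/C)=+\infty$ for every $C>0$, contradicting the continuous embedding.

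\textbf{Main obstacle.} The sufficiency is essentially book-keeping with the modular, so the only genuinely delicate point is producing the sharp Hölder-type constant in Step~2: one must be careful that $s(\cdot)$ may not be bounded and that $\mathcal{L}^d(\Omega)$ may lie on either side of $1$, which is why the bound naturally involves both $(1/q-1/p)^+$ and $(1/q-1/p)^-$. For the necessity, the technicality lies in selecting a concrete counterexample that is insensitive to the local behavior of $p$ and $q$ on $B$; this is handled by freezing both exponents up to a small error on $B$ via inner regularity.
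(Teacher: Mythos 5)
The paper offers no proof of Lemma~\ref{embedding}: it is quoted verbatim from \cite[Corollary~3.3.4]{DHHR}, so there is no ``paper's argument'' to compare against, only the standard one. Your Steps~1 and~2 are the standard and correct way to obtain the two embedding constants: Step~1 is the direct modular computation (split $\{|u/\lambda|\ge 1\}$ vs.\ its complement, then pull out the factor $(\lambda/\mu)^{q(x)}\le\lambda/\mu$), and Step~2 is the generalized H\"older inequality $\|u\|_{q(\cdot)}\le 2\|u\|_{p(\cdot)}\|1\|_{s(\cdot)}$ with $1/s=1/q-1/p\ge 0$ combined with the elementary estimate $\|1\|_{L^{s(\cdot)}(\Omega)}\le\max\{\mathcal{L}^d(\Omega)^{(1/q-1/p)^+},\mathcal{L}^d(\Omega)^{(1/q-1/p)^-}\}$, which you verify correctly in both regimes $\mathcal{L}^d(\Omega)\lessgtr 1$. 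These two steps are fine.

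Step~3 as written has a small but genuine gap. You chose $(\lambda_n)$ so that $\sum_n \lambda_n^{p^+_B}\mathcal{L}^d(B_n)<+\infty$ and $\sum_n \lambda_n^{p^+_B+\delta}\mathcal{L}^d(B_n)=+\infty$, and then claimed $\varrho_{q(\cdot)}(u/C)=+\infty$. But the hypothesis $q\ge p+\delta$ on $B$ only gives, for $\lambda_n/C\ge 1$, the pointwise lower bound $(\lambda_n/C)^{q(x)}\ge(\lambda_n/C)^{p^-_B+\delta}$, so what you actually need to diverge is $\sum_n\lambda_n^{p^-_B+\delta}\mathcal{L}^d(B_n)$, not the sum with exponent $p^+_B+\delta$; the latter does not imply the former since $\lambda_n^{p^+_B+\delta}\ge\lambda_n^{p^-_B+\delta}$. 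With your explicit choice $\lambda_n\sim\mathcal{L}^d(B_n)^{-1/(p^+_B+\delta/2)}$, the sum $\sum_n\lambda_n^{p^-_B+\delta}\mathcal{L}^d(B_n)$ diverges only when the oscillation $p^+_B-p^-_B$ is smaller than $\delta/2$. You do gesture at the fix in the ``Main obstacle'' paragraph (``freezing both exponents up to a small error on $B$''), but it should be incorporated into the construction: before choosing $(B_n)$ and $(\lambda_n)$, replace $B$ by a positive-measure subset on which $p$ varies by less than, say, $\delta/4$ (possible because $p$ is bounded on $B$, so $p(B)$ is covered by finitely many intervals of that length). After that reduction, your divergence condition and the actual one agree and the counterexample works.
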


The following result generalizes the concept of Lebesgue points to the variable exponent Lebesgue spaces (see, e.g., \cite[Theorem~3.1]{HH}).

\begin{theorem}\label{thm:lebpoint}
Let $\displaystyle p^+:=\mathop{\rm ess\,sup}_{x\in\R^d}p(x)<+\infty$. If $u\in L^{p(\cdot)}(\R^d)$ then
\begin{equation*}
\lim_{\varepsilon\to0} \frac{1}{\varepsilon^d}\int_{B_\varepsilon(x)}|u(y)-u(x)|^{p(y)}\,\mathrm{d}y=0
\end{equation*}
for a.e. $x\in\R^d$.
\end{theorem}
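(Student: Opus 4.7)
My plan is to proceed by a density-plus-triangle-inequality argument, reducing the variable-exponent statement to the classical Lebesgue point theorem applied to $L^{1}(\Rd)$ integrands. The starting observation is that, since $p^{+}<+\infty$, the space $C_{c}(\Rd)$ is dense in $L^{p(\cdot)}(\Rd)$: this follows by first truncating $u$ to bounded, compactly supported approximants (using dominated convergence at the modular level) and then invoking Lusin's theorem, the conversion between modular smallness and norm smallness being ensured by \eqref{eq:normineq}. I therefore fix a sequence $\{\p_{n}\}\subset C_{c}(\Rd)$ with $\varrho_{p(\cdot)}(u-\p_{n})\to 0$ and, after extracting a subsequence (still denoted $\p_{n}$), I may assume $\p_{n}\to u$ pointwise a.e.

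For each $n$, the function $|u-\p_{n}|^{p(\cdot)}$ belongs to $L^{1}(\Rd)$, so the classical Lebesgue point theorem supplies a null set $N_{n}$ outside of which
\[
\lim_{\eps\to 0}\frac{1}{\eps^{d}}\int_{B_{\eps}(x)}|u(y)-\p_{n}(y)|^{p(y)}\,\mathrm{d}y \;=\; \gamma_{d}\,|u(x)-\p_{n}(x)|^{p(x)}.
\]
Denoting by $N$ the union of all the $N_{n}$ together with the null set where $\p_{n}\not\to u$, for $x\notin N$ I would apply the convexity inequality $(a+b+c)^{p}\le 3^{p-1}(a^{p}+b^{p}+c^{p})$ (valid for every $p=p(y)\le p^{+}$) to the splitting $u(y)-u(x)=\bigl(u(y)-\p_{n}(y)\bigr)+\bigl(\p_{n}(y)-\p_{n}(x)\bigr)+\bigl(\p_{n}(x)-u(x)\bigr)$, obtaining a pointwise majoration of $|u(y)-u(x)|^{p(y)}$ by three manageable pieces.

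Integrating this bound over $B_{\eps}(x)$, dividing by $\eps^{d}$, and letting first $\eps\to 0$ and then $n\to\infty$, each contribution is controlled separately: the first converges to $\gamma_{d}|u(x)-\p_{n}(x)|^{p(x)}$ by the displayed formula; the middle term vanishes because $\p_{n}$ is uniformly continuous, so $|\p_{n}(y)-\p_{n}(x)|\le \eta_{n}(\eps)$ on $B_{\eps}(x)$ with $\eta_{n}(\eps)\to 0$, and for $\eps$ small enough $\eta_{n}(\eps)<1$, whence $|\p_{n}(y)-\p_{n}(x)|^{p(y)}\le \eta_{n}(\eps)^{p^{-}}\to 0$; the third term is immediately bounded by $\gamma_{d}\max\{|\p_{n}(x)-u(x)|^{p^{-}},|\p_{n}(x)-u(x)|^{p^{+}}\}$, which vanishes as $n\to\infty$ since $\p_{n}(x)\to u(x)$ for $x\notin N$. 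Combining these estimates forces the $\limsup$ in $\eps$ to be zero on the full-measure set $\Rd\setminus N$.

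The only real subtlety is bookkeeping: one must arrange for a single full-measure set on which Lebesgue differentiation for every $|u-\p_{n}|^{p(\cdot)}$ and pointwise convergence $\p_{n}\to u$ both hold, and this is handled by the countable union defining $N$. The hypothesis $p^{+}<+\infty$ enters decisively at three places — density of $C_{c}(\Rd)$ in $L^{p(\cdot)}(\Rd)$, finiteness of the combinatorial constant $3^{p^{+}-1}$, and uniform control of the exponent in the continuity estimate for $\p_{n}$. In particular, no log-H\"older hypothesis on $p$ is needed here, in contrast to the blow-up arguments used elsewhere in the paper; the statement is of a purely measure-theoretic density nature.
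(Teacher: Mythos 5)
Your argument is correct. Note, however, that the paper does not prove Theorem~\ref{thm:lebpoint}: it cites it directly from \cite[Theorem~3.1]{HH}, so there is no in-text proof to compare against, and your reduction — density of $C_c(\Rd)$ in $L^{p(\cdot)}(\Rd)$, three-term splitting, and the classical $L^1$ Lebesgue differentiation theorem applied to $|u-\p_n|^{p(\cdot)}$ — is the natural self-contained route. Two small points you gloss over are worth making explicit: the convexity inequality $(a+b+c)^{p}\le 3^{p-1}(a^p+b^p+c^p)$ requires $p(y)\ge 1$, which holds because variable exponents are by definition valued in $[1,+\infty)$; and the passage from $\varrho_{p(\cdot)}(u-\p_n)\to 0$ to a.e.\ convergence of a subsequence should be justified via convergence in measure on $\Rd$, which follows since for $\delta\in(0,1)$ one has $\mathcal{L}^d(\{|u-\p_n|>\delta\})\le \delta^{-p^+}\varrho_{p(\cdot)}(u-\p_n)$. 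Your closing observation that no log-H\"older continuity is needed here — in contrast to the blow-up arguments used elsewhere in the paper — is accurate and correctly places the hypothesis $p^+<+\infty$ at the three spots where it is actually used.
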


\subsection{The space $GSBV^{p(\cdot)}$. Poincar\'e-type inequality} \label{sec:gsbvp(x)}

We denote by $SBV^{p(\cdot)}(\Omega;\R^m)$ the set of functions $u\in SBV(\Omega;\R^m)$ with $\nabla u\in L^{p(\cdot)}(\Omega;\mathbb{R}^{m \times d})$ and $\mathcal{H}^{d-1}(J_u)<+\infty$. Here, $\nabla u$ denotes the approximate gradient, 
while $J_u$ stands for the (approximate) jump set with corresponding normal $\nu_u$ and one-sided limits $u^+$ and $u^-$.   We say that $u\in GSBV^{p(\cdot)}(\Omega;\R^m)$ if for every $\phi\in C^1(\R^m)$ with the support of $\nabla \phi$ compact, the composition $\phi\circ u$ belongs to $SBV^{p(\cdot)}_{\rm loc}(\Omega;\R^m)$.

From the inclusion $L^{p(\cdot)}(\Omega)\subset L^{p^-}(\Omega)$ and \cite{Amb}, one can also deduce that for $u\in GSBV^{p(\cdot)}(\Omega)$ the \emph{approximate gradient} $\nabla u$ exists $\mathcal{L}^d$-a.e.\ in $\Omega$.

\begin{lemma}[Approximate gradient]\label{lemma: approx-grad}
Let $\Omega \subset \R^d$ be  open, bounded (with  Lipschitz boundary),   let $p:\Omega\to[1,+\infty]$ be a variable exponent, and $u \in GSBV^{p(\cdot)}(\Omega;\R^m)$. Then for $\mathcal{L}^d$-a.e.\   $x_0 \in \Omega$  there exists a matrix  in $\mathbb{R}^{m\times d}$,   denoted by $\nabla u(x_0)$, such that
$$\lim_{\eps \to 0} \  \eps^{-d} \mathcal{L}^d\Big(\Big\{x \in B_\eps(x_0) \colon \,  \frac{|u(x) - u(x_0) - \nabla u(x_0)(x-x_0)|}{|x - x_0|}  > \varrho   \Big\} \Big)  = 0 \text{ for all $\varrho >0$}. $$    
\end{lemma}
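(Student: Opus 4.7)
The plan is to reduce the statement to the classical approximate differentiability theorem for $GSBV$ functions of Ambrosio (see \cite{Amb} or \cite[Theorem~4.34]{Ambrosio-Fusco-Pallara:2000}), by first proving the inclusion $GSBV^{p(\cdot)}(\Omega;\R^m)\subset GSBV(\Omega;\R^m)$ in the classical sense.

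The first step would be to fix $u\in GSBV^{p(\cdot)}(\Omega;\R^m)$ together with an arbitrary $\phi\in C^1(\R^m)$ having $\nabla\phi$ compactly supported; note that $\phi$ itself is then bounded on $\R^m$, being constant outside a large ball. By the defining property of $GSBV^{p(\cdot)}$, one has $\phi\circ u\in SBV^{p(\cdot)}_{\mathrm{loc}}(\Omega;\R^m)$, so for every open $A\Subset\Omega$ the approximate gradient $\nabla(\phi\circ u)$ belongs to $L^{p(\cdot)}(A;\mathbb{R}^{m\times d})$ and $\mathcal{H}^{d-1}(J_{\phi\circ u}\cap A)<+\infty$.

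The key (and essentially only nontrivial) observation is then that, since $\mathcal{L}^d(A)<+\infty$ and $1\leq p(x)$ a.e.\ on $A$, Lemma~\ref{lem:embed} applied with the constant exponent $q\equiv 1$ yields the embedding $L^{p(\cdot)}(A)\hookrightarrow L^{1}(A)$, so that $\nabla(\phi\circ u)\in L^{1}_{\mathrm{loc}}(\Omega;\mathbb{R}^{m\times d})$. Combined with $\phi\circ u\in L^\infty(\Omega;\R^m)$ and the local finiteness of the jump set, this forces $\phi\circ u\in SBV_{\mathrm{loc}}(\Omega;\R^m)$; since $\phi$ was arbitrary, $u\in GSBV(\Omega;\R^m)$.

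Finally, I would invoke Ambrosio's classical approximate differentiability theorem: every $v\in GSBV(\Omega;\R^m)$ is approximately differentiable at $\mathcal{L}^d$-a.e.\ $x_0\in\Omega$, with a gradient $\nabla v(x_0)\in\mathbb{R}^{m\times d}$ satisfying precisely the limit displayed in the statement. Applied to $u$, this yields the conclusion. The only point where the variable-exponent setting enters is the $L^{p(\cdot)}\hookrightarrow L^1$ embedding on bounded subdomains, which is immediate from Lemma~\ref{lem:embed}; no additional obstruction arises from the $x$-dependence of $p$, and in particular log-H\"older continuity is not needed here.
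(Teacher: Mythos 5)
Your proof is correct and takes essentially the same route as the paper, which in a single sentence reduces the claim to Ambrosio's approximate differentiability theorem for $GSBV$ via the inclusion $L^{p(\cdot)}(\Omega)\subset L^{p^-}(\Omega)$. One small remark: the inference ``$\nabla(\phi\circ u)\in L^1_{\rm loc}$, $\phi\circ u\in L^\infty$, and jump set locally $\mathcal{H}^{d-1}$-finite $\Rightarrow$ $\phi\circ u\in SBV_{\rm loc}$'' is not a valid implication in isolation (one also needs $\phi\circ u\in BV_{\rm loc}$ with vanishing Cantor part), but here it is unnecessary anyway: by the paper's definition, $SBV^{p(\cdot)}(\Omega;\R^m)$ is a subset of $SBV(\Omega;\R^m)$, so $\phi\circ u\in SBV^{p(\cdot)}_{\rm loc}$ already gives $\phi\circ u\in SBV_{\rm loc}$ directly, and the inclusion $GSBV^{p(\cdot)}\subset GSBV$ follows at once without invoking the embedding lemma.
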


In order to state a Poincar\'e-Wirtinger inequality in $GSBV^{p(\cdot)}$, we first fix some notation, following \cite{BFLM, CL}. With given $a=(a_1,\dots, a_m)$, $b=(b_1,\dots,b_m)\in\R^m$, we denote $a\wedge b:=(\min(a_1,b_1),\dots,\min(a_m,b_m))$ and $a\vee b:=(\max(a_1,b_1),\dots,\max(a_m,b_m))$. Let $B$ be a ball in $\R^d$. For every measurable function $u:B\to\R^m$, with $u=(u_1,\dots,u_m)$, we set
\begin{equation*}
u_*(s;B):= ((u_1)_*(s;B),\dots, (u_m)_*(s;B))\,,\quad {\rm med}(u;B):=u_*\left(\frac{1}{2}\mathcal{L}^d(B);B\right)\,,
\end{equation*}
where
\begin{equation*}
(u_i)_*(s;B):=\inf\{t\in\R:\,\, |\{u_i<t\}\cap B|\geq s\} \qquad \mbox{ for } 0\leq s \leq \mathcal{L}^d(B)\,,
\end{equation*}
for $i=1,\dots,m$.

For every $u\in GSBV^{p(\cdot)}(\Omega;\R^m)$ such that
\begin{equation*}
\left(2\gamma_{\rm iso}\mathcal{H}^{d-1}(J_u\cap B)\right)^{\frac{d}{d-1}} \leq \frac{1}{2}\mathcal{L}^d(B)\,,
\end{equation*}
we define
\begin{equation*}
\begin{split}
\tau'(u;B) & := u_*\left(\left(2\gamma_{\rm iso}\mathcal{H}^{d-1}(J_u\cap B)\right)^{\frac{d}{d-1}};B\right)\,, \\
\tau''(u;B) & := u_*\left(\mathcal{L}^d(B)-\left(2\gamma_{\rm iso}\mathcal{H}^{d-1}(J_u\cap B)\right)^{\frac{d}{d-1}};B\right)\,, 
\end{split}
\end{equation*}
and the truncation operator
\begin{equation}
T_Bu(x):= (u(x)\wedge \tau''(u;B)) \vee \tau'(u;B)\,,
\label{eq:truncated}
\end{equation}
where $\gamma_{\rm iso}$ is the dimensional constant in the relative isoperimetric inequality. 

We recall the following Poincar\'e-Wirtinger inequality for $SBV$ functions with small jump set in a ball, which was first proven in the scalar setting in \cite[Theorem~3.1]{DeGCarLea}, and then extended to vector-valued functions in \cite[Theorem~2.5]{CL}.
In the statement below, the case $p\ge d$ is discussed in \cite[Remark 4.15]{Ambrosio-Fusco-Pallara:2000}.

\begin{theorem}
Let $u\in SBV(B;\R^m)$ and assume that
\begin{equation}
\left(2\gamma_{\rm iso}\mathcal{H}^{d-1}(J_u\cap B)\right)^{\frac{d}{d-1}} \leq \frac{1}{2}\mathcal{L}^d(B)\,.
\label{(10)}
\end{equation}
If $1\leq p < d$ then
\begin{equation}
\left(\int_B |T_Bu-{\rm med}(u;B)|^{p^*}\,\mathrm{d}x\right)^\frac{1}{p^*} \leq \frac{2\gamma_{\rm iso}p(d-1)}{d-p} \left(\int_B|\nabla u|^p\,\mathrm{d}x\right)^\frac{1}{p}
\label{(11)}
\end{equation}
and
\begin{equation}
\mathcal{L}^d(\{T_Bu\neq u\}\cap B) \leq 2 \left(2\gamma_{\rm iso}\mathcal{H}^{d-1}(J_u\cap B)\right)^{\frac{d}{d-1}}\,,
\label{(12)}
\end{equation}
where $p^*:=\frac{dp}{d-p}$.

If $p\ge d$, inequality \eqref{(11)} holds with $p^*$ replaced by an arbitrary $q\in [1, +\infty)$.
\label{thm:poincsbv}
\end{theorem}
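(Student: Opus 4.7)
The plan is to adopt the classical truncation-plus-rearrangement strategy of \cite{DeGCarLea} (scalar case $m=1$) and its vectorial counterpart in \cite{CL}. The building blocks are the quantiles $\tau'(u;B)$ and $\tau''(u;B)$, the relative isoperimetric inequality on the ball, and the BV coarea formula. As a preliminary step, I would reduce to the scalar case $m=1$: the truncation \eqref{eq:truncated} is defined componentwise, and so is the median. Applying the scalar version of \eqref{(11)} to each $(T_B u)_i - \mathrm{med}(u_i;B)$ and summing gives \eqref{(11)} in the stated vector form, since $|\nabla u|^p \geq c_m \sum_i |\nabla u_i|^p$ pointwise; likewise, \eqref{(12)} in the vector case follows from the scalar case by a union bound over the $m$ components, the factor $2$ being preserved by using the common level $(2\gamma_{\rm iso}\mathcal{H}^{d-1}(J_u\cap B))^{d/(d-1)}$ in each component.

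Estimate \eqref{(12)} is then immediate from the defining property of the rearrangement $u_*$: by construction, both $\mathcal{L}^d(\{u \leq \tau'(u;B)\}\cap B)$ and $\mathcal{L}^d(\{u \geq \tau''(u;B)\}\cap B)$ are bounded by $(2\gamma_{\rm iso}\mathcal{H}^{d-1}(J_u\cap B))^{d/(d-1)}$, the latter because $\tau''$ is the quantile at the complementary level $\mathcal{L}^d(B) - (2\gamma_{\rm iso}\mathcal{H}^{d-1}(J_u\cap B))^{d/(d-1)}$. Adding the two bounds and using the inclusion $\{T_B u \neq u\}\cap B \subset \{u < \tau'\}\cup\{u > \tau''\}$ yields \eqref{(12)}; hypothesis \eqref{(10)} ensures that $\tau'(u;B) \leq \tau''(u;B)$, so that the truncation is well-defined.

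To establish \eqref{(11)} in the scalar case, set $v := T_B u - \mathrm{med}(u;B)$. The characteristic property of the median is that every superlevel set $\{v > t\}\cap B$ and every sublevel set $\{v < -t\}\cap B$ with $t \geq 0$ has $\mathcal{L}^d$-measure at most $\mathcal{L}^d(B)/2$, so the relative isoperimetric inequality in $B$ gives
\begin{equation*}
\mathcal{L}^d(\{v > t\}\cap B)^{(d-1)/d} \leq \gamma_{\rm iso}\, P(\{v > t\}, B) \qquad \text{for a.e. } t > 0,
\end{equation*}
and symmetrically for $\{v < -t\}\cap B$. Integrating in $t$ against the Fédérer layer-cake representation $\|v^\pm\|_{L^{d/(d-1)}(B)} \leq \int_0^\infty \mathcal{L}^d(\{v^\pm > t\}\cap B)^{(d-1)/d}\,\mathrm{d}t$, combining with the BV coarea formula, and using a comparison of the form $|DT_B u|(B) \lesssim |Du|(B)$ settles the case $p=1$, $p^* = d/(d-1)$ with constant $2\gamma_{\rm iso}$. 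The general case $1 < p < d$ follows by applying the previous inequality to the function $|v|^{p(d-1)/(d-p)}\mathrm{sign}(v)$ and using Hölder's inequality with exponent $p/(p-1)$, which produces the Sobolev exponent $p^* = dp/(d-p)$ and the explicit constant $\frac{2\gamma_{\rm iso} p(d-1)}{d-p}$ stated in \eqref{(11)}.

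The main technical hurdle is controlling the jump contribution to $|DT_B u|$ in the coarea chain, since the right-hand side of \eqref{(11)} contains no surface term: the truncation at the quantiles does not annihilate $J_{T_B u}$ in general. This is precisely where the specific choice of the thresholds at the level $(2\gamma_{\rm iso}\mathcal{H}^{d-1}(J_u\cap B))^{d/(d-1)}$, together with the smallness condition \eqref{(10)}, enters: the jump part of $|DT_B u|$ turns out to be absorbable into the bulk part $\int_B|\nabla u|\,\mathrm{d}x$ up to a universal constant, as carried out in \cite{DeGCarLea, CL}. The remaining case $p \geq d$ is finally handled by picking any $\tilde p \in [1,d)$ with $d\tilde p/(d-\tilde p) \geq q$ and applying the inequality already proved, losing only a volume factor by Hölder's inequality, as in \cite[Remark~4.15]{Ambrosio-Fusco-Pallara:2000}.
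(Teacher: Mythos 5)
The paper does not prove Theorem~\ref{thm:poincsbv}; it is quoted directly from De Giorgi--Carriero--Leaci (scalar case) and Carriero--Leaci (vectorial case), with the case $p\ge d$ attributed to \cite[Remark 4.15]{Ambrosio-Fusco-Pallara:2000}. So there is no in-paper proof to compare against, and your task was effectively to reconstruct the cited arguments. Your outline does capture the classical truncation-isoperimetry-coarea scheme of those references, and the treatment of \eqref{(12)} in the scalar case and of the $p\ge d$ case are both correct.

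Two points in your write-up, however, would not go through as stated. First, the reduction from $\R^m$-valued to scalar functions is not a matter of summing componentwise estimates. For \eqref{(12)}, a union bound over $m$ components produces $2m\,(2\gamma_{\rm iso}\mathcal{H}^{d-1}(J_u\cap B))^{d/(d-1)}$, not $2(\cdots)$ as claimed; the common threshold level is the same for each component, so nothing is saved by choosing it. Similarly, the chain $|v|\le\sum_i|v_i|$, Minkowski, and $|\nabla u_i|\le|\nabla u|$ inflates the constant in \eqref{(11)} by a factor of order $m$, whereas the stated constant $2\gamma_{\rm iso}p(d-1)/(d-p)$ is dimension-of-target independent. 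The vectorial extension in \cite{CL} requires handling the $\R^m$-valued function genuinely, not component by component.

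Second, your description of the key mechanism --- ``the jump part of $|DT_Bu|$ turns out to be absorbable into the bulk part $\int_B|\nabla u|\,\mathrm{d}x$'' --- misidentifies where the absorption happens. In the actual argument one splits $\partial^*\{v>t\}\cap B$ into its part on $J_u$ and off $J_u$. By the choice of the truncation thresholds, for $t>0$ the superlevel set has $\mathcal{L}^d(\{v>t\}\cap B)\ge (2\gamma_{\rm iso}\mathcal{H}^{d-1}(J_u\cap B))^{d/(d-1)}$, hence $\gamma_{\rm iso}\mathcal{H}^{d-1}(J_u\cap B)\le\frac12\mathcal{L}^d(\{v>t\}\cap B)^{(d-1)/d}$. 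So the contribution of the jump to the relative isoperimetric inequality is absorbed into the \emph{left-hand side} (the measure of the superlevel set), at the price of the factor $2$ in the final constant; it is not controlled by $\int_B|\nabla u|$. Only the off-$J_u$ part of $\partial^*\{v>t\}$ is integrated via coarea to give $\int_B|\nabla u|$. This distinction is exactly what makes the particular choice of threshold (and hypothesis \eqref{(10)}) essential, and it is worth stating precisely rather than gesturing at an absorption ``into the bulk.''
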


\begin{remark}\label{rem:gsbv}
More generally, Theorem~\ref{thm:poincsbv} holds for functions in $GSBV(\Omega;\R^m)$ and for balls $B\subset\subset\Omega$, by applying the scalar result in $SBV$ to truncated functions $u_i^M:=M\wedge u_i \vee -M$ for every $i=1,\dots,m$, up to understand $\nabla u$ and $J_u$ in a weaker sense.
\end{remark}

The analogous result in $GSBV^{p(\cdot)}$ is as follows.

\begin{theorem}\label{thm:thm5}
Let $p:\Omega\to (1,+\infty)$ be measurable and such that
\begin{equation}
\mathrm{either}\,p^-\ge d \quad \mathrm{ or }\quad 1<p^-<d\,,\quad p^+< (p^-)^*\,.
\label{eq:assump}
\end{equation}
Let $B\subset \subset\Omega$ and $u\in GSBV^{p(\cdot)}(B;\R^m)$, and assume that \eqref{(10)} holds. Then
\begin{equation}
\|T_Bu-{\rm med}(u;B)\|_{L^{p(\cdot)}(B;\R^m)} \leq c(1+\mathcal{L}^d(B))^2\mathcal{L}^d(B)^{\frac{1}{d}+\frac{1}{p^+}-\frac{1}{p^-}}
\|\nabla{u}\|_{L^{p(\cdot)}(B;\mathbb{R}^{m\times d})}
\label{(11bis)}
\end{equation}
for some constant $c$ depending on $p^-,d$, and
\begin{equation}
\mathcal{L}^d(\{T_Bu\neq u\}\cap B) \leq 2 \left(2\gamma_{\rm iso}\mathcal{H}^{d-1}(J_u\cap B)\right)^{\frac{d}{d-1}}\,.
\label{(12bis)}
\end{equation}
\end{theorem}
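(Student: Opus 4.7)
Estimate \eqref{(12bis)} is independent of the exponent and follows at once from \eqref{(12)} of Theorem \ref{thm:poincsbv} (as extended to $GSBV$ in Remark \ref{rem:gsbv}): the inclusion $L^{p(\cdot)}(B)\subset L^{p^-}(B)\subset L^1(B)$ ensures $u\in GSBV(B;\R^m)$ with $\mathcal{H}^{d-1}(J_u)<+\infty$, so the classical bound applies unchanged.

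The plan for \eqref{(11bis)} is to reduce the variable exponent inequality to the constant exponent Poincar\'e--Wirtinger inequality at the smallest exponent $p^-$, and then translate the resulting $L^{p^+}/L^{p^-}$ bound into an $L^{p(\cdot)}$ bound via the embedding Lemma \ref{lem:embed}. Concretely, I first apply Theorem \ref{thm:poincsbv} (in its $GSBV$ version) with $p=p^-$. In the case $1<p^-<d$, this gives control of $T_Bu-\mathrm{med}(u;B)$ in $L^{(p^-)^*}(B)$, and the assumption $p^+<(p^-)^*$ lets H\"older's inequality downgrade the bound to $L^{p^+}$ with a factor $\mathcal{L}^d(B)^{1/p^+-1/(p^-)^*}=\mathcal{L}^d(B)^{1/d+1/p^+-1/p^-}$. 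In the case $p^-\ge d$ I pick instead the target $q=p^+$ in \eqref{(11)}; the correct scaling factor $\mathcal{L}^d(B)^{1/d+1/p^+-1/p^-}$ is then produced either by a Morrey-type estimate when $p^->d$, or by rescaling of the $W^{1,d}$-Poincar\'e inequality when $p^-=d$. Either way one ends up with
\[
\|T_Bu-\mathrm{med}(u;B)\|_{L^{p^+}(B)} \le c\,\mathcal{L}^d(B)^{\frac{1}{d}+\frac{1}{p^+}-\frac{1}{p^-}}\|\nabla u\|_{L^{p^-}(B)}.
\]

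To pass to variable exponent norms I would then invoke Lemma \ref{lem:embed} twice, using the universal embedding constant $2(1+\mathcal{L}^d(B))$. Since $p(x)\le p^+$ on $B$, the lemma gives
\[
\|T_Bu-\mathrm{med}(u;B)\|_{L^{p(\cdot)}(B)} \le 2(1+\mathcal{L}^d(B))\,\|T_Bu-\mathrm{med}(u;B)\|_{L^{p^+}(B)},
\]
and since $p^-\le p(x)$ on $B$, it also gives
\[
\|\nabla u\|_{L^{p^-}(B)} \le 2(1+\mathcal{L}^d(B))\,\|\nabla u\|_{L^{p(\cdot)}(B)}.
\]
Chaining these two with the constant exponent estimate above yields \eqref{(11bis)}. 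The main subtlety I anticipate is simply keeping track of the scaling factors in $\mathcal{L}^d(B)$ so that they collect exactly into the stated power $1/d+1/p^+-1/p^-$; notably, the log-H\"older continuity of $p(\cdot)$ plays no role here, the condition $p^+<(p^-)^*$ in \eqref{eq:assump} being the genuine hypothesis that makes the H\"older step in case A (and the choice $q=p^+$ in case B) legitimate.
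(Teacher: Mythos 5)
Your proposal is correct and follows essentially the same route as the paper's (extremely terse) proof: apply the constant-exponent Poincaré--Wirtinger inequality \eqref{(11)} with $p=p^-$, and then use assumption \eqref{eq:assump} together with the embedding Lemma~\ref{lem:embed} to pass to the variable-exponent norms; your intermediate Hölder step $L^{(p^-)^*}\to L^{p^+}$ and the Morrey/rescaling discussion for $p^-\ge d$ are just explicit bookkeeping that recovers the precise factor $(1+\mathcal{L}^d(B))^2\mathcal{L}^d(B)^{1/d+1/p^+-1/p^-}$ in \eqref{(11bis)}.
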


\proof
In view of Remark~\ref{rem:gsbv}, we are reduced to prove the validity of \eqref{(11bis)}. For this, it will suffice to write \eqref{(11)} for $p=p^-$, and then the desired inequality will be a consequence of \eqref{eq:assump} and Lemma~\ref{lem:embed}.
\endproof

A first consequence of Theorem~\ref{thm:thm5} is the following compactness result, which can be seen as the $GSBV^{p(\cdot)}$ counterpart of \cite[Theorem~3.5]{DeGCarLea}. Motivated by the blow-up analysis of Lemma~\ref{lem:lemma2fonseca}, we will prove the result for a fixed ball and a uniformly convergent sequence of continuous variable exponents satisfying \eqref{eq:assump} (see also \cite[Theorem~4.1]{DCLV} for a related result under the additional stronger assumption \eqref{eq:loghold}). 

\begin{theorem}\label{thm:3.5}
Let $B\subset\Omega$ be a ball, $(p_j)_{j\in\N}$ be a sequence of variable exponents $p_j:B\to(1,+\infty)$ complying uniformly with \eqref{eq:assump} and converging uniformly to some $\bar{p}:B\to(1,+\infty)$ in $B$. Let $\{u_j\}_{j\in\mathbb{N}}\subset GSBV^{p_j(\cdot)}(B;\R^m)$ be such that
\begin{equation}
\sup_{j\in\mathbb{N}}\int_B |\nabla u_j|^{p_j(y)}\,\mathrm{d}y < +\infty\,,\quad \lim_{j\to+\infty} \mathcal{H}^{d-1}(J_{u_j}\cap B)=0\,.
\label{eq:ebounded}
\end{equation}
Then there exist a function $u_0\in W^{1,\bar{p}(\cdot)}(B;\R^m)$ and a subsequence (not relabeled) of $\{u_j\}$ such that
\begin{equation}
\begin{split}
\int_B |T_B{u}_j-{\rm med}(u_j;B)-u_0|^{p_j(y)}\,\mathrm{d}y\to0\,,\quad {u}_j-{\rm med}(u_j;B) \to u_0 \quad & \mathcal{L}^d-\mbox{a.e. in $B$}\,.
\end{split}
\label{eq:claims}
\end{equation}
\end{theorem}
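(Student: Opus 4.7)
The plan is to apply the Poincar\'e-type estimate of Theorem~\ref{thm:thm5} to a truncated and recentered sequence, extract an a.e.\ limit via Ambrosio's $GSBV$ compactness, and then upgrade both the mode of convergence (from $L^{p^-}$ to the $p_j(\cdot)$-modular) and the integrability of the limit (from $W^{1,p^-}$ to $W^{1,\bar p(\cdot)}$).

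First I would set $v_j := T_B u_j - \mathrm{med}(u_j;B)$. The smallness condition \eqref{(10)} holds for $j$ large by \eqref{eq:ebounded}, so Theorem~\ref{thm:thm5} yields a uniform bound on $\|v_j\|_{L^{p_j(\cdot)}(B;\R^m)}$. Since $T_B u_j$ is a truncation of $u_j$, $|\nabla v_j| \leq |\nabla u_j|$ a.e.\ and $J_{v_j} \subseteq J_{u_j}$, so $\nabla v_j$ is uniformly bounded in $L^{p_j(\cdot)}(B;\mathbb{R}^{m\times d})$ and $\mathcal{H}^{d-1}(J_{v_j} \cap B) \to 0$. Setting $p^- := \inf_j p_j^- > 1$ and applying Lemma~\ref{lem:embed}, both $v_j$ and $\nabla v_j$ become uniformly bounded in $L^{p^-}(B)$. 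Ambrosio's $GSBV$ compactness theorem then produces a subsequence (not relabeled) and a function $u_0$ such that $v_j \to u_0$ a.e.\ in $B$ and $\nabla v_j \rightharpoonup \nabla u_0$ weakly in $L^{p^-}(B;\mathbb{R}^{m\times d})$, with $\mathcal{H}^{d-1}(J_{u_0}) \leq \liminf_j \mathcal{H}^{d-1}(J_{v_j} \cap B) = 0$ by Ambrosio's lower semicontinuity. In particular $u_0 \in W^{1,p^-}(B;\R^m)$.

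The main obstacle will be upgrading $u_0$ to $W^{1,\bar p(\cdot)}$: weak $L^{p^-}$ convergence does not pass directly into the $p_j(\cdot)$-modular because the exponent varies with $j$, and we do not assume log-H\"older continuity of $\bar p$ or of the $p_j$. I would squeeze through the subcritical exponent $\bar p - \eta$: for every $\eta \in (0, p^- - 1)$, the uniform convergence $p_j \to \bar p$ gives $p_j(y) \geq \bar p(y) - \eta$ on $B$ for all $j$ large enough, and splitting the domain according to $|\nabla v_j| \leq 1$ or $|\nabla v_j| > 1$ yields
\begin{equation*}
\int_B |\nabla v_j|^{\bar p(y) - \eta}\,\mathrm{d}y \leq |B| + \int_B |\nabla v_j|^{p_j(y)}\,\mathrm{d}y \leq C\,,
\end{equation*}
with $C$ independent of $j$ and $\eta$. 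Since $(y,w) \mapsto |w|^{\bar p(y) - \eta}$ is a nonnegative normal integrand, convex in $w$ (as $\bar p(\cdot) - \eta \geq 1$), and $\nabla v_j \rightharpoonup \nabla u_0$ weakly in $L^1(B;\mathbb{R}^{m\times d})$, Ioffe's weak lower semicontinuity theorem gives $\int_B |\nabla u_0|^{\bar p(y) - \eta}\,\mathrm{d}y \leq C$. Letting $\eta \downarrow 0$, monotone convergence on the set $\{|\nabla u_0| \geq 1\}$ (where the integrand is monotone in $\eta$), together with the trivial bound $|\nabla u_0|^{\bar p(y)} \leq 1$ on $\{|\nabla u_0| < 1\}$, yields $\int_B |\nabla u_0|^{\bar p(y)}\,\mathrm{d}y < \infty$, so $u_0 \in W^{1,\bar p(\cdot)}(B;\R^m)$.

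For the modular convergence in \eqref{eq:claims}, I would exploit higher integrability: \eqref{(11)} applied with $p = p^-$ gives $v_j$ uniformly bounded in $L^{(p^-)^*}(B;\R^m)$ (interpreting $(p^-)^* = \infty$ when $p^- \geq d$). Reading the uniform version of \eqref{eq:assump} as $p^+ := \sup_j p_j^+ < (p^-)^*$ in the subcritical case, a uniform H\"older estimate makes $\int_E |v_j|^{p^+}\,\mathrm{d}y$ absolutely continuous uniformly in $j$, and Vitali's theorem yields $v_j \to u_0$ in both $L^{p^-}(B;\R^m)$ and $L^{p^+}(B;\R^m)$. The pointwise bound $|v_j - u_0|^{p_j(y)} \leq |v_j - u_0|^{p^-} + |v_j - u_0|^{p^+}$ then produces $\int_B |v_j - u_0|^{p_j(y)}\,\mathrm{d}y \to 0$. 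Finally, \eqref{(12bis)} combined with \eqref{eq:ebounded} gives $\mathcal{L}^d(\{T_B u_j \neq u_j\}) \to 0$, so $u_j - \mathrm{med}(u_j;B) \to u_0$ in measure, and a further subsequence converges a.e., completing \eqref{eq:claims}.
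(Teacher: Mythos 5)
Your proof is correct and follows essentially the same route as the paper's: truncate and recenter via $T_B$ and the median, use the Poincar\'e estimate of Theorem~\ref{thm:thm5} together with Ambrosio's $GSBV$ compactness to extract an a.e.\ limit $u_0$, upgrade it to $W^{1,\bar p(\cdot)}$ by the Ioffe/monotone-convergence squeeze through $\bar p - \eta$, and deduce the modular and a.e.\ convergences from equi-integrability (via \eqref{(11)}) and \eqref{(12bis)}. The only cosmetic difference is that you obtain the modular convergence from the pointwise inequality $a^{p_j(y)}\le a^{p^-}+a^{p^+}$ together with strong convergence in the two constant-exponent spaces, whereas the paper invokes the embedding $L^{p^+_\eta}\hookrightarrow L^{p_j(\cdot)}$ of Lemma~\ref{embedding}; these are equivalent.
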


\begin{proof}
For every $j\in\N$, we set
\begin{equation*}
p^-_j:= \inf_{y\in B} p_j(y)\,,\quad p^+_j:= \sup_{y\in B} p_j(y)\,.
\end{equation*}
Correspondingly, we define
\begin{equation*}
p^-:=\mathop{\lim\inf}_{j\to+\infty} p^-_j\,,\quad p^+:=\mathop{\lim\sup}_{j\to+\infty} p^+_j\,.
\end{equation*}
We set for brevity $\bar{u}_j:=T_B{u}_j-{\rm med}(u_j;B)$. Let $\eta>0$ be fixed such that $p^-_\eta:=p^--\eta>1$ and $p^+_\eta:=p^++\eta<(p^-_\eta)^*$. Note that, for $j$ large enough, we have
\begin{equation*}
p^-_\eta<p^-_j\leq p_j(\cdot) \leq p^+_j<p^+_\eta \quad \mbox{ on $B$}\,.
\end{equation*}
By virtue of \eqref{eq:assump}, \eqref{(11bis)}, the definition of $T_B{u}_j$ and \eqref{eq:ebounded} we have
\begin{equation*}
\sup_{j\in\mathbb{N}} \left(\|\bar{u}_j\|_{L^{p^-_\eta}(B;\R^m)} + \|\nabla\bar{u}_j\|_{L^{p^-_\eta}(B;\mathbb{R}^{m\times d})} +  \mathcal{H}^{d-1}(J_{u_j}\cap B)\right) <+\infty\,.
\end{equation*}
This implies, by \cite[Theorem~2.2]{Amb} that there exists $u_0\in GSBV^{{p^-_\eta}}(B;\R^m)$ and a subsequence (not relabeled) ${u}_j$ such that $\bar{u}_j \to u_0$ in measure and 
\begin{equation}\label{nojump}
\mathcal{H}^{d-1}(J_{u_0}\cap B) \leq \mathop{\lim\inf}_{j\to+\infty} \mathcal{H}^{d-1}(J_{T_B{u}_j}\cap B)=0\,.
\end{equation}
With \eqref{(11)}, since $p^+_\eta<(p^-_\eta)^*$, we get that   $|\bar{u}_j|^{p^+_\eta}$ is equiintegrable, hence $\bar{u}_j$ strongly converges to $u_0$ in $L^{p^+_\eta}(B;\R^m)$. With Lemma \ref{embedding}, and the definition of $\bar{u}_j$ we then get the first assertion in \eqref{eq:claims}.
With \eqref{nojump}, we have $u_0\in W^{1,{p^-_\eta}}(B;\R^m)$. Now, for each $\eta>0$ we further have
\[
\sup_{j\in\mathbb{N}}\int_B |\nabla u_j|^{\bar{p}(y)-\eta}\,\mathrm{d}y \le C< +\infty
\]
by the uniform convergence of $p_j$. With the weak-$L^1$ convergence of $\nabla \bar{u}_j$ to $\nabla u_0$ and Ioffe's Theorem (see \cite{Ioffe}), we get
\[
\int_B |\nabla u_0|^{\bar{p}(y)-\eta}\,\mathrm{d}y \le C
\]
with a bound independent of $\eta$. Applying the monotone convergence Theorem in the set $\{|\nabla u_0|\ge 1\}$ we get $u_0 \in W^{1,\bar{p}(\cdot)}(B;\R^m)$. The second assertion in \eqref{eq:claims} follows from \eqref{(12bis)} and \eqref{eq:ebounded}.
\end{proof}

To conclude this section, we recall the following result on the approximation of $GSBV$ functions with piecewise constant functions (see \cite[Theorem~4.9]{FS}), which can be seen as a piecewise Poincar\'e inequality and essentially relies on the $BV$ coarea formula. We refer the reader for a proof to \cite[Theorem~2.3]{Manuel}, although the argument can be retrieved in previous literature (see, e.g., \cite{Amb, Braides-Defranceschi-Vitali}).
\begin{theorem}\label{thm:fspoincare}
Let $d\geq1$ and $z\in GSBV(\Omega;\R^m)$ with
\begin{equation*}
\|\nabla z\|_{L^1(\Omega;\R^{m\times d})} + \mathcal{H}^{d-1}(J_z) < +\infty\,.
\end{equation*}
Let $D\subset\Omega$ be a Borel set with finite perimeter. Let $\theta>0$ be fixed. Then there exists a partition $(P_l)_{l=1}^\infty$ of $D$, made of sets of finite perimeter, and a piecewise constant function $z_{\rm pc}:=\sum_{l=1}^\infty b_l \chi_{P_l}$ such that
\begin{enumerate}
\item[$(i)$] $\displaystyle\sum_{l=1}^\infty \mathcal{H}^{d-1}((\partial^*P_l\cap D^1)\backslash J_z)\leq \theta$; \\
\item[$(ii)$] $\|z-z_{\rm pc}\|_{L^\infty(D;\R^m)}\leq c \theta^{-1}\|\nabla z\|_{L^1(D;\R^{m\times d})}$,
\end{enumerate}
for a dimensional constant $c=c(d)>0$, where $D^1$ denotes the set of points with density one. If, in addition, the $i$-th component $z^i$ satisfies the bound $\|z^i\|_{L^\infty(D;\R)}\leq M$, then also $\|z^i_{\rm pc}\|_{L^\infty(D;\R)}\leq M$ holds.
\end{theorem}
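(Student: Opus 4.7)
\medskip
\noindent\textbf{Proof plan.} The natural approach is the classical level-set construction based on the $BV$ coarea formula applied to a suitably translated regular partition of the target space. I would first prove the scalar case $m=1$ and then lift to the vectorial setting componentwise. Conditions (i)--(ii) then follow by choosing the mesh size $\delta$ in terms of $\theta$ and $\|\nabla z\|_{L^1(D)}$.

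\medskip
\noindent\textbf{Scalar case.} For a parameter $s\in[0,\delta)$ and $k\in\Zz$, set
$$
I_k^s:=[k\delta+s,(k+1)\delta+s)\,,\qquad P_k^s:=\{x\in D:\ z(x)\in I_k^s\}\,.
$$
Then $\{P_k^s\}_k$ is a Borel partition of $D$, and (up to $\hd$-negligible sets)
$$
\partial^* P_k^s\cap D^1\subseteq\partial^*\{z>k\delta+s\}\cup\partial^*\{z>(k+1)\delta+s\}\,.
$$
Applying the $BV$ coarea formula to the truncations $z^M=(-M)\vee z\wedge M$ (which lie in $SBV$) and then letting $M\to+\infty$, and using that $z\in GSBV$ has no Cantor part, I would obtain, via Fubini in $s\in(0,\delta)$,
\begin{equation*}
\int_0^{\delta}\sum_{k\in\Zz}\hd\big((\partial^* P_k^s\cap D^1)\setminus J_z\big)\,\mathrm{d}s\;\le\;2\int_{\R}\hd\big((\partial^*\{z>t\}\cap D^1)\setminus J_z\big)\,\mathrm{d}t\;=\;2\|\nabla z\|_{L^1(D)}\,.
\end{equation*}
A mean-value choice of $s^*\in(0,\delta)$ then gives $\sum_k\hd((\partial^*P_k^{s^*}\cap D^1)\setminus J_z)\le (2/\delta)\|\nabla z\|_{L^1(D)}$. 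Setting $\delta:=(2/\theta)\|\nabla z\|_{L^1(D)}$ produces (i), while defining $z_{\rm pc}:=\sum_k b_k\chi_{P_k^{s^*}}$ with any $b_k\in I_k^{s^*}$ (e.g., its midpoint) gives $|z-z_{\rm pc}|\le\delta$ a.e.\ on $D$, yielding (ii).

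\medskip
\noindent\textbf{Vector case and truncation.} For $m\ge 2$ I would apply the scalar step to each component $z^i$ with threshold $\theta/m$, obtaining partitions $(P_{l_i}^i)$ with their respective translations $s^*_i$, and then form the common refinement $P_{\mathbf{l}}:=\bigcap_{i=1}^m P_{l_i}^i$, which is a partition of $D$ into sets of finite perimeter. The perimeter contributions add up, so (i) persists up to a factor $m$ absorbed in the constant; assigning $b_{\mathbf{l}}=(b_{l_1}^1,\dots,b_{l_m}^m)$ yields $|z-z_{\rm pc}|\le c\,\delta$ and hence (ii). The last assertion is handled by noting that if $\|z^i\|_{L^\infty}\le M$, the relevant intervals $I_{l_i}^{s^*_i}$ are contained in $[-M,M]$ (or can be intersected with it), so the representatives $b_{l_i}^i$ can be chosen in $[-M,M]$, preserving the $L^\infty$ bound on each component.

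\medskip
\noindent\textbf{Main obstacle.} The only delicate step is justifying the coarea identity in the $GSBV$ framework while \emph{excluding} the jump contribution: one must verify that, after removing $J_z$, the translated perimeter integral is controlled purely by the absolutely continuous part $\|\nabla z\|_{L^1(D)}$. This is achieved by the truncation argument together with the absence of a Cantor part for $GSBV$ functions. The Fubini step then automatically selects a good translation $s^*$ whose grid values $k\delta+s^*$ avoid the (at most countable) exceptional set of values attained by $z$ on $J_z$ with positive $\hd$-measure.
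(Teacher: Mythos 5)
Your proof is correct and matches the approach of the references the paper actually points to. The paper does not itself prove Theorem~\ref{thm:fspoincare}: it refers to \cite[Theorem~2.3]{Manuel} and notes the argument can be retrieved from \cite{Amb, Braides-Defranceschi-Vitali}, and your construction---translated level-set partitions via the $BV$ coarea formula applied to the truncations $z^M$, a Fubini/mean-value choice of the offset $s^*$, and componentwise refinement for the vector case---is exactly that classical argument.

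The one step that deserves to be made explicit is the claim that \emph{``the perimeter contributions add up''} under the common refinement $P_{\mathbf{l}}=\bigcap_i P^i_{l_i}$. The naive inclusion $\partial^*P_{\mathbf{l}}\subseteq\bigcup_i\partial^*P^i_{l_i}$ is not enough by itself, because summing $\hd\bigl((\partial^*P_{\mathbf{l}}\cap D^1)\setminus J_z\bigr)$ over the countably many multi-indices $\mathbf{l}$ would count each $\partial^*P^i_{l_i}$ infinitely often. What is needed is the multiplicity-two structure of Caccioppoli partitions ($\hd$-a.e.\ point of the union of reduced boundaries lies on the reduced boundary of exactly two cells), from which
\begin{equation*}
\sum_{\mathbf{l}}\hd\bigl((\partial^*P_{\mathbf{l}}\cap D^1)\setminus J_z\bigr)=2\,\hd\Bigl(\Bigl(\bigcup_{\mathbf{l}}\partial^*P_{\mathbf{l}}\cap D^1\Bigr)\setminus J_z\Bigr)\le 2\sum_{i=1}^m\hd\Bigl(\Bigl(\bigcup_{l_i}\partial^*P^i_{l_i}\cap D^1\Bigr)\setminus J_z\Bigr)\le\theta\,,
\end{equation*}
the last inequality again using the same multiplicity-two identity on each scalar partition together with your choice of threshold $\theta/m$. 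This is standard, but worth writing out; note also that it shows $(i)$ holds with the bound $\theta$ exactly---there is no ``factor $m$ absorbed in the constant'' to speak of, since $(i)$ has no constant (the dependence on $m$ appears only in the constant $c$ of $(ii)$).
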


\section{The integral representation result}\label{sec: main}

In this section we will establish an integral representation result in the  space $GSBV^{p(\cdot)}(\Omega;\R^m)$ for $m \in \N$, where the variable exponent $p:\Omega\to(1,+\infty)$ complies with the following assumptions
\begin{enumerate}[font={\normalfont},label={(${\rm P}_\arabic*$)}]
\item $p^->1$ and $p^+<+\infty$; \label{assP1}
\item $p$ is log-H\"older continuous on $\Omega$ (see \eqref{eq:loghold}). \label{assP2}
\end{enumerate}
We consider functionals  $\mathcal{F}\colon GSBV^{p(\cdot)}(\Omega;\R^m) \times \mathcal{B}(\Omega) \to  [0,+\infty)$ with the following general assumptions: 
\begin{enumerate}[font={\normalfont},label={(${\rm H}_\arabic*$)}]
\item  $\mathcal{F}(u,\cdot)$ is a Borel measure for any $u \in  GSBV^{p(\cdot)}(\Omega;\R^m)$; \label{assH1} 
\item  $\mathcal{F}(\cdot,A)$ is lower semicontinuous with respect to convergence in measure on $\Omega$ for any $A \in \mathcal{A}(\Omega)$; \label{assH2}
\item   $\mathcal{F}(\cdot, A)$ is local for any $A \in \mathcal{A}(\Omega)$, in the sense that if $u,v \in GSBV^{p(\cdot)}(\Omega;\R^m)$ satisfy $u=v$ a.e.\ in $A$, then $\mathcal{F}(u,A) = \mathcal{F}(v,A)$; \label{assH3}
\item  there exist $0 < \alpha  < \beta $ such that for any $u \in GSBV^{p(\cdot)}(\Omega;\R^m)$ and $B \in \mathcal{B}(\Omega)$  we have
$$\alpha \bigg(\int_{ B  } |\nabla u|^{p(x)}  \dx    +   \mathcal{H}^{d-1}(J_u \cap B)\bigg) \le \mathcal{F}(u,B) \le \beta \bigg(\int_{ B } (1 + |\nabla u|^{p(x)})    \dx  +   \mathcal{H}^{d-1}(J_u \cap B)\bigg).$$  \label{assH4}
\end{enumerate}

For every $u \in GSBV^{p(\cdot)}(\Omega;\R^m)$ and $A \in \mathcal{A}(\Omega)$ we define
\begin{equation}\label{eq: general minimizationsgbv}
\mathbf{m}_{\mathcal{F}}(u,A) = \inf_{v \in GSBV^{p(\cdot)}(\Omega;\R^m)} \  \lbrace \mathcal{F}(v,A): \ v = u \ \text{ in a neighborhood of } \partial A \rbrace\,.
\end{equation}
Moreover, for $x_0 \in \Omega$, $u_0 \in \R^m$, and $\xi \in  \mathbb{R}^{m \times d}  $ we introduce the affine functions  $\ell_{x_0,u_0,\xi}\colon \R^d \to \R^m$ by 
\begin{align}\label{eq: elastic competitor}
\ell_{x_0,u_0,\xi}(x) =  u_0 + \xi (x-x_0)\,,
\end{align}
and, for $a,b \in \R^m$, $\nu \in \mathbb{S}^{d-1}$ we define  $u_{x_0,a,b,\nu} \colon \R^d \to \R^m$ by 
\begin{align}\label{eq: jump competitor}
u_{x_0,a,b,\nu}(x) = \begin{cases}  a & \text{if } (x-x_0) \cdot \nu > 0,\\ b & \text{if }  (x-x_0) \cdot \nu < 0. \end{cases} 
\end{align}

The main result of this section is the following integral representation theorem.

\begin{theorem}[Integral representation in $GSBV^{p(\cdot)}$]\label{thm: int-representation-gsbv}
Let $\Omega \subset \R^d$ be open, bounded with Lipschitz boundary, let $m \in \N$. Let $p:\Omega\to(1,+\infty)$ be a variable exponent complying with {\rm\ref{assP1}}-{\rm\ref{assP2}}, and suppose that  $\mathcal{F}\colon GSBV^{p(\cdot)}(\Omega;\R^m)  \times \mathcal{B}(\Omega) \to [0,+\infty)$ satisfies {\rm\ref{assH1}}--{\rm\ref{assH4}}. Then 
$$\mathcal{F}(u,B) = \int_B f\big(x,u(x),\nabla u(x)\big)  \, {\rm d}x +    \int_{J_u\cap  B} g\big(x,u^+(x),u^-(x),\nu_u(x)\big)\,  {\rm d}  \mathcal{H}^{d-1}(x)$$
for all $u \in  GSBV^{p(\cdot)}(\Omega;\R^m)$  and   $B \in \mathcal{B}(\Omega)$, where $f$ is given  by
\begin{align}\label{eq:fdef-gsbv}
f(x_0,u_0,\xi) = \limsup_{\eps \to 0} \frac{\mathbf{m}_{\mathcal{F}}(\ell_{x_0,u_0,\xi},B_\eps(x_0))}{\gamma_d\eps^{d}}
\end{align}
for all $x_0 \in \Omega$, $u_0 \in \R^m$, $\xi \in \mathbb{R}^{m \times d}$,  and $g$ is given by 
\begin{align}\label{eq:gdef-gsbv}
g(x_0,a,b,\nu) = \limsup_{\eps \to 0} \frac{\mathbf{m}_{\mathcal{F}}(u_{x_0,a,b,\nu},B_\eps(x_0))}{\gamma_{d-1}\eps^{d-1}}
\end{align}
for all $ x_0  \in \Omega$,  $a,b \in \R^m$, and $\nu \in \mathbb{S}^{d-1}$.
\end{theorem}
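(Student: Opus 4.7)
My plan is to adapt the global method for relaxation of Bouchitté-Fonseca-Leoni-Mascarenhas to the variable exponent setting. Fix $u \in GSBV^{p(\cdot)}(\Omega;\R^m)$ and observe that, by \ref{assH1} and \ref{assH4}, the set function $B \mapsto \mathcal{F}(u, B)$ is a finite Borel measure on $\Omega$ absolutely continuous with respect to $\mu_u := \mathcal{L}^d \mres \Omega + \mathcal{H}^{d-1} \mres J_u$, since $|\nabla u|^{p(x)} \in L^1(\Omega)$ and $\mathcal{H}^{d-1}(J_u) < +\infty$. A Radon-Nikodym/Besicovitch decomposition then produces
$$\mathcal{F}(u, B) = \int_B \phi(x) \, dx + \int_{J_u \cap B} \psi(x) \, d\mathcal{H}^{d-1}(x),$$
so the proof reduces to identifying $\phi(x_0) = f(x_0, u(x_0), \nabla u(x_0))$ at $\mathcal{L}^d$-a.e.\ $x_0 \in \Omega \setminus J_u$ and $\psi(x_0) = g(x_0, u^+(x_0), u^-(x_0), \nu_u(x_0))$ at $\mathcal{H}^{d-1}$-a.e.\ $x_0 \in J_u$. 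By Besicovitch differentiation these pointwise values coincide with the limsup formulas \eqref{eq:fdef-gsbv} and \eqref{eq:gdef-gsbv} as soon as one can replace $\mathcal{F}(u, B_\eps(x_0))$ by $\mathbf{m}_\mathcal{F}$ evaluated at the appropriate tangent function, modulo $o(\eps^d)$ or $o(\eps^{d-1})$ errors. This replacement is what the two blow-up statements (Lemmas \ref{lem:lemma2fonseca} and \ref{lem:Lemma3fon}) are designed to perform.

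For the bulk part, I would work at a point $x_0 \in \Omega \setminus J_u$ where $u$ is approximately differentiable and where the Radon-Nikodym density of $\mathcal{F}(u,\cdot)$ with respect to $\mathcal{L}^d$ exists. Rescaling $u_\eps(y) := \eps^{-1}(u(x_0 + \eps y) - u(x_0))$, the rescaled exponents $p_\eps(y) := p(x_0 + \eps y)$ converge uniformly on $B_1$ to the constant $p(x_0)$, so Theorem \ref{thm:3.5} yields compactness of $\{u_\eps\}$ with limit $\xi y$ for $\xi = \nabla u(x_0)$. Given this, I would invoke the fundamental estimate on a thin annulus inside $B_\eps(x_0)$ to glue $u$ with the affine tangent $\ell_{x_0, u(x_0), \nabla u(x_0)}$, obtaining an admissible competitor for $\mathbf{m}_\mathcal{F}(\ell_{x_0, u(x_0), \nabla u(x_0)}, B_\eps(x_0))$; conversely, one modifies near-minimizers for the latter to produce competitors whose energy bounds $\mathcal{F}(u, B_\eps(x_0))$ from below. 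The resulting two-sided comparison, combined with the growth control \ref{assH4} and Lemma \ref{lem:dieni3.2}, pins down the bulk density as required.

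The surface identification is the technical heart of the argument. At $\mathcal{H}^{d-1}$-a.e.\ $x_0 \in J_u$ I would instead rescale without centering, $v_\eps(y) := u(x_0 + \eps y)$, so that $v_\eps \to u_{x_0, u^+(x_0), u^-(x_0), \nu_u(x_0)}$ in measure. The analogous annular surgery must then be performed to match the jump function near $\partial B_\eps(x_0)$, but the gluing inevitably produces a modification $w_\eps$ with a large gradient on a thin shell, and the main obstacle is controlling the perturbation energy $\int |\nabla w_\eps|^{p(x)}\, dx$ on that shell in a way that remains compatible with the $\eps^{d-1}$ scaling. Here log-Hölder continuity \ref{assP2} becomes decisive: by \eqref{eq:loghold} and Lemma \ref{lem:dieni3.2}, the oscillation of $p(\cdot)$ on $B_\eps(x_0)$ is such that $\eps^{p^-_{B_\eps(x_0)} - p^+_{B_\eps(x_0)}}$ stays bounded uniformly in $\eps$, which is precisely what allows the target estimate \eqref{eq:crucial} to be proved with constants independent of $\eps$ and to absorb the variable-exponent error arising when one freezes $p(\cdot)$ to $p(x_0)$ on $B_\eps(x_0)$. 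Once this estimate is in place, the two-sided comparison between $\mathcal{F}(u, B_\eps(x_0))$ and $\mathbf{m}_\mathcal{F}(u_{x_0, u^+(x_0), u^-(x_0), \nu_u(x_0)}, B_\eps(x_0))$ proceeds as in the constant-exponent case, yielding $\psi(x_0) = g(x_0, u^+(x_0), u^-(x_0), \nu_u(x_0))$ and closing the integral representation.
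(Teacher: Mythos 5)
Your proposal follows essentially the same route as the paper: Besicovitch differentiation against $\mu_u = \mathcal{L}^d\mres\Omega + \mathcal{H}^{d-1}\mres J_u$, replacement of $\mathcal{F}$ by $\mathbf{m}_{\mathcal{F}}$ (Lemma~\ref{lemma: F=m}) and then of $u$ by its bulk/jump blow-up (Lemmas~\ref{lemma: sameminbulk} and \ref{lemma: sameminsurf}), with the fundamental estimate (Lemma~\ref{lemma: fundamental estimate}) performing the gluing and log-H\"older continuity keeping the constants in the Poincar\'e-type estimate \eqref{eq:crucial} bounded as $\eps\to 0$. Your diagnosis of where \ref{assP2} enters and why the surface case is the delicate one is exactly the paper's.
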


\subsection{Fundamental estimate}\label{sec:fundamental}

In this section we prove an important tool in the proof of the integral representation,  namely  a fundamental estimate in $GSBV^{p(\cdot)}$ for functionals $\mathcal{F}$.

 \begin{lemma}[Fundamental estimate in $GSBV^{p(\cdot)}$]\label{lemma: fundamental estimate}
Let $\Omega \subset \R^d$ be open and bounded, and let $p:\Omega\to(1,+\infty)$ be a variable exponent in $\Omega$ satisfying {\rm\ref{assP1}}.  Let $\eta >0$ and  let $D', D'', E \in \mathcal{A}(\Omega)$ with $D' \subset \subset  D''$, and set $\delta:=\frac{1}{2}{\rm dist}(D',\partial D'')$. For every functional $\mathcal{F}$  satisfying {\rm\ref{assH1}}, {\rm\ref{assH3}}, and {\rm\ref{assH4}} and for every $u \in GSBV^{p(\cdot)}(D';\R^m)$, $v \in GSBV^{p(\cdot)}(E;\R^m)$ there exists a function $\varphi \in C^\infty(\R^d;[0,1])$  such that  $w :=  \varphi u + (1- \varphi)v \in GSBV^{p(\cdot)}(D'\cup E;\R^m)$ satisfies
\begin{align}\label{eq: assertionfund}
{\rm (i)}& \ \ \mathcal{F} ( w, D' \cup E) \le  (1+ \eta)\big(\mathcal{F}(u,D'')  + \mathcal{F}(v, E) \big) + M \int_{F}\left(\frac{|u-v|}{\delta}\right)^{p(x)}\,\mathrm{d}x +\eta\mathcal{L}^d(D' \cup E), \notag \\ 
{\rm (ii)} & \ \  w = u \text{ on } D' \text{ and } w = v \text{ on } E \setminus D'',
\end{align}
where $F:= (D'' \setminus D') \cap E$ and $M=M(D',D'',E,p^+,\eta)>0$ depends only on $D',D'',E,p^+,\eta$, but is independent of $u$ and $v$. Moreover, if for $\eps>0$  and $x_0 \in \R^d$ we have  $D'_{\eps,x_0}, D''_{\eps,x_0}, E_{\eps,x_0} \subset \Omega$,  then 
\begin{align}\label{eq: constant-scal}
 M(D'_{\eps,x_0}, D''_{\eps,x_0},E_{\eps,x_0},p^+,\eta)  = M(D',D'',E,p^+,\eta),
\end{align} 
and the remainder term is
\begin{equation*}
M \int_{F_{\eps,x_0}}\left(\frac{|u-v|}{\delta\varepsilon}\right)^{p(x)}\,\mathrm{d}x\,,
\end{equation*}
where we used the notation introduced in \eqref{eq: shift-not}. 
\end{lemma}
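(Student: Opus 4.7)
The plan is to follow the classical pigeonhole-plus-cutoff construction, taking care that the variable exponent $p(\cdot)$ does not spoil the $(1+\eta)$-factor in front of the original energies. Set $\delta := \tfrac{1}{2}\dist(D',\partial D'')$ and fix $N \in \N$ (to be chosen large in terms of $\alpha, \beta, \eta$). I would pick nested open sets $D' \subset D_0 \subset\subset D_1 \subset\subset \cdots \subset\subset D_N \subset D''$ with $\dist(D_{i-1}, \partial D_i) \ge \delta/(2N)$, and cutoffs $\varphi_i \in C^\infty_c(D_i;[0,1])$ equal to $1$ on $\overline{D_{i-1}}$ with $\|\nabla\varphi_i\|_\infty \le C_d N/\delta$, where $C_d$ is a dimensional constant. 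Setting $A_i := D_i \setminus \overline{D_{i-1}}$, the sets $A_1,\ldots,A_N$ are pairwise disjoint and contained in $F$, so by the measure property \ref{assH1} and the pigeonhole principle there exists $i^* \in \{1,\ldots,N\}$ such that
\begin{equation*}
\mathcal{F}(u, A_{i^*}) + \mathcal{F}(v, A_{i^*}) + \mathcal{L}^d(A_{i^*}) \le \tfrac{1}{N}\bigl(\mathcal{F}(u,F) + \mathcal{F}(v,F) + \mathcal{L}^d(F)\bigr).
\end{equation*}
Defining $\varphi := \varphi_{i^*}$ and $w := \varphi u + (1-\varphi) v$, condition (ii) of \eqref{eq: assertionfund} is immediate.

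Next, I would split $\mathcal{F}(w, D'\cup E)$ via \ref{assH1} into the contributions on $D_{i^*-1}$, on $A_{i^*}$, and on $(D'\cup E)\setminus\overline{D_{i^*}}$. Locality \ref{assH3} rewrites the outer two as $\mathcal{F}(u, D_{i^*-1}) + \mathcal{F}(v, (D'\cup E)\setminus\overline{D_{i^*}})$, bounded by $\mathcal{F}(u,D'') + \mathcal{F}(v,E)$. The core step is the estimate of the middle term. Using $\nabla w = \varphi\nabla u + (1-\varphi)\nabla v + (u-v)\otimes\nabla\varphi$, the pointwise convexity bound $|\varphi\nabla u + (1-\varphi)\nabla v|^{p(x)} \le |\nabla u|^{p(x)} + |\nabla v|^{p(x)}$, and the elementary Young-type inequality, valid for every $\eta' > 0$ because $p^+<+\infty$,
\begin{equation*}
(a+b)^{p(x)} \le (1+\eta')\, a^{p(x)} + C(\eta',p^+)\, b^{p(x)}, \qquad a,b \ge 0,
\end{equation*}
together with $J_w\cap A_{i^*}\subset (J_u\cup J_v)\cap A_{i^*}$ and both bounds in \ref{assH4}, one arrives at
\begin{equation*}
\mathcal{F}(w, A_{i^*}) \le \tfrac{\beta}{\alpha}(1+\eta')\bigl(\mathcal{F}(u,A_{i^*}) + \mathcal{F}(v,A_{i^*})\bigr) + C(\eta',p^+)\beta\!\int_{A_{i^*}}\! |\nabla\varphi|^{p(x)}|u-v|^{p(x)}\dx + \beta\mathcal{L}^d(A_{i^*}).
\end{equation*}
Choosing first $\eta'$ small in terms of $\eta$ and then $N$ large enough that $(\beta/\alpha)(1+\eta')/N \le \eta$ and $\beta/N \le \eta$, the pigeonhole inequality absorbs the first and third terms into $\eta(\mathcal{F}(u,D'') + \mathcal{F}(v,E)) + \eta\mathcal{L}^d(D'\cup E)$, yielding (i).

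The only manipulation really specific to the variable exponent setting is the handling of the remainder, where I would write
\begin{equation*}
|\nabla\varphi|^{p(x)}|u-v|^{p(x)} = \bigl(|\nabla\varphi|\,|u-v|\bigr)^{p(x)} \le (C_d N)^{p(x)}\Bigl(\tfrac{|u-v|}{\delta}\Bigr)^{p(x)},
\end{equation*}
and then bound $(C_d N)^{p(x)}$ by $\max\{1,(C_d N)^{p^+}\}$; after enlarging the domain of integration from $A_{i^*}$ to $F$, the constant $M = M(D',D'',E,p^+,\eta)$ does not depend on $\delta$ beyond the explicit factor displayed. This is exactly what gives \eqref{eq: constant-scal}: repeating the construction on the rescaled sets $D'_{\eps,x_0}, D''_{\eps,x_0}, E_{\eps,x_0}$ merely replaces $\delta$ by $\eps\delta$ in the bound on $\|\nabla\varphi\|_\infty$, producing the remainder $M \int_{F_{\eps,x_0}} (|u-v|/(\delta\eps))^{p(x)}\dx$ with the same $M$. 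The main obstacle I anticipate is precisely this last point: a naive use of $\delta^{-p(x)} \le \delta^{-p^+}$ would introduce a spurious factor $\delta^{p^- - p^+}$, which diverges as $\eps\to 0$ in the blow-up regime where the lemma is later applied in Lemmas~\ref{lem:lemma2fonseca} and~\ref{lem:Lemma3fon}; thus one has to factor the error intrinsically in the form $(|u-v|/\delta)^{p(x)}$.
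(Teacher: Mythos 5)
Your proposal is correct and takes essentially the same route as the paper: nested cut-offs, the pigeonhole principle to pick a good transition layer, and the key trick of keeping the quotient $|u-v|/\delta$ intact inside the variable power so that the constant $M$ is scale-invariant. The only cosmetic difference is your use of a $(1+\eta')$-Young inequality to split off the $(u-v)\otimes\nabla\varphi$ term, where the paper uses the cruder bound $|a+b+c|^{p(x)}\le 3^{p^+-1}(|a|^{p(x)}+|b|^{p(x)}+|c|^{p(x)})$ -- both factors are absorbed by the pigeonhole, so the outcome is the same.
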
 
 
\begin{proof}
We choose $k \in \N$ such that
\begin{align}\label{eq: kdef}
k \ge \max\Big\{\frac{ 3^{p^+-1}\beta  }{\eta \alpha}, \frac{\beta}{\eta}   \Big\}\,,
\end{align}
and for $i=1,\ldots,k$, we set
\begin{equation*}
D_{i+1}:=\left\{x\in D'' :\,\, {\rm dist}(x,D')<\frac{\delta i}{k}\right\}\,.
\end{equation*}
We then have $D_1:=D' \subset \subset D_2 \subset \subset \ldots \subset \subset D_{k+1} \subset \subset D''$. Correspondingly, let $\varphi_i\in C_0^\infty(D_{i+1})$ with $0\leq\varphi_i\leq1$ and $\varphi_i=1$ in a neighborhood $U_i$ of $\overline{D_i}$. Note that $\|\nabla \varphi_i\|_\infty\leq\frac{2k}{\delta}$.

Let $u \in GSBV^{p(\cdot)}(D'';\R^m)$ and  $v \in GSBV^{p(\cdot)}(E;\R^m)$ be such that $u-v \in L^{p(\cdot)}((D'' \setminus D') \cap E;\R^m)$, as otherwise the result is trivial.  We define the function $w_i = \varphi_i u + (1-\varphi_i)v \in GSBV^{p(\cdot)}(D' \cup E;\R^m)$ (this can be easily proved as in \cite[Lemma~2.11]{CM}), where $u$ and $v$ are extended arbitrarily outside $D''$ and $E$, respectively.  Letting $I_i = D'' \cap (D_{i+1} \setminus \overline{D_i})$ we get by {\rm\ref{assH1}}  and {\rm\ref{assH3}} 
\begin{align}\label{eq: wi}
\mathcal{F}(w_i,D' \cup E) &\le \mathcal{F}(u, (D' \cup E) \cap U_i) + \mathcal{F}(v, E \setminus {\rm supp}\,\varphi_i)
+  \mathcal{F}(w_i,I_i) \notag \\
&\le \mathcal{F}(u,D'') + \mathcal{F}(v,E) + \mathcal{F}(w_i,I_i). 
\end{align} 
 For the last term, we compute  using  {\rm\ref{assH4}}   
\begin{align*}
& \mathcal{F}(w_i,I_i)  \le  \beta \int_{I_i} (1+|\nabla w_i|^{p(x)})\, {\rm d}x + \beta \mathcal{H}^{d-1}(J_{w_i} \cap J_i) \\
& \le \beta  \int_{I_i} (1+|\varphi_i \nabla u + (1-\varphi_i)\nabla v + \nabla \varphi_i (u-v) |^{p(x)})\,\mathrm{d}x + \beta \mathcal{H}^{d-1}( (J_{u} \cup J_v) \cap I_i) \\
& \le \beta\mathcal{L}^d(I_i) + 3^{p^+-1}\beta  \int_{I_i} \big(|\nabla u|^{p(x)} + |\nabla v|^{p(x)} + |\nabla \varphi_i|^{p(x)} |u-v|^{p(x)} \big)\,\mathrm{d}x \\
& + \beta \mathcal{H}^{d-1}( J_{u}  \cap I_i) + \beta \mathcal{H}^{d-1}( J_v \cap I_i)  \\
& \le  3^{p^+-1}  \frac{\beta}{\alpha} \big(  \mathcal{F}(u,I_i) +  \mathcal{F}(v,I_i)\big) + (2k)^{p^+}\cdot 3^{p^+-1}\beta \int_{I_i}\left(\frac{|u-v|}{\delta}\right)^{p(x)}\,\mathrm{d}x+ \beta\mathcal{L}^d(I_i).
\end{align*}
Consequently, recalling \eqref{eq: kdef}  and using {\rm\ref{assH1}}  we find $i_0 \in \lbrace 1, \ldots, k \rbrace$ such that
\begin{equation*}
\begin{split}
\mathcal{F}(w_{i_0},I_{i_0}) & \le \frac{1}{k}\sum_{i=1}^k  \mathcal{F}(w_{i},I_i) \\
& \le \eta \big(  \mathcal{F}(u,D'') +  \mathcal{F}(v,E)\big) + M \int_{F}\left(\frac{|u-v|}{\delta}\right)^{p(x)}\,\mathrm{d}x + \eta \mathcal{L}^d(F), 
\end{split}
\end{equation*}
where $M :=  (2k)^{p^+}\cdot 3^{p^+-1}  \beta k^{-1}$. This along with \eqref{eq: wi} concludes the proof of \eqref{eq: assertionfund}  by setting $w = w_{i_0}$.  To see the scaling property \eqref{eq: constant-scal}, it suffices to use the cut-off functions 
$\varphi^\eps_i\in C_0^\infty((D_{i+1})_{\eps,x_0};[0,1])$ $i=1,\ldots,k$, defined by $\varphi_i^\eps (x) =  \varphi_i( x_0 +  \frac{(x-x_0)}{\eps}) $ for $x \in (D_{i+1})_{\eps,x_0}$. This concludes the proof.   
 \end{proof}

\subsection{The global method}\label{sec: global method}

This section is entirely devoted to the proof of Theorem \ref{thm: int-representation-gsbv}. 
As a first step, we show that $\mathcal{F}$ and $\mathbf{m}_{\mathcal{F}}$, defined by \eqref{eq: general minimizationsgbv}, 
have the same  Radon-Nikodym  derivative with respect to the measure
\begin{equation}
\mu:= \mathcal{L}^d\lfloor_{\Omega}\,\, +\,\, \mathcal{H}^{d-1}\lfloor_{J_u \cap \Omega}\,. 
\label{eq: measuremu}
\end{equation}

\begin{lemma}\label{lemma: F=m}
Let $p:\Omega\to(1,+\infty)$ be a variable exponent satisfying {\rm\ref{assP1}}.   
 Suppose that $\mathcal{F}$ satisfies {\rm\ref{assH1}}--{\rm\ref{assH4}}. Let $u \in GSBV^{p(\cdot)}(\Omega;\R^m)$ and  $\mu$ as in \eqref{eq: measuremu}. 
Then for $\mu$-a.e.\ $x_0 \in \Omega$ we have
 $$\lim_{\eps \to 0}\frac{\mathcal{F}(u,B_\eps(x_0))}{\mu(B_\eps(x_0))} =  \lim_{\eps \to 0}\frac{\mathbf{m}_{\mathcal{F}}(u,B_\eps(x_0))}{\mu(B_\eps(x_0))}.$$
\end{lemma}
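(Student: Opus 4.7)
The inequality $\mathbf{m}_{\mathcal{F}}(u, B_\varepsilon(x_0)) \le \mathcal{F}(u, B_\varepsilon(x_0))$ is immediate by taking $v = u$ itself as a competitor in \eqref{eq: general minimizationsgbv}, so the content of the lemma is the reverse asymptotic bound for $\mu$-a.e.\ $x_0$. The plan is to follow the Vitali covering strategy from \cite{BFLM}. First, by \ref{assH1} and \ref{assH4}, for every fixed $u \in GSBV^{p(\cdot)}(\Omega;\R^m)$ the set function $\mathcal{F}(u,\cdot)$ is a finite positive Radon measure on $\Omega$ which is absolutely continuous with respect to $\mu$ (since $|\nabla u|^{p(\cdot)} \in L^1(\Omega)$ and $\mathcal{H}^{d-1}(J_u) < +\infty$). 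Let $\Phi(x_0) := \frac{\mathrm{d}\mathcal{F}(u,\cdot)}{\mathrm{d}\mu}(x_0)$ be its Radon–Nikodym density; by the Besicovitch differentiation theorem, $\lim_{\varepsilon \to 0} \mathcal{F}(u, B_\varepsilon(x_0))/\mu(B_\varepsilon(x_0)) = \Phi(x_0)$ for $\mu$-a.e.\ $x_0$. The claim therefore reduces to showing $\liminf_{\varepsilon \to 0} \mathbf{m}_{\mathcal{F}}(u, B_\varepsilon(x_0))/\mu(B_\varepsilon(x_0)) \ge \Phi(x_0)$ for $\mu$-a.e.\ $x_0$.

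Assume by contradiction that $E := \{x_0 \in \Omega : \liminf_{\varepsilon \to 0} \mathbf{m}_{\mathcal{F}}(u, B_\varepsilon(x_0))/\mu(B_\varepsilon(x_0)) < \Phi(x_0) - c\}$ has $\mu(E) > 0$ for some $c > 0$. Each $x_0 \in E$ admits radii $\rho \to 0$ satisfying $\mathbf{m}_{\mathcal{F}}(u, B_\rho(x_0)) < (\Phi(x_0) - c)\mu(B_\rho(x_0))$, which produces a fine Vitali cover of $E$. For any $\eta > 0$, the Vitali theorem for the Radon measure $\mu$ yields a pairwise-disjoint countable subfamily $\{B_{\rho_i}(x_i)\}_i$ with $x_i \in E$, $\sup_i \rho_i < \eta$, and $\mu(E \setminus \bigcup_i B_{\rho_i}(x_i))=0$; by refining and using the Besicovitch theorem at the $x_i$'s, we may further assume $\mathcal{F}(u, B_{\rho_i}) \ge (\Phi(x_i) - \eta)\mu(B_{\rho_i})$ for every $i$.

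For each $i$, pick $v_i \in GSBV^{p(\cdot)}(\Omega;\R^m)$ with $v_i = u$ in a neighborhood of $\partial B_{\rho_i}(x_i)$ and $\mathcal{F}(v_i, B_{\rho_i}(x_i)) \le \mathbf{m}_{\mathcal{F}}(u, B_{\rho_i}(x_i)) + \eta\mu(B_{\rho_i}(x_i))$, and set $w := v_i$ on each $B_{\rho_i}(x_i)$ and $w := u$ on $\Omega \setminus \bigcup_i B_{\rho_i}(x_i)$. The boundary matching together with disjointness of the balls place $w$ in $GSBV^{p(\cdot)}(\Omega;\R^m)$, and \ref{assH1} (additivity on disjoint Borel sets) combined with \ref{assH3} (locality) give
\[
\mathcal{F}(w, \Omega) = \mathcal{F}\big(u, \Omega \setminus \bigcup_i B_{\rho_i}(x_i)\big) + \sum_i \mathcal{F}(v_i, B_{\rho_i}(x_i)) \le \mathcal{F}(u, \Omega) + \sum_i (-c + 2\eta)\mu(B_{\rho_i}(x_i)).
\]
Choosing $\eta < c/4$ and using $\sum_i \mu(B_{\rho_i}(x_i)) \ge \mu(E)$, we obtain $\mathcal{F}(w, \Omega) \le \mathcal{F}(u, \Omega) - \tfrac{c}{2}\mu(E)$ for a sequence of Vitali refinements $w_n$ associated to $\eta_n \to 0$.

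The hard part is closing the contradiction by showing that (a subsequence of) $w_n$ converges in measure to $u$: once this is done, \ref{assH2} delivers $\mathcal{F}(u,\Omega) \le \liminf_n \mathcal{F}(w_n,\Omega) \le \mathcal{F}(u,\Omega) - \tfrac{c}{2}\mu(E)$, the desired contradiction. The convergence is immediate on the part of $E$ lying in $J_u$, where $\mathcal{L}^d(\bigcup_i B_{\rho_i^{(n)}}) \to 0$ as $\sup_i \rho_i^{(n)} \to 0$ by the $(d-1)$-dimensional scaling of $\mu$ there. The delicate case is the bulk part: the union of balls does not shrink in Lebesgue measure, so one must argue that each competitor $v_i$ is forced to be close to $u$ on $B_{\rho_i}$. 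The coercivity \ref{assH4} bounds $\int_{B_{\rho_i}} |\nabla v_i|^{p(x)}\,\mathrm{d}x + \mathcal{H}^{d-1}(J_{v_i} \cap B_{\rho_i}) \le C\mu(B_{\rho_i}) \sim C\rho_i^d$ at bulk points $x_i$, so in particular \eqref{(10)} is satisfied for $\rho_i$ small enough; the Poincaré–Wirtinger estimate of Theorem~\ref{thm:thm5} then shows that the truncation $T_{B_{\rho_i}} v_i$ is close in $L^{p(\cdot)}(B_{\rho_i})$ to its median, which (thanks to the matching boundary condition $v_i = u$ near $\partial B_{\rho_i}$ and the Lebesgue-continuity of $u$ at $\mu$-typical bulk points $x_i$) is in turn close to $u(x_i)$. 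Summing these local estimates across the Vitali family furnishes the required $L^0$-convergence $w_n \to u$, concluding the argument.
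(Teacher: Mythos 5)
Your overall plan — Vitali covering, patching near-optimal competitors into a single function that would strictly undercut $\mathcal{F}(u,\Omega)$, and invoking \ref{assH2} — is in the same spirit as the paper's route, which goes through the auxiliary functional $\mathbf{m}^*_{\mathcal{F}}$ of Lemma~\ref{lemma: F=m*} and then the arguments of \cite[Lemmas 5 and 6]{BFLM}. The decisive technical step in either version is showing that the patched function converges to $u$ in measure as the mesh shrinks, and here your sketch has a genuine gap. You apply the Poincar\'e--Wirtinger estimate of Theorem~\ref{thm:thm5} to $v_i$ itself and then assert that $\mathrm{med}(v_i;B_{\rho_i})$ is close to $u(x_i)$ ``thanks to the matching boundary condition $v_i=u$ near $\partial B_{\rho_i}$''. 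But the near-optimal competitor $v_i$ is only required to coincide with $u$ on \emph{some} neighborhood of $\partial B_{\rho_i}$, whose $\mathcal{L}^d$-measure is not controlled and may well be $o(\rho_i^d)$. Theorem~\ref{thm:thm5} together with \eqref{(12bis)} tell you $v_i$ agrees with its median (up to a small error) off a set of measure $o(\rho_i^d)$; if the boundary annulus itself has measure $o(\rho_i^d)$, it can be entirely absorbed into this exceptional set and the boundary condition never links $\mathrm{med}(v_i;B_{\rho_i})$ to $u(x_i)$. So the claim ``the median \ldots is in turn close to $u(x_i)$'' is unjustified as written. It could be salvaged, but only by explicitly re-choosing competitors to coincide with $u$ on an annulus of \emph{fixed} relative thickness, and controlling the extra energy this introduces — an argument you would need to supply.

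The proof of Lemma~\ref{lemma: F=m*} sidesteps the issue entirely by working with the difference $w^\delta := u - v^\delta$, which has \emph{zero} trace on each $\partial B_i^\delta$: one truncates to $w^{\delta,M}$, applies the classical zero-trace Poincar\'e inequality on each ball (whose constant scales like $\rho_i \le \delta$), sums over the disjoint family using the uniform $BV$-type bound \eqref{eq: to show-flaviana2-NNN} coming from \ref{assH4}, and gets $\|w^{\delta,M}\|_{L^1(A)} \le C\delta$; convergence in measure of $w^\delta$ then follows by looking at $\{|w^{\delta,1}|>\varepsilon\}$. This requires no discussion of medians, avoids the uncontrolled boundary annulus, and handles bulk and jump points of $E$ simultaneously, so there is no need to split the Vitali cover. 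That is the step you should replace in your proposal.
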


The proof of this lemma is postponed to the end of this section. 
The second step in the proof of Theorem \ref{thm: int-representation-gsbv} is that, asymptotically as $\eps \to 0$, the minimization problems $\mathbf{m}_{\mathcal{F}}(u,B_\eps(x_0))$ and $\mathbf{m}_{\mathcal{F}}(\bar{u}^{\rm bulk}_{x_0},B_\eps(x_0))$ coincide for $\mathcal{L}^{d}$-a.e.\ $x_0 \in \Omega$, where we write $\bar{u}^{\rm bulk}_{x_0} :=  \ell_{x_0,u(x_0),\nabla u(x_0)}  $ for brevity, see \eqref{eq: elastic competitor}.

\begin{lemma}\label{lemma: sameminbulk}
Let $p:\Omega\to(1,+\infty)$ be a Riemann-integrable variable exponent satisfying {\rm\ref{assP1}}. Suppose that $\mathcal{F}$ satisfies {\rm\ref{assH1}} and  {\rm\ref{assH3}}--{\rm\ref{assH4}}  and let $u \in GSBV^{p(\cdot)}(\Omega;\R^m)$.  Then for $\mathcal{L}^{d}$-a.e.\ $x_0 \in \Omega$  we have
\begin{align}\label{eq: samemin-bulk}
  \lim_{\eps \to 0}\frac{\mathbf{m}_{\mathcal{F}}(u,B_\eps(x_0))}{\gamma_{d}\eps^{d}} =  \limsup_{\eps \to 0}\frac{\mathbf{m}_{\mathcal{F}}(\bar{u}^{\rm bulk}_{x_0},B_\eps(x_0))}{\gamma_{d}\eps^{d}}. 
 \end{align}
\end{lemma}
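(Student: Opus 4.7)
The plan is to combine Lemma~\ref{lemma: F=m}, which already guarantees the existence of the limit on the left-hand side as an $\mathcal{L}^d$-density of $\mathcal{F}(u,\cdot)$, with two symmetric applications of the fundamental estimate (Lemma~\ref{lemma: fundamental estimate}) to transfer near-optimal competitors between the two minimization problems. I would select $x_0$ in the full-$\mathcal{L}^d$-measure set of points where: $p$ is continuous at $x_0$ (by Riemann integrability); $u$ admits approximate limit $u(x_0)$ and approximate gradient $\nabla u(x_0)$ (Lemma~\ref{lemma: approx-grad}); the variable-exponent Lebesgue-point property of Theorem~\ref{thm:lebpoint} applied to $|\nabla u|^{p(\cdot)}\in L^1(\Omega)$ yields
\[
\lim_{\eps\to 0}\frac{1}{\eps^d}\int_{B_\eps(x_0)}|\nabla u(x)-\nabla u(x_0)|^{p(x)}\,\mathrm{d}x=0;
\]
and the singular measure $\mathcal{H}^{d-1}\lfloor_{J_u}$ has vanishing $\mathcal{L}^d$-density at $x_0$, so $\eps^{-d}\mathcal{H}^{d-1}(J_u\cap B_\eps(x_0))\to 0$ by Besicovitch differentiation.

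The pivotal technical step is the modular blow-up
\begin{equation}\label{eq:keyblow-prop}
\lim_{\eps\to 0}\frac{1}{\eps^d}\int_{B_\eps(x_0)}\frac{|u(x)-\bar{u}^{\rm bulk}_{x_0}(x)|^{p(x)}}{\eps^{p(x)}}\,\mathrm{d}x=0.
\end{equation}
To obtain \eqref{eq:keyblow-prop}, I would rescale by setting $u_\eps(y):=\eps^{-1}[u(x_0+\eps y)-u(x_0)]$ and $p_\eps(y):=p(x_0+\eps y)$ on $B_1(0)$. Continuity of $p$ at $x_0$ and $p(x_0)\in(1,+\infty)$ give uniform convergence $p_\eps\to p(x_0)$, so the $p_\eps$'s comply uniformly with \eqref{eq:assump}. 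A change of variables shows
\[
\sup_\eps\int_{B_1}|\nabla u_\eps|^{p_\eps(y)}\,\mathrm{d}y=\sup_\eps\frac{1}{\eps^d}\int_{B_\eps(x_0)}|\nabla u(x)|^{p(x)}\,\mathrm{d}x<+\infty,\qquad \mathcal{H}^{d-1}(J_{u_\eps}\cap B_1)\to 0.
\]
Theorem~\ref{thm:3.5} then supplies, up to subsequence, a function $u_0\in W^{1,p(x_0)}(B_1;\mathbb{R}^m)$ with $\int_{B_1}|T_{B_1}u_\eps-\mathrm{med}(u_\eps;B_1)-u_0|^{p_\eps(y)}\,\mathrm{d}y\to 0$ and $u_\eps-\mathrm{med}(u_\eps;B_1)\to u_0$ a.e. Approximate continuity of $u$ at $x_0$ forces $\mathrm{med}(u_\eps;B_1)\to 0$, and the approximate differentiability of Lemma~\ref{lemma: approx-grad} identifies $u_0(y)=\nabla u(x_0)\cdot y$. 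The exceptional set $\{T_{B_1}u_\eps\neq u_\eps\}$, of measure $\lesssim\mathcal{H}^{d-1}(J_{u_\eps}\cap B_1)^{d/(d-1)}\to 0$ by \eqref{(12bis)}, is dealt with by a standard truncation of $u-\bar{u}^{\rm bulk}_{x_0}$ above a fixed threshold, after which \eqref{eq:keyblow-prop} follows by undoing the rescaling.

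With \eqref{eq:keyblow-prop} available, I fix $\eta>0$ and $\tau\in(0,1)$ and pick, for each small $\eps$, a $v_\eps\in GSBV^{p(\cdot)}(B_\eps(x_0);\mathbb{R}^m)$ with $v_\eps=\bar{u}^{\rm bulk}_{x_0}$ on $B_\eps(x_0)\setminus\overline{B_{(1-\tau)\eps}(x_0)}$ and $\mathcal{F}(v_\eps,B_\eps(x_0))\le\mathbf{m}_{\mathcal{F}}(\bar{u}^{\rm bulk}_{x_0},B_\eps(x_0))+\eta\,\eps^d$. Applying Lemma~\ref{lemma: fundamental estimate} to the pair $(v_\eps,u)$ with $D'=B_{(1-\tau)\eps}(x_0)\subset\subset D''=B_{(1-\tau/2)\eps}(x_0)$, $E=B_\eps(x_0)\setminus\overline{D'}$, the resulting function $w$ coincides with $u$ near $\partial B_\eps(x_0)$, hence is admissible for $\mathbf{m}_{\mathcal{F}}(u,B_\eps(x_0))$. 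Since $v_\eps=\bar{u}^{\rm bulk}_{x_0}$ on the gluing annulus $F=D''\setminus D'$, the scaling property \eqref{eq: constant-scal} and \eqref{eq:keyblow-prop} give
\[
M\int_F\Bigl(\frac{|v_\eps-u|}{\tau\eps/4}\Bigr)^{p(x)}\,\mathrm{d}x\le C(\tau,p^+)\int_{B_\eps(x_0)}\Bigl(\frac{|u-\bar{u}^{\rm bulk}_{x_0}|}{\eps}\Bigr)^{p(x)}\,\mathrm{d}x=o(\eps^d),
\]
while $\mathcal{F}(u,E)\le O(\tau)\eps^d+\beta\,\mathcal{H}^{d-1}(J_u\cap B_\eps(x_0))=O(\tau)\eps^d+o(\eps^d)$. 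Dividing by $\gamma_d\eps^d$, taking $\eps\to 0$ and then $\tau,\eta\to 0$, one concludes $\lim_{\eps\to 0}\mathbf{m}_{\mathcal{F}}(u,B_\eps(x_0))/(\gamma_d\eps^d)\le\liminf_{\eps\to 0}\mathbf{m}_{\mathcal{F}}(\bar{u}^{\rm bulk}_{x_0},B_\eps(x_0))/(\gamma_d\eps^d)$. The reverse inequality $\limsup_{\eps\to 0}\mathbf{m}_{\mathcal{F}}(\bar{u}^{\rm bulk}_{x_0},B_\eps(x_0))/(\gamma_d\eps^d)\le\lim_{\eps\to 0}\mathbf{m}_{\mathcal{F}}(u,B_\eps(x_0))/(\gamma_d\eps^d)$ is obtained by the symmetric construction, gluing a near-optimal competitor for $\mathbf{m}_{\mathcal{F}}(u,B_\eps(x_0))$ (which equals $u$ near $\partial B_\eps(x_0)$) with $\bar{u}^{\rm bulk}_{x_0}$ on the exterior; the same bound on the rest term applies.

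The central difficulty is precisely the modular blow-up \eqref{eq:keyblow-prop}: the measure-theoretic approximate differentiability from Lemma~\ref{lemma: approx-grad} is too weak to control a modular integral, and the passage to the stronger convergence hinges on the $GSBV^{p_\eps(\cdot)}$-compactness of Theorem~\ref{thm:3.5} combined with the variable-exponent Lebesgue point property of Theorem~\ref{thm:lebpoint}; a careful treatment of the truncation operator through \eqref{(12bis)} and the vanishing jump-density at $x_0$ is also needed to transfer the convergence from the truncated to the untruncated functions.
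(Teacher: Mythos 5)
Your overall plan is aligned with the paper's strategy -- select good blow-up points, establish a modular estimate on a small ball, and apply the fundamental estimate twice to exchange competitors -- but there is a genuine gap in the central claim \eqref{eq:keyblow-prop}. For a general $u\in GSBV^{p(\cdot)}(\Omega;\R^m)$ (even for $u\in SBV^{p(\cdot)}$), the modular blow-up
\[
\lim_{\eps\to 0}\eps^{-d}\int_{B_\eps(x_0)}\Bigl(\frac{|u-\bar{u}^{\rm bulk}_{x_0}|}{\eps}\Bigr)^{p(x)}\,\mathrm{d}x=0
\]
is false $\mathcal{L}^d$-a.e.: $u$ need not belong to $L^{p(\cdot)}_{\rm loc}$ (for $GSBV^{p(\cdot)}$ it need not even be $L^1_{\rm loc}$), so the integral may be $+\infty$ for every $\eps$; and even when $u$ is bounded, on the exceptional set $\{T_{B_1}u_\eps\neq u_\eps\}$ -- whose measure is only $o(1)$ in $B_1$, i.e.\ $o(\eps^d)$ after rescaling, by \eqref{(12bis)} -- the integrand behaves like $\eps^{-p(x)}$, and $o(\eps^d)\cdot\eps^{-p^+}$ does not vanish. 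Your proposed repair, truncating $u-\bar{u}^{\rm bulk}_{x_0}$ above a fixed threshold and then ``undoing the rescaling,'' produces a statement about the truncation, not about $u$; and a fixed threshold is in any case too weak, because the cut-off levels must scale with the jump content of $u$ inside $B_\eps(x_0)$, exactly as the operator $T_{B_\eps(x_0)}$ does.

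The paper avoids this by replacing $u$ before the gluing: Lemma~\ref{lem:lemma2fonseca} constructs $u_\eps$, equal to $u$ on $B_\eps(x_0)\setminus\overline{B_{\sigma\eps}(x_0)}$ but equal to a $T_{B_{\sigma\eps}}$-truncated version of $u$ inside $B_{\sigma\eps}(x_0)$, and it is $u_\eps$ -- not $u$ -- that satisfies the modular blow-up \eqref{eq:(16)}$(ii)$, together with $\eps^{-d}\mathcal{H}^{d-1}(J_{u_\eps})\to 0$. In Lemmas~\ref{lem:bulk1} and~\ref{lem:bulk2}, the fundamental estimate is then applied to the pair $(z_\eps,u_\eps)$, where $z_\eps$ is a near-minimizer for $\bar{u}^{\rm bulk}_{x_0}$ extended by $\bar{u}^{\rm bulk}_{x_0}$ outside its domain. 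The remainder term then involves $|z_\eps-u_\eps|=|\bar{u}^{\rm bulk}_{x_0}-u_\eps|$ on a gluing annulus contained in $B_{\sigma\eps}(x_0)$, which is precisely what \eqref{eq:(16)}$(ii)$ controls, while the resulting $w_\eps$ still equals $u_\eps=u$ near $\partial B_\eps(x_0)$, hence remains admissible for $\mathbf{m}_{\mathcal{F}}(u,B_\eps(x_0))$. This replacement of $u$ by $u_\eps$ in the gluing is the step missing from your argument; without it, the remainder term in the fundamental estimate is uncontrolled.
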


The final step is that, asymptotically as $\eps \to 0$, the minimization problems $\mathbf{m}_{\mathcal{F}}(u,B_\eps(x_0))$ and $\mathbf{m}_{\mathcal{F}}(\bar{u}^{\rm surf}_{x_0},B_\eps(x_0))$ coincide for $\mathcal{H}^{d-1}$-a.e.\ $x_0 \in J_u$, where we write $\bar{u}^{\rm surf}_{x_0} := u_{x_0,u^+(x_0),u^-(x_0),\nu_u(x_0)}$ for brevity, see \eqref{eq: jump competitor}.

\begin{lemma}\label{lemma: sameminsurf}
Let $p:\Omega\to(1,+\infty)$ be a variable exponent satisfying {\rm\ref{assP1}}-{\rm\ref{assP2}}. Suppose that $\mathcal{F}$ satisfies {\rm\ref{assH1}} and  {\rm\ref{assH3}}--{\rm\ref{assH4}}  and let $u \in GSBV^{p(\cdot)}(\Omega;\R^m)$.  Then for $\mathcal{H}^{d-1}$-a.e.\ $x_0 \in J_u$  we have
\begin{align}\label{eq: samemin-surf}
  \lim_{\eps \to 0}\frac{\mathbf{m}_{\mathcal{F}}(u,B_\eps(x_0))}{\gamma_{d-1}\eps^{d-1}} =  \limsup_{\eps \to 0}\frac{\mathbf{m}_{\mathcal{F}}(\bar{u}^{\rm surf}_{x_0},B_\eps(x_0))}{\gamma_{d-1}\eps^{d-1}}. 
  \end{align}
\end{lemma}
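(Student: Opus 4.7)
The plan is to follow the global method for relaxation of \cite{BFLM}, comparing the two minimum problems by glueing near-minimizers of one against the comparison profile of the other via Lemma~\ref{lemma: fundamental estimate}. Two inequalities have to be proved, each of which reduces, once the fundamental estimate is applied, to showing that a remainder of the form $\int_F(|u_1-u_2|/\delta)^{p(x)}\,\mathrm dx$ is negligible with respect to $\gamma_{d-1}\varepsilon^{d-1}$. All the new difficulties with respect to the classical constant-exponent analysis concentrate in this last estimate, where the log-H\"older continuity \ref{assP2} of $p(\cdot)$ plays a decisive role.

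First I would fix a ``good'' jump point $x_0$. For $\mathcal{H}^{d-1}$-a.e.\ $x_0 \in J_u$, standard $BV$ blow-up theory yields $u(x_0+\varepsilon\,\cdot)\to \bar u^{\rm surf}_{0}$ in $L^1_{\rm loc}(\R^d)$, and the mutual singularity of $|\nabla u|^{p(\cdot)}\mathcal L^d$ and $\mathcal H^{d-1}\lfloor_{J_u}$ yields both $\mathcal{H}^{d-1}(J_u\cap B_\varepsilon(x_0))/(\gamma_{d-1}\varepsilon^{d-1})\to 1$ and $\varepsilon^{-(d-1)}\int_{B_\varepsilon(x_0)}|\nabla u|^{p(x)}\,\mathrm{d}x\to 0$. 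Assumption \ref{assH4} then gives $\mathcal F(u,B_\varepsilon(x_0))=O(\varepsilon^{d-1})$. Existence of the limit on the LHS of \eqref{eq: samemin-surf} at such $x_0$ is inherited from Lemma~\ref{lemma: F=m}, since at a jump point $\mu(B_\varepsilon(x_0))/(\gamma_{d-1}\varepsilon^{d-1})\to 1$.

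To prove $\lim_\varepsilon\mathbf m_\mathcal F(u,B_\varepsilon)/(\gamma_{d-1}\varepsilon^{d-1})\le \limsup_\varepsilon \mathbf m_\mathcal F(\bar u^{\rm surf}_{x_0},B_\varepsilon)/(\gamma_{d-1}\varepsilon^{d-1})$, I would fix $\eta,\sigma>0$ small, pick a near-minimizer $\tilde v$ of $\mathbf m_\mathcal F(\bar u^{\rm surf}_{x_0},B_\varepsilon(x_0))$, and apply Lemma~\ref{lemma: fundamental estimate} with $D'=B_{(1-\sigma)\varepsilon}(x_0)$, $D''=B_{(1-\sigma/2)\varepsilon}(x_0)$, $E=B_\varepsilon(x_0)\setminus\overline{B_{(1-\sigma)\varepsilon}(x_0)}$, glueing $\tilde v$ on the inside with $u$ on the outside. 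The resulting $w$ coincides with $u$ in a neighborhood of $\partial B_\varepsilon(x_0)$, hence is admissible for $\mathbf m_\mathcal F(u,B_\varepsilon)$. Using the asymptotics selected at $x_0$, $\mathcal F(u,E)=O(\sigma\varepsilon^{d-1})+o(\varepsilon^{d-1})$ and $\mathcal L^d(B_\varepsilon)=o(\varepsilon^{d-1})$, so Lemma~\ref{lemma: fundamental estimate} reduces matters to proving that the remainder
\[
R_{\varepsilon,\sigma}\ :=\ \frac{1}{\gamma_{d-1}\varepsilon^{d-1}}\int_{F}\Bigl(\frac{|\tilde v-u|}{\delta}\Bigr)^{p(x)}\,\mathrm dx,\quad F:=D''\setminus\overline{D'},\;\delta\sim\sigma\varepsilon,
\]
vanishes in the limit. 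The reverse inequality is obtained symmetrically: starting from a near-minimizer $\hat u$ of $\mathbf m_\mathcal F(u,B_{\varepsilon_n})$ along a subsequence realizing the $\limsup$ of the RHS, glueing $\hat u$ inside with $\bar u^{\rm surf}_{x_0}$ outside, and observing that the analogous annular term $\mathcal F(\bar u^{\rm surf}_{x_0},E)=O(\sigma\varepsilon^{d-1})$ is trivial since $\bar u^{\rm surf}_{x_0}$ is piecewise constant.

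The main obstacle is therefore proving $\lim_{\varepsilon\to 0}R_{\varepsilon,\sigma}=0$ for each fixed $\sigma>0$. The plan is: (i) truncate $\tilde v$ (resp.\ $\hat u$) and $u$ at the level $M:=|a|+|b|+1$, preserving admissibility since $|\bar u^{\rm surf}_{x_0}|\le M$; (ii) rescale by $y:=(x-x_0)/\varepsilon$, and exploit $\bar u^{\rm surf}_{x_0}(x_0+\varepsilon y)=\bar u^{\rm surf}_0(y)$; (iii) invoke \ref{assP2}, in the form of Lemma~\ref{lem:dieni3.2}, to obtain $\varepsilon^{-|p(x_0+\varepsilon y)-p(x_0)|}\le e^C$ uniformly for $y$ in a bounded set, so as to reduce the scaling behaviour of $R_{\varepsilon,\sigma}$ to that of a constant exponent $p(x_0)$ (this is the essential ``frozen exponent'' estimate signalled in the introduction); (iv) split $F$ into its two halves on either side of the jump hyperplane, where $\bar u^{\rm surf}_{x_0}$ is constant equal to $a$ or $b$, and apply the Poincar\'e--Wirtinger inequality of Theorem~\ref{thm:thm5} together with the compactness result Theorem~\ref{thm:3.5}; combined with the vanishing bulk density $\int_{B_\varepsilon}|\nabla u|^{p(x)}\,\mathrm dx = o(\varepsilon^{d-1})$ and with the control of $\int_{F}|\nabla \tilde v|^{p(x)}\,\mathrm dx$ (obtained by a pigeonhole/slicing argument on the radius of $F$ that exploits the $O(\varepsilon^{d-1})$ energy budget of $\tilde v$), this yields the required quantitative $L^{p(\cdot)}$-decay of the rescaled differences, strong enough to balance the negative power of $\varepsilon$. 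This last quantitative asymptotic estimate in a variable-exponent space, where log-H\"older enters through step (iii), is where I expect the main technical difficulty. Once $R_{\varepsilon,\sigma}\to 0$ is secured, letting first $\sigma\to 0$ and then $\eta\to 0$ in the two inequalities yields \eqref{eq: samemin-surf}.
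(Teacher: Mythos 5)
Your overall strategy — isolate a good jump point via Besicovitch/Radon--Nikod\'ym and Lemma~\ref{lemma: F=m}, then prove the two inequalities by applying the fundamental estimate and controlling the $L^{p(\cdot)}$ remainder with log-H\"older ``frozen-exponent'' estimates plus Poincar\'e--Wirtinger — is indeed the one the paper follows, and your identification of Lemma~\ref{lem:dieni3.2} as the device to kill the factor $\eps^{-(p^+_\eps-p^-_\eps)/p^-_\eps}$ is exactly the paper's key observation (see \eqref{eq:crucial}--\eqref{eq:crucial2}).

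However, there is a genuine gap in the way you set up the gluing. You propose to glue a near-minimizer $\tilde v$ of $\mathbf m_{\mathcal F}(\bar u^{\rm surf}_{x_0},\cdot)$ with $u$ itself, which leaves the remainder $R_{\eps,\sigma}=\eps^{-(d-1)}\int_F(|\tilde v - u|/\delta)^{p(x)}\,\mathrm dx$, essentially $\eps^{-(d-1)}\int_F(|\bar u^{\rm surf}_{x_0}-u|/\delta)^{p(x)}\,\mathrm dx$ once $\tilde v$ is extended as $\bar u^{\rm surf}_{x_0}$ on the annulus. This quantity cannot be estimated via Poincar\'e--Wirtinger as you propose: Theorem~\ref{thm:thm5} controls $T_B u$, not $u$, and $u$ is a genuinely unbounded $GSBV^{p(\cdot)}$ function — your step (i) of ``truncating $u$ at level $M$'' is not legitimate here, because the glued function must equal $u$ (not $T_Mu$) near $\partial B_\eps(x_0)$ in order to remain admissible for $\mathbf m_{\mathcal F}(u,B_\eps(x_0))$, and the contribution of $u-T_Mu$ to the remainder on the annulus is uncontrolled. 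The paper resolves this with a dedicated modification lemma (Lemma~\ref{lem:Lemma3fon}): it builds a function $\bar u_\eps$ that \emph{coincides with $u$ outside $B_{\sigma\eps}(x_0)$} — so admissibility is preserved — but inside equals a truncated, reflected version of $u$ across the rectifiable jump surface (not merely across the tangent hyperplane, which matters because the reflection must eliminate the jump from each side before Poincar\'e can be applied). It is for this $\bar u_\eps$, not $u$, that property \eqref{eq:24}$(iii)$ holds, and the fundamental estimate is applied to the pair $(\bar z_\eps,\bar u_\eps)$ rather than $(\tilde v, u)$. Your plan, as written, is missing this intermediate construction, and the direct estimate of $R_{\eps,\sigma}$ you sketch in steps (i)--(iv) cannot be carried through without it. Finally, a smaller technical point: to kill $\int_F|\nabla\tilde v|^{p(x)}\,\mathrm dx$ on the gluing annulus you do not need a pigeonhole/slicing argument on the radius; the paper simply takes the near-minimizer on the \emph{smaller} ball $B_{(1-3\theta)\eps}(x_0)$ and extends it as $\bar u^{\rm surf}_{x_0}$, so that $\nabla\bar z_\eps\equiv 0$ on the annulus.
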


We defer the proof of Lemma~\ref{lemma: sameminbulk} and Lemma~\ref{lemma: sameminsurf} to Section~\ref{sec:bulk} and Section~\ref{sec:surf}, respectively.  Now, we proceed  to prove Theorem~\ref{thm: int-representation-gsbv}.

\begin{proof}[Proof of Theorem \ref{thm: int-representation-gsbv}]
In view of the assumption {\rm\ref{assH4}} on $\mathcal{F}$ and of the Besicovitch derivation theorem (cf.\ \cite[Theorem~2.22]{Ambrosio-Fusco-Pallara:2000}), we need to show that 
\begin{align}
\frac{\mathrm{d}\mathcal{F}(u,\cdot)}{\mathrm{d}\mathcal{L}^{d}}(x_0) & = f\big(x_0,u(x_0),\nabla u(x_0)\big), \quad \mbox{ for $\mathcal{L}^{d}$-a.e.\ $x_0 \in \Omega$,} \label{eq: to show1}\\
\frac{\mathrm{d}\mathcal{F}(u,\cdot)}{\mathrm{d}\mathcal{H}^{d-1}\lfloor_{J_u}}(x_0) & =  g\big(x_0,u^+(x_0),u^-(x_0),\nu_u(x_0)\big), \quad \mbox{ for $\mathcal{H}^{d-1}$-a.e.\ $x_0 \in J_u$, }\label{eq: to show2}
\end{align}
where $f$ and $g$ were defined in \eqref{eq:fdef-gsbv} and \eqref{eq:gdef-gsbv}, respectively. 

By Lemma \ref{lemma: F=m} and the fact that  $\lim_{\eps \to 0} (\gamma_{d}\eps^{d})^{-1}\mu(B_\eps(x_0))=1$ for $\mathcal{L}^{d}$-a.e.\ $x_0 \in \Omega$  we deduce 
 $$\frac{\mathrm{d}\mathcal{F}(u,\cdot)}{\mathrm{d}\mathcal{L}^{d}}(x_0)  = \lim_{\eps \to 0}\frac{\mathcal{F}(u,B_\eps(x_0))}{\mu(B_\eps(x_0))} =  \lim_{\eps \to 0}\frac{\mathbf{m}_{\mathcal{F}}(u,B_\eps(x_0))}{\mu(B_\eps(x_0))}  = \lim_{\eps \to 0}\frac{\mathbf{m}_{\mathcal{F}}(u,B_\eps(x_0))}{\gamma_{d}\eps^{d}} < \infty$$
 for $\mathcal{L}^{d}$-a.e.\ $x_0 \in \Omega$. Then, \eqref{eq: to show1} follows from \eqref{eq:fdef-gsbv} and Lemma \ref{lemma: sameminbulk}.    By Lemma \ref{lemma: F=m} and the fact that  $\lim_{\eps \to 0} (\gamma_{d-1}\eps^{d-1})^{-1}\mu(B_\eps(x_0))=1$ for $\mathcal{H}^{d-1}$-a.e.\ $x_0 \in J_u$  we deduce 
 $$\frac{\mathrm{d}\mathcal{F}(u,\cdot)}{\mathrm{d}\mathcal{H}^{d-1}\lfloor_{J_u}}(x_0)  = \lim_{\eps \to 0}\frac{\mathcal{F}(u,B_\eps(x_0))}{\mu(B_\eps(x_0))} =  \lim_{\eps \to 0}\frac{\mathbf{m}_{\mathcal{F}}(u,B_\eps(x_0))}{\mu(B_\eps(x_0))}  = \lim_{\eps \to 0}\frac{\mathbf{m}_{\mathcal{F}}(u,B_\eps(x_0))}{\gamma_{d-1}\eps^{d-1}} < \infty$$
 for $\mathcal{H}^{d-1}$-a.e.\ $x_0 \in J_u$. Now, \eqref{eq: to show2} follows   from \eqref{eq:gdef-gsbv} and  Lemma \ref{lemma: sameminsurf}.
 \end{proof} 
 \EEE

In the remaining part of the section we prove Lemma~\ref{lemma: F=m}. For this, we need to fix some notation. For $\delta>0$ and $A \in \mathcal{A}(\Omega)$,  we define 
\begin{align}
\mathbf{m}^\delta_{\mathcal{F}}(u,A) = \inf\Big\{ \sum\nolimits_{i=1}^\infty \mathbf{m}_{\mathcal{F}}(u,B_i)\colon & \ B_i \subset A   \text{ pairwise disjoint balls}, \,  \diam(B_i) \le \delta,\notag\\ 
 & \quad   \quad  \quad  \quad  \quad \quad  \quad  \quad \quad \quad \quad \quad \mu\Big(  A \setminus \bigcup\nolimits_{i=1}^\infty B_i\Big) = 0\Big\}\,, \notag
\end{align}
where $\mu$ is defined in \eqref{eq: measuremu}. 
As $\mathbf{m}^\delta_{\mathcal{F}}(u,A) $ is decreasing in $\delta$, we can also introduce
\begin{align}\label{eq: m*=F}
\mathbf{m}^*_{\mathcal{F}}(u,A) :=   \lim_{\delta \to 0 }  \mathbf{m}^\delta_{\mathcal{F}}(u,A).
\end{align}
\EEE

\BBB  In the following lemma, we prove that $\mathcal{F}$ and $\mathbf{m}^*_{\mathcal{F}}$ coincide under our assumptions.   \EEE

\begin{lemma}\label{lemma: F=m*}
Let $p:\Omega\to(1,+\infty)$ be complying with {\rm\ref{assP1}}. {Suppose that $\mathcal{F}$ satisfies {\rm (${\rm H_1}$)}--{\rm (${\rm H_4}$)} and  let  $u \in GSBV^{p(\cdot)}(\Omega;\R^m)$. Then, for all $A \in \mathcal{A}(\Omega)$ there holds $\mathcal{F}(u,A) = \mathbf{m}^*_{\mathcal{F}}(u,A)$. }
\end{lemma}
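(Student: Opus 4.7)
The plan is to follow the standard global-method pattern of \cite{BFLM}, proving the two inequalities $\mathbf{m}^*_{\mathcal{F}}(u,A) \le \mathcal{F}(u,A)$ and $\mathcal{F}(u,A) \le \mathbf{m}^*_{\mathcal{F}}(u,A)$ separately. The first is immediate: for any admissible cover $\{B_i\}$ of $A$ by balls of diameter at most $\delta$ with $\mu(A \setminus \bigcup_i B_i) = 0$, I would take $v = u$ itself as a trial in $\mathbf{m}_{\mathcal{F}}(u, B_i)$, getting $\mathbf{m}_{\mathcal{F}}(u, B_i) \le \mathcal{F}(u, B_i)$. Since by {\rm\ref{assH4}} the measure $\mathcal{F}(u, \cdot)$ vanishes on $\mu$-null sets (the upper bound reduces to zero on such sets), {\rm\ref{assH1}} yields $\sum_i \mathcal{F}(u, B_i) = \mathcal{F}(u, A)$; summing the local inequalities and taking the infimum over covers will give $\mathbf{m}^{\delta}_{\mathcal{F}}(u, A) \le \mathcal{F}(u, A)$, whence $\mathbf{m}^*_{\mathcal{F}}(u, A) \le \mathcal{F}(u, A)$ by letting $\delta\to0$.

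For the reverse inequality I would construct an approximating sequence by patching near-optimal competitors. For every $n \in \N$, fix $\delta_n \to 0$ and a cover $\{B_{i,n}\}_i$ with $\diam(B_{i,n}) \le \delta_n$, $\mu(A \setminus \bigcup_i B_{i,n}) = 0$, and $\sum_i \mathbf{m}_{\mathcal{F}}(u, B_{i,n}) \le \mathbf{m}^{\delta_n}_{\mathcal{F}}(u, A) + 1/n$; then pick competitors $v_{i,n}$ with $v_{i,n} = u$ in a neighborhood of $\partial B_{i,n}$ and $\mathcal{F}(v_{i,n}, B_{i,n}) \le \mathbf{m}_{\mathcal{F}}(u, B_{i,n}) + 2^{-i}/n$. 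Setting $w_n = v_{i,n}$ on $B_{i,n}$ and $w_n = u$ elsewhere, {\rm\ref{assH3}} together with the boundary-match ensures $w_n \in GSBV^{p(\cdot)}(A; \R^m)$, and {\rm\ref{assH1}} combined with the $\mu$-null vanishing above yields
\[
\mathcal{F}(w_n, A) = \sum_i \mathcal{F}(v_{i,n}, B_{i,n}) \le \mathbf{m}^{\delta_n}_{\mathcal{F}}(u, A) + 2/n,
\]
so that $\limsup_n \mathcal{F}(w_n, A) \le \mathbf{m}^*_{\mathcal{F}}(u, A)$. If I can then show $w_n \to u$ in measure on $A$, property {\rm\ref{assH2}} will give $\mathcal{F}(u, A) \le \liminf_n \mathcal{F}(w_n, A) \le \mathbf{m}^*_{\mathcal{F}}(u, A)$, closing the argument.

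The convergence $w_n \to u$ in measure is the crucial step. My plan is to exploit that $v_{i,n} - u$ vanishes near $\partial B_{i,n}$ and to apply a $BV$ Sobolev--Poincar\'e inequality with compact support. To handle the jump part I would first truncate $u$ at a level $M$ (bringing it to $SBV \cap L^\infty$ with bounded total variation) and correspondingly truncate the competitors; the boundary match is preserved since $T_M v_{i,n} = T_M u$ near $\partial B_{i,n}$. The Poincar\'e inequality then yields
\[
\|v_{i,n} - u\|_{L^1(B_{i,n}; \R^m)} \le C\,\diam(B_{i,n})\bigl(|Dv_{i,n}|(B_{i,n}) + |Du|(B_{i,n})\bigr),
\]
and summing over $i$ with $\diam(B_{i,n}) \le \delta_n$, together with the uniform energy bound (invoking Lemma~\ref{embedding}, whose constants remain controlled under {\rm\ref{assP2}}, to pass from $L^{p(\cdot)}$ to $L^1$ for the gradient contribution), should force $\|w_n - T_M u\|_{L^1(A; \R^m)} \to 0$ for each fixed $M$, and hence convergence to $u$ in measure via a diagonal choice $M_n \to \infty$.

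The main obstacle lies precisely in this truncation step. The functional $\mathcal{F}$ is not a priori monotone under truncation: controlling $\mathcal{F}(T_M v_{i,n}, B_{i,n})$ via {\rm\ref{assH4}} introduces a multiplicative factor $\beta/\alpha$ and an additive term $\beta \mathcal{L}^d(B_{i,n})$, which would spoil the upper-bound estimate for $\mathcal{F}(w_n, A)$ if applied naively. The way around should be a careful diagonal coupling of $\delta_n \to 0$ and $M_n \to \infty$ slowly enough that the truncation error is absorbed, possibly combined with the fundamental estimate of Lemma~\ref{lemma: fundamental estimate} to reconcile the truncated construction with the original competitors. The log-H\"older assumption {\rm\ref{assP2}} plays a supporting role throughout, keeping the Poincar\'e-type and embedding constants uniform across balls with different local oscillations of $p(\cdot)$.
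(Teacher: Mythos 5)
Your construction and upper estimate for $\mathcal F(w_n,A)$ match the paper's, but the convergence-in-measure step goes down a wrong road, and the obstacle you raise at the end is one you created for yourself. The paper never truncates $u$ or the competitors $v_{i,n}$; it truncates the \emph{difference}. Writing $w^\delta:=u-v^\delta$ (with $v^\delta$ the patched competitor), the paper observes that $w^\delta$ restricted to each ball $B^\delta_i$ has zero trace on $\partial B^\delta_i$, because $v^\delta_i=u$ near the boundary. This zero-trace property is preserved by the scalar truncation $w^{\delta,M}:=(-M\vee w^\delta)\wedge M$, to which one then applies the \emph{classical} $L^1$ Poincar\'e inequality on each ball,
\begin{equation*}
\|w^{\delta,M}\|_{L^1(B^\delta_i)}\le C\,\delta\,|Dw^{\delta,M}|(B^\delta_i),
\end{equation*}
and sums over $i$. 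The total variation $|Dw^{\delta,M}|(A)$ is bounded uniformly in $\delta$ by $\int_A|\nabla v^\delta|+\int_A|\nabla u|+2M\bigl(\mathcal H^{d-1}(J_{v^\delta}\cap A)+\mathcal H^{d-1}(J_u\cap A)\bigr)$, which is controlled via \eqref{eq: to show-flaviana2-NNN} and $u\in GSBV(A;\R^m)$, so $w^{\delta,M}\to0$ in $L^1(A)$ for each fixed $M$. Taking $M=1$ and noting $\{|w^\delta|>\varepsilon\}\subseteq\{|w^{\delta,1}|>\varepsilon\}$ for $\varepsilon\in(0,1)$ gives convergence in measure.

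This sidesteps your worry entirely. You do not need to re-define $w_n$ with truncated competitors, so $\mathcal F(w_n,A)$ is never evaluated on a truncation and the $\beta/\alpha$ degradation never enters. The truncation is a device inside the convergence-in-measure argument, not a modification of the recovery sequence; no diagonal coupling of $M_n$ and $\delta_n$ and no appeal to the fundamental estimate are needed. Two further inaccuracies: the lemma is stated under \ref{assP1} alone, and the proof indeed needs neither \ref{assP2} nor Lemma~\ref{embedding}, since the Poincar\'e inequality used is the plain $BV$/$L^1$ one on balls with zero trace, with constants independent of $p(\cdot)$. Finally, the membership $v^\delta\in GSBV^{p(\cdot)}(\Omega;\R^m)$ is not as immediate from \ref{assH3} as you suggest; the paper justifies it via the partial sums $v^{\delta,n}$, the compactness theorem of Ambrosio, and Ioffe's theorem to pass from $GSBV^{p^-}$ to $GSBV^{p(\cdot)}$.
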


\begin{proof}
We can follow the argument of \cite[Lemma~4]{BFLM} (see also \cite[Lemma~3.3]{BFM}). We start by proving the inequality $\mathbf{m}_{\mathcal{F}}^*(u,A) \le \mathcal{F}(u,A)$. For each ball $B \subset A$ we have $\mathbf{m}_{\mathcal{F}}(u,B) \le \mathcal{F}(u,B)$ by definition. By {\rm\ref{assH1}}  we  get $\mathbf{m}_{\mathcal{F}}^\delta(u,A) \le \mathcal{F}(u,A)$ for all $\delta>0$, whence the assertion follows taking into account \eqref{eq: m*=F}.

We now address the reverse inequality.  We fix $A\in\mathcal{A}(\Omega)$ and  $\delta >0$. Let $(B^\delta_i)_i$ be balls as in the definition of $\mathbf{m}_{\mathcal{F}}^\delta(u,A)$ such that
\begin{align}\label{eq: to show-estimate1}
\sum\nolimits_{i=1}^\infty \mathbf{m}_{\mathcal{F}}(u,B^\delta_i) \le \mathbf{m}_{\mathcal{F}}^\delta(u,A) + \delta.
\end{align}
By the definition of $\mathbf{m}_{\mathcal{F}}$, we find $v_i^\delta \in GSBV^{p(\cdot)}(B_i^\delta;\R^m)$ such that $v_i^\delta = u$ in a neighborhood of $\partial B_i^\delta$ and 
\begin{align}\label{eq: to show-estimate2}
\mathcal{F}(v_i^\delta,B_i^\delta) \le \mathbf{m}_{\mathcal{F}}(u,B_i^\delta) + \delta \mathcal{L}^d(B_i^\delta).
\end{align}
We define 
\begin{align}\label{eq: def of vdelta}
 v^{\delta,n}  := \sum\nolimits_{i=1}^n v^\delta_i \chi_{B^\delta_i} +  u \chi_{N_0^{\delta,n}}  \quad \text{for $n\in \N$},\quad \quad \quad v^\delta := \sum\nolimits_{i=1}^\infty v^\delta_i \chi_{B^\delta_i} + u \chi_{N_0^\delta}, 
\end{align}
 where  $N_0^{\delta,n} := \Omega \setminus \bigcup_{i=1}^n B^\delta_i$   and $N_0^\delta := \Omega \setminus \bigcup_{i=1}^\infty B^\delta_i$.  
By construction, we have that  each $ v^{\delta,n} $ lies in $GSBV^{p(\cdot)}(\Omega;\R^m)$ and that 
\begin{equation}
 \sup_{n \in \N} \left(\int_\Omega |\nabla v^{\delta,n}(x)|^{p(x)}\,\mathrm{d}x  + \mathcal{H}^{d-1}(J_{ v^{\delta,n} })\right) <+\infty
\label{eq: equibddvdelta}
\end{equation}
by \eqref{eq: to show-estimate1}--\eqref{eq: to show-estimate2} and {\rm\ref{assH4}}. Moreover, $ v^{\delta,n}  \to v^\delta$ pointwise a.e.\ in $\Omega$, and then in measure on $\Omega$. Then,  
\cite[Theorem~2.2]{Amb} combined with the compactness in $L^0$ of $(v^{\delta,n})$ yields $v^\delta \in GSBV^{p^{-}}(\Omega;\R^m)$ and $\nabla v^{\delta,n}\rightharpoonup \nabla v^{\delta}$ weakly in $L^{p^-}(\Omega;\mathbb{R}^{m\times d})$. Now, by Ioffe's Theorem (see \cite{Ioffe}) and \eqref{eq: equibddvdelta} we get
\begin{equation*}
\int_\Omega |\nabla v^{\delta}(x)|^{p(x)}\,\mathrm{d}x \leq \displaystyle\mathop{\lim\inf}_{n\to+\infty} \int_\Omega |\nabla v^{\delta,n}(x)|^{p(x)}\,\mathrm{d}x <+\infty\,,
\end{equation*}
whence $v^\delta \in GSBV^{p{(\cdot)}}(\Omega;\R^m)$. We have

\begin{align}\label{eq: to show-flaviana2}
\mathcal{F}(v^\delta,A) &= \sum\nolimits_{i=1}^\infty\mathcal{F}(v_i^\delta,B_i^\delta)  + \mathcal{F}(u,N_0^\delta \cap A) \le  \sum\nolimits_{i=1}^\infty \big(\mathbf{m}_{\mathcal{F}}(u,B_i^\delta) + \delta \mathcal{L}^d(B_i^\delta)\big) \notag\\
&\le \mathbf{m}_{\mathcal{F}}^\delta(u,A) + \delta(1+\mathcal{L}^d(A)),    
\end{align}
where we also used  the fact that $\mu(N_0^\delta \cap A)  =  \mathcal{F}(u,N_0^\delta \cap A) = 0$ by the definition of $(B^\delta_i)_i$  and {\rm\ref{assH4}}. For later purpose, we also note by {\rm\ref{assH4}} that this implies  
\begin{align}\label{eq: to show-flaviana2-NNN}
\Vert \nabla v^\delta \Vert_{L^{p^-}(A;\mathbb{R}^{m\times d})}  + \mathcal{H}^{d-1}(J_{v^\delta} \cap A) \le c\alpha^{-1} \big(\mathbf{m}_{\mathcal{F}}^\delta(u,A) + \delta(1+\mathcal{L}^d(A)) \big).
\end{align}
We now  claim that 
\begin{equation}
w^\delta:= u-v^\delta \to 0 \quad \mbox{ in measure on $A$.}
\label{eq:convinmeasure} 
\end{equation}
With this, using {\rm\ref{assH2}},  \eqref{eq: m*=F},  and  \eqref{eq: to show-flaviana2} we will get the required inequality  $\mathbf{m}_{\mathcal{F}}^*(u,A) \ge \mathcal{F}(u,A)$ in the limit as $\delta \to 0$.  To prove \eqref{eq:convinmeasure}, we first note that $w^\delta\lfloor_{B^\delta_i} \in GSBV^{p^-}(B^\delta_i;\R^m)$ has trace zero on $\partial B^\delta_i$. 
Then, setting for every $M>0$
\begin{equation*}
w^{\delta,M}:= (-M \vee w^\delta)\wedge M\,,
\end{equation*}
from the classical Poincar\'e inequality we get
\begin{equation*}
\begin{split}
\|w^{\delta,M}\|_{L^1(B^\delta_i;\R^m)}&\leq C\delta |Dw^{\delta,M}|(B^\delta_i)\,, 
\end{split}
\end{equation*}
whence
\begin{equation*}
\|w^{\delta,M}\|_{L^1(A;\R^m)} \leq C\delta |Dw^{\delta,M}|(\cup_{i=1}^\infty B^\delta_i) \leq  C\delta |Dw^{\delta,M}|(A)\,,
\end{equation*}
where $|Dw^{\delta,M}|(A)$ is bounded in view of \eqref{eq: to show-flaviana2-NNN} and the fact that $u\in GSBV(A;\R^m)$, since 
\begin{equation*}
|Dw^{\delta,M}|(A) \leq \int_A|\nabla v^\delta|\,\mathrm{d}x + \int_A|\nabla u|\,\mathrm{d}x + 2M \left(\mathcal{H}^{d-1}(J_{v^\delta}\cap A)+\mathcal{H}^{d-1}(J_{u}\cap A)\right)<+\infty\,.
\end{equation*}
This implies $w^{\delta,M}\to0$ in $L^1(A;\R^m)$, and then in measure on $A$, as $\delta\to0$ for every $M>0$. Now, with fixed $M=1$ and $\varepsilon\in(0,1)$ we have
\begin{equation*}
E_\eps^\delta:=\{x\in A:\,\,|w^\delta(x)|>\eps\} \subseteq \{x\in A:\,\,|w^{\delta,1}(x)|>\varepsilon\}\,,
\end{equation*}
whence $\mathcal{L}^d(E_\eps^\delta)\to0$ as $\delta\to0$, thus proving \eqref{eq:convinmeasure}. The proof is concluded. \EEE
\end{proof}

\begin{proof}[Proof of Lemma \ref{lemma: F=m}.] {We may follow the same argument as in \cite[Proofs of Lemma~5 and Lemma~6]{BFLM}, by exploiting also Lemma~\ref{lemma: F=m*}. We then omit the details.}
\end{proof}

 To conclude the proof of Theorem \ref{thm: int-representation-gsbv}, it remains to prove Lemmas \ref{lemma: sameminbulk} and \ref{lemma: sameminsurf}. This is the subject of the following two sections.

\subsection{The bulk density}\label{sec:bulk}

 This section is devoted to the proof of  Lemma~\ref{lemma: sameminbulk}. With the following lemma, we analyze the blow-up at points with approximate  gradient, which exists for  $\mathcal{L}^d$-a.e.\   point in $\Omega$ by Lemma \ref{lemma: approx-grad}. It is noteworthy that in order to develop the blow-up arguments of this section, it will suffice to consider a Riemann integrable exponent $p$ satisfying \ref{assP1}, as $\mathcal{L}^d$-a.e. $x\in\Omega$ is a continuity point for $p$. On the contrary, the stronger assumption \ref{assP2} will be crucial in Section~\ref{sec:surf} when dealing with the surface scaling.

\begin{lemma}\label{lem:lemma2fonseca}
Let $p:\Omega\to(1,+\infty)$ be a Riemann integrable variable exponent complying with \ref{assP1}. Let $u \in GSBV^{p(\cdot)}(\Omega;\R^m)$. Then for $\mathcal{L}^{d}$-a.e.\ $x_0 \in \Omega$ and $\mathcal{L}^{1}$-a.e.\ $\sigma \in (0,1)$ there exists a sequence $u_\eps\in GSBV^{p(\cdot)}(B_\eps(x_0);\R^m)$ such that
\begin{equation}
\begin{aligned}
& (i)\,\, u_\eps=u \,\, \mbox{in $B_\eps(x_0)\backslash\overline{B_{\sigma\eps}(x_0)}$,}\quad\displaystyle\lim_{\eps\to0}\,{\eps^{-(d+1)}} \mathcal{L}^d(\{u_\eps\neq u\}\cap B_{\eps}(x_0))=0\,; \\
& (ii)\,\, \displaystyle \lim_{\eps \to 0}  \ \eps^{-d} \int_{B_{\sigma\eps}(x_0)} \left(\frac{|u_\eps(x) - u(x_0)-\nabla u(x_0)(x-x_0)|}{\eps} \right)^{p(x)} \, \mathrm{d}x = 0\,;\\
& (iii)\,\, \displaystyle\lim_{\eps\to0} \eps^{-d} \mathcal{H}^{d-1}(J_{u_\eps})=0\,.
\end{aligned}
\label{eq:(16)}
\end{equation}
If, in addition, $u \in SBV^{p(\cdot)}(\Omega;\R^m)$, then $u_\eps$ also satisfies
\begin{equation*}
\begin{aligned}
&(i)'\,\, \lim_{\eps\to0}\eps^{-(d+1)}\int_{B_\eps(x_0)}|u_\eps-u|\,\mathrm{d}x=0\,, \\
&(iii)'\,\, \lim_{\eps\to0}\eps^{-d}\int_{J_{u_\eps}}|[u_\eps]|\,\mathrm{d}\mathcal{H}^{d-1}=0\,.
\end{aligned}
\end{equation*}
\end{lemma}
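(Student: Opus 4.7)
The strategy is to select a \emph{good} blow-up point $x_0$, apply the variable-exponent Poincar\'e-Wirtinger inequality from Theorem~\ref{thm:thm5} to $v:=u-\ell$ on $B_\eps(x_0)$ (with $\ell(x):=u(x_0)+\nabla u(x_0)(x-x_0)$), and then graft the resulting truncation onto $u$ across $\partial B_{\sigma\eps}(x_0)$. First, I would fix $x_0\in\Omega$ enjoying the following properties simultaneously, each on a set of full $\mathcal{L}^d$-measure: (a) $p$ is continuous at $x_0$ (by Riemann integrability); (b) $u$ is approximately differentiable with gradient $\nabla u(x_0)$ (Lemma~\ref{lemma: approx-grad}); (c) $x_0$ is an $L^{p(\cdot)}$-Lebesgue point of $\nabla u$, so that $\eps^{-d}\int_{B_\eps(x_0)}|\nabla u(y)-\nabla u(x_0)|^{p(y)}\,\mathrm{d}y\to 0$, via Theorem~\ref{thm:lebpoint}; (d) $\eps^{-d}\mathcal{H}^{d-1}(J_u\cap B_\eps(x_0))\to 0$, which holds a.e.\ since $\mathcal{H}^{d-1}\lfloor J_u$ is a finite measure singular with respect to $\mathcal{L}^d$; (e) in the $SBV^{p(\cdot)}$ subcase, also $\eps^{-d}\int_{J_u\cap B_\eps(x_0)}|[u]|\,\mathrm{d}\mathcal{H}^{d-1}\to 0$, for the same reason applied to the finite measure $|[u]|\,\mathcal{H}^{d-1}\lfloor J_u$.

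Next, for each small $\eps>0$ I would apply Theorem~\ref{thm:thm5} to $v$ on $B_\eps(x_0)$ (condition~\eqref{(10)} is met eventually by (d)). This yields the truncation $T_{B_\eps}v$ and, setting $E_\eps:=\{T_{B_\eps}v\ne v\}$, the estimate $\mathcal{L}^d(E_\eps)\le 2(2\gamma_{\rm iso}\mathcal{H}^{d-1}(J_u\cap B_\eps))^{d/(d-1)}=o(\eps^{d^2/(d-1)})=o(\eps^{d+1})$, exploiting that $d^2/(d-1)>d+1$ for $d\ge 2$. I then define
\[
u_\eps(x):=\begin{cases}T_{B_\eps}v(x)+\ell(x) & x\in B_{\sigma\eps}(x_0),\\ u(x) & x\in B_\eps(x_0)\setminus B_{\sigma\eps}(x_0).\end{cases}
\]
Property (i) is immediate since $\{u_\eps\ne u\}\subset E_\eps$. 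For (iii), truncation creates no new jumps, so $J_{u_\eps}\subset (J_u\cap B_\eps)\cup(\partial B_{\sigma\eps}\cap E_\eps)$; the first set has $\mathcal{H}^{d-1}$-measure $o(\eps^d)$ by (d), while Fubini gives $\int_0^1\mathcal{H}^{d-1}(\partial B_{\sigma\eps}\cap E_\eps)\,\mathrm{d}\sigma=\eps^{-1}\mathcal{L}^d(E_\eps)=o(\eps^d)$, so for $\mathcal{L}^1$-a.e.\ $\sigma$---along a suitable subsequence of $\eps$-values extracted via Fatou---the boundary contribution is also $o(\eps^d)$.

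The crux is the modular estimate in (ii). Applying \eqref{(11bis)} to $v$ on $B_\eps(x_0)$ and using Lemma~\ref{lem:dieni3.2} to absorb the factor $|B_\eps|^{1/p^+_{B_\eps}-1/p^-_{B_\eps}}$ into a uniform constant (this is where log-H\"older continuity is essential), together with the $L^{p(\cdot)}$-Lebesgue property (c), I would deduce $\|T_{B_\eps}v-{\rm med}(v;B_\eps)\|_{L^{p(\cdot)}(B_\eps)}\le C\eps\|\nabla v\|_{L^{p(\cdot)}(B_\eps)}=o(\eps^{1+d/p(x_0)})$; in parallel, $|{\rm med}(v;B_\eps)|=o(\eps)$ follows since approximate differentiability forces $v/\eps\to 0$ in measure on $B_\eps(x_0)$. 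Converting back between norm and modular through \eqref{eq:normineq}--\eqref{eq:normineq2}, I would conclude that $\eps^{-d}\int_{B_{\sigma\eps}}(|u_\eps-\ell|/\eps)^{p(x)}\,\mathrm{d}x\to 0$. For the $SBV^{p(\cdot)}$ subcase, (i)$'$ and (iii)$'$ will follow from the same construction using the classical $L^1$-Poincar\'e inequality in $SBV$ together with (e). The main technical obstacle is (ii): the $x$-dependence of $p$ means one must juggle modulars and norms with parameters varying ball-by-ball, and only the log-H\"older condition allows these exchanges to stay uniformly controlled.
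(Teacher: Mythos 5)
Your construction of $u_\eps$ and the verification of (i), (iii), (i)$'$, (iii)$'$ follow the paper's blueprint and look correct, including the crucial observation that the right blow-up property at $x_0$ is $\mathcal{H}^{d-1}(J_u\cap B_\eps(x_0))=o(\eps^d)$ (not merely $o(\eps^{d-1})$), which makes $\mathcal{L}^d(E_\eps)=o(\eps^{d^2/(d-1)})=o(\eps^{d+1})$. The genuine gap is in your treatment of (ii). You apply the variable-exponent Poincar\'e--Wirtinger inequality \eqref{(11bis)} \emph{directly on the shrinking balls} $B_\eps(x_0)$. As you yourself note, this produces the multiplicative factor $\mathcal{L}^d(B_\eps)^{1/p^+_{B_\eps}-1/p^-_{B_\eps}}$, and you need Lemma~\ref{lem:dieni3.2} — that is, the log-H\"older condition \ref{assP2} — to keep it bounded. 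But Lemma~\ref{lem:lemma2fonseca} is stated only for Riemann integrable exponents satisfying \ref{assP1}; log-H\"older continuity is \emph{not} assumed. Mere continuity at $x_0$ gives ${p^+_{B_\eps}-p^-_{B_\eps}}\to 0$, but with no rate, so $\mathcal{L}^d(B_\eps)^{1/p^+_{B_\eps}-1/p^-_{B_\eps}}$ can still blow up. Your argument therefore proves a strictly weaker statement than the lemma.

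The paper sidesteps this entirely. Instead of working on $B_\eps(x_0)$, it rescales: setting $\widetilde u_\eps(y):=\bar u_\eps(x_0+\eps y)$ and $p_\eps(y):=p(x_0+\eps y)$, assertion (ii) becomes an estimate on the \emph{fixed} ball $B_\sigma$ with exponents $p_\eps$ that converge uniformly to the constant $p(x_0)$ precisely because $x_0$ is a continuity point of $p$. The paper then invokes the compactness Theorem~\ref{thm:3.5}, whose internal Poincar\'e constants depend only on the fixed ball $B_\sigma$ and on constant exponents $p^\pm_\eta$ close to $p(x_0)$. No log-H\"older is needed, and the final step identifying $\widetilde u_0=\nabla u(x_0)\cdot y$ and $\mathrm{med}(\widetilde u_\eps;B_\sigma)\to 0$ uses only approximate differentiability. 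This is exactly what the paper flags at the start of Section~\ref{sec:bulk}: "it will suffice to consider a Riemann integrable exponent $p$ satisfying \ref{assP1} $\dots$ the stronger assumption \ref{assP2} will be crucial in Section~\ref{sec:surf}." To repair your argument within the lemma's stated hypotheses, you would need to perform the same rescaling to the unit ball before invoking the Poincar\'e inequality (or compactness), rather than absorbing $\eps$-dependent constants ball by ball.
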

\proof
It will suffice to treat the scalar case $m=1$. Let $x_0\in\Omega$ be such that 
\begin{align}\label{eq:givenpropertiesbis}
{(a)} & \ \  \lim_{\eps \to 0} \ \eps^{-d} \int_{B_{\eps}(x_0)} \big|\nabla u(x) - \nabla u(x_0)\big|^{p(x)}  \, \mathrm{d}x = 0\,;\notag\\
{(b)} & \ \   \lim_{\eps \to 0} \  \eps^{-d}\,\mathcal{H}^{d-1}(J_{u} \cap B_\eps(x_0))= 0\,; \\
{(c)} & \  \ \lim_{\eps \to 0} \  \eps^{-d} \mathcal{L}^d\Big(\Big\{x \in B_\eps(x_0) \colon \,  \frac{|u(x) - u(x_0) - \nabla u(x_0)(x-x_0)|}{\eps} > \varrho   \Big\} \Big)  = 0 \text{ for all $\varrho >0$}     
\notag\,.
\end{align}
Properties $(a)$ and $(c)$ hold for $\mathcal{L}^d$-a.e.\ $x_0 \in \Omega$ by Theorem~\ref{thm:lebpoint} since $|\nabla u| \in L^{p(\cdot)}(\Omega;\mathbb{R}^{m\times d})$ and by Lemma \ref{lemma: approx-grad}, respectively, while $(b)$ follows from the fact that $J_u$ is countably $\hd$-rectifiable (see, e.g., \cite{Ambrosio-Fusco-Pallara:2000}). We can also assume that $x_0$ is a continuity point for $p(x)$; hence, it is not restrictive to assume that \eqref{eq:assump} holds, up to  replacing $\Omega$ with a fixed neighborhood of $x_0$ where it is satisfied.

We set $\bar{u}_\eps(x):=\frac{u(x)-u(x_0)}{\eps}$, define the truncated functions $T_\eps \bar{u}_\eps:= T_{B_{\eps}(x_0)}\bar{u}_\eps$ as in \eqref{eq:truncated}, and $v_\eps(x) := u(x_0)+\eps T_\eps \bar{u}_\eps(x)$. 

Note that
\begin{equation}
|\nabla v_\eps|\leq |\nabla u| \quad \mbox{ $\mathcal{L}^d$-a.e.}
\label{eq:gradineq}
\end{equation}
and $J_{v_\eps}\subseteq J_u$, $\mathcal{H}^{d-1}(J_{v_\eps}\backslash J_u)=0$. This along with \eqref{eq:givenpropertiesbis}$(b)$ implies \eqref{eq:(16)}$(iii)$.

We notice that \eqref{eq:givenpropertiesbis}$(b)$ implies also \eqref{(10)} for $\eps$ small enough, which combined with \eqref{(12bis)} gives
\begin{equation}
\begin{split}
0\leq \eps^{-d} \int_0^1 \mathcal{H}^{d-1}(\{v_\eps\neq u\}\cap \partial B_{\sigma\eps}(x_0))\,\mathrm{d}\sigma & = \frac{2}{\eps^{d+1}} \mathcal{L}^d(\{v_\eps\neq u\}\cap B_{\eps}(x_0)) \\
 & \leq \frac{4 (2\gamma_{\rm iso})^{\frac{d}{d-1}}}{\eps^{d+1}}\left(\mathcal{H}^{d-1}(J_{u}\cap B_\eps(x_0))\right)^{\frac{d}{d-1}} \\
 & \leq 4 (2\gamma_{\rm iso}\gamma_d)^{\frac{d}{d-1}}C^{\frac{d}{d-1}}\eps^\frac{1}{d-1} \to 0
\end{split}
\label{eq:contocoarea}
\end{equation} 
as $\eps\to0$.

Therefore, for every sequence $\eps\to0$ one can find a subsequence (not relabeled) such that, for $\mathcal{L}^1$-a.e. $\sigma\in(0,1)$,
\begin{equation}
\begin{split}
\mu(\partial B_{\sigma\eps}(x_0)) = \mathcal{H}^{d-1}(\partial B_{\sigma\eps}(x_0)\cap J_{v_\eps})=0\,,  \\
\lim_{\eps\to0} \eps^{-d} \mathcal{H}^{d-1}(\{v_\eps\neq u\}\cap \partial B_{\sigma\eps}(x_0)) = 0\,. 
\end{split}
\label{eq:43}
\end{equation}
Now, we fix a sequence $\eps\to0$ and consider a subsequence (not relabeled) and $\sigma\in(0,1)$ for which \eqref{eq:43} holds. We then define
\begin{equation*}
u_\eps(x) =
\begin{cases}
v_\eps(x) & \mbox{ in } B_{\sigma\eps}(x_0)\,,\\
u(x) & \mbox{ in } B_\eps(x_0)\backslash \overline{B_{\sigma\eps}(x_0)}\,.
\end{cases}
\end{equation*}
From the definition of $u_\eps$ and the argument of \eqref{eq:contocoarea} we get the assertions in \eqref{eq:(16)}$(i)$.
We now prove \eqref{eq:(16)}$(ii)$. We set $\widetilde{u}_\eps(y):=\bar{u}_\eps(x_0+\eps y)$. Then, for $s\in[0,\eps^d]$ we have $(\bar{u}_\eps)_*(s;B_{\sigma\eps}(x_0)) = (\widetilde{u}_\eps)_*(s/\eps^d;B_\sigma)$ and, in turn, 
\begin{equation*}
\tau'(\widetilde{u}_\eps;B_{\sigma}) = \tau'(\bar{u}_\eps;B_{\sigma\eps}(x_0))\,,\,\,\tau''(\widetilde{u}_\eps;B_{\sigma}) = \tau''(\bar{u}_\eps;B_{\sigma\eps}(x_0))\,,\,\,
{\rm med}(\widetilde{u}_\eps;B_\sigma) ={\rm med}(\bar{u}_\eps;B_{\sigma\eps}(x_0))\,.
\end{equation*}
We have
\begin{equation*}
T_\eps \bar{u}_\eps(x_0+\eps y)=T_\sigma\widetilde{u}_\eps(y)\,,
\end{equation*}
so, recalling that $u_\eps=v_\eps$ in $B_{\sigma\eps}(x_0)$, \eqref{eq:(16)}$(ii)$ can be rephrased as
\begin{equation}
\int_{B_\sigma}|T_\sigma\widetilde{u}_\eps(y)-\nabla u(x_0)\cdot y|^{p_\eps(y)}\,\mathrm{d}y\to0\,,
\label{eq:(20)}
\end{equation}
as $\eps\to0$, where we have set
\begin{equation*}
p_\eps(y):=p(x_0+\eps y)\,,\quad y\in B_\sigma\,.
\end{equation*}
From \eqref{eq:givenpropertiesbis}$(a)$-$(b)$ we infer
\begin{equation*}
\int_{B_\sigma} |\nabla \widetilde{u}_\eps|^{p_\eps(y)}\,\mathrm{d}y\leq C\,,\quad \lim_{\eps\to 0} \mathcal{H}^{d-1}(J_{\widetilde{u}_\eps}\cap B_\sigma)=0\,.
\end{equation*}
Then, by virtue of Theorem~\ref{thm:3.5} there exist a function $\widetilde{u}_0\in W^{1,p(x_0)}(B_\sigma;\R)$ and a subsequence (not relabeled) of $\{\widetilde{u}_\eps\}$ such that
\begin{equation*}
\begin{split}
\int_{B_\sigma}|T_\sigma\widetilde{u}_\eps(y)-{\rm med}(\widetilde{u}_\eps;B_\sigma)-\widetilde{u}_0|^{p_\eps(y)}\,\mathrm{d}y\to0\,,\quad \widetilde{u}_\eps-{\rm med}(\widetilde{u}_\eps;B_\sigma) \to \widetilde{u}_0 \quad & \mathcal{L}^d-\mbox{a.e. in $B_\sigma$}\,.
\end{split}
\end{equation*}
\EEE
The assertion \eqref{eq:(20)} will then follow 
once we prove that
\begin{equation}
\lim_{\varepsilon\to0} {\rm med}(\widetilde{u}_\eps;B_\sigma)=0\,.
\label{eq:(21)}
\end{equation}
For this, notice that \eqref{eq:givenpropertiesbis}$(c)$ implies $\widetilde{u}_0(y)=\nabla u(x_0)\cdot y$ for $\mathcal{L}^d$-a.e. $y\in B_\sigma$. The a.e. convergence in measure of $\widetilde{u}_\eps-{\rm med}(\widetilde{u}_\eps;B_\sigma)$ to $\nabla u(x_0)\cdot y$ is now enough to reproduce  the proof of \cite[eq. (21)]{BFLM}, and obtain \eqref{eq:(21)}. We therefore omit the details.

If $u\in SBV^{p(\cdot)}(\Omega;\R^m)$, we may fix $x_0\in\Omega$ such that, in addition to \eqref{eq:givenpropertiesbis}$(a)$, \eqref{eq:givenpropertiesbis}$(c)$ holds in the stronger form
\begin{equation*}
\lim_{\eps \to 0} \  \eps^{-(d+1)}\int_{B_\eps(x_0)}{|u(x) - u(x_0) - \nabla u(x_0)(x-x_0)|}\,\mathrm{d}x =0 \,,
\end{equation*}
and also property
\begin{equation}
\lim_{\eps\to0} \eps^{-d}\int_{J_u\cap B_\eps(x_0)}|[u]|\,\mathrm{d}\mathcal{H}^{d-1}=0 
\label{eq:givenpropertiestris}
\end{equation}
is satisfied. Then as a consequence of Fubini's Theorem, we can fix $\sigma\in(0,1)$ such that $\mathcal{H}^{d-1}(J_u\cap\partial B_{\sigma\eps}(x_0))=0$ and
\begin{equation}
\lim_{\eps\to0} \eps^{-d}\int_{\partial B_{\sigma\eps}(x_0)}{|u(x) - u(x_0) - \nabla u(x_0)(x-x_0)|}\,\mathrm{d}\mathcal{H}^{d-1} =0\,.
\label{eq:7.5}
\end{equation}
Now, we can define the sequence $u_\eps$ as above and prove $(i)$, $(ii)$ and $(iii)$. Assertion $(i)'$ will follow from $(iii)$, \eqref{eq:7.5}, $(c)$ and H\"older's inequality, since $u_\eps=u$ in $B_\eps(x_0)\backslash B_{\sigma\eps}(x_0)$.

Finally, since by construction it holds that $|[u_\eps]|\leq |[u]|$ $\mathcal{H}^{d-1}$-a.e., property $(iii)'$ is a consequence of \eqref{eq:givenpropertiestris}.

\endproof

We are now in a position to prove Lemma \ref{lemma: sameminbulk}, which will follow as a consequence of Lemma~\ref{lem:bulk1}
 and Lemma~\ref{lem:bulk2}.

\begin{lemma}\label{lem:bulk1}
Let $p:\Omega\to(1,+\infty)$ be a Riemann integrable variable exponent satisfying {\rm\ref{assP1}}. Suppose that $\mathcal{F}$ satisfies {\rm\ref{assH1}} and  {\rm\ref{assH3}}--{\rm\ref{assH4}}  and let $u \in GSBV^{p(\cdot)}(\Omega;\R^m)$.  Then for $\mathcal{L}^{d}$-a.e.\ $x_0 \in \Omega$ we have
\begin{equation}
\displaystyle \lim_{\eps \to 0}\frac{\mathbf{m}_{\mathcal{F}}(u,B_\eps(x_0))}{\gamma_d\eps^{d}} \leq \mathop{\lim\sup}_{\eps\to0} \frac{\mathbf{m}_{\mathcal{F}}(\bar{u}_{x_0}^{\rm bulk},B_{\eps}(x_0))}{\gamma_d\eps^{d}}\,.
\label{eq:bulkineq1}
\end{equation} 
\end{lemma}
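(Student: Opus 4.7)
The plan is to adapt the global method strategy of \cite{BFLM} to the variable exponent setting. Fix $x_0\in\Omega$ (of full $\mathcal{L}^d$-measure) such that the approximate gradient $\nabla u(x_0)$ exists, $p$ is continuous at $x_0$ (possible since Riemann integrable exponents are continuous $\mathcal{L}^d$-a.e.), $x_0\notin J_u$, and the properties $(a)$, $(b)$ stated in the proof of Lemma~\ref{lem:lemma2fonseca} hold at $x_0$. Set $L:=\limsup_{\eps\to 0}\mathbf{m}_{\mathcal{F}}(\bar u_{x_0}^{\rm bulk},B_\eps(x_0))/(\gamma_d\eps^d)$; testing with $\bar u_{x_0}^{\rm bulk}$ itself in \ref{assH4} together with continuity of $p$ at $x_0$ gives $L<+\infty$. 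The goal is to produce, for each $\eps$, an admissible competitor for $\mathbf{m}_{\mathcal{F}}(u,B_\eps(x_0))$ whose energy is asymptotically no larger than $L\,\gamma_d\eps^d$ plus errors which vanish in the final iterated limit.

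Fix $\eta>0$, pick $\sigma\in(0,1)$ close to $1$ in the full-measure set where Lemma~\ref{lem:lemma2fonseca} applies, and let $u_\eps$ be the sequence produced by that lemma, so that $u_\eps=u$ on $B_\eps(x_0)\setminus B_{\sigma\eps}(x_0)$, $|\nabla u_\eps|\le|\nabla u|$ a.e., $\mathcal{H}^{d-1}(J_{u_\eps})=o(\eps^d)$, and $u_\eps$ is $p(\cdot)$-modularly close to $\bar u_{x_0}^{\rm bulk}$ on $B_{\sigma\eps}(x_0)$ in the sense of Lemma~\ref{lem:lemma2fonseca}$(ii)$. Choose further parameters $\sigma_0<\sigma_1<\sigma_2<\sigma$, let $\tilde v_\eps$ be a near-optimal competitor for $\mathbf{m}_{\mathcal{F}}(\bar u_{x_0}^{\rm bulk},B_{\sigma_0\eps}(x_0))$, and extend it by $\bar u_{x_0}^{\rm bulk}$ outside $B_{\sigma_0\eps}(x_0)$ to obtain a function $v_\eps$ on all of $B_\eps(x_0)$. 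Apply the fundamental estimate (Lemma~\ref{lemma: fundamental estimate}) with unscaled triple $D'=B_{\sigma_1}$, $D''=B_{\sigma_2}$, $E=B_1\setminus\overline{B_{\sigma_0}}$ (so $F=B_{\sigma_2}\setminus\overline{B_{\sigma_1}}$ and $\delta=(\sigma_2-\sigma_1)/2$), scaled by $\eps$: the resulting $w_\eps$ coincides with $v_\eps$ on $B_{\sigma_1\eps}(x_0)$ and with $u_\eps=u$ on $B_\eps(x_0)\setminus B_{\sigma_2\eps}(x_0)$, so its extension by $u$ outside $B_\eps(x_0)$ is an admissible competitor for $\mathbf{m}_{\mathcal{F}}(u,B_\eps(x_0))$, whence $\mathbf{m}_{\mathcal{F}}(u,B_\eps(x_0))\le\mathcal{F}(w_\eps,B_\eps(x_0))$.

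The three contributions given by the fundamental estimate are controlled as follows. By additivity and the construction of $v_\eps$, $\mathcal{F}(v_\eps,D''_{\eps,x_0})\le \mathbf{m}_{\mathcal{F}}(\bar u_{x_0}^{\rm bulk},B_{\sigma_0\eps}(x_0))+\eta(\sigma_0\eps)^d+C(\sigma_2^d-\sigma_0^d)\eps^d$, the last summand obtained via \ref{assH4} and continuity of $p$ at $x_0$. Using the same ingredients, together with $|\nabla u_\eps|\le|\nabla u|$, property $(a)$ of $x_0$ and Lemma~\ref{lem:lemma2fonseca}$(iii)$, one gets $\mathcal{F}(u_\eps,E_{\eps,x_0})\le C'(1-\sigma_0^d)\eps^d+o(\eps^d)$. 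The delicate term is the remainder: since $F_{\eps,x_0}\subset B_{\sigma_2\eps}(x_0)\setminus B_{\sigma_0\eps}(x_0)$ is disjoint from the ball where $\tilde v_\eps$ was defined, $v_\eps\equiv\bar u_{x_0}^{\rm bulk}$ on $F_{\eps,x_0}$, and
\[
M\int_{F_{\eps,x_0}}\Big(\frac{|v_\eps-u_\eps|}{\delta\eps}\Big)^{p(x)}\,dx\le \frac{M}{\delta^{p^+}}\int_{B_{\sigma\eps}(x_0)}\Big(\frac{|u_\eps-\bar u_{x_0}^{\rm bulk}|}{\eps}\Big)^{p(x)}\,dx=o(\eps^d)
\]
by Lemma~\ref{lem:lemma2fonseca}$(ii)$. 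This is the main obstacle specific to the variable exponent setting: the factor $\delta^{-p(x)}$ arising from the scaling would be dangerous if $\delta$ depended on $\eps$, but because $\delta$ is fixed and $p^+<+\infty$ by \ref{assP1}, one has $\delta^{-p(x)}\le\delta^{-p^+}$, a constant which is absorbed before passing to the limit. Dividing by $\gamma_d\eps^d$, taking $\limsup$ as $\eps\to 0$ (and using that $\limsup_\eps\mathbf{m}_{\mathcal{F}}(\bar u_{x_0}^{\rm bulk},B_{\sigma_0\eps})/(\gamma_d(\sigma_0\eps)^d)=L$, since $\sigma_0\eps\to 0$ is a reparametrization), and finally letting $\sigma_2\to\sigma_0$, $\sigma_0\to\sigma$, $\sigma\to 1$, and $\eta\to 0$, yields the claimed inequality. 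The limit on the left-hand side exists by the differentiation argument used in the proof of Theorem~\ref{thm: int-representation-gsbv}.
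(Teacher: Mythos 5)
Your proof is correct and follows essentially the same route as the paper: fix a good blow-up point, invoke Lemma~\ref{lem:lemma2fonseca} to produce $u_\eps$, glue a near-optimal competitor for $\mathbf m_{\mathcal F}(\bar u_{x_0}^{\rm bulk},\cdot)$ to $u_\eps$ with the fundamental estimate on an annulus where the near-optimal competitor equals $\bar u_{x_0}^{\rm bulk}$, control the remainder modular term via Lemma~\ref{lem:lemma2fonseca}$(ii)$ and $\delta^{-p(x)}\le\delta^{-p^+}$, and send the radial parameters and $\eta$ to their limits. The only cosmetic difference is that the paper uses the explicit scale $1-k\theta$ where you use generic $\sigma_0<\sigma_1<\sigma_2<\sigma$, and your intermediate claim ``$u_\eps=u$ on $B_\eps\setminus B_{\sigma_2\eps}$'' is imprecise (it holds only on $B_\eps\setminus B_{\sigma\eps}$), but the conclusion you draw from it -- that $w_\eps=u$ near $\partial B_\eps(x_0)$ -- is still valid.
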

\proof
We will prove the assertion for those points $x_0\in \Omega$ for which the statement of Lemma~\ref{lem:lemma2fonseca} holds and  $\lim_{\eps \to 0} (\gamma_{d}\eps^{d})^{-1}\mu(B_\eps(x_0))=1$. This holds for $\mathcal{L}^d$-a.e. $x_0\in\Omega$.
Also, by Lemma~\ref{lemma: F=m}, we know that for $\mathcal{L}^d$-a.e. $x_0\in\Omega$
\begin{equation}
\lim_{\eps \to 0}\frac{\mathcal{F}(u,B_\eps(x_0))}{\gamma_d \eps^d} =  \lim_{\eps \to 0}\frac{\mathbf{m}_{\mathcal{F}}(u,B_\eps(x_0))}{\gamma_d \eps^d}<+\infty\,.
\label{eq:lemma4.1}
\end{equation}

Let $(u_\eps)_\eps$ be the sequence of Lemma~\ref{lem:lemma2fonseca} and we fix $\sigma\in(0,1)$ such that \eqref{eq:(16)}$(ii)$ holds. We write $\sigma=1-\theta$ for some $\theta\in(0,1)$.

Given $z_\eps\in GSBV^{p(\cdot)}(B_{(1-3\theta)\eps}(x_0);\R^m)$ such that $z_\eps=\bar{u}_{x_0}^{\rm bulk}$ in a neighborhood of $\partial B_{(1-3\theta)\eps}(x_0)$ and
\begin{equation}
\mathcal{F}(z_\eps,B_{(1-3\theta)\eps}(x_0)) \leq \mathbf{m}_{\mathcal{F}}(\bar{u}_{x_0}^{\rm bulk},B_{(1-3\theta)\eps}(x_0)) + \gamma_d\eps^{d+1}\,,
\label{eq:5.8}
\end{equation}
we extend it to $z_\eps\in GSBV^{p(\cdot)}(B_{\eps}(x_0);\R^m)$ by setting $z_\eps=\bar{u}_{x_0}^{\rm bulk}$ outside $B_{(1-3\theta)\eps}(x_0)$. Now, we apply Lemma~\ref{lemma: fundamental estimate} with $u$ and $v$ replaced by $z_\eps$ and $u_\eps$, respectively, and 
\begin{equation}
D'_{\eps,x_0}:=B_{(1-2\theta)\eps}(x_0)\,,\,\, D''_{\eps,x_0}:=B_{(1-\theta)\eps}(x_0)\,,\,\, E_{\eps,x_0}:=C_{\eps,\theta}(x_0)\,, 
\label{eq:choiceofsets}
\end{equation}
where, to enlighten the notation, we denote by $C_{\eps,\theta}(x_0)$ the annulus $B_\eps(x_0)\backslash \overline{B_{(1-4\theta)\eps}(x_0)}$. Note that $C_{\eps,\theta}(x_0)=(C_{1,\theta}(x_0))_{\eps,x_0}$ according to notation \eqref{eq: shift-not}, where $C_{1,\theta}(x_0):=B_1(x_0)\backslash \overline{B_{(1-4\theta)}(x_0)}$. Also, $\mathcal{L}^d(C_{1,\theta}(x_0))=\gamma_d(1-(1-4\theta)^d)\to0$ as $\theta\to0$. 

With fixed $\eta>0$, we then find ${w}_\eps\in GSBV^{p(\cdot)}(B_\eps(x_0);\R^m)$ such that ${w}_\eps=u_\eps$ on $B_\eps(x_0)\backslash B_{(1-\theta)\eps}(x_0)$ and
\begin{equation}
\begin{split}
\mathcal{F}({w}_\eps, B_\eps(x_0)) & \leq (1+\eta) \left(\mathcal{F}(z_\eps, B_{(1-\theta)\eps}(x_0)) + \mathcal{F}(u_\eps, C_{\eps,\theta}(x_0))\right) \\
& + M \int_{(B_{(1-\theta)\eps}(x_0) \setminus B_{(1-2\theta)\eps}(x_0))}\left(\frac{|z_\eps-u_\eps|}{\eps}\right)^{p(x)}\,\mathrm{d}x +\eta\mathcal{L}^d(B_\eps(x_0))\,.
\end{split}
\label{eq:5.10}
\end{equation}
Recalling the definition of $u_\eps$, we have $w_\eps=u_\eps=u$ in a neighborhood of $\partial B_\eps(x_0)$. Moreover, since $z_\eps=\bar{u}_{x_0}^{\rm bulk}$ outside $B_{(1-3\theta)\eps}(x_0)$, by virtue of \eqref{eq:(16)}$(ii)$ we conclude that
\begin{equation}
\begin{split}
&\lim_{\eps\to0} \eps^{-d} \int_{(B_{(1-\theta)\eps}(x_0) \setminus B_{(1-2\theta)\eps}(x_0))}\left(\frac{|z_\eps-u_\eps|}{\eps}\right)^{p(x)}\,\mathrm{d}x \\
& = \lim_{\eps \to 0}  \ \eps^{-d} \int_{B_{(1-\theta)\eps}(x_0)} \left(\frac{|u_\eps - \bar{u}_{x_0}^{\rm bulk}|}{\eps} \right)^{p(x)} \, \mathrm{d}x = 0\,.
\end{split}
\label{eq:5.11}
\end{equation}
From this and \eqref{eq:5.10} we infer that there exists  a non-negative sequence $(\varrho_\eps)_\eps$, vanishing as $\eps\to0$, such that  
\begin{align}\label{eq:5.12}
\mathcal{F}&(w_\eps, B_\eps(x_0)) \le   (1+\eta)\left(\mathcal{F}(z_\eps, B_{(1-\theta)\eps}(x_0)) + \mathcal{F}(u_\eps, C_{\eps,\theta}(x_0))\right) +  \eps^d\varrho_\eps +\gamma_d\eps^d\eta.
\end{align}
We set for brevity
\begin{equation}
|\nabla u(x_0)|^{\widetilde{p}}:=\max\{|\nabla u(x_0)|^{p^-}, |\nabla u(x_0)|^{p^+}\}\,.
\label{eq:gradientpmax}
\end{equation}
Then, by using that  $z_\eps = \bar{u}^{\rm bulk}_{x_0}$ on  $B_{\eps}(x_0) \setminus B_{(1-3\theta)\eps}(x_0) \subset C_{\eps,\theta}(x_0)$,  \ref{assH1},  \ref{assH4},   and \eqref{eq:5.8} we compute  
\begin{align}\label{eq: rep1}
\limsup_{\eps\to 0}\frac{\mathcal{F}(z_\eps, B_{(1-\theta)\eps}(x_0))}{\eps^{d}} &\le \limsup_{\eps\to 0} \frac{\mathcal{F}(z_\eps,B_{(1- 3 \theta)\eps}(x_0))}{\eps^{d}}+  \limsup_{\eps\to 0} \frac{\mathcal{F}(\bar{u}^{\rm bulk}_{x_0}, C_{\eps,\theta}(x_0))}{\eps^{d}}\notag\\
&  \le  \limsup_{\eps\to 0} \frac{\mathbf{m}_{\mathcal{F}}(\bar{u}^{\rm bulk}_{x_0},B_{(1-3\theta)\eps}(x_0))}{\eps^{d}} + \beta \,  \mathcal{L}^d(C_{1,\theta}(x_0)) (1+|\nabla u(x_0)|^{\widetilde{p}}) \notag \\
&\le  (1-3\theta)^{d}\limsup_{\eps\to 0} \frac{\mathbf{m}_{\mathcal{F}}(\bar{u}^{\rm bulk}_{x_0},B_{(1-3\theta)\eps}(x_0))}{(1-3\theta)^{d}\eps^{d}} \\
& +\beta \, \mathcal{L}^d(C_{1,\theta}(x_0)) (1+|\nabla u(x_0)|^{\widetilde{p}})\,. \notag
\end{align}
On the other hand, by  \ref{assH4}  
we also obtain
\begin{align*}
\mathcal{F}(u_\eps,  C_{\eps,\theta}(x_0)) &\le \beta \int_{C_{\eps,\theta}(x_0)} (1+  |\nabla u_\eps|^{p(x)})\,\mathrm{d}x + \beta \,\mathcal{H}^{d-1}(J_{u_\eps} \cap  C_{\eps,\theta}(x_0))) \\
&\le \beta \mathcal{L}^d(C_{\eps,\theta}(x_0)) 
+ \beta \int_{ C_{\eps,\theta}(x_0)} |\nabla u_\eps|^{p(x)}\,\mathrm{d}x +\beta\mathcal{H}^{d-1}(J_{u_\eps}\cap C_{\eps,\theta}(x_0))\,. 
\end{align*}  
Now, taking into account \eqref{eq:(16)}$(iii)$ we get
\begin{align}\label{eq: rep1XXX}
\limsup_{\eps \to 0} \frac{\mathcal{F}(u_\eps,  C_{\eps,\theta}(x_0))}{\eps^{d}}  \le \beta  \, \mathcal{L}^d( C_{1,\theta}(x_0))  + \limsup_{\eps \to 0} \frac{\beta}{\eps^d} \int_{ C_{\eps,\theta}(x_0)} |\nabla u_\eps|^{p(x)}\,\mathrm{d}x\,.
\end{align}
Since $|\nabla u_\eps|\leq |\nabla u|$ $\mathcal{L}^d$-a.e., we have, with \eqref{eq:givenpropertiesbis}$(a)$,
\begin{equation}\label{eq:stimaintermedia}
\begin{split}
&\limsup_{\eps \to 0} \frac{\beta}{\eps^d} \int_{ C_{\eps,\theta}(x_0)} |\nabla u_\eps|^{p(x)}\,\mathrm{d}x \\
&\leq  \limsup_{\eps \to 0} \frac{\beta}{\eps^d} \int_{ C_{\eps,\theta}(x_0)} |\nabla u|^{p(x)}\,\mathrm{d}x \\
& \leq \limsup_{\eps \to 0} 2^{p^+-1}\left(\frac{\beta}{\eps^d}\int_{B_\eps(x_0)} |\nabla u -\nabla u(x_0)|^{p(x)}\,\mathrm{d}x + \beta|\nabla u(x_0)|^{\widetilde{p}}\mathcal{L}^d(C_{1,\theta}(x_0))\right) \\
& \leq 2^{p^+-1} \beta|\nabla u(x_0)|^{\widetilde{p}} \mathcal{L}^d(C_{1,\theta}(x_0))\,.
\end{split}
\end{equation} 
Combining \eqref{eq: rep1XXX} with \eqref{eq:stimaintermedia} we finally get
\begin{equation}
\limsup_{\eps \to 0} \frac{\mathcal{F}(u_\eps, C_{\eps,\theta}(x_0))}{\eps^{d}}  \le \beta  \,\mathcal{L}^d(C_{1,\theta}(x_0)) (1+2^{p^+-1} |\nabla u(x_0)|^{\widetilde{p}})\,.
\label{eq:5.14}
\end{equation}
Recall that  $w_\eps = u$ in a neighborhood of $\partial B_\eps(x_0)$. This along with  \eqref{eq:5.12}, \eqref{eq: rep1}, \eqref{eq:5.14} and  $\varrho_\eps \to 0$ yields
\begin{align*}
 \displaystyle \lim_{\eps \to 0}\frac{\mathbf{m}_{\mathcal{F}}(u,B_\eps(x_0))}{\gamma_{d}\eps^{d}} &\le  \limsup_{\eps \to 0}\frac{\mathcal{F}(w_\eps, B_\eps(x_0))}{\gamma_{d}\eps^{d}} \notag \\
 & \le (1+\eta) \, (1-3\theta)^{d} \limsup_{\eps \to 0}  \frac{\mathbf{m}_{\mathcal{F}}(\bar{u}^{\rm bulk}_{x_0},B_\eps(x_0))}{\gamma_{d}\eps^{d}} \\ & \ \ \ + (1+\eta) \beta \gamma_d^{-1}\mathcal{L}^d(C_{1,\theta}(x_0)) (1+2^{p^+}|\nabla u(x_0)|^{\widetilde{p}}) + \eta,
\end{align*}
whence \eqref{eq:bulkineq1} follows up to passing to $\eta,\theta \to 0$. The proof is concluded.

\endproof

\begin{lemma}\label{lem:bulk2}
Under the assumptions of Lemma~\ref{lem:bulk1}, for $\mathcal{L}^{d}$-a.e.\ $x_0 \in \Omega$ we have
\begin{equation}
\displaystyle \lim_{\eps \to 0}\frac{\mathbf{m}_{\mathcal{F}}(u,B_\eps(x_0))}{\gamma_d\eps^{d}} \geq \mathop{\lim\sup}_{\eps\to0} \frac{\mathbf{m}_{\mathcal{F}}(\bar{u}_{x_0}^{\rm bulk},B_{\eps}(x_0))}{\gamma_d\eps^{d}}\,.
\label{eq:bulkineq2}
\end{equation} 
\end{lemma}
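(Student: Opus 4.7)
The plan is to mirror Lemma~\ref{lem:bulk1} with the roles of $u$ and $\bar{u}^{\rm bulk}_{x_0}$ swapped: I will use the approximation $u_\eps$ from Lemma~\ref{lem:lemma2fonseca}, which is close to $\bar{u}^{\rm bulk}_{x_0}$ on $B_{\sigma\eps}(x_0)$ in the variable-exponent sense, to build via Lemma~\ref{lemma: fundamental estimate} an admissible competitor $w_\eps$ for $\mathbf{m}_{\mathcal{F}}(\bar{u}^{\rm bulk}_{x_0}, B_{(1-\theta)\eps}(x_0))$. The energy of $w_\eps$ will be controlled by $\mathcal{F}(u_\eps, \cdot)$, which in turn I would bound by $\mathcal{F}(u, \cdot)$ up to a lower-order error, and Lemma~\ref{lemma: F=m} converts $\mathcal{F}(u,\cdot)$ into $\mathbf{m}_{\mathcal{F}}(u,\cdot)$ asymptotically.

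Concretely, I fix $x_0$ at which Lemma~\ref{lem:lemma2fonseca} applies, Lemma~\ref{lemma: F=m} holds, $|\nabla u|^{p(x)}$ admits a Lebesgue point, and $\mathcal{H}^{d-1}(J_u\cap B_\eps(x_0))/\eps^d\to 0$ (each condition holding $\mathcal{L}^d$-a.e.). I fix a small $\theta\in(0,1/5)$ and $\sigma=1-\theta$ admissible for Lemma~\ref{lem:lemma2fonseca}. Applying Lemma~\ref{lemma: fundamental estimate} with first function $u_\eps$, second function $\bar{u}^{\rm bulk}_{x_0}$, and sets $D'_{\eps,x_0}=B_{(1-3\theta)\eps}(x_0)$, $D''_{\eps,x_0}=B_{(1-2\theta)\eps}(x_0)$, $E_{\eps,x_0}=B_{(1-\theta)\eps}(x_0)\setminus\overline{B_{(1-4\theta)\eps}(x_0)}$, yields $w_\eps$ on $B_{(1-\theta)\eps}(x_0)$ that equals $\bar{u}^{\rm bulk}_{x_0}$ on the outer annulus $B_{(1-\theta)\eps}\setminus\overline{B_{(1-2\theta)\eps}}$, hence is admissible for $\mathbf{m}_{\mathcal{F}}(\bar{u}^{\rm bulk}_{x_0}, B_{(1-\theta)\eps}(x_0))$. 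The crucial geometric fact is that the transition annulus $F=B_{(1-2\theta)\eps}\setminus B_{(1-3\theta)\eps}$ lies inside $B_{\sigma\eps}$ (since $1-2\theta<\sigma$), so by Lemma~\ref{lem:lemma2fonseca}(ii) the remainder $M\int_F(|u_\eps-\bar{u}^{\rm bulk}_{x_0}|/(\theta\eps))^{p(x)}\,\mathrm{d}x$ is $o(\eps^d)$, while \ref{assH4} gives $\mathcal{F}(\bar{u}^{\rm bulk}_{x_0},E_{\eps,x_0})\le C\theta\eps^d$.

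The main technical obstacle is showing $\mathcal{F}(u_\eps, B_{(1-2\theta)\eps}) \le \mathcal{F}(u, B_{(1-2\theta)\eps}) + o(\eps^d)$. Splitting via the Borel-measure property~\ref{assH1} and locality~\ref{assH3},
\begin{equation*}
\mathcal{F}(u_\eps, B_{(1-2\theta)\eps}) \le \mathcal{F}(u, B_{(1-2\theta)\eps}) + \mathcal{F}(u_\eps, B_{(1-2\theta)\eps}\cap\{u_\eps\neq u\}),
\end{equation*}
I would control the remainder through~\ref{assH4}, the structural inequalities $|\nabla u_\eps|\le|\nabla u|$ and $J_{u_\eps}\subset J_u$ from the construction in Lemma~\ref{lem:lemma2fonseca}, the fast-decay $\mathcal{L}^d(\{u_\eps\neq u\}\cap B_\eps)=o(\eps^{d+1})$ of Lemma~\ref{lem:lemma2fonseca}(i), and $\mathcal{H}^{d-1}(J_u\cap B_\eps)=o(\eps^d)$. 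The delicate piece is the integral $\int_{\{u_\eps\neq u\}\cap B_\eps}|\nabla u|^{p(x)}\,\mathrm{d}x$: at a Lebesgue point of $|\nabla u|^{p(x)}$ this integral differs from $|\nabla u(x_0)|^{p(x_0)}\mathcal{L}^d(\{u_\eps\neq u\}\cap B_\eps)$ by at most $\int_{B_\eps}\bigl||\nabla u|^{p(x)}-|\nabla u(x_0)|^{p(x_0)}\bigr|\,\mathrm{d}x=o(\eps^d)$, and $\mathcal{L}^d(\{u_\eps\neq u\}\cap B_\eps)=o(\eps^{d+1})$ then makes the whole integral $o(\eps^d)$.

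Combining these estimates with Lemma~\ref{lemma: F=m} applied to the smaller ball (which identifies $\lim_\eps \mathcal{F}(u, B_{(1-2\theta)\eps})/\gamma_d((1-2\theta)\eps)^d$ with $L:=\lim_\eps \mathbf{m}_{\mathcal{F}}(u, B_\eps(x_0))/\gamma_d\eps^d$), dividing through by $\gamma_d\eps^d$ produces
\begin{equation*}
(1-\theta)^d\,\frac{\mathbf{m}_{\mathcal{F}}(\bar{u}^{\rm bulk}_{x_0}, B_{(1-\theta)\eps}(x_0))}{\gamma_d((1-\theta)\eps)^d} \le (1+\eta)\bigl[(1-2\theta)^d L + C\theta\bigr] + o(1) + \eta.
\end{equation*}
Taking $\limsup_\eps$ and then letting $\eta\to 0$ and $\theta\to 0$ yields $\limsup_\eps \mathbf{m}_{\mathcal{F}}(\bar{u}^{\rm bulk}_{x_0}, B_\eps(x_0))/\gamma_d\eps^d \le L$, which is exactly \eqref{eq:bulkineq2}.
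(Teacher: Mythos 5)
Your overall construction is on the right track: building a competitor for $\mathbf{m}_{\mathcal{F}}(\bar{u}^{\rm bulk}_{x_0},\cdot)$ by gluing $u_\eps$ from Lemma~\ref{lem:lemma2fonseca} with $\bar{u}^{\rm bulk}_{x_0}$ via the fundamental estimate, keeping the transition annulus inside $B_{\sigma\eps}(x_0)$ so that the $p(\cdot)$-remainder is killed by Lemma~\ref{lem:lemma2fonseca}(ii), and using Lemma~\ref{lemma: F=m} to convert back to $\mathbf{m}_{\mathcal{F}}(u,\cdot)$ at the end.

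However, there is a genuine gap at your ``main technical obstacle.'' The inequality
$$\mathcal{F}(u_\eps, B_{(1-2\theta)\eps}) \le \mathcal{F}(u, B_{(1-2\theta)\eps}) + \mathcal{F}\big(u_\eps, B_{(1-2\theta)\eps}\cap\{u_\eps\neq u\}\big)$$
requires $\mathcal{F}(u_\eps, M)\leq\mathcal{F}(u, M)$ on the Borel set $M:=B_{(1-2\theta)\eps}\cap\{u_\eps=u\}$, i.e.\ locality on a Borel (non-open) set. But \ref{assH3} only stipulates locality on \emph{open} sets, and this cannot in general be upgraded. Indeed, $\mathcal{F}(\cdot, B)$ on a Borel set $B$ may feel the traces $u^\pm$ at points of $\partial^*B\cap J_u$, which are determined by the values of the function \emph{outside} $B$: for instance, if $u=v$ on a half-ball $B=\{x\cdot\nu>0\}\cap B_1$ but $u\neq v$ on the other half, the jump energy charged to $\overline{B}\cap\{x\cdot\nu=0\}$ need not agree for $u$ and $v$. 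In your situation, $M$ contains (up to $\mu$-negligible sets) points of $J_u$ where $u_\eps^\pm$ are \emph{truncations} of $u^\pm$, so $\mathcal{F}(u_\eps,\cdot)$ and $\mathcal{F}(u,\cdot)$ may genuinely disagree on $M$; there is no way to conclude the inequality without already knowing the integral representation of $\mathcal{F}$, which is what the lemma is en route to proving.

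The paper sidesteps this entirely. Instead of trying to estimate $\mathcal{F}(u_\eps,\cdot)$ on the core ball and comparing it with $\mathcal{F}(u,\cdot)$, it introduces a near-minimizer $z_\eps$ of $\mathbf{m}_{\mathcal{F}}(u,B_{s\eps}(x_0))$ with $z_\eps=u$ near $\partial B_{s\eps}(x_0)$, extended by $z_\eps=u_\eps$ outside $B_{s\eps}(x_0)$, where $s\in(1-4\theta,1-3\theta)$ is chosen by a Fubini argument (cf.\ \eqref{eq:5.15}) so that $\partial B_{s\eps}(x_0)$ carries $\mathcal{H}^{d-1}$-null intersection with $J_{u_\eps}\cup J_u$ and a $o(\eps^d)$-small interface $\{u\neq u_\eps\}\cap\partial B_{s\eps}(x_0)$. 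Then $\mathcal{F}(z_\eps,B_{(1-\theta)\eps}(x_0))$ is split into $\mathcal{F}(z_\eps,B_{s\eps}(x_0))$ (controlled by $\mathbf{m}_{\mathcal{F}}(u,B_{s\eps}(x_0))$ by construction), $\mathcal{F}(z_\eps,\partial B_{s\eps}(x_0))$ (controlled by \ref{assH4} and the Fubini choice), and $\mathcal{F}(u_\eps, C_{\eps,\theta}(x_0))$ (controlled crudely by \ref{assH4} and Lemma~\ref{lem:lemma2fonseca}): locality is used only on open sets and \ref{assH4} only on Borel sets, so no extension of \ref{assH3} is ever invoked. You would need to adopt this near-minimizer-plus-Fubini-sphere device (or some equivalent) to close the argument.
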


\proof
We can restrict the proof to those points $x_0\in\Omega$ considered in Lemma~\ref{lem:bulk1}. Let $\eta>0$, $\sigma=1-\theta$ fixed as in Lemma~\ref{lem:bulk1}, and let $(u_\eps)_\eps$ be the sequence provided by Lemma~\ref{lem:lemma2fonseca}. An argument based on Fubini's Theorem (see \eqref{eq:contocoarea} and \eqref{eq:43}) shows that for each $\eps>0$ we can find $s\in(1-4\theta, 1-3\theta)$ such that
\begin{equation}
\begin{split}
&\mathcal{H}^{d-1}(\partial B_{s\eps}(x_0)\cap (J_{u_\eps}\cup J_u))=0\,,\quad \mbox{ for all $\eps>0$,}  \\
&\lim_{\eps\to0} \eps^{-d} \mathcal{H}^{d-1}(\{u\neq u_\eps\}\cap \partial B_{s\eps}(x_0))
=0\,.
\end{split}
\label{eq:5.15}
\end{equation}
From now on, the argument of the proof closely follows that of Lemma~\ref{lem:bulk1}. We choose a sequence $z_\eps\in GSBV^{p(\cdot)}(B_{s\eps}(x_0);\R^m)$ such that $z_\eps=u$ in a neighborhood of $\partial B_{s\eps}(x_0)$ and
\begin{equation}
\mathcal{F}(z_\eps,B_{s\eps}(x_0)) \leq \mathbf{m}_{\mathcal{F}}(u,B_{s\eps}(x_0)) + \gamma_d\eps^{d+1}\,.
\label{eq:5.16}
\end{equation}
Setting $z_\eps=u_\eps$ outside $B_{s\eps}(x_0)$, we extend it to $z_\eps\in GSBV^{p(\cdot)}(B_{\eps}(x_0);\R^m)$. Now, we apply Lemma~\ref{lemma: fundamental estimate} with $u$ and $v$ replaced by $z_\eps$ and $\bar{u}_{x_0}^{\rm bulk}$, respectively, and the same choice for the sets $D'_{\eps,x_0}$, $D''_{\eps,x_0}$ and $E_{\eps,x_0}$ as in Lemma~\ref{lem:bulk1}, see \eqref{eq:choiceofsets}. 

By virtue of Lemma~\ref{lemma: fundamental estimate}, we then find ${w}_\eps\in GSBV^{p(\cdot)}(B_\eps(x_0);\R^m)$ such that ${w}_\eps=\bar{u}_{x_0}^{\rm bulk}$ on $B_\eps(x_0)\backslash B_{(1-\theta)\eps}(x_0)$ and
\begin{equation*}
\begin{split}
\mathcal{F}({w}_\eps, B_\eps(x_0)) & \leq (1+\eta) \left(\mathcal{F}(z_\eps, B_{(1-\theta)\eps}(x_0)) + \mathcal{F}(\bar{u}_{x_0}^{\rm bulk}, C_{\eps,\theta}(x_0))\right) \\
& + M \int_{(B_{(1-\theta)\eps}(x_0) \setminus B_{(1-2\theta)\eps}(x_0))}\left(\frac{|z_\eps-\bar{u}_{x_0}^{\rm bulk}|}{\eps}\right)^{p(x)}\,\mathrm{d}x +\eta\mathcal{L}^d(B_\eps(x_0))\,.
\end{split}
\end{equation*}
Since $z_\eps=u_\eps$ outside $B_{(1-3\theta)\eps}(x_0)$ from the choice of $s$, by arguing as in Lemma~\ref{lem:bulk1}, see in particular \eqref{eq:5.11} and \eqref{eq:5.12}, we find a non-negative sequence $(\varrho_\eps)_\eps$, vanishing as $\eps\to0$, such that  
\begin{align}\label{eq:5.18}
\mathcal{F}&(w_\eps, B_\eps(x_0)) \le   (1+\eta)\left(\mathcal{F}(z_\eps, B_{(1-\theta)\eps}(x_0)) + \mathcal{F}(\bar{u}_{x_0}^{\rm bulk}, C_{\eps,\theta}(x_0))\right) +  \eps^d\varrho_\eps +\gamma_d\eps^d\eta.
\end{align} 
We now proceed to the estimate of the terms in \eqref{eq:5.18}. Using that  $z_\eps = u_\eps$ on  $B_{\eps}(x_0) \setminus B_{s\eps}(x_0) \subset C_{\eps,\theta}(x_0)$,  \ref{assH1},  \ref{assH4},   and \eqref{eq:5.16} we obtain 
\begin{equation}
\begin{split}
\mathcal{F}(z_\eps, B_{(1-\theta)\eps}(x_0)) & \leq \mathbf{m}_{\mathcal{F}}(u,B_{s\eps}(x_0)) + \gamma_d\eps^{d+1} + \beta \mathcal{H}^{d-1}(\partial B_{s\eps}(x_0)\cap (J_{u_\eps}\cup J_u)) \\
& + \mathcal{F}(u_\eps, C_{\eps,\theta}(x_0))\,. 
\end{split}
\label{eq:5.19}
\end{equation}
Now, with \eqref{eq:5.14} and \eqref{eq:5.15} and the fact that $s\eps\leq (1-3\theta)\eps$ we get
\begin{align}\label{eq:5.20}
\limsup_{\eps\to 0}\frac{\mathcal{F}(z_\eps, B_{(1-\theta)\eps}(x_0))}{\eps^{d}} & \le s^d \limsup_{\eps\to 0} \frac{\mathbf{m}_{\mathcal{F}}(u,B_{s\eps}(x_0))}{(s\eps)^{d}} + \beta \,  \mathcal{L}^d(C_{1,\theta}) (1+{|\nabla u(x_0)|^{\widetilde{p}}}) \notag \\
&\le  (1-3\theta)^{d}\limsup_{\eps\to 0} \frac{\mathbf{m}_{\mathcal{F}}(u,B_{\eps}(x_0))}{\eps^{d}} +\beta \, \mathcal{L}^d(C_{1,\theta}(x_0)) (1+{|\nabla u(x_0)|^{\widetilde{p}}})\,,
\end{align}
where $|\nabla u(x_0)|^{\widetilde{p}}$ is defined as in \eqref{eq:gradientpmax}.

The analogous of the estimate for $\mathcal{F}(\bar{u}_{x_0}^{\rm bulk}, C_{\eps,\theta}(x_0))$ in \eqref{eq: rep1}, the estimates \eqref{eq:5.18}, \eqref{eq:5.19}, \eqref{eq:5.20} and $\varrho_\eps\to0$ give
\begin{equation*}
\begin{split}
\limsup_{\eps\to 0}\frac{\mathcal{F}(w_\eps, B_{\eps}(x_0))}{\eps^{d}} & \leq (1+\eta) (1-3\theta)^{d}\limsup_{\eps\to 0} \frac{\mathbf{m}_{\mathcal{F}}(u,B_{\eps}(x_0))}{\eps^{d}} \\
& +2(1+\eta)\beta \, \mathcal{L}^d(C_{1,\theta}(x_0)) (1+{|\nabla u(x_0)|^{\widetilde{p}}})+\gamma_d\eta\,.
\end{split}
\end{equation*}
Finally, letting $\eta$ and $\theta$ to 0, and recalling that $w_\eps=\bar{u}_{x_0}^{\rm bulk}$ in a neighborhood of $\partial B_\eps(x_0)$, we can write
\begin{equation*}
\begin{split}
\mathop{\lim\sup}_{\eps\to0} \frac{\mathbf{m}_{\mathcal{F}}(\bar{u}_{x_0}^{\rm bulk},B_{\eps}(x_0))}{\gamma_d\eps^{d}} & \leq \mathop{\lim\sup}_{\eps\to0} \frac{\mathcal{F}(w_\eps,B_{\eps}(x_0))}{\gamma_d\eps^{d}} \\
& \leq \mathop{\lim\sup}_{\eps\to0} \frac{\mathbf{m}_{\mathcal{F}}(u,B_{\eps}(x_0))}{\gamma_d\eps^{d}} = \lim_{\eps\to0}  \frac{\mathbf{m}_{\mathcal{F}}(u,B_{\eps}(x_0))}{\gamma_d\eps^{d}}\,,
\end{split}
\end{equation*}
and this concludes the proof of \eqref{eq:bulkineq2}.
\EEE
\endproof

\subsection{The surface density}\label{sec:surf}

The proof of Lemma~\ref{lemma: sameminsurf} requires the analysis of the blow-up at the jump points of function $u$. To this aim, we need a refinement of the results of \cite[Lemma~3]{BFLM} to the case of a variable exponent $p(\cdot)$. This requires a careful analysis of the asymptotic behavior of some constants, where the assumption of log-H\"older continuity of the variable exponent $p(\cdot)$, see \ref{assP2}, plays a crucial role. 

We state and prove the announced blow-up properties for $u\in GSBV^{p(\cdot)}$ around each jump point $x_0\in J_u$. 

\begin{lemma}\label{lem:Lemma3fon}
Assume that $p:\Omega\to(1,+\infty)$ be continuous and complying with {\rm\ref{assP2}}. 
Let $u\in GSBV^{p(\cdot)}(\Omega;\R^m)$. Then for $\mathcal{H}^{d-1}$-a.e. $x_0\in J_u$, for $\mathcal{L}^1$-a.e. $\sigma\in(0,1)$ and for every $\eps>0$ there exists a function $\bar{u}_\eps\in GSBV^{p(\cdot)}(B_\eps(x_0);\R^m)$ with $\nu=\nu_u(x_0)$ such that
\begin{equation}
\begin{aligned}
& (i) \,\, \mathcal{H}^{d-1}((J_{\bar{u}_\eps}\backslash J_u)\cap B_\eps(x_0))=0\,,\\
& (ii) \,\, \displaystyle\lim_{\eps\to0} \eps^{-(d-1)} \int_{B_\eps(x_0)}|\nabla \bar{u}_\eps|^{p(x)}\,\mathrm{d}x=0\,, \\
& (iii) \,\,\displaystyle\lim_{\eps\to0} \eps^{-(d-1)} \int_{B_{\sigma\eps}(x_0)}\left(\frac{|\bar{u}_\eps-\bar{u}^{\rm surf}_{x_0}|}{\eps}\right)^{p(x)}\,\mathrm{d}x=0\,,\\
& (iv) \,\, \bar{u}_\eps=u \mbox{\,\,on $B_\eps(x_0)\backslash \overline{B_{\sigma\eps}(x_0)}$}\,
\end{aligned}
\label{eq:24}
\end{equation}
and
\begin{equation}
\begin{split}
&\lim_{\eps\to0} \eps^{-d} \mathcal{L}^d(\{x\in B_\eps(x_0):\,\, \bar{u}_\eps\neq u\})=0\,. 
\end{split}
\label{eq:25}
\end{equation}
If, in addition, $u\in SBV^{p(\cdot)}(\Omega;\R^m)$, we also have
\begin{equation}
\displaystyle\lim_{\eps\to0} \eps^{-d}\int_{B_\eps(x_0)}|\bar{u}_\eps(x)-u(x)|\,\mathrm{d}x=0\,,
\label{eq:25bis}
\end{equation}
and
\begin{equation}
\displaystyle\lim_{\eps\to0} \eps^{-(d-1)}\int_{J_{\bar{u}_\eps}\cap E_{\eps,x_0}}|[\bar{u}_\eps]|\,\mathrm{d}\mathcal{H}^{d-1}=|[\bar{u}^{\rm surf}_{x_0}]|\mathcal{H}^{d-1}(\Pi_0\cap E) \quad \mbox{ for all Borel sets $E\subset B_1(x_0)$}\,,
\label{eq:(7.8)_2}
\end{equation}
where $\Pi_0$ is the hyperplane passing through $x_0$ with normal $\nu_u(x_0)$.
\end{lemma}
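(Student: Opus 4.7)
The plan is to follow the blueprint of \cite[Lemma~3]{BFLM}, carefully adapting every step to the variable exponent setting and exploiting log-Hölder continuity via Lemma~\ref{lem:dieni3.2} at the critical juncture. First, I would restrict attention to those points $x_0 \in J_u$ at which the following standard collection of blow-up properties hold simultaneously, each of them valid for $\mathcal{H}^{d-1}$-a.e.\ point of $J_u$: the traces $a := u^+(x_0)$, $b := u^-(x_0)$ and the normal $\nu := \nu_u(x_0)$ are well defined; $p$ is continuous at $x_0$; the density identity $\lim_{\eps\to 0}\mathcal{H}^{d-1}(J_u \cap B_\eps(x_0))/(\gamma_{d-1}\eps^{d-1}) = 1$ holds; $\eps^{-(d-1)}\int_{B_\eps(x_0)} (1+|\nabla u|^{p(x)})\,\mathrm{d}x \to 0$ (a consequence of the fact that $|\nabla u|^{p(\cdot)}\mathcal{L}^d$ is $\mathcal{L}^d$-absolutely continuous, hence has zero $(d-1)$-density at $\mathcal{H}^{d-1}$-a.e.\ point of the $(d-1)$-rectifiable set $J_u$); and the approximate jump blow-up $\eps^{-d}\int_{B_\eps(x_0)} |u - \bar{u}^{\rm surf}_{x_0}|\,\mathrm{d}x \to 0$.

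Second, I would construct $\bar{u}_\eps$ as a componentwise Lipschitz truncation of $u$ on $B_{\sigma\eps}(x_0)$, in the spirit of the operator in \eqref{eq:truncated}, clipping the values of $u$ into a bounded neighbourhood of $\{a,b\}$, and gluing it with $u$ on the annulus $B_\eps(x_0) \setminus \overline{B_{\sigma\eps}(x_0)}$. Since $\bar{u}_\eps$ is the composition of $u$ with a Lipschitz map, $J_{\bar{u}_\eps} \subseteq J_u$ modulo $\mathcal{H}^{d-1}$-null sets, and by Fubini --- combined with the control \eqref{(12bis)} on $\mathcal{L}^d(\{\bar{u}_\eps \neq u\})$ --- the radius $\sigma \in (0,1)$ can be picked so that no additional jump is produced on $\partial B_{\sigma\eps}(x_0)$. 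This delivers (i) and (iv) of \eqref{eq:24}, while the vanishing $(d-1)$-density of $|\nabla u|^{p(\cdot)}$, together with the pointwise bound $|\nabla \bar{u}_\eps| \le |\nabla u|$ a.e., yields (ii). Property \eqref{eq:25} follows from \eqref{(12bis)} combined with the density identity above, which forces $\mathcal{H}^{d-1}(J_u\cap B_\eps(x_0))^{d/(d-1)} = o(\eps^d)$.

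The main obstacle is the asymptotic estimate (iii). Since the truncation enforces a pointwise bound $|\bar{u}_\eps - \bar{u}^{\rm surf}_{x_0}| \le M$ for a fixed constant $M$, one can dominate
\begin{equation*}
\eps^{-(d-1)}\int_{B_{\sigma\eps}(x_0)}\Big(\frac{|\bar{u}_\eps - \bar{u}^{\rm surf}_{x_0}|}{\eps}\Big)^{p(x)}\mathrm{d}x \le M^{p^+}\,\eps^{-(d-1)}\int_{B_{\sigma\eps}(x_0)\cap\{\bar{u}_\eps\neq \bar{u}^{\rm surf}_{x_0}\}} \eps^{-p(x)}\,\mathrm{d}x.
\end{equation*}
Here \ref{assP2}, via Lemma~\ref{lem:dieni3.2}, provides the uniform bound $\eps^{\,p^-_{B_\eps(x_0)} - p^+_{B_\eps(x_0)}} \le C$ for all small $\eps$, allowing us to freeze the exponent in the right-hand side at the value $p(x_0)$ at the cost of a multiplicative constant depending only on the oscillation of $p$ in $B_\eps(x_0)$ --- this is the mechanism the authors highlight in the introduction as the decisive use of log-Hölder continuity. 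The resulting bound is of the order of $\eps^{-(d-1)-p(x_0)}\mathcal{L}^d(\{\bar{u}_\eps\neq \bar{u}^{\rm surf}_{x_0}\}\cap B_{\sigma\eps}(x_0))$, and a careful quantification of this measure via the approximate blow-up property above (localizing the set where $u$ departs from $\bar{u}^{\rm surf}_{x_0}$ at the scale $\eps$) shows it is $o(\eps^{d-1+p(x_0)})$, completing the proof of (iii).

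Finally, the refinements \eqref{eq:25bis} and \eqref{eq:(7.8)_2} in the $SBV^{p(\cdot)}$ case will follow from analogous $L^1$-blow-up identities valid at $\mathcal{H}^{d-1}$-a.e.\ point of $J_u$ for $SBV$ functions (namely $\eps^{-d}\int_{B_\eps(x_0)}|u - \bar{u}^{\rm surf}_{x_0}|\,\mathrm{d}x \to 0$ upgraded through the $L^1$ integrability of the jump measure), combined with the pointwise jump bound $|[\bar{u}_\eps]| \le |[u]|$ which is preserved by the Lipschitz truncation; a standard measure-theoretic argument on the restriction of the jump mass to arbitrary Borel subsets $E\subset B_1(x_0)$ then concludes.
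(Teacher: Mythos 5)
The construction you propose is essentially different from the one in the paper (and in \cite[Lemma~3]{BFLM}), and the difference is not cosmetic — it is where the argument actually breaks.

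You build $\bar u_\eps$ by directly truncating $u$ on $B_{\sigma\eps}(x_0)$, in the spirit of the operator $T_B$ of \eqref{eq:truncated}, and then try to prove (iii) via a pointwise bound $|\bar u_\eps-\bar u^{\rm surf}_{x_0}|\le M$ combined with an estimate on $\mathcal{L}^d(\{\bar u_\eps\neq \bar u^{\rm surf}_{x_0}\}\cap B_{\sigma\eps}(x_0))$. This cannot succeed. First, at $\mathcal{H}^{d-1}$-a.e.\ $x_0\in J_u$ the jump set has $(d-1)$-density one, i.e.\ $\mathcal{H}^{d-1}(J_u\cap B_\eps(x_0))\sim \gamma_{d-1}\eps^{d-1}$, so the quantitative smallness that Theorem~\ref{thm:thm5} and \eqref{(12bis)} rely on is simply not available when these are applied directly to $u$. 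Second, and more importantly, the Poincar\'e--Wirtinger estimate, even where it applies, would tell you that $T_B u$ is close to ${\rm med}(u;B)$, a \emph{single} constant which, near a jump point where $u$ takes values close to $a$ on half the ball and close to $b$ on the other half, sits somewhere between $a$ and $b$. Being close to that constant is incompatible with being close to the \emph{two-valued} step function $\bar u^{\rm surf}_{x_0}$. Third, the measure bound $\mathcal{L}^d(\{\bar u_\eps\neq \bar u^{\rm surf}_{x_0}\}\cap B_{\sigma\eps}(x_0))=o(\eps^{d-1+p(x_0)})$ you invoke is false in general: $u$ and $\bar u^{\rm surf}_{x_0}$ differ a.e.\ unless $u$ is already locally piecewise constant, and even after any clipping of $u$ the Chebyshev consequence of the $L^1$ blow-up gives only $\mathcal{L}^d(\{|u-\bar u^{\rm surf}_{x_0}|>\rho\}\cap B_\eps)=o(\eps^d)$ for fixed $\rho$, which is strictly weaker than what you need since $p(x_0)>1$.

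What your proposal omits is the reflection step, which is the engine of the argument. The paper uses the $(d-1)$-rectifiability of $J_u$ to write $J_u=\bigcup_j K_j\cup N$ and, near a point $y\in K_j$, defines $w$ (resp.\ $z$) by reflecting $u$ across the $C^1$ graph $x_d=\Gamma_j(x')$. Because the reflection kills the jump along $K_j$, $w$ and $z$ have \emph{small} jump set near $x_0$ (in the precise sense \eqref{eq:32}), so the small-jump Poincar\'e estimate \eqref{(11)} applies to $w$ and $z$ separately. This yields that $T_\eps w$ is close to its median, which in turn tends to $u^+(x_0)$, and likewise $T_\eps z$ is close to $u^-(x_0)$; gluing them on the two sides of the graph produces a function close to $\bar u^{\rm surf}_{x_0}$ in the sense of \eqref{eq:crucial0}. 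Your observation that log-H\"older continuity via Lemma~\ref{lem:dieni3.2} lets one freeze the exponent is correct and matches what the paper does (the bound $\eps^{-(p^+_\eps-p^-_\eps)/p^-_\eps}\le c_1$ in \eqref{eq:crucial2}), but by that point your argument has already departed from a viable route, so the place you invoke it cannot rescue the estimate.
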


\proof

We first note that since $|\nabla u|^{p(\cdot)}\in L^1(\Omega)$, the points $x_0\in J_u$ can be fixed such that
\begin{equation}
\lim_{\eps\to0} \eps^{-(d-1)}\int_{B_\eps(x_0)}|\nabla u|^{p(x)}\,\mathrm{d}x=0
\label{eq:blowupevgar}
\end{equation}
(see, e.g., \cite[Section~2.4.3, Theorem~3]{EG}). 
Further, since $J_u$ is $(d-1)$-rectifiable, there exists a sequence of compact sets $K_j$ such that $J_u=\bigcup_{j=1}^\infty K_j\cup N$, for some $N$ such that $\mathcal{H}^{d-1}(N)=0$, and each $K_j$ is a subset of a $C^1$ hypersurface. Then, in a neighborhood $B_{\eps_0}(y)\subset \Omega$ of each point $y\in K_j$, up to a rotation, we may find a $C^1$ function $\Gamma_j:\R^{d-1}\to\R$ such that 
\begin{equation*}
K_j\cap B_{\eps_0}(y) \subset \{x=(x',x_d)\in B_{\eps_0}(y):\,\,x_d=\Gamma_j(x')\}\,.
\end{equation*}
We now define the function $w\in GSBV^{p(\cdot)}(B_{\eps_0}(y);\R^m)$ by setting 
\begin{equation*}
w(x):=
\begin{cases}
u(x',x_d) & \mbox{ if } x_d>\Gamma_j(x')\,,\\
u(x', -x_d+2\Gamma_j(x'))  & \mbox{ if } x_d<\Gamma_j(x')\,.
\end{cases}
\end{equation*}
Notice indeed that by construction we have $|\nabla w|\le C |\nabla u|$ a.e., hence $w\in GSBV^{p(\cdot)}(B_{\eps_0}(y);\R^m)$. Furthemore, $J_w\cap B_{\eps_0}(y)\subset B_{\eps_0}(y)\backslash K_j$. Now, following the argument of \cite[Lemma~3]{BFLM}, we can fix $x_0\in B_{\eps_0}(y)\cap K_j$ with the following properties: 
\begin{equation}
w(x_0)=u^+(x_0)\,,\quad \lim_{\eps\to 0^+}\frac1{\eps^{d-1}}\int_{B_\eps(x_0)\cap K_j} |w-u^+(x_0)|\,\mathrm{d}x=0\,,
\label{eq:32plus}
\end{equation} 
and for fixed $\eta>0$ (small enough) there exists (a smaller, if necessary) $\eps_0>0$ such that 
\begin{equation}
\int_{B_\eps(x_0)} |\nabla w|^{q}\,\mathrm{d}x + \mathcal{H}^{d-1}(J_w\cap B_\eps(x_0)) < \eta \eps^{d-1}
\label{eq:32}
\end{equation}
holds, for all $\eps<\eps_0$, for $q=p^-_\Omega$. Moreover, if we set for every $\eps>0$
\begin{equation*}
p^-_\eps:= \inf_{x\in B_\eps(x_0)} p(x)\,,\quad p^+_\eps:= \sup_{x\in B_\eps(x_0)} p(x)\,,
\end{equation*}
combining with \eqref{eq:blowupevgar} we have that \eqref{eq:32} is indeed satisfied for $q=p^-_\eps$. \UUU

\BBB Now, fix $q$ such that \eqref{eq:32} holds. Define $T_\eps w(x):= T_{B_\eps(x_0)}w(x)$ as in \eqref{eq:truncated} with $u=w$ and $B=B_\eps(x_0)$. From the Poincar\'e inequality \eqref{(11)}, \eqref{eq:32} and for any $q\leq r < q^*$ we have
\begin{equation*}
\begin{split}
&\int_{B_\eps(x_0)}{|T_\eps w-{\rm med}(w;B_\eps(x_0))|}^r\,\mathrm{d}x \\
&\leq \left(\int_{B_\eps(x_0)}{|T_\eps w-{\rm med}(w;B_\eps(x_0))|}^{q^*}\,\mathrm{d}x\right)^{\frac{r}{q^*}}[\mathcal{L}^d(B_\eps)]^{1-\frac{r}{q^*}} \\ 
& \leq \left({2\gamma_{\rm iso}q^*(d-1)}\right)^r\left(\int_{B_\eps(x_0)}{|\nabla w|}^{q}\,\mathrm{d}x\right)^{\frac{r}{q}}[\mathcal{L}^d(B_\eps)]^{1-\frac{r}{q^*}} \\
& \leq C(d,q,r)\eta^{\frac{r}{q}} \eps^{\frac{r(d-1)}{q}+d-\frac{r(d-q)}{q}} \\
& = C(d,q,r)\eta^{\frac{r}{q}} \eps^{\frac{r(q-1)}{q}+d}\,,
\end{split}
\end{equation*}
where $C(d,q,r):=\left({2\gamma_{\rm iso}q^*(d-1)}\gamma_d^{\frac{1}{r}-\frac{1}{q^*}}\right)^r$. Since arguing as for the proof of \cite[eq. (34)]{BFLM} we can prove that
\begin{equation*}
\begin{split}
&\int_{B_\eps(x_0)}{|{\rm med}(w,B_\eps(x_0)) - u^+(x_0)|}^r\,\mathrm{d}x \leq \eta^r \eps^{\frac{r(q-1)}{q}+d}
\end{split}
\end{equation*}
for $\eps$ small enough, collecting the previous estimates we finally obtain
\begin{equation}
\frac{1}{\eps^{d-1}}\int_{B_\eps(x_0)}\left({\frac{|T_\eps w-u^+(x_0)|}{\eps}}\right)^r\,\mathrm{d}x \leq 2\max\{C(d,q,r)\eta^{\frac{r}{q}},\eta^r\}\eps^{-\frac{(r-q)}{q}}\,.
\label{eq:crucial}
\end{equation}
If we define the function $z$ as
\begin{equation*}
z(x):=
\begin{cases}
u(x',x_d) & \mbox{ if } x_d<\Gamma_j(x')\,,\\
u(x', -x_d+2\Gamma_j(x'))  & \mbox{ if } x_d>\Gamma_j(x')\,,
\end{cases}
\end{equation*}
then $z$ complies with the \eqref{eq:32plus}-\eqref{eq:32}, up to replacing $w$ with $z$ and $u^+(x_0)$ with $u^-(x_0)$. Hence, an analogous estimate as in \eqref{eq:crucial} can be inferred for the sequence $T_\eps z$ defined as the truncation $T_{B_\eps(x_0)}z$ of the function $z$. We then set
\begin{equation*}
u_\eps(x):=
\begin{cases}
T_\eps w(x) & \mbox{ if } x_d>\Gamma_j(x')\,,\\
T_\eps z(x) & \mbox{ if } x_d<\Gamma_j(x')\,,
\end{cases}
\end{equation*}
and we have
\begin{equation}
\frac{1}{\eps^{d-1}}\int_{B_{\sigma\eps}(x_0)}\left({\frac{|{u}_\eps-\bar{u}^{\rm surf}_{x_0}|}{\eps}}\right)^{q}\,\mathrm{d}x \leq 2\max\{C(d,q,r)\eta^{\frac{r}{q}},\eta^{r}\}\eps^{-\frac{(r-q)}{q}}\,,\quad \forall r\in[q,q^*)\,,
\label{eq:crucial0}
\end{equation}
Arguing exactly as in \cite[Lemma~3]{BFLM}, we also have
\begin{equation}\label{eq:25-}
\lim_{\eps\to0} \eps^{-d} \mathcal{L}^d(\{x\in B_\eps(x_0):\,\, u_\eps\neq u\})=0\,.
\end{equation}
\EEE

Now, an analogous argument as for \eqref{eq:contocoarea} shows that for every sequence $\eps\to0$ one can find a subsequence (not relabeled) such that, for $\mathcal{L}^1$-a.e. $\sigma\in(0,1)$,
\begin{equation*}
\begin{split}
\mu(\partial B_{\sigma\eps}(x_0)) = \mathcal{H}^{d-1}(\partial B_{\sigma\eps}(x_0)\cap J_{u_\eps})=0\,,  \\
\lim_{\eps\to0} \eps^{-d} \mathcal{H}^{d-1}(\{u_\eps\neq u\}\cap \partial B_{\sigma\eps}(x_0)) = 0\,. 
\end{split}
\end{equation*}
We then define
\begin{equation*}
\bar{u}_\eps(x) =
\begin{cases}
u_\eps(x) & \mbox{ in } B_{\sigma\eps}(x_0)\,,\\
u(x) & \mbox{ in } B_\eps(x_0)\backslash \overline{B_{\sigma\eps}(x_0)}\,.
\end{cases}
\end{equation*} 
Now, property \eqref{eq:24} $(i)$, $(ii)$ and $(iv)$  follow from the definition and \eqref{eq:blowupevgar}, while \eqref{eq:25} is immediate from \eqref{eq:25-}. As for \eqref{eq:24} $(iii)$, with fixed $\eta>0$, the estimate \eqref{eq:crucial0} with $q=p^-_\eps$ and $r=p^+_\eps$ implies that
\begin{equation}
\frac{1}{\eps^{d-1}}\int_{B_{\sigma\eps}(x_0)}\left({\frac{|\bar{u}_\eps-\bar{u}^{\rm surf}_{x_0}|}{\eps}}\right)^{p^+_\eps}\,\mathrm{d}x \leq 2 \max\{C(d,p^-_\eps, p^+_\eps)\eta^{\frac{p^+_\eps}{p^-_\eps}},\eta^{p^+_\eps}\}\eps^{-\frac{(p^+_\eps-p^-_\eps)}{p^-_\eps}}
\label{eq:crucial2}
\end{equation}
for $\eps$ small enough. Observe that, by its definition, the constant $C(d,p^-_\eps, p^+_\eps)$ is a bounded function of $\eps$. Now, since
\begin{equation*}
\begin{split}
\frac{1}{\eps^{d-1}}\int_{B_{\sigma\eps}(x_0)}\left({\frac{|\bar{u}_\eps-\bar{u}^{\rm surf}_{x_0}|}{\eps}}\right)^{p(\cdot)}\,\mathrm{d}x & \leq \eps + \frac{1}{\eps^{d-1}}\int_{B_{\sigma\eps}(x_0)}\left({\frac{|\bar{u}_\eps-\bar{u}^{\rm surf}_{x_0}|}{\eps}}\right)^{p^+_\eps}\,\mathrm{d}x \\
& \leq \eps + 2\max\{C(d,p^-_\eps, p^+_\eps)\eta^{\frac{p^+_\eps}{p^-_\eps}},\eta^{p^+_\eps}\}\eps^{-\frac{(p^+_\eps-p^-_\eps)}{p^-_\eps}}\,,
\end{split}
\end{equation*}
and with $p_\eps^+\le p^+_\Omega$, assertion $(iii)$ in \eqref{eq:24} will follow sending $\eps \to 0$ first and then $\eta\to0$, once we note that 
\begin{equation*}
\mathop{\lim\sup}_{\eps\to0} \eps^{-\frac{(p^+_\eps-p^-_\eps)}{p^-_\eps}}\leq c_1 
\end{equation*}
for some constant $c_1$ by virtue of \ref{assP2}. 

Assertion \eqref{eq:25bis} for a function $u\in SBV(\Omega;\R^m)$ can be obtained exactly as in \cite[Lemma~3]{BFLM} as a consequence of H\"older's inequality, combining \eqref{eq:crucial0}, written for $r=q=p^-_\Omega$, and the property
\begin{equation*}
\lim_{\eps\to0} \eps^{-d}\int_{B_\eps(x_0)}|u(x)-\bar{u}^{\rm surf}_{x_0}(x)|\,\mathrm{d}x=0\,.
\end{equation*}
We omit further details.

Concerning \eqref{eq:(7.8)_2}, we begin by observing that, if $u\in SBV(\Omega; \mathbb{R}^m)$ for $\mathcal{H}^{d-1}$-a.e. $x_0\in J_u$ we have
\begin{equation}\label{eq:fauljohannes}
\lim_{\eps\to 0}\frac{1}{\eps^{d-1}}\int_{(J_u\cap B_\eps(x_0))\setminus K_j}(1+|[u]|)\,\mathrm{d}\mathcal{H}^{d-1}=0\,.
\end{equation}
If we now set
\[
\bar{u}^{\rm surf}_{\eps, x_0}(x):=
\begin{cases}
\tau' (w, B_\eps(x_0))\wedge u^+(x_0)\vee \tau{''} (w, B_\eps(x_0)) & \mbox{ if } x_d>\Gamma_j(x')\,,\\
\tau' (z, B_\eps(x_0))\wedge u^-(x_0)\vee \tau{''} (z, B_\eps(x_0)) & \mbox{ if } x_d<\Gamma_j(x')\,,
\end{cases}
\]
with \eqref{eq:32plus}, \eqref{eq:32},  \eqref{eq:fauljohannes}, and since truncations are $1$-Lipschitz, we get
\[
\lim_{\eps\to 0}\frac{1}{\eps^{d-1}}\int_{J_{u_\eps}}|u_\eps-\bar{u}^{\rm surf}_{\eps, x_0}|\,\mathrm{d}\mathcal{H}^{d-1}=0\,.
\]
Now, as shown in \cite[Remark 2, Formula (39)]{BFLM}, one has componentwise
\[
\limsup_{\eps\to 0}\tau'' (w_i, B_\eps(x_0))\geq u^+_i(x_0)\,,\quad\liminf_{\eps\to 0}\tau' (w_i, B_\eps(x_0))\leq u^-_i(x_0)
\]
and the same properties also hold for $z$. With this, one has, for all $E\subset B_1(x_0)$,
\begin{equation*}
\begin{split}
\lim_{\eps\to 0}\frac{1}{\eps^{d-1}}\int_{J_{u_\eps}\cap E_{\eps, x_0}}|[u_\eps]|\,\mathrm{d}\mathcal{H}^{d-1} & =|[u]|(x_0)\lim_{\eps\to 0}\frac{1}{\eps^{d-1}}\mathcal{H}^{d-1}(J_{u_\eps}\cap E_{\eps, x_0}) \\
&=|[u]|(x_0)\mathcal{H}^{d-1}(\Pi_0\cap E)\,,
\end{split}
\end{equation*}
since the last property is satisfied at $\mathcal{H}^{d-1}$-a.e. $x_0\in J_u$ by the definition of measure-theoretic normal to a rectifiable set. This is clearly equivalent to \eqref{eq:(7.8)_2}.
\EEE 
\endproof

We now prove Lemma~\ref{lemma: sameminsurf}. The two inequalities in \eqref{eq: samemin-surf} will be shown with Lemma~\ref{lem:jump1} and Lemma~\ref{lem:jump2} below. 

\begin{lemma}\label{lem:jump1}
Let $p:\Omega\to(1,+\infty)$ be a variable exponent satisfying {\rm\ref{assP1}}-{\rm\ref{assP2}}. \EEE Suppose that $\mathcal{F}$ satisfies {\rm\ref{assH1}} and  {\rm\ref{assH3}}--{\rm\ref{assH4}} and let $u \in GSBV^{p(\cdot)}(\Omega;\R^m)$.  Then for $\mathcal{H}^{d-1}$-a.e.\ $x_0 \in J_u$  we have
\begin{align}\label{eq: surf-proof1upper}
  \lim_{\eps \to 0}\frac{\mathbf{m}_{\mathcal{F}}(u,B_\eps(x_0))}{\gamma_{d-1}\eps^{d-1}} \leq  \limsup_{\eps \to 0}\frac{\mathbf{m}_{\mathcal{F}}(\bar{u}^{\rm surf}_{x_0},B_\eps(x_0))}{\gamma_{d-1}\eps^{d-1}}\,. 
  \end{align}
\end{lemma}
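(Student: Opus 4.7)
The plan is to mirror the argument of Lemma~\ref{lem:bulk1}, replacing the bulk scaling $\eps^d$ by the surface scaling $\eps^{d-1}$ and using the jump blow-up sequence provided by Lemma~\ref{lem:Lemma3fon} in place of Lemma~\ref{lem:lemma2fonseca}. I restrict to points $x_0\in J_u$ at which Lemma~\ref{lem:Lemma3fon} applies, Lemma~\ref{lemma: F=m} gives $\lim_\eps \mathcal{F}(u,B_\eps(x_0))/(\gamma_{d-1}\eps^{d-1})=\lim_\eps \mathbf{m}_{\mathcal{F}}(u,B_\eps(x_0))/(\gamma_{d-1}\eps^{d-1})<+\infty$, and the rectifiability blow-up
\[
\lim_{\eps\to 0}\frac{\mathcal{H}^{d-1}(J_u\cap(x_0+\eps E))}{\eps^{d-1}}=\mathcal{H}^{d-1}(\Pi_0\cap E)
\]
holds for every Borel $E\subset\R^d$ with $\mathcal{H}^{d-1}(\Pi_0\cap\partial E)=0$, $\Pi_0$ being the approximate tangent hyperplane to $J_u$ at $x_0$; all three conditions are satisfied at $\mathcal{H}^{d-1}$-a.e.\ point of $J_u$.

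Fix $\eta>0$ and $\sigma=1-\theta\in(0,1)$ so that the blow-up sequence $\bar u_\eps$ of Lemma~\ref{lem:Lemma3fon} is defined. I pick a near-optimal competitor $z_\eps\in GSBV^{p(\cdot)}(B_{(1-3\theta)\eps}(x_0);\R^m)$ for $\mathbf{m}_{\mathcal{F}}(\bar{u}^{\rm surf}_{x_0},B_{(1-3\theta)\eps}(x_0))$, with $z_\eps=\bar{u}^{\rm surf}_{x_0}$ near $\partial B_{(1-3\theta)\eps}(x_0)$ and error at most $\gamma_{d-1}\eps^{d}$, and extend it by $\bar{u}^{\rm surf}_{x_0}$ to $B_\eps(x_0)$. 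Then I apply Lemma~\ref{lemma: fundamental estimate} to glue $z_\eps$ with $\bar u_\eps$ on the scaled sets in \eqref{eq:choiceofsets}, obtaining a function $w_\eps\in GSBV^{p(\cdot)}(B_\eps(x_0);\R^m)$. Since $\bar u_\eps=u$ on $B_\eps(x_0)\setminus\overline{B_{\sigma\eps}(x_0)}$ by Lemma~\ref{lem:Lemma3fon}(iv), $w_\eps$ coincides with $u$ in a neighborhood of $\partial B_\eps(x_0)$, so $\mathcal{F}(w_\eps,B_\eps(x_0))$ is an admissible upper bound for $\mathbf{m}_{\mathcal{F}}(u,B_\eps(x_0))$.

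The estimate of $\mathcal{F}(w_\eps,B_\eps(x_0))/\eps^{d-1}$ splits into three contributions coming from the fundamental estimate. The remainder $M\int_F(|z_\eps-\bar u_\eps|/(\theta\eps))^{p(x)}\,\mathrm{d}x$ vanishes after division by $\eps^{d-1}$, since $z_\eps=\bar{u}^{\rm surf}_{x_0}$ on $F\subset B_{\sigma\eps}(x_0)$ and Lemma~\ref{lem:Lemma3fon}(iii) applies. The term $\mathcal{F}(z_\eps,B_{(1-\theta)\eps}(x_0))$, decomposed via locality at radius $(1-3\theta)\eps$, produces the factor $(1-3\theta)^{d-1}\limsup_\eps \mathbf{m}_{\mathcal{F}}(\bar{u}^{\rm surf}_{x_0},B_\eps(x_0))/(\gamma_{d-1}\eps^{d-1})$ plus a residual annular term $\beta\mathcal{H}^{d-1}(\Pi_0\cap C_{1,\theta}(x_0))+o(1)$ arising from $\bar{u}^{\rm surf}_{x_0}$ on $C_{\eps,\theta}(x_0)$ via the rectifiability blow-up. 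Finally, Lemma~\ref{lem:Lemma3fon}(i)--(ii) combined with that same blow-up yield $\limsup_\eps \mathcal{F}(\bar u_\eps,C_{\eps,\theta}(x_0))/\eps^{d-1}\le\beta\gamma_{d-1}(1-(1-4\theta)^{d-1})$. Collecting the three bounds and letting first $\eps\to 0$ and then $\eta,\theta\to 0$ produces \eqref{eq: surf-proof1upper}.

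The main obstacle, absent from the constant-exponent argument of \cite{BFLM}, is ensuring that the fundamental-estimate remainder vanishes at the surface scaling: the gluing constant $M$ depends on $p^+$, the integrand is the variable-exponent power $(|z_\eps-\bar u_\eps|/\eps)^{p(x)}$, and one divides by $\eps^{d-1}$ rather than by $\eps^d$, so only a sharp quantitative decay of $|\bar u_\eps-\bar{u}^{\rm surf}_{x_0}|$ in $L^{p(\cdot)}$ suffices. This is precisely what Lemma~\ref{lem:Lemma3fon}(iii) provides, and that property in turn hinges on the log-H\"older assumption \ref{assP2} through the bound $\eps^{-(p^+_\eps-p^-_\eps)/p^-_\eps}=O(1)$ used in \eqref{eq:crucial2}.
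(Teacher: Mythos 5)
Your argument reproduces the paper's proof essentially verbatim: you select the generic point $x_0$ via Lemma~\ref{lem:Lemma3fon}, Lemma~\ref{lemma: F=m}, and the rectifiability blow-up; take a near-optimal competitor on $B_{(1-3\theta)\eps}(x_0)$; glue it to $\bar u_\eps$ with Lemma~\ref{lemma: fundamental estimate} on the same scaled triple of sets \eqref{eq:choiceofsets}; kill the remainder via Lemma~\ref{lem:Lemma3fon}(iii); estimate the annular contributions of $\bar u^{\rm surf}_{x_0}$ and $\bar u_\eps$ by $\beta(1-(1-4\theta)^{d-1})$ through the rectifiability blow-up; and send $\eta,\theta\to0$. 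Your closing remark correctly identifies where the log-H\"older assumption enters (through the uniform bound $\eps^{-(p^+_\eps-p^-_\eps)/p^-_\eps}=O(1)$ in the proof of Lemma~\ref{lem:Lemma3fon}(iii)), which matches the paper's presentation.
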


\proof
Let $\bar{u}_\eps$ be the sequence of Lemma~\ref{lem:Lemma3fon}, let $x_0$ be such that Lemma~\ref{lem:Lemma3fon} holds, and set $\nu:=\nu_u(x_0)$. By Lemma~\ref{lemma: F=m}, for $\mathcal{H}^{d-1}$-a.e. $x_0\in J_u\cap\Omega$ we have
\begin{equation}
\frac{\mathrm{d}\mathcal{F}(u,\cdot)}{\mathrm{d}\mathcal{H}^{d-1}\lfloor_{J_u}}(x_0)  = \lim_{\eps \to 0}\frac{\mathcal{F}(u,B_\eps(x_0))}{\mu(B_\eps(x_0))}= \lim_{\eps \to 0}\frac{\mathbf{m}_{\mathcal{F}}(u,B_\eps(x_0))}{\gamma_{d-1}\eps^{d-1}} < \infty\,.
\label{eq:52}
\end{equation}

Let $\eta>0$ and $\sigma\in(0,1)$ be fixed such that Lemma~\ref{lem:Lemma3fon} holds, and set $\sigma=1-\theta$ for some $\theta\in(0,1)$. We consider a sequence $\bar{z}_\eps \in GSBV^{p(\cdot)}(B_{(1-3\theta)\eps}(x_0);\R^m)$ with $\bar{z}_\eps = \bar{u}^{\rm surf}_{x_0}$ in a neighborhood of $\partial B_{(1-3\theta)\eps}(x_0)$ and
\begin{align}\label{eq:6.20}
\mathcal{F}\big(\bar{z}_\eps,B_{(1-3\theta)\eps}(x_0)\big) \le \mathbf{m}_{\mathcal{F}}\big(\bar{u}^{\rm surf}_{x_0},B_{(1-3\theta)\eps}(x_0)\big) + \gamma_{d-1} \eps^{d}.
\end{align}
We extend $\bar{z}_\eps$ to a function in $GSBV^{p(\cdot)}(B_\eps(x_0);\R^m)$ by setting  $\bar{z}_\eps = \bar{u}^{\rm surf}_{x_0}$ outside $B_{(1-3\theta)\eps}(x_0)$. 
Now, we apply Lemma~\ref{lemma: fundamental estimate} with $u$ and $v$ replaced by $\bar{z}_\eps$ and $\bar{u}_\eps$, respectively, and $D'_{\eps,x_0}:=B_{(1-2\theta)\eps}(x_0)$, $D''_{\eps,x_0}:=B_{(1-\theta)\eps}(x_0)$ and $E_{\eps,x_0}:=C_{\eps,\theta}(x_0)$, where $C_{\eps,\theta}(x_0)$ still denotes the annulus $B_\eps(x_0)\backslash \overline{B_{(1-4\theta)\eps}(x_0)}$ (see \eqref{eq:choiceofsets}). We then find $\bar{w}_\eps\in GSBV^{p(\cdot)}(B_\eps(x_0);\R^m)$ such that $\bar{w}_\eps=\bar{u}_\eps$ on $B_\eps(x_0)\backslash B_{(1-\theta)\eps}(x_0)$ and
\begin{equation}
\begin{split}
\mathcal{F}(\bar{w}_\eps, B_\eps(x_0)) & \leq (1+\eta) \left(\mathcal{F}(\bar{z}_\eps, B_{(1-\theta)\eps}(x_0)) + \mathcal{F}(\bar{u}_\eps, C_{\eps,\theta}(x_0))\right) \\
& + M \int_{(B_{(1-\theta)\eps}(x_0) \setminus B_{(1-2\theta)\eps}(x_0))}\left(\frac{|\bar{z}_\eps-\bar{u}_\eps|}{\eps}\right)^{p(x)}\,\mathrm{d}x +\eta\mathcal{L}^d(B_\eps(x_0))\,.
\end{split}
\label{eq:6.21}
\end{equation}
In particular, by \eqref{eq:24}$(iv)$ we have that $\bar{w}_\eps=\bar{z}_\eps=u$ in a neighborhood of $\partial B_\eps(x_0)$. By \eqref{eq:24}$(iii)$ and the fact that $\bar{z}_\eps=\bar{u}^{\rm surf}_{x_0}$ outside $B_{(1-3\theta)\eps}(x_0)$ we get
\begin{equation}
\begin{split}
&\lim_{\eps\to0} \eps^{-(d+1)} \int_{(B_{(1-\theta)\eps}(x_0) \setminus B_{(1-2\theta)\eps}(x_0))}\left(\frac{|\bar{z}_\eps-\bar{u}_\eps|}{\eps}\right)^{p(x)}\,\mathrm{d}x \\
& = \lim_{\eps \to 0}  \ \eps^{-(d+1)} \int_{B_{(1-\theta)\eps}(x_0)} \left(\frac{|\bar{u}_\eps - \bar{u}^{\rm surf}_{x_0}|}{\eps} \right)^{p(x)} \, \mathrm{d}x = 0\,.
\end{split}
\label{eq:6.21bis}
\end{equation}
Plugging in \eqref{eq:6.21} we find that, for a suitable non-negative vanishing sequence $\varrho_\eps$, it holds that
\begin{equation}
\begin{split}
\mathcal{F}(\bar{w}_\eps, B_\eps(x_0))\leq (1+\eta) \left(\mathcal{F}(\bar{z}_\eps, B_{(1-\theta)\eps}(x_0)) + \mathcal{F}(\bar{u}_\eps, C_{\eps,\theta}(x_0))\right) + \eps^{d-1}\varrho_\eps +\gamma_d\eps^d\eta\,.
\end{split}
\label{eq:6.22}
\end{equation}
In order to estimate the terms in \eqref{eq:6.22}, using that  $\bar{z}_\eps = \bar{u}^{\rm surf}_{x_0}$ on  $B_{\eps}(x_0) \setminus B_{(1-3\theta)\eps}(x_0) \subset C_{\eps,\theta}(x_0)$,  \ref{assH1},  \ref{assH4},   and \eqref{eq:6.20} we compute  
\begin{align}\label{eq:6.23}
\limsup_{\eps\to 0}\frac{\mathcal{F}(\bar{z}_\eps, B_{(1-\theta)\eps}(x_0))}{\gamma_{d-1}\eps^{d-1}} &\le \limsup_{\eps\to 0} \frac{\mathcal{F}(\bar{z}_\eps,B_{(1- 3 \theta)\eps}(x_0))}{\gamma_{d-1}\eps^{d-1}}+  \limsup_{\eps\to 0} \frac{\mathcal{F}(\bar{u}^{\rm surf}_{x_0}, C_{\eps,\theta}(x_0))}{\gamma_{d-1}\eps^{d-1}}\notag\\
&  \le  \limsup_{\eps\to 0} \frac{\mathbf{m}_{\mathcal{F}}(\bar{u}^{\rm surf}_{x_0},B_{(1-3\theta)\eps}(x_0))}{\gamma_{d-1}\eps^{d-1}} + \frac{\beta}{\gamma_{d-1}} \,  \mathcal{H}^{d-1}(C_{1,\theta}(x_0)\cap \Pi_0) \notag \\
&\le  (1-3\theta)^{d-1}\limsup_{\eps\to 0} \frac{\mathbf{m}_{\mathcal{F}}(\bar{u}^{\rm surf}_{x_0},B_{\eps}(x_0))}{\gamma_{d-1}\eps^{d-1}} \\
& +\beta \,(1-(1-4\theta)^{d-1})\,, \notag
\end{align}
where we denote by $\Pi_0$ the hyperplane passing through $x_0$ with normal $\nu_u(x_0)$.

To estimate the remaining term, observe that by rectifiability of $J_u$ and \eqref{eq:24} (i) it holds
\begin{equation}
\begin{split}
\mathop{\lim\sup}_{\eps\to0}\frac{\beta\mathcal{H}^{d-1}(J_{\bar{u}_\eps} \cap C_{\eps,\theta}(x_0))}{\gamma_{d-1}\eps^{d-1}}&\le \lim_{\eps\to0}\frac{\beta\mathcal{H}^{d-1}(J_u \cap C_{\eps,\theta}(x_0))}{\gamma_{d-1}\eps^{d-1}}\\
&=\frac{\beta}{\gamma_{d-1}} \,  \mathcal{H}^{d-1}(C_{1,\theta}(x_0)\cap \Pi_0)=\beta \,(1-(1-4\theta)^{d-1})\,.
\end{split}
\label{eq:stimapersalto}
\end{equation}
With this, using  \ref{assH4} and \eqref{eq:24}$(ii)$ we infer
\begin{equation}
\begin{split}
\mathop{\lim\sup}_{\eps\to0}\frac{\mathcal{F}(\bar{u}_\eps,  C_{\eps,\theta}(x_0))}{\gamma_{d-1}\eps^{d-1}} &\le \mathop{\lim\sup}_{\eps\to0}\frac{\beta\int_{C_{\eps,\theta}(x_0)} (1+  |\nabla \bar{u}_\eps|^{p(x)})\,\mathrm{d}x}{\gamma_{d-1}\eps^{d-1}} +  \mathop{\lim\sup}_{\eps\to0}\frac{\beta\mathcal{H}^{d-1}(J_{\bar{u}_\eps} \cap C_{\eps,\theta}(x_0))}{\gamma_{d-1}\eps^{d-1}} \\
& = \beta(1-(1-4\theta)^{d-1})\,.
\end{split}
\label{eq:6.24}
\end{equation}  
Finally, collecting the estimates in \eqref{eq:6.22}, \eqref{eq:6.23} and \eqref{eq:6.24}, recalling that $\varrho_\eps\to0$ and that $\bar{w}_\eps=u$ in a neighborhood of $\partial B_\eps(x_0)$, we obtain
\begin{align*}
 \displaystyle \lim_{\eps \to 0}\frac{\mathbf{m}_{\mathcal{F}}(u,B_\eps(x_0))}{\gamma_{d-1}\eps^{d-1}} & \le  \limsup_{\eps \to 0}\frac{\mathcal{F}(\bar{w}_\eps, B_\eps(x_0))}{\gamma_{d-1}\eps^{d-1}} \notag \\
 & \le (1+\eta) \, (1-3\theta)^{d-1} \limsup_{\eps \to 0}  \frac{\mathbf{m}_{\mathcal{F}}(\bar{u}^{\rm surf}_{x_0},B_\eps(x_0))}{\gamma_{d-1}\eps^{d-1}} \\ & \ \ \ + 2\beta(1+\eta)(1-(1-4\theta)^{d-1})\,,
\end{align*}
whence \eqref{eq: surf-proof1upper} follows up to passing to $\eta,\theta \to 0$. The proof is concluded.
\EEE

\endproof

\begin{lemma}\label{lem:jump2}
Under the assumptions of Lemma~\ref{lem:jump1},
for $\mathcal{H}^{d-1}$-a.e.\ $x_0 \in J_u$  we have
\begin{align}\label{eq: surf-proof1lower}
  \lim_{\eps \to 0}\frac{\mathbf{m}_{\mathcal{F}}(u,B_\eps(x_0))}{\gamma_{d-1}\eps^{d-1}} \geq  \limsup_{\eps \to 0}\frac{\mathbf{m}_{\mathcal{F}}(\bar{u}^{\rm surf}_{x_0},B_\eps(x_0))}{\gamma_{d-1}\eps^{d-1}}. 
  \end{align}
\end{lemma}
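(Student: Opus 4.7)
The plan is to mirror the structure of the proof of Lemma \ref{lem:bulk2}, replacing the role played by the approximate gradient blow-up (Lemma \ref{lem:lemma2fonseca}) with the jump-point blow-up (Lemma \ref{lem:Lemma3fon}), and rescaling by $\gamma_{d-1}\eps^{d-1}$ instead of $\gamma_d\eps^d$. Fix $x_0\in J_u$ such that Lemma \ref{lem:Lemma3fon} applies, Lemma \ref{lemma: F=m} holds, and the blow-up properties \eqref{eq:blowupevgar}, \eqref{eq:stimapersalto} (rectifiability of $J_u$) are satisfied. Let $\eta>0$ and $\theta\in(0,1)$ be arbitrarily small, and set $\sigma=1-\theta$. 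Let $(\bar u_\eps)_\eps$ be the recovery sequence given by Lemma \ref{lem:Lemma3fon}, which coincides with $u$ outside $B_{\sigma\eps}(x_0)$ and approximates $\bar{u}^{\rm surf}_{x_0}$ in the sense of \eqref{eq:24}.

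First, by a Fubini/coarea argument (entirely analogous to \eqref{eq:contocoarea}--\eqref{eq:43} and \eqref{eq:5.15}), up to passing to a subsequence we can select, for each $\eps$, a radius $s=s(\eps)\in(1-4\theta,1-3\theta)$ such that
\begin{equation*}
\mathcal{H}^{d-1}\big(\partial B_{s\eps}(x_0)\cap(J_u\cup J_{\bar u_\eps})\big)=0\,,\qquad \lim_{\eps\to 0}\eps^{-(d-1)}\mathcal{H}^{d-1}\big(\{u\neq\bar u_\eps\}\cap \partial B_{s\eps}(x_0)\big)=0\,.
\end{equation*}
Next, choose a near-minimizer $z_\eps\in GSBV^{p(\cdot)}(B_{s\eps}(x_0);\R^m)$ for $\mathbf{m}_{\mathcal{F}}(u,B_{s\eps}(x_0))$ with $z_\eps=u$ in a neighbourhood of $\partial B_{s\eps}(x_0)$ and error bounded by $\gamma_{d-1}\eps^{d}$, and extend it by $z_\eps:=\bar u_\eps$ on $B_\eps(x_0)\setminus B_{s\eps}(x_0)$. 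Apply the fundamental estimate (Lemma \ref{lemma: fundamental estimate}) to the pair $(z_\eps,\bar u^{\rm surf}_{x_0})$ with $D'_{\eps,x_0}=B_{(1-2\theta)\eps}(x_0)$, $D''_{\eps,x_0}=B_{(1-\theta)\eps}(x_0)$, $E_{\eps,x_0}=C_{\eps,\theta}(x_0)$ (same choice as \eqref{eq:choiceofsets}). This produces $w_\eps$ equal to $\bar u^{\rm surf}_{x_0}$ on a neighbourhood of $\partial B_\eps(x_0)$, hence an admissible competitor for $\mathbf{m}_{\mathcal{F}}(\bar u^{\rm surf}_{x_0},B_\eps(x_0))$, with
\begin{equation*}
\mathcal{F}(w_\eps,B_\eps(x_0))\le(1+\eta)\bigl(\mathcal{F}(z_\eps,B_{(1-\theta)\eps}(x_0))+\mathcal{F}(\bar u^{\rm surf}_{x_0},C_{\eps,\theta}(x_0))\bigr)+R_\eps+\eta\mathcal{L}^d(B_\eps(x_0))\,,
\end{equation*}
where $R_\eps$ is the remainder term of the fundamental estimate. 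Since $z_\eps=\bar u_\eps$ on the annulus $B_{(1-\theta)\eps}(x_0)\setminus B_{s\eps}(x_0)$, we have $R_\eps\le M\int_{B_{(1-\theta)\eps}(x_0)}\bigl(|\bar u_\eps-\bar u^{\rm surf}_{x_0}|/\eps\bigr)^{p(x)}\dx$, which by \eqref{eq:24}$(iii)$ is $o(\eps^{d-1})$.

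It remains to estimate the main terms after division by $\gamma_{d-1}\eps^{d-1}$. On one hand, splitting on $B_{s\eps}(x_0)$ and $B_{(1-\theta)\eps}(x_0)\setminus B_{s\eps}(x_0)\subset C_{\eps,\theta}(x_0)$, using \ref{assH1}, \ref{assH4}, and the jump transfer at $\partial B_{s\eps}(x_0)$ (which is controlled by the choice of $s$), one gets
\begin{equation*}
\frac{\mathcal{F}(z_\eps,B_{(1-\theta)\eps}(x_0))}{\gamma_{d-1}\eps^{d-1}}\le s^{d-1}\,\frac{\mathbf{m}_{\mathcal{F}}(u,B_{s\eps}(x_0))}{\gamma_{d-1}(s\eps)^{d-1}}+\frac{\mathcal{F}(\bar u_\eps,C_{\eps,\theta}(x_0))}{\gamma_{d-1}\eps^{d-1}}+o(1)\,.
\end{equation*}
For the term on $\bar u_\eps$, exactly as in \eqref{eq:6.24}, by \eqref{eq:24}$(ii)$ and rectifiability of $J_u$ (with $J_{\bar u_\eps}\cap B_\eps(x_0)\subset J_u\cup(\partial B_{\sigma\eps}(x_0))$), one obtains $\limsup_{\eps\to 0}\gamma_{d-1}^{-1}\eps^{-(d-1)}\mathcal{F}(\bar u_\eps,C_{\eps,\theta}(x_0))\le\beta(1-(1-4\theta)^{d-1})$; similarly for $\bar u^{\rm surf}_{x_0}$ one has $\limsup_{\eps\to 0}\gamma_{d-1}^{-1}\eps^{-(d-1)}\mathcal{F}(\bar u^{\rm surf}_{x_0},C_{\eps,\theta}(x_0))\le\beta(1-(1-4\theta)^{d-1})$, because $\nabla\bar u^{\rm surf}_{x_0}\equiv 0$. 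Since $w_\eps$ is admissible for $\mathbf{m}_{\mathcal{F}}(\bar u^{\rm surf}_{x_0},B_\eps(x_0))$ and $s^{d-1}\le 1$, combining everything and then letting first $\eps\to 0$ (so that $(s\eps\to 0$ as well and by Lemma \ref{lem:jump1}/Lemma \ref{lemma: F=m} the limit of $\mathbf{m}_{\mathcal{F}}(u,B_{\eps}(x_0))/(\gamma_{d-1}\eps^{d-1})$ exists and is finite), then $\eta,\theta\to 0$, yields \eqref{eq: surf-proof1lower}.

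The main technical obstacle, as in Lemma \ref{lem:bulk2}, is controlling the interface between the near-minimizer $z_\eps$ (defined on $B_{s\eps}$) and the blow-up sequence $\bar u_\eps$ to which it is glued: the Fubini selection of $s\in(1-4\theta,1-3\theta)$ is essential to guarantee both that no Hausdorff-$(d-1)$ mass concentrates on $\partial B_{s\eps}(x_0)$ and that the mismatch set has vanishing $(d-1)$-density, so that the fundamental estimate can be applied cleanly and the remainder $R_\eps$ is absorbed by \eqref{eq:24}$(iii)$, whose validity relies crucially on the log-H\"older assumption \ref{assP2}.
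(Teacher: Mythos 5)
Your proposal follows essentially the same route as the paper: take the blow-up sequence $\bar u_\eps$ of Lemma~\ref{lem:Lemma3fon}, select a good radius $s\in(1-4\theta,1-3\theta)$ by a Fubini/coarea argument, glue a near-minimizer of $\mathbf{m}_{\mathcal{F}}(u,B_{s\eps}(x_0))$ to $\bar u_\eps$, apply the fundamental estimate with $(D',D'',E)=(B_{(1-2\theta)\eps},B_{(1-\theta)\eps},C_{\eps,\theta})$ to produce an admissible competitor for $\mathbf{m}_{\mathcal{F}}(\bar u^{\rm surf}_{x_0},B_\eps(x_0))$, absorb the remainder via \eqref{eq:24}$(iii)$, estimate the annular contributions by $\beta(1-(1-4\theta)^{d-1})$, and send $\eps\to0$ then $\eta,\theta\to0$. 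All the key points match the paper's argument, so the proof is correct and not a genuinely different approach.
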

\EEE
\proof
Let $\bar{u}_\eps$ be the sequence of Lemma~\ref{lem:Lemma3fon}, and let $\theta\in(0,1)$, $\eta>0$ be fixed. 
From \eqref{eq:25} it follows that
\begin{equation*}
\begin{split}
0\leq \eps^{-(d-1)} \int_0^1 \mathcal{H}^{d-1}(\{u\neq \bar{u}_\eps\}\cap \partial B_{\sigma\eps}(x_0))\,\mathrm{d}\sigma & = \frac{2}{\eps^{d}} \mathcal{L}^d(\{u\neq \bar{u}_\eps\}\cap B_{\eps}(x_0))\to0
\end{split}
\end{equation*} 
as $\eps\to0$. Then for each $\eps>0$ we can find $\sigma\in(1-4\theta,1-3\theta)$ such that
\begin{equation}
\begin{split}
&\mu(\partial B_{\sigma\eps}(x_0)) = \mathcal{H}^{d-1}(\partial B_{\sigma\eps}(x_0)\cap (J_{\bar{u}_\eps}\cup J_u))=0\,,\quad \mbox{ for all $\eps>0$,}  \\
&\lim_{\eps\to0} \eps^{-d} \mathcal{H}^{d-1}(\{u\neq \bar{u}_\eps\}\cap \partial B_{\sigma\eps}(x_0))
=0\,.
\end{split}
\label{eq:6.25}
\end{equation}

We consider a sequence $z_\eps \in GSBV^{p(\cdot)}(B_{\sigma\eps}(x_0);\R^m)$ with $z_\eps = u$ in a neighborhood of $\partial B_{\sigma\eps}(x_0)$ and
\begin{align}\label{eq:repr1+bis}
\mathcal{F}\big(z_\eps,B_{\sigma\eps}(x_0)\big) \le \mathbf{m}_{\mathcal{F}}\big(u,B_{\sigma\eps}(x_0)\big) + \gamma_{d-1}\eps^{d}\,.
\end{align}
We extend $z_\eps$ to a function in $GSBV^{p(\cdot)}(B_\eps(x_0);\R^m)$ by setting  $z_\eps = \bar{u}_\eps$ outside $B_{\sigma\eps}(x_0)$. By applying Lemma~\ref{lemma: fundamental estimate} with $u$ and $v$ replaced by $z_\eps$ and $\bar{u}^{\rm surf}_{x_0}$, respectively, and the same choice for the sets $D'_{\eps,x_0}$, $D''_{\eps,x_0}$ and $E_{\eps,x_0}$ as in Lemma~\ref{lem:jump1}, 
we find $\bar{w}_\eps\in GSBV^{p(\cdot)}(B_\eps(x_0);\R^m)$ such that $\bar{w}_\eps=\bar{u}^{\rm surf}_{x_0}$ on $B_\eps(x_0)\backslash B_{(1-\theta)\eps}$ and
\begin{equation*}
\begin{split}
\mathcal{F}(\bar{w}_\eps, B_\eps(x_0)) & \leq (1+\eta) \left(\mathcal{F}(z_\eps, B_{(1-\theta)\eps}(x_0)) + \mathcal{F}(\bar{u}^{\rm surf}_{x_0}, C_{\eps,\theta}(x_0))\right) \\
& + M \int_{(B_{(1-\theta)\eps}(x_0) \setminus B_{(1-2\theta)\eps}(x_0))}\left(\frac{|z_\eps-\bar{u}^{\rm surf}_{x_0}|}{\eps}\right)^{p(x)}\,\mathrm{d}x +\eta\mathcal{L}^d(B_\eps(x_0))\,.
\end{split}
\end{equation*}
We notice that, as a consequence of the choice of $\sigma$, $z_\eps = \bar{u}_\eps$ outside $B_{(1-3\theta)}(x_0)$. Then, by virtue of $\eqref{eq:24}_3$, we can find a non-negative sequence $\varrho_\eps$, vanishing as $\eps\to0$, such that
\begin{equation}
\begin{split}
\mathcal{F}(\bar{w}_\eps, B_\eps(x_0)) & \leq (1+\eta) \left(\mathcal{F}(z_\eps, B_{(1-\theta)\eps}(x_0)) + \mathcal{F}(\bar{u}^{\rm surf}_{x_0}, C_{\eps,\theta}(x_0))\right) \\
& + \eps^{d-1}\varrho_\eps +\eta\gamma_d\eps^d\,.
\end{split}
\label{eq:6.28}
\end{equation}
We now estimate each term in the right hand side of \eqref{eq:6.28}. Taking into account {\rm \ref{assH1}}, {\rm \ref{assH4}}, \eqref{eq:repr1+bis}, the fact that $z_\eps = \bar{u}_\eps$ on $B_\eps(x_0)\backslash B_{\sigma\eps}(x_0)$ and the choice of $\sigma$, we get
\begin{equation}
\begin{split}
\mathcal{F}(z_\eps, B_{(1-\theta)\eps}(x_0))\leq & \mathbf{m}_{\mathcal{F}}\big(u,B_{\sigma_\eps}(x_0)\big) + \gamma_{d-1}\eps^{d} + \beta \mathcal{H}^{d-1}\left((\{\bar{u}_\eps\neq u\}\cup J_u\cup J_{\bar{u}_\eps})\cap \partial B_{\sigma_\eps}(x_0)\right) \\
& + \mathcal{F}(\bar{u}_\eps, C_{\eps,\theta}(x_0))\,. 
\end{split}
\label{eq:6.29}
\end{equation}
Now, with \eqref{eq:6.24}, \eqref{eq:6.25} and $\sigma\leq (1-3\theta)$ we then obtain
\begin{equation}
\begin{split}
\mathop{\lim\sup}_{\eps\to0} \frac{\mathcal{F}(z_\eps, B_{(1-\theta)\eps}(x_0))}{\gamma_{d-1}\eps^{d-1}} & \leq \mathop{\lim\sup}_{\eps\to0} \frac{ \mathbf{m}_{\mathcal{F}}\big(u,B_{\sigma\eps}(x_0)\big)}{\gamma_{d-1}\eps^{d-1}}+\beta(1-(1-4\theta)^{d-1}) \\
& \leq (1-3\theta)^{d-1}\mathop{\lim\sup}_{\eps\to0} \frac{ \mathbf{m}_{\mathcal{F}}\big(u,B_{\eps}(x_0)\big)}{\gamma_{d-1}\eps^{d-1}}+\beta(1-(1-4\theta)^{d-1})\,,
\end{split}
\label{eq:6.30}
\end{equation} 
and, as already proven in \eqref{eq:6.23},
\begin{equation}
\mathop{\lim\sup}_{\eps\to0}\frac{\mathcal{F}(\bar{u}^{\rm surf}_{x_0}, C_{\eps,\theta}(x_0))}{\gamma_{d-1}\eps^{d-1}} \leq \beta(1-(1-4\theta)^{d-1})\,.
\label{eq:6.31}
\end{equation} 
Collecting the estimates \eqref{eq:6.28}, \eqref{eq:6.30}, \eqref{eq:6.31} and using $\varrho_\eps\to0$ we infer
\begin{equation*}
\mathop{\lim\sup}_{\eps\to0} \frac{\mathcal{F}(\bar{w}_\eps, B_\eps(x_0))}{\gamma_{d-1}\eps^{d-1}} \leq (1+\eta) \left((1-3\theta)^{d-1}\mathop{\lim\sup}_{\eps\to0} \frac{ \mathbf{m}_{\mathcal{F}}\big(u,B_{\eps}(x_0)\big)}{\gamma_{d-1}\eps^{d-1}}+2\beta(1-(1-4\theta)^{d-1})\right)\,.
\end{equation*}
Finally, since $\bar{w}_\eps=\bar{u}^{\rm surf}_{x_0}$ in a neighborhood of $\partial B_\eps(x_0)$, and using the arbitrariness of $\eta$ and $\theta$, we derive
\begin{equation*}
\begin{split}
 \limsup_{\eps \to 0}\frac{\mathbf{m}_{\mathcal{F}}(\bar{u}^{\rm surf}_{x_0},B_\eps(x_0))}{\gamma_{d-1}\eps^{d-1}} & \leq \mathop{\lim\sup}_{\eps\to0} \frac{\mathcal{F}(\bar{w}_\eps, B_\eps(x_0))}{\gamma_{d-1}\eps^{d-1}} \\
& \leq \mathop{\lim\sup}_{\eps\to0} \frac{\mathbf{m}_{\mathcal{F}}({u},B_{\eps}(x_0))}{\gamma_{d-1}\eps^{d-1}} \\
&= \lim_{\eps\to0} \frac{\mathbf{m}_{\mathcal{F}}({u},B_{\eps}(x_0))}{\gamma_{d-1}\eps^{d-1}}\,.
\end{split}
\end{equation*} 
The proof of \eqref{eq: surf-proof1lower} is concluded.
\endproof
\EEE

\section{$\Gamma$\hbox{-}convergence} \label{sec:gammaconv}


In this section, we present a general $\Gamma$-convergence result for functionals $\mathcal{F}\colon GSBV^{p(\cdot)}(\Omega;\R^m) \times \mathcal{A}(\Omega) \to [0,+\infty)$ of the form 
\begin{align}\label{eq:energy}
\mathcal{F}(u,A) = \int_{A} f\big(x, \nabla u(x) \big) \, {\rm d}x +  \int_{J_u \cap A} g\big(x,  [u](x),  \nu_u(x)\big) \, {\rm d} \mathcal{H}^{d-1} (x) 
\end{align} 
for each $u \in GSBV^{p(\cdot)}(\Omega;\R^m)$ and each $A \in \mathcal{A}(\Omega)$, where $[u](x):=u^+(x)-u^-(x)$ (we refer the reader to \cite{DalMaso:93} for an exhaustive treatment of the topic). To formulate the result, we adopt the notation of Section~\ref{sec: main} and define the minimization problems $\mathbf{m}_{ \mathcal{F}}(u,A)$ and the functions $\ell_{x_0,u_0,\xi}$ and $u_{x_0,a,b,\nu}$ as in \eqref{eq: general minimizationsgbv}, \eqref{eq: elastic competitor} and \eqref{eq: jump competitor}, respectively.

Let $0<\alpha\leq\beta<+\infty$ and $1\leq c<+\infty$ be fixed constants. We assume that $f\colon \R^d{\times} \R^{m{\times}d}\to [0,+\infty)$ satisfies the following assumptions: 
\begin{enumerate}[font={\normalfont},label={($f$\arabic*)}]
\item(measurability) $f$ is Borel measurable on $\R^d{\times} \R^{m{\times}d}$; \label{ass-f1}
\item(lower and upper bound) for every $x \in \R^d$ and every $\xi \in \R^{m{\times}d}$,
$$\alpha |\xi|^{p(\cdot)} \leq f(x,\xi)\leq \beta(1+|\xi|^{p(\cdot)})\,,$$
\label{eq:bound1}
\end{enumerate}
and that $g\colon \R^d{\times}\R^m_0{\times} \Sph^{d-1} \to [0,+\infty)$ complies with the following assumptions:
\begin{enumerate}[font={\normalfont},label={($g$\arabic*)}]
\item(measurability) $g$ is Borel measurable on $\R^d{\times}\R^m_0{\times} \Sph^{d-1}$; \label{ass-g1}
\item (estimate for $c|\zeta_1|\le|\zeta_2|$) for every $x\in \R^d$ and every $\nu \in \Sph^{d-1}$
we have
\begin{equation*}
g(x,\zeta_1,\nu) \leq \,g(x,\zeta_2,\nu)
\end{equation*}
for every $\zeta_1$, $\zeta_2\in \R^m_0$ with $c|\zeta_1|\le|\zeta_2|$; \label{ass-g4}
\item(lower and upper bound) for every $x\in \R^d$, $\zeta\in \R^m_0$, and $\nu \in \Sph^{d-1}$
\begin{equation*}
\alpha \le g(x,\zeta,\nu) \le \beta\,;
\end{equation*}\label{eq:bound2}
\item(symmetry) for every $x\in \R^d$, $\zeta\in \R^m_0$, and $\nu \in \Sph^{d-1}$
$$g(x,\zeta,\nu) = g(x,-\zeta,-\nu).$$ \label{ass-g7}
\end{enumerate}
For future reference, we also introduce the property
\begin{enumerate}[font={\normalfont},label={($g5$)}]
\item (estimate for $|\zeta_1|\le|\zeta_2|$) for every $x\in \R^d$ and every $\nu \in \Sph^{d-1}$ we have
$$
g(x,\zeta_1,\nu) \leq c \,g(x,\zeta_2,\nu)
$$
for every $\zeta_1$, $\zeta_2 \in \R^m_0$ with $|\zeta_1|\le |\zeta_2|$. \label{ass-g3}
\end{enumerate}
Notice that assumption \ref{eq:bound2} implies \ref{ass-g3} with $c:=\frac{\beta}{\alpha}$.


The first main result is the following. 

\begin{theorem}[$\Gamma$-convergence]\label{th: gamma}
 Let $\Omega \subset \R^d$ be open.  Let $(f_j)_j$ and $(g_j)_j$ be sequences of functions satisfying {\rm\ref{ass-f1}}-{\rm\ref{eq:bound1}} and {\rm\ref{ass-g1}}-{\rm\ref{ass-g7}}, respectively.  Let $\mathcal{F}_j \colon GSBV^{p(\cdot)}(\Omega;\R^m) \times \mathcal{A}(\Omega) \to [0,+\infty)$ be the corresponding sequence of functionals given in \eqref{eq:energy}.  Then, there exists a functional $\mathcal{F}_\infty\colon  GSBV^{p(\cdot)}(\Omega;\R^m)\times \mathcal{A}(\Omega) \to [0,+\infty)$ and a subsequence (not relabeled) such that
$$\mathcal{F}_\infty(\cdot,A) =\Gamma\text{-}\lim_{j \to \infty} \mathcal{F}_j(\cdot,A) \ \ \ \ \text{with respect to convergence in measure on $A$} $$
for all $A \in  \mathcal{A}(\Omega) $. Moreover, for every $u\in GSBV^{p(\cdot)}(\Omega;\R^m)$ and  $A\in \mathcal{A}(\Omega)$ we have that
\begin{align}\label{eq: representation}
\mathcal{F}_\infty(u,A)= \int_A f_{\infty}\big(x,u(x),   \nabla u(x)   \big)\, {\rm d}x +\int_{J_u\cap A}g_{\infty}(x,u^+(x), u^-(x),\nu_u(x))\, {\rm d}\mathcal{H}^{d-1}(x)\,,
\end{align}
where $f_\infty=f_{\infty}(x_0,u_0,\xi)$ is given by \eqref{eq:fdef-gsbv} for all $x_0 \in \Omega$, $u_0 \in \R^m$, $\xi \in \mathbb{R}^{m\times d}$, and $g_\infty=g_{\infty}(x_0,a,b,\nu)$ is given by \eqref{eq:gdef-gsbv} for all $  x_0  \in \Omega$,  $a,b \in \R^m$, and $\nu \in \mathbb{S}^{d-1}$. 
\end{theorem}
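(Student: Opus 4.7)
My plan is to reduce Theorem \ref{th: gamma} to the integral representation result of Theorem \ref{thm: int-representation-gsbv} via a standard abstract $\Gamma$-compactness procedure, adapted to the variable exponent framework. Since convergence in measure on $\Omega$ is metrizable, the general compactness theorem for increasing $\Gamma$-limits (see, e.g., \cite[Chapters~16--18]{DalMaso:93}) supplies a subsequence, not relabeled, and a functional $\mathcal{F}_\infty(\cdot,A)$ defined for every $A$ in a countable rich family $\mathcal{R}\subset\mathcal{A}(\Omega)$ which coincides with the inner regular envelopes of both $\Gamma\text{-}\liminf$ and $\Gamma\text{-}\limsup$; I would then extend it to all open sets by inner regularization
\begin{equation*}
\mathcal{F}_\infty(u,A):=\sup\{\mathcal{F}_\infty(u,A')\,:\,A'\in\mathcal{R},\,A'\subset\subset A\}\,,
\end{equation*}
and to Borel sets once the measure property below is established.

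Next, I would check that $\mathcal{F}_\infty$ satisfies the assumptions \ref{assH1}--\ref{assH4} of the integral representation theorem. The growth bound \ref{assH4} is inherited from \ref{eq:bound1}--\ref{eq:bound2}: the upper one by taking $u$ itself as a competitor, the lower one from the lower semicontinuity with respect to convergence in measure of the bulk and surface terms in \eqref{eq:energy} (Ioffe's theorem and Ambrosio's lower semicontinuity theorem, respectively, whose $GSBV^{p(\cdot)}$ versions are discussed in \cite{DCLV}). Lower semicontinuity \ref{assH2} is automatic from the very definition of $\Gamma$-limit, together with the fact that convergence in measure on $\Omega$ implies convergence in measure on any measurable subset. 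Locality \ref{assH3} is transferred from the locality of the $\mathcal{F}_j$'s along recovery sequences through a standard truncation argument. The main obstacle, which is the crux of this whole machinery, is the measure property \ref{assH1}. I would obtain it through the De Giorgi--Letta criterion, which boils down to the weak subadditivity
\begin{equation*}
\mathcal{F}_\infty(u,A\cup B)\le \mathcal{F}_\infty(u,A')+\mathcal{F}_\infty(u,B)\quad\text{whenever }A\subset\subset A'\,,
\end{equation*}
and this is precisely where the variable exponent fundamental estimate of Lemma \ref{lemma: fundamental estimate} plays its role. Pasting near-optimal sequences $u_j$ and $v_j$ for $\mathcal{F}_\infty(u,A')$ and $\mathcal{F}_\infty(u,B)$ (both converging in measure to $u$) through the cutoff construction of the lemma produces a competitor $w_j$ on $A\cup B$; the remainder term $M\int_F(|u_j-v_j|/\delta)^{p(x)}\,\mathrm{d}x$ can be made vanishing by first truncating $u_j,v_j$ at a uniform level --- which is legitimate since they share the limit $u$ --- and then appealing to dominated convergence for the modular, using that $u_j-v_j\to 0$ in measure on $F$.

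Once \ref{assH1}--\ref{assH4} are in force, Theorem \ref{thm: int-representation-gsbv} applied to $\mathcal{F}_\infty$ immediately yields the integral representation \eqref{eq: representation}; the cell formulas \eqref{eq:fdef-gsbv}--\eqref{eq:gdef-gsbv}, written with $\mathcal{F}=\mathcal{F}_\infty$, identify the limit integrands $f_\infty$ and $g_\infty$ as in the statement.
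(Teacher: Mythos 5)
Your proposal follows the paper's route closely: abstract $\bar\Gamma$-compactness (\cite[Theorem~16.9]{DalMaso:93}), verification of \ref{assH1}--\ref{assH4} (with the De Giorgi--Letta criterion for \ref{assH1} fed by the fundamental estimate of Lemma~\ref{lemma: fundamental estimate}), and then the integral representation Theorem~\ref{thm: int-representation-gsbv}. The overall reduction is correct. There is, however, a genuine gap in the step where you kill the remainder term $M\int_F(|u_j-v_j|/\delta)^{p(x)}\,\mathrm{d}x$.

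You propose to truncate $u_j$ and $v_j$ ``at a uniform level --- which is legitimate since they share the limit $u$.'' This justification is not sufficient, for two separate reasons. First, truncation must be done \emph{without losing energy}: a componentwise cut-off preserves the bulk modular, but for the surface term the inequality $g_j(x,[T_\lambda u_j],\nu)\le (1+\eta)\,g_j(x,[u_j],\nu)$ (or a fixed multiplicative version) is not automatic; it requires the monotonicity-type hypothesis \ref{ass-g4} on $g_j$, and in fact the paper relies on a dedicated result (Lemma~\ref{lem:cdmsz}) that composes with a carefully chosen Lipschitz map of $\R^m$ to get the $(1+\eta)$-estimate in both the bulk and the jump terms. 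Simply invoking ``truncation at a uniform level'' skips precisely this point. Second, since $u\in GSBV^{p(\cdot)}$ may be unbounded, the truncated competitors $T_\lambda u_j$ do \emph{not} converge to $u$ in measure but to $T_\lambda u$; hence the inequality you obtain after the pasting is a statement about $\mathcal{F}_\infty'(T_\lambda u,\cdot)$, not about $\mathcal{F}_\infty'(u,\cdot)$. The paper avoids this by choosing the truncation level $\lambda$ such that $\beta\mathcal{L}^d(U\cap\{|u|\ge\lambda\})<\eta$, using Lemma~\ref{lem:cdmsz} so that the truncated sequences agree with the originals on $\{|u_j|\le\lambda\}$, and then running a diagonal argument in $(j,\eta)$. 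Without the energy estimate of Lemma~\ref{lem:cdmsz} and this diagonalization, the subadditivity step — and hence \ref{assH1} — is not fully established.
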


We will prove the compactness of $\Gamma$-convergence via the localization technique for $\Gamma$-convergence (see \cite[Ch. 14--20]{DalMaso:93} for the general method), where the main ingredient is the fundamental estimate in $GSBV^{p(\cdot)}$, proven with Lemma~\ref{lemma: fundamental estimate}. The representation \eqref{eq: representation} in terms of the densities $f_\infty$ and $g_\infty$ then will follow by the integral representation result of Theorem~\ref{thm: int-representation-gsbv}. Indeed, since each $\mathcal{F}_j$ is invariant under translations of $u$, then also $\mathcal{F}_\infty$, as $\Gamma$-limit, satisfies the same property. Thus, from Theorem~\ref{thm: int-representation-gsbv}, in particular \eqref{eq:fdef-gsbv}--\eqref{eq:gdef-gsbv}, we infer that $f_\infty=f_{\infty}(x_0,\xi)$, $g_\infty=g_{\infty}(x_0,a-b,\nu)$ so that $\mathcal{F}_\infty$ has the form
\begin{align}\label{eq: representationbis}
\mathcal{F}_\infty(u,A)= \int_A f_{\infty}\big(x,   \nabla u(x)   \big)\, {\rm d}x +\int_{J_u\cap A}g_{\infty}(x,[u](x),\nu_u(x))\, {\rm d}\mathcal{H}^{d-1}(x)\,,
\end{align}
and the densities $f_\infty, g_\infty$ can be computed as
\begin{align}
f_\infty(x_0,\xi) & = \limsup_{\eps \to 0} \frac{\mathbf{m}_{\mathcal{F}}(\ell_{0,0,\xi},B_\eps(x_0))}{\gamma_d\eps^{d}}\,, \label{eq:fdef-gsbvbis} \\
g_\infty(x_0,\zeta,\nu) & = \limsup_{\eps \to 0} \frac{\mathbf{m}_{\mathcal{F}}(u_{x_0,\zeta,0,\nu},B_\eps(x_0))}{\gamma_{d-1}\eps^{d-1}} \,,  \label{eq:gdef-gsbvbis}
\end{align}
for all $x_0 \in \Omega$, $\xi \in \mathbb{R}^{m \times d}$, $\zeta\in\R^m$ and $\nu\in\mathbb{S}^{d-1}$.

For our purposes, it will be useful to consider functionals $\mathcal{I}: L^0(\Omega;\R^m)\times\mathcal{A}(\Omega)\to[0,+\infty]$ defined as
\begin{equation}
\mathcal{I}(u,A):=
\begin{cases}
\int_A f(x,\nabla u(x))\,\mathrm{d}x\,, & u\in GSBV^{p(\cdot)}(A;\R^m)\,, \\
+\infty & \mbox{ otherwise. }
\end{cases}
\label{eq:partialenergy}
\end{equation}

We recall a result concerning the existence of suitable truncations of a measurable function $u$ by which functionals $\mathcal{F}$ as above almost decrease (see \cite[Lemma~4.1]{CDMSZ}). For our purposes, the statement below is formulated in the $p(\cdot)$-setting, and since the adaptation of the original proof requires only minor changes, we omit the details.

\begin{lemma}\label{lem:cdmsz}
Let $\mathcal{F}$ be as in \eqref{eq:energy}, where we assume that $f$ satisfies {\rm\ref{ass-f1}}-{\rm\ref{eq:bound1}} and $g$ satisfies {\rm\ref{ass-g1}},{\rm\ref{ass-g4}}, {\rm\ref{ass-g7}} and {\rm\ref{ass-g3}}. Let $\mathcal{I}$ be as in \eqref{eq:partialenergy}. Let $\eta,\lambda>0$. Then there exists $\mu>\lambda$ depending on $\eta$, $\lambda$, $\alpha$, $\beta$, $c$ such that the following holds: for every open set $A\subset\Omega$ and for every $u\in L^0(\R^d, \R^m)$ such that ${u}|_A\in GSBV^{p(\cdot)}(A,\R^m)$, there exists $\hat{u}\in L^\infty(\R^d,\R^m)$ such that $\hat{u}|_A\in SBV^{p(\cdot)}(A,\R^m)$ and 
\begin{itemize}
\item[$(i)$] $|\hat{u}|\leq \mu$ on $\R^d$;
\item[$(ii)$] $\hat{u}=u$ $\mathcal{L}^d$-a.e. in $\{|u|\leq \lambda\}$;
\item[$(iii)$] $\mathcal{F}(\hat{u},A)  \leq (1+\eta) \mathcal{F}(u,A) + \beta \mathcal{L}^d (A\cap\{|u|\geq \lambda\})\,.$
\end{itemize}
Moreover, there exists $\hat{v}$ with the same properties of $\hat{u}$ such that $(iii)$ holds for the functional $\mathcal{I}$ with $\hat{v}$ in place of $\hat{u}$.
\end{lemma}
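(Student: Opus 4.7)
My plan is to follow the proof of \cite[Lemma~4.1]{CDMSZ}, which treats the constant-exponent case; the variable exponent enters only through the upper bound \ref{eq:bound1}, and the \emph{minor changes} amount to replacing $|\xi|^p$ by $|\xi|^{p(x)}$ throughout and using the pointwise inequality $|\nabla\hat{u}|^{p(x)}\le|\nabla u|^{p(x)}$, valid since the truncations we consider are $1$-Lipschitz on $\R^m$. All remaining arguments depend only on the values of $u$ and on the constant $c$ of \ref{ass-g4}, and are insensitive to the variable-exponent setting.

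Concretely, I would fix $N\in\N$ and a factor $C>1$, both large (depending only on $\eta,\alpha,\beta,c,p^-$), introduce a geometric sequence of truncation levels $\lambda_k:=C^{k}\lambda$ for $k=0,\dots,N$, and set $\mu:=\lambda_N$. For each $k\in\{1,\dots,N\}$, I consider the radial truncation $\hat{u}^{(k)}:=P_{\lambda_k}\circ u$, the projection of $u$ onto $\bar B_{\lambda_k}\subset\R^m$; each $\hat{u}^{(k)}$ belongs to $SBV^{p(\cdot)}(A;\R^m)$, is bounded by $\mu$, coincides with $u$ on $\{|u|\le\lambda_k\}$, and satisfies $J_{\hat{u}^{(k)}}\subseteq J_u$, $|[\hat{u}^{(k)}]|\le|[u]|$, and $|\nabla\hat{u}^{(k)}|^{p(x)}\le|\nabla u|^{p(x)}$ a.e. The annular pieces $A_k:=A\cap\{\lambda_{k-1}\le|u|<\lambda_k\}$ together with the analogous partition of $J_u\cap A$ via the maximum of $|u^+|,|u^-|$ decompose the large-value region of $u$ into $N$ disjoint pieces, and the sum of the corresponding bulk and surface contributions is at most $\mathcal{F}(u,A)$. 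The pigeonhole principle then produces an index $k^*\in\{1,\dots,N\}$ whose annular bulk plus surface contribution is bounded by $\mathcal{F}(u,A)/N$; I set $\hat{u}:=\hat{u}^{(k^*)}$.

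Properties $(i)$ and $(ii)$ are immediate. For $(iii)$, I split $\mathcal{F}(\hat{u},A)$ into (a) the contribution on $\{\hat{u}=u\}$ (unchanged), (b) the contribution on the \emph{strongly truncated} region $\{|u|\ge\lambda_{k^*+1}\}$, and (c) the contribution on the annulus $A_{k^*}$ and its surface counterpart. In (b), the choice of $C$ enforces simultaneously $c|[\hat{u}]|\le|[u]|$ on the surviving jumps (so \ref{ass-g4} makes the surface energy non-increasing) and the bulk contraction $|\nabla\hat{u}|^{p(x)}\le C^{-p^-}|\nabla u|^{p(x)}$, a direct computation for the radial projection on $\{|u|\ge C\lambda_{k^*}\}$; combined with \ref{eq:bound1} this yields at most $\beta\mathcal{L}^d(A\cap\{|u|\ge\lambda\})+(\beta/\alpha)C^{-p^-}\mathcal{F}(u,A)$. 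In (c), \ref{ass-g3} applied to $|[\hat{u}]|\le|[u]|$ combined with the pigeonhole bound produces a surface contribution of at most $c\mathcal{F}(u,A)/N$ and a bulk contribution of at most $\beta\mathcal{L}^d(A\cap\{|u|\ge\lambda\})+(\beta/\alpha)\mathcal{F}(u,A)/N$. Choosing $N,C$ large enough absorbs all these error terms into $\eta\mathcal{F}(u,A)$, yielding $(iii)$. The function $\hat{v}$ relative to $\mathcal{I}$ is constructed identically, simply omitting the surface considerations.

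The hardest step, already present in \cite{CDMSZ}, is pinning down the factor $C=C(c)$ so that the strict monotonicity $c|[\hat{u}]|\le|[u]|$ holds on the strongly truncated jumps, in particular in the delicate one-sided case where only one of $|u^\pm|$ exceeds $\lambda_{k^*+1}$: writing $u^\pm=r_\pm v_\pm$ with $|v_\pm|=1$ and expanding $|u^+-u^-|^2-c^2|\hat{u}^+-\hat{u}^-|^2$ shows that $C\ge 2c+1$ suffices, a purely geometric fact in $\R^m$ to which the variable exponent is transparent.
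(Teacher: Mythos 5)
Your proposal is correct and follows essentially the same route as the paper, which itself simply defers to \cite[Lemma~4.1]{CDMSZ} with ``minor changes'' in the $p(\cdot)$-setting. You have correctly isolated the only two places where the variable exponent enters: the pointwise bound $|\nabla\hat u|^{p(x)}\le|\nabla u|^{p(x)}$ (from the $1$-Lipschitz truncation) and the contraction $|\nabla\hat u|^{p(x)}\le C^{-p^-}|\nabla u|^{p(x)}$ on the strongly truncated region; the surface arguments via \ref{ass-g4} and \ref{ass-g3} are indeed insensitive to the exponent, and your geometric computation yielding $C\gtrsim 2c+1$ (via the full identity $|[u]|^2=(r_+-r_-)^2+r_+r_-|v_+-v_-|^2$, not merely the weaker lower bound $r_+r_-|v_+-v_-|^2$) is sound. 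One small bookkeeping slip: with $\hat u:=P_{\lambda_{k^*}}\circ u$ the transition region where neither $\hat u=u$ nor the $C^{-p^-}$-contraction applies is $A\cap\{\lambda_{k^*}\le|u|<\lambda_{k^*+1}\}=A_{k^*+1}$, \emph{not} $A_{k^*}$ (which lies inside $\{\hat u=u\}$); you should therefore either pigeonhole to select a good $A_{k^*+1}$, or take $\hat u:=P_{\lambda_{k^*-1}}\circ u$, so that the annulus controlled by the pigeonhole is the actual transition annulus. This is a one-index shift and does not affect the validity of the method.
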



Let $(\mathcal{F}_j)_j$ be a sequence of functionals of the  form \eqref{eq:energy}. We start by proving some  properties of the $\Gamma$-liminf and $\Gamma$-limsup with respect to the topology of the convergence in measure. To this end, we define 
\begin{align}\label{eq: liminf-limsup}
\mathcal{F}_\infty'(u,A)&:=\Gamma-\liminf_{n \to \infty} \mathcal{F}_j(u,A)   = \inf \big\{ \liminf_{j \to \infty} \mathcal{F}_j(u_j,A):   \  u_j \to  u \hbox{ in measure on }  A  \big\}, \notag \\
\mathcal{F}_\infty''(u,A) &:= \Gamma-\limsup_{n \to \infty} \mathcal{F}_j(u,A)  = \inf \big\{  \limsup_{j \to \infty} \mathcal{F}_j(u_j,A):   \ u_j \to  u \hbox{ in measure on }  A \big\}
\end{align} 
for all $u \in GSBV^{p(\cdot)}(\Omega;\R^m)$ and $A \in \mathcal{A}(\Omega)$.

\begin{lemma}[Properties of $\Gamma$-liminf and  $\Gamma$-limsup]\label{eq: liminflimsup-prop}
 Let $\Omega \subset \R^d$  be an open set, and \\ $\mathcal{F}_j\colon  GSBV^{p(\cdot)}(\Omega;\R^m)\times \mathcal{A}(\Omega) \to [0,\infty)$ be a sequence of functionals as in {\rm(\ref{eq:energy})}, where we assume that $f_j$ and $g_j$ comply with {\rm\ref{ass-f1}}-{\rm\ref{eq:bound1}} and {\rm\ref{ass-g1}}-{\rm\ref{ass-g7}}, respectively, for all $j\in\mathbb{N}$. Define $\mathcal{F}_\infty'$ and $\mathcal{F}_\infty''$ as in \eqref{eq: liminf-limsup}, and write, for brevity, $$ \mathcal{G}(u,A):= \int_A |\nabla u|^{p(\cdot)}\,\mathrm{d}x+ \mathcal{H}^{d-1}(J_u \cap A)\,.$$ Then we have
\begin{align}\label{eq: infsup0}
{\rm (i)} & \ \ 
\mathcal{F}_\infty'(u,A) \le \mathcal{F}_\infty'(u,B), \ \ \ \ \ \  \mathcal{F}_\infty''(u,A) \le \mathcal{F}_\infty''(u,B) \ \ \ \text{ whenever } A \subset B, \notag \\
{\rm (ii)} & \ \ 
 \alpha \mathcal{G}(u,A)  \le \mathcal{F}_\infty'(u,A)    \le  \mathcal{F}_\infty''(u,A) \le \beta\mathcal{G}(u,A) + \beta\mathcal{L}^d(A), \notag \\
{\rm (iii)} & \ \ 
\mathcal{F}_\infty'(u,A)  = \sup\nolimits_{B \subset \subset A} \mathcal{F}_\infty'(u,B),  \ \ \ \ \mathcal{F}_\infty''(u,A)  = \sup\nolimits_{B \subset \subset A} \mathcal{F}_\infty''(u,B)  \ \ \  \text{ whenever } A \in \mathcal{A}(\Omega),  \notag \\
{\rm (iv)} & \ \ 
\mathcal{F}_\infty'(u,A\cup B) \le  \mathcal{F}_\infty'(u,A) + \mathcal{F}_\infty'(u,B), \notag \\
& \ \   \mathcal{F}_\infty''(u,A\cup B) \le  \mathcal{F}_\infty''(u,A) + \mathcal{F}_\infty''(u,B)  \ \ \  \text{ whenever } A,B \in \mathcal{A}(\Omega),
\end{align}
where  $\alpha, \beta$ have been introduced in {\rm\ref{eq:bound1}} and {\rm\ref{eq:bound2}}.
\end{lemma}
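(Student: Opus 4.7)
The strategy is to verify the four properties in order, isolating in (iv) the one technical point that requires the fundamental estimate from Lemma~\ref{lemma: fundamental estimate} together with the truncation Lemma~\ref{lem:cdmsz}. The remaining items are more or less standard consequences of the definition \eqref{eq: liminf-limsup}.

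\textbf{Monotonicity (i) and the lower/upper bounds (ii).} Item (i) follows directly from \eqref{eq: liminf-limsup} since any sequence $u_j\to u$ in measure on $B\supset A$ also converges in measure on $A$, and the functionals $\mathcal{F}_j(\cdot,A)$ are monotone in their second argument by \ref{ass-f1}--\ref{eq:bound2}. For (ii), the upper bound is obtained by testing with the constant sequence $u_j\equiv u$ and using the upper bounds in \ref{eq:bound1} and \ref{eq:bound2}. For the lower bound, given any sequence $u_j\to u$ in measure on $A$, the uniform lower bounds in \ref{eq:bound1}--\ref{eq:bound2} give $\mathcal{F}_j(u_j,A)\ge\alpha\mathcal{G}(u_j,A)$; Ambrosio's lower semicontinuity for $GSBV$ with respect to convergence in measure (see \cite[Theorem~2.2]{Amb}), together with Ioffe's theorem (see \cite{Ioffe}) applied to the convex, normal integrand $|\xi|^{p(\cdot)}$, yields $\liminf_j \mathcal{G}(u_j,A)\ge \mathcal{G}(u,A)$, whence $\mathcal{F}'_\infty(u,A)\ge \alpha\mathcal{G}(u,A)$. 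The chain of inequalities $\mathcal{F}'_\infty\le\mathcal{F}''_\infty$ is immediate from the definitions.

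\textbf{Subadditivity (iv).} This is the core step, and the only one where the log-H\"older regularity of $p(\cdot)$ does not enter (since \ref{assP1} suffices for the fundamental estimate). Fix $A,B\in\mathcal{A}(\Omega)$ and $\eta>0$; fix $A'\subset\subset A''\subset\subset A$ and $B'\subset\subset B$ such that $A'\cup B'$ exhausts $A\cup B$ modulo a small set (or, more precisely, work with an exhausting family). Let $u_j\to u$ in measure on $A$ and $v_j\to u$ in measure on $B$ be almost-optimal for $\mathcal{F}'_\infty(u,A)$ and $\mathcal{F}'_\infty(u,B)$ (the argument for $\mathcal{F}''_\infty$ is identical, replacing $\liminf$ by $\limsup$). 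The main obstacle is that the remainder term
\[
R_j:=M\int_{F}\Big(\frac{|u_j-v_j|}{\delta}\Big)^{p(x)}\mathrm{d}x
\]
appearing in Lemma~\ref{lemma: fundamental estimate} cannot be controlled by convergence in measure alone, because $|u_j-v_j|$ need not be equi-integrable to the power $p(\cdot)$. To overcome this, I would apply Lemma~\ref{lem:cdmsz} with a fixed large parameter $\lambda$ (and the corresponding $\mu=\mu(\eta,\lambda)$) to replace $u_j,v_j$ by truncations $\hat u_j,\hat v_j$ that are uniformly bounded by $\mu$ on $\R^d$ and satisfy
\[
\mathcal{F}_j(\hat u_j,A)\le(1+\eta)\mathcal{F}_j(u_j,A)+\beta\mathcal{L}^d(A\cap\{|u_j|\ge\lambda\}),
\]
and analogously for $\hat v_j$. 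Since $u_j\to u$, $v_j\to u$ in measure, for $\lambda$ large enough and $j\to\infty$ the Lebesgue-measure terms vanish up to an arbitrarily small quantity depending on $\{|u|\ge\lambda\}$. With $|\hat u_j-\hat v_j|\le 2\mu$ on $F$, and $\hat u_j-\hat v_j\to 0$ in measure on $F$ (the common limit $u$ cancels), dominated convergence yields $\lim_j R_j=0$. Applying Lemma~\ref{lemma: fundamental estimate} to $\hat u_j,\hat v_j$ on the sets $D'=A', D''=A'', E=B'$ produces $w_j\in GSBV^{p(\cdot)}(A'\cup B';\R^m)$ with $w_j\to u$ in measure on $A'\cup B'$ and
\[
\mathcal{F}_j(w_j,A'\cup B')\le(1+\eta)^2(\mathcal{F}_j(u_j,A)+\mathcal{F}_j(v_j,B))+o(1)+\eta\,\mathcal{L}^d(A\cup B)+\text{lower-order terms}.
\]
Taking $\liminf_j$ (resp.\ $\limsup_j$), using the subadditivity of $\liminf/\limsup$ and then letting $\eta\to0$ and $\lambda\to\infty$, one obtains $\mathcal{F}'_\infty(u,A'\cup B')\le\mathcal{F}'_\infty(u,A)+\mathcal{F}'_\infty(u,B)$. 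Finally, saturating $A'\cup B'\nearrow A\cup B$ through the inner regularity already encoded in the argument (or post facto via (iii)) yields (iv).

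\textbf{Inner regularity (iii).} For $\mathcal{F}''_\infty$ the inequality $\sup_{B\subset\subset A}\mathcal{F}''_\infty(u,B)\le\mathcal{F}''_\infty(u,A)$ follows from (i); the converse uses the standard De~Giorgi--Letta-type trick. Given $A'\subset\subset A$ and a thin annular layer $A\setminus\overline{A'}\subset E$, combine via (iv) a recovery sequence on $A'$ with the constant sequence $u$ on $E$, whose energy on $E$ is bounded by $\beta\mathcal{G}(u,E)+\beta\mathcal{L}^d(E)$ thanks to (ii). The Borel measure $\mu:=\beta(\mathcal{L}^d+\mathcal{H}^{d-1}\mres J_u)$ satisfies $\mu(\Omega)<\infty$, so the layers can be chosen of arbitrarily small $\mu$-mass, yielding $\mathcal{F}''_\infty(u,A)\le \mathcal{F}''_\infty(u,A')+o(1)$ as $A'\nearrow A$. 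For $\mathcal{F}'_\infty$ the argument is analogous, exploiting the fact that the subadditivity (iv) established above holds for both $\mathcal{F}'_\infty$ and $\mathcal{F}''_\infty$.

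The main obstacle is the step in (iv) where the remainder $R_j$ must be shown to vanish: this is precisely where the log-H\"older assumption \ref{assP2} is irrelevant, while the coercivity \ref{eq:bound2} combined with Lemma~\ref{lem:cdmsz} is essential to secure $L^\infty$ bounds permitting the use of dominated convergence.
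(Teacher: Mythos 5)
Your proposal is correct and the substance matches the paper's argument closely: you isolate the same technical heart, namely that the remainder term in the fundamental estimate is brought under control by first truncating via Lemma~\ref{lem:cdmsz} to obtain $L^\infty$ bounds, then invoking dominated convergence on the $L^{p(\cdot)}$-modular of the difference, which converges to zero in measure. Items (i), (ii) also proceed as in the paper.

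The one thing to clean up is the circular ordering between (iii) and (iv). As written, your step for (iv) only yields $\mathcal{F}'_\infty(u,A'\cup B')\le\mathcal{F}'_\infty(u,A)+\mathcal{F}'_\infty(u,B)$ for $A'\subset\subset A$, $B'\subset\subset B$, and then you invoke inner regularity ``post facto via (iii)'' to saturate $A'\cup B'\nearrow A\cup B$; but your proof of (iii) in turn ``combines via (iv)'' a recovery sequence on $A'$ with the constant competitor on the annulus. Neither step is wrong mathematically, because what (iii) really needs is not the full statement of (iv) but precisely the intermediate estimate you just produced. The paper avoids the apparent circularity by explicitly isolating this intermediate inequality (their \eqref{eq:4.35FPS}, $\mathcal{F}'_\infty(u,U)\le \mathcal{F}'_\infty(u,W)+\mathcal{F}'_\infty(u,U\setminus\overline V)$ for $V\subset\subset W\subset\subset U$) as a stand-alone consequence of Lemma~\ref{lem:cdmsz} plus the fundamental estimate, and then derives both (iii) and (iv) from it. Adopting that presentation would make your logical dependencies airtight without changing the mathematics.
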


\begin{proof}
The monotonicity property (i) follows from the fact that $\mathcal{F}_j(u,\cdot)$ are measures. The upper bound in (ii) can be inferred choosing the constant sequence $u_j=u$ in \eqref{eq: liminf-limsup} and taking into account the upper bounds in {\rm\ref{eq:bound1}} and {\rm\ref{eq:bound2}}. For what concerns the lower bound in (ii), we consider an (almost) optimal sequence $(v_j)_j$ in \eqref{eq: liminf-limsup}. Then, with the lower bounds in {\rm\ref{eq:bound1}} and {\rm\ref{eq:bound2}} we get
\begin{equation*}
 \sup_{j \in \N} \alpha \mathcal{G}(v_j, A)<+\infty\,.
\end{equation*}
Now, since $v_j\to u$ in measure on $A$, by arguing as in the proof of Lemma~\ref{lemma: F=m*} and exploiting the lower semicontinuity inequalities
\begin{equation*}
\begin{split}
\int_A |\nabla u(x)|^{p(x)}\,\mathrm{d}x &\leq \displaystyle\mathop{\lim\inf}_{j\to+\infty} \int_A|\nabla v_j(x)|^{p(x)}\,\mathrm{d}x <+\infty\,, \\
\mathcal{H}^{d-1}(J_u\cap A) & \leq \displaystyle\mathop{\lim\inf}_{j\to+\infty} \mathcal{H}^{d-1}(J_{v_j}\cap A)\,,
\end{split}
\end{equation*}
we easily obtain the lower bound.

In order to prove (iii) and (iv), we preliminary show that for every $U, V$ and $W$ open subsets of $\Omega$, with $V\subset\subset W \subset\subset U$, we have
\begin{equation}
\mathcal{F}_\infty'(u,U) \leq \mathcal{F}_\infty'(u,W) +  \mathcal{F}_\infty'(u,U\backslash \overline{V})\,,\quad \mathcal{F}_\infty''(u,U) \leq \mathcal{F}_\infty''(u,W) +  \mathcal{F}_\infty''(u,U\backslash \overline{V})\,.
\label{eq:4.35FPS}
\end{equation}
We confine ourselves to the proof of the first assertion in \eqref{eq:4.35FPS}, the other one being similar. Let $(u_j)_j$ and $(v_j)_j$ be sequences in $GSBV^{p(\cdot)}(\Omega;\R^m)$ converging in measure to $u$ on $W$ and $U\backslash\overline{V}$, respectively, such that
\begin{equation}
\mathcal{F}_\infty'(u,W) = \mathop{\lim\inf}_{j\to+\infty} \mathcal{F}_j(u_j,W)\,, \quad \mathcal{F}_\infty'(u,U\backslash \overline{V}) = \mathop{\lim\inf}_{j\to+\infty} \mathcal{F}_j(v_j,U\backslash \overline{V})\,. 
\label{eq:4.36FPS}
\end{equation}
We may assume, up to passing to a not relabeled subsequence, that each liminf above is a limit. We fix $\eta\in(0,1)$ and $\lambda>0$ such that
\begin{equation}
\beta \mathcal{L}^d (U\cap\{|u|\geq \lambda\})<\eta\,.
\label{eq:boundonu}
\end{equation}
By virtue of Lemma~\ref{lem:cdmsz}, there exists $\mu>\lambda$ such that, for every $k\geq1$ we can find $\hat{u}_{j_k}\in SBV^{p(\cdot)}(W;\R^m)\cap L^\infty(W;\R^m)$, with $|\hat{u}_{j_k}|\leq\mu$, $\hat{v}_{j_k}\in SBV^{p(\cdot)}(U\backslash \overline{V};\R^m)\cap L^\infty(U\backslash \overline{V};\R^m)$, with $|\hat{v}_{j_k}|\leq\mu$, such that $\hat{u}_{j_k}=u_j$ $\mathcal{L}^d$-a.e. in $W\cap\{|u_j|\leq \lambda\}$, $\hat{v}_{j_k}=v_j$ $\mathcal{L}^d$-a.e. in $(U\backslash \overline{V})\cap\{|v_j|\leq \lambda\}$ and 
\begin{equation}
\begin{split}
\mathcal{F}_j(\hat{u}_{j_k},W) & \leq \left(1+\eta\right) \mathcal{F}_j(u_j,W) + \beta \mathcal{L}^d (W\cap\{|u_j|\geq \lambda\})\,, \\
\mathcal{F}_j(\hat{v}_{j_k},U\backslash \overline{V}) & \leq \left(1+\eta\right) \mathcal{F}_j(v_j,U\backslash \overline{V}) + \beta \mathcal{L}^d ((U\backslash \overline{V})\cap\{|v_j|\geq \lambda\})\,.
\end{split}
\label{eq:stimette}
\end{equation}
We apply Lemma~\ref{lemma: fundamental estimate} with $\eta$ above, 
$D'':=W$, $E:=U\backslash \overline{V}$, $u=\hat{u}_{j_k}$, $v=\hat{v}_{j_k}$, for some $D'$ with $V\subset\subset D'\subset\subset W$. Note that $W\backslash D'\subset U\backslash \overline{V}$. We then find a function $\hat{w}_{j_k}\in SBV^{p(\cdot)}(U;\R^m)\cap L^\infty(U;\R^m)$ such that
\begin{equation}
\begin{split}
\mathcal{F}_j (\hat{w}_{j_k}, U)  \le  &  \left(1+\eta\right)\big(\mathcal{F}_j(\hat{u}_{j_k},W)  + \mathcal{F}_j(\hat{v}_{j_k},U\backslash \overline{V}) \big) \\
& + M \int_{W\backslash D'}\left(\frac{|\hat{u}_{j_k}-\hat{v}_{j_k}|}{\delta}\right)^{p(x)}\,\mathrm{d}x + \eta 
\mathcal{L}^d(D' \cup E)\,.
\end{split}
\label{eq:stimafond}
\end{equation}
Note that, by the dominated convergence in measure, $\hat{u}_{j_k}-\hat{v}_{j_k}\to 0$ in $L^{p(\cdot)}(W\backslash D';\R^m)$ as $k\to+\infty$. Moreover, recalling \eqref{eq: assertionfund}(ii), we have that $\hat{w}_{j_k}\to u$ in measure on $U$ as $k\to+\infty$. By a diagonal argument this implies, in particular, that
\begin{equation}
\mathcal{F}_\infty'(u,U) \leq \mathop{\lim\inf}_{k\to+\infty} \mathcal{F}_{j_k} (\hat{w}_{j_k}, U)\,.
\label{eq:semicontinuo}
\end{equation}
Note also that, from \eqref{eq:boundonu} and the convergence in measure of both $u_{j_k}$ and $v_{j_k}$ to $u$, we have 
\begin{equation*}
\mathcal{L}^d (W\cap\{|u_{j_k}|\geq \lambda\})<\eta \quad \mbox{ and } \quad \mathcal{L}^d ((U\backslash \overline{V})\cap\{|v_{j_k}|\geq \lambda\})<\eta
\end{equation*}
for $k$ large enough. Then, combining \eqref{eq:stimette} with \eqref{eq:stimafond}, \eqref{eq:4.36FPS} and passing to the limit as $k\to+\infty$, and then letting $\eta\to0^+$, assertion \eqref{eq:4.35FPS} follows.

We now prove the inner regularity of $\mathcal{F}_\infty'$, the first property in (iii). Combining \eqref{eq: infsup0}(ii) and \eqref{eq:4.35FPS} we find
\begin{equation*}
 \mathcal{F}_\infty'(u,U) \le  \mathcal{F}_\infty'(u,W)+ \beta\mathcal{G}(u,U\backslash\overline{V}) + \beta\mathcal{L}^d(U\backslash\overline{V})\,.
\end{equation*}
Now, we can choose $V\subset\subset U$ and $U$ in such a way that $\mathcal{L}^d(U\backslash\overline{V})$ and $\mathcal{G}(u,U\backslash\overline{V})$ be arbitrarily small, and recalling that $\mathcal{F}_\infty'(u,\cdot)$ is an increasing set function by \eqref{eq: infsup0}(i), we obtain \eqref{eq: infsup0}(iii) for $\mathcal{F}_\infty'$. The proof of the analogous property for $\mathcal{F}_\infty''$ is similar.

We conclude by showing property (iv) for $\mathcal{F}_\infty'$. First, we note that it is not restrictive to assume that $A \cap B \neq \emptyset$, otherwise the inequalities in (iv) are straightforward. It is well known (see, e.g., \cite[Proof of Lemma 5.2]{AmbrosioBraides}) that given $\eta >0$, one can choose in $\Omega$ open sets $U \subset \subset U' \subset \subset A$ and $V \subset \subset V' \subset \subset B$ such that  $U' \cap V' = \emptyset$, and  $\mathcal{G}(u,(A\cup B) \setminus (\overline{U \cup V} ))  + \mathcal{L}^d((A\cup B) \setminus (\overline{U \cup V} ))  \le \eta$. Then using, \eqref{eq: infsup0}(i),(ii) and \eqref{eq:4.35FPS}  we get 
\begin{align*}
\mathcal{F}_\infty'(u,A \cup B) & \le  \mathcal{F}_\infty'(u,U' \cup V') + \mathcal{F}_\infty'(u, (A\cup B) \setminus \overline{U \cup V} ) \le  \mathcal{F}_\infty'(u,U') +  \mathcal{F}_\infty'(u,V') + \beta  \eta  \\ & \le \mathcal{F}_\infty'(u,A) +  \mathcal{F}_\infty'(u,B) + \beta  \eta,
\end{align*}
where we also used $\mathcal{F}_\infty'(u,U' \cup V') \le \mathcal{F}_\infty'(u,U') +  \mathcal{F}_\infty'(u,V')$ which holds due to $U' \cap V' = \emptyset$. Since $\eta$ was arbitrary, the statement follows. 
\end{proof}

We can now prove Theorem \ref{th: gamma}.

\begin{proof}[Proof of Theorem \ref{th: gamma}]  
First, we prove the existence of the $\Gamma$-limit by applying an abstract compactness result for $\bar{\Gamma}$-convergence, see \cite[Theorem 16.9]{DalMaso:93}. This implies the existence of an increasing sequence of integers $(j_k)_k$ such that $\mathcal{F}_\infty'$ and $\mathcal{F}_\infty''$ defined in \eqref{eq: liminf-limsup} with respect to  $(j_k)_k$ satisfy
$$(\mathcal{F}_\infty')_-(u,A) = (\mathcal{F}_\infty'')_-(u,A)  $$
for all $u \in GSBV^{p(\cdot)}(\Omega;\R^m)$ and $A \in \mathcal{A}(\Omega)$, where $(\mathcal{F}_\infty')_-$ and $(\mathcal{F}_\infty'')_-$ denote the inner regular envelopes of $\mathcal{F}_\infty'$ and $\mathcal{F}_\infty''$, respectively.   By   \eqref{eq: infsup0}(iii) we know that $\mathcal{F}_\infty'$ and $\mathcal{F}_\infty''$ are inner regular, and thus they both coincide with their respective inner regular envelopes.   This shows that  the $\Gamma$-limit, denoted by $\mathcal{F}_\infty:= \mathcal{F}_\infty' = \mathcal{F}_\infty''$, exists for all $u \in GSBV^{p(\cdot)}(\Omega;\R^m)$ and all $A \in \mathcal{A}(\Omega)$. 

 We now check that $\mathcal{F}_\infty$ satisfies assumptions \ref{assH1}--\ref{assH4} of the integral representation result, Theorem~\ref{thm: int-representation-gsbv}. First, the definition in \eqref{eq: liminf-limsup} and the locality of each $\mathcal{F}_j$ show that $\mathcal{F}_\infty(\cdot, A)$ is local according to \ref{assH3} for any $A \in \mathcal{A}(\Omega)$. 
Moreover, $\mathcal{F}_\infty(\cdot,A)$ complies with \ref{assH2} for any $A \in \mathcal{A}(\Omega)$ in view of  \cite[Remark 16.3]{DalMaso:93}. Now, since $\mathcal{F}_\infty$ is increasing, superadditive, inner regular (see \cite[Proposition 16.12 and Remark 16.3]{DalMaso:93}) and subadditive by \eqref{eq: infsup0}(iv), the De  Giorgi-Letta criterion (see \cite[Theorem 14.23]{DalMaso:93}) ensures that $\mathcal{F}_\infty(u,\cdot)$ can be extended to a Borel measure. Thus, also \ref{assH1} is satisfied. Eventually, by \eqref{eq: infsup0}(ii) we get \ref{assH4}. 
Therefore, we can conclude that $\mathcal{F}_\infty$ admits a representation of the form \eqref{eq: representation}. 
\end{proof}

\section{Identification of the $\Gamma$-limit}\label{sec:identification}

In this section we identify the structure of the $\Gamma$-limit provided by Theorem~\ref{th: gamma}, by showing a \emph{separation of scales} effect; i.e., that there is no interaction between the bulk and surface densities, as $f_\infty$ is only determined by $(f_j)_j$ and $g_\infty$ is only determined by $(g_j)_j$.

We assume that $f\colon \R^d{\times} \R^{m{\times}d}\to [0,+\infty)$ satisfies \ref{ass-f1}, \ref{eq:bound1} and the following: for every $x \in \R^d$ and every $\xi \in \R^{m{\times}d}$,
\begin{enumerate}[font={\normalfont},label={($f3$)}]
\item (continuity in $\xi$) for every $x \in \R^d$ we have
\begin{equation*}
|f(x,\xi_1)-f(x,\xi_2)| \leq \omega_1(|\xi_1-\xi_2|)\big(1+f(x,\xi_1)+f(x,\xi_2)\big)
\end{equation*}
for every $\xi_1$, $\xi_2 \in \R^{m{\times}d}$;\label{eq:ass-f3}
\end{enumerate}
 and that $g\colon \R^d{\times}\R^m_0{\times} \Sph^{d-1} \to [0,+\infty)$ satisfies {\rm\ref{ass-g1}}, {\rm\ref{ass-g4}},  {\rm\ref{eq:bound2}}, {\rm\ref{ass-g7}} and complies with
\begin{enumerate}[font={\normalfont},label={($g6$)}]
\item (continuity in $\zeta$) for every $x\in \R^d$ and every $\nu \in \Sph^{d-1}$ we have
\begin{equation*}
|g(x,\zeta_2,\nu)-g(x,\zeta_1,\nu)|\leq \omega_2(|\zeta_1-\zeta_2|)\big(g(x,\zeta_1,\nu)+g(x,\zeta_2,\nu)\big)
\end{equation*}
for every $\zeta_1$, $\zeta_2\in \R^m_0$, where $\omega_2\colon  [0,+\infty) \to [0,+\infty)$ is a nondecreasing continuous function such that $\omega_2(0)=0$. \label{ass-g2}
\end{enumerate}

%
%

\EEE

\subsection{Identification of the bulk density}

We start with the identification of the bulk density. To do this, we restrict functionals $\mathcal{F}$ as in \eqref{eq:energy} to Sobolev functions $W^{1,p(\cdot)}(\Omega;\R^m)$. Indeed, since every Sobolev function has a $\mathcal{H}^{d-1}$-negligible jump set we have
\begin{equation}
\mathcal{F}(u,A)= \int_\Omega f(x,\nabla u)\,\mathrm{d}x\,,\quad \mbox{ for all } u\in W^{1,p(\cdot)}(\Omega;\R^m)\,.
\label{eq:restricSob}
\end{equation}
We set, for every $\xi\in\R^{m\times d}$,
\begin{equation}
\bar{\ell}_\xi:=\ell_{0,0,\xi}\,,
\label{eq:affinecomp}
\end{equation}
where $\ell_{x_0,u_0,\xi}$ is defined as in \eqref{eq: elastic competitor}. In analogy to \eqref{eq: general minimizationsgbv}, for every $u\in W^{1,p(\cdot)}(\Omega;\R^m)$ and $A\in\mathcal{A}(\Omega)$ we define
\begin{equation}\label{eq: general minimizationsSobolev}
\mathbf{m}_{\mathcal{F}}^{1,p(\cdot)}(u,A) = \inf_{v \in W^{1,p(\cdot)}(\Omega;\R^m)} \  \lbrace \mathcal{F}(v,A): \ v = u \ \text{ in a neighborhood of } \partial A \rbrace\,.
\end{equation}
We consider the functionals ${F}_j: L^1(\Omega;\R^m)\times\mathcal{A}(\Omega)\to[0,+\infty]$ defined as
\begin{equation*}
{F}_j(u,A):=
\begin{cases}
\int_A f_j(x,\nabla u(x))\,\mathrm{d}x\,, & u\in W^{1,p(\cdot)}(\Omega;\R^m)\,, \\
+\infty & \mbox{ otherwise. }
\end{cases}
\end{equation*}
where $f_j$ satisfies {\rm\ref{ass-f1}},{\rm\ref{eq:bound1}} and {\rm\ref{eq:ass-f3}} for every $j\in\N$.
We then have the following $\Gamma$-convergence result.

\begin{proposition}
The functionals ${F}_j(\cdot,A)$ $\Gamma$-converge (up to a not relabeled subsequence) as $j\to+\infty$ in the strong topology of $L^1(\Omega;\R^m)$ to the functional ${F}(\cdot,A)$ for every $A\in\mathcal{A}(\Omega)$, where
\begin{equation}
{F}(u,A)=\int_A f_{\rm sob}(x,\nabla u(x))\,\mathrm{d}x
\label{eq:densitySob}
\end{equation}
and 
\begin{equation}
f_{\rm sob}(x,\xi):= \mathop{\lim\sup}_{\eps\to0^+} \frac{\mathbf{m}_{{F}}^{1,p(\cdot)}(\bar{\ell}_\xi, B_\eps(x))}{\gamma_d \eps^d}\,,\quad \mbox{ for all $x\in\Omega$ and $\xi\in\R^{m\times d}$\,.}
\label{eq:minprobSob}
\end{equation}
Moreover, $f_{\rm sob}$ is a Carath\'eodory function satisfying {\rm\ref{eq:bound1}} and it holds that
\begin{equation}
f_{\rm sob}(x,\xi) = \mathop{\lim\sup}_{\eps\to0^+} \mathop{\lim\inf}_{j\to+\infty} \frac{\mathbf{m}_{{F}_j}^{1,p(\cdot)}(\bar{\ell}_\xi, B_\eps(x))}{\gamma_d \eps^d} = \mathop{\lim\sup}_{\eps\to0^+} \mathop{\lim\sup}_{j\to+\infty} \frac{\mathbf{m}_{{F}_j}^{1,p(\cdot)}(\bar{\ell}_\xi, B_\eps(x))}{\gamma_d \eps^d}\,.
\label{eq:minprobSob2}
\end{equation}
\end{proposition}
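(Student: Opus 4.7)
The plan follows the classical scheme of abstract $\Gamma$-compactness, integral representation, and cell-formula identification, adapted to the variable exponent setting. First, I would extract a $\Gamma$-convergent subsequence by invoking \cite[Theorem~16.9]{DalMaso:93} in the metric space $L^1(\Omega;\R^m)$: the lower bound in \ref{eq:bound1} yields equicoercivity in $W^{1,p^-}(\Omega;\R^m)$, hence $L^1$-compactness of sequences of bounded energy. Combined with the De Giorgi--Letta criterion and the Sobolev specialization of Lemma~\ref{eq: liminflimsup-prop} (where the surface term in $\mathcal{G}$ disappears), this furnishes a Borel measure-valued functional $F(\cdot,A)$ for all $A\in\mathcal{A}(\Omega)$.

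Second, to establish the integral representation \eqref{eq:densitySob}, I would extend $F$ to $GSBV^{p(\cdot)}(\Omega;\R^m)$ by setting it equal to $+\infty$ outside $W^{1,p(\cdot)}(\Omega;\R^m)$ and verify that the extended functional satisfies \ref{assH1}--\ref{assH4}; note that \ref{assH4} is trivially compatible with a vacuous surface contribution, since Sobolev functions have $\mathcal{H}^{d-1}$-negligible jump set. Theorem~\ref{thm: int-representation-gsbv} then provides a bulk density $f_\infty$ via \eqref{eq:fdef-gsbvbis}. The identification $f_\infty = f_{\rm sob}$ amounts to showing that, when the boundary datum is affine, the infima in \eqref{eq: general minimizationsgbv} and \eqref{eq: general minimizationsSobolev} coincide asymptotically: any $GSBV^{p(\cdot)}$ competitor with energy close to the infimum must have vanishing jump contribution (as $F$ carries no surface part), and can be truncated via Lemma~\ref{lem:cdmsz} and smoothed to a Sobolev competitor. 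The Carath\'eodory property of $f_{\rm sob}$ then follows, with Borel measurability in $x$ inherited from the measure structure of $F(u,\cdot)$ and continuity in $\xi$ propagated from \ref{eq:ass-f3} through the cell formula by direct comparison of the test functions $\bar{\ell}_{\xi_1}$ and $\bar{\ell}_{\xi_2}$.

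Third, the double-limit identity \eqref{eq:minprobSob2} requires two chains of inequalities. For the lower one, almost-minimizers $v_{j,\eps}$ of $\mathbf{m}_{F_j}^{1,p(\cdot)}(\bar{\ell}_\xi,B_\eps(x))$ form an $L^1$-precompact sequence whose limit $v_\eps$ is Sobolev, matches $\bar{\ell}_\xi$ near $\partial B_\eps(x)$, and satisfies $F(v_\eps,B_\eps(x))\leq\liminf_j F_j(v_{j,\eps},B_\eps(x))$ by the $\Gamma$-liminf inequality; hence $\mathbf{m}_F^{1,p(\cdot)}\leq\liminf_j\mathbf{m}_{F_j}^{1,p(\cdot)}$. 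For the upper one, I would take an almost-minimizer $w$ of $\mathbf{m}_F^{1,p(\cdot)}$, pick a $\Gamma$-recovery sequence $w_j\to w$, and splice $w_j$ with $\bar{\ell}_\xi$ on a thin annulus via the Sobolev analogue of Lemma~\ref{lemma: fundamental estimate}, thereby restoring the affine boundary condition. Dividing by $\gamma_d\eps^d$ and letting $\eps\to 0$ closes both chains and matches the cell formula \eqref{eq:minprobSob}.

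The main obstacle will be the control of the remainder term $M\!\int|w_j-\bar{\ell}_\xi|^{p(\cdot)}/\delta^{p(\cdot)}\,\mathrm{d}x$ in the upper-bound splicing: mere $L^1$-convergence of the recovery sequence is not enough to make it vanish in the variable exponent setting. The key idea is that equiboundedness of $\nabla w_j$ in $L^{p(\cdot)}$ (from the energy bound) upgrades, via the Sobolev Poincar\'e--Wirtinger inequality on the annulus and the Luxembourg-norm estimates \eqref{eq:normineq}--\eqref{eq:normineq2}, the $L^1$-convergence of $w_j$ to $w$ into an $L^{p(\cdot)}$-convergence sufficient to close the splicing error. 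Combined with the continuity \ref{eq:ass-f3} of $f_j$, this also lets one replace $F_j(w_j,\cdot)$ by $F_j(\bar{\ell}_\xi,\cdot)$ on the annulus modulo infinitesimal terms, eventually matching the two chains.
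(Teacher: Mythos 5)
Your Step~2 has a genuine gap: the proposed extension of $F$ to $GSBV^{p(\cdot)}(\Omega;\R^m)$ by setting $F(u,A)=+\infty$ whenever $u\notin W^{1,p(\cdot)}(A;\R^m)$ does \emph{not} satisfy hypothesis \ref{assH4}, so Theorem~\ref{thm: int-representation-gsbv} cannot be invoked. Indeed, \ref{assH4} requires a two-sided bound for \emph{every} $u\in GSBV^{p(\cdot)}(\Omega;\R^m)$: for a function $u$ with $0<\mathcal{H}^{d-1}(J_u\cap B)<+\infty$ the right-hand side $\beta\big(\int_B(1+|\nabla u|^{p(x)})\,\mathrm{d}x+\mathcal{H}^{d-1}(J_u\cap B)\big)$ is finite, while the extended functional equals $+\infty$. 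The remark that ``\ref{assH4} is trivially compatible with a vacuous surface contribution'' only addresses the Sobolev part of the domain; it is precisely on the complementary, nontrivial-jump part that the upper bound collapses. This is a well-known obstruction to transferring $SBV/GSBV$-type representation theorems to Sobolev-restricted functionals by brute-force extension, and it is why the paper takes a different route.

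What the paper actually does is invoke the variable-exponent Sobolev analogue of the global method directly (citing \cite[Theorems 4.1 and 4.2]{CM}): the $\Gamma$-limit $F(\cdot,A)$ is represented via the blow-up of the Sobolev minimization problem $\mathbf{m}^{1,p(\cdot)}_F$ from \eqref{eq: general minimizationsSobolev}, with no detour through $GSBV^{p(\cdot)}$. Your Step~1 (abstract $\Gamma$-compactness via Dal Maso's Theorem 16.9 plus De~Giorgi--Letta) and Step~3 (the double-limit identity \eqref{eq:minprobSob2} obtained by splicing with the fundamental estimate, and upgrading $L^1$ to $L^{p(\cdot)}$ control on the annulus via the Poincar\'e--Wirtinger inequality) are sound in spirit and closely parallel what an expanded version of the paper's argument would do. But the central integral-representation step needs to be re-derived in $W^{1,p(\cdot)}$ from scratch — e.g., by repeating the blow-up argument of Lemma~\ref{lem:lemma2fonseca} with Sobolev competitors only, where the truncation $T_B$ keeps the function Sobolev — rather than borrowed from the $GSBV^{p(\cdot)}$ theorem. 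Note also a minor imprecision in your lower chain of \eqref{eq:minprobSob2}: the $L^1$-limit $v_\eps$ of the almost-minimizers $v_{j,\eps}$ need not agree with $\bar{\ell}_\xi$ on a fixed neighborhood of $\partial B_\eps(x)$, since the neighborhoods where $v_{j,\eps}=\bar{\ell}_\xi$ can shrink with $j$; one must splice \emph{before} taking the $j$-limit (on a slightly smaller ball), exactly as in your upper chain, rather than rely on preservation of the boundary condition under $L^1$-convergence.
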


\begin{proof}
The proof of the $\Gamma$-convergence result and the integral representation \eqref{eq:densitySob} can be obtained as in \cite[Theorem~4.1 and 4.2]{CM}. The characterization \eqref{eq:minprobSob} follows by adapting the global method of Section~\ref{sec: global method} to the variable exponent Sobolev setting, while \eqref{eq:minprobSob2} is a standard consequence of the $\Gamma$-convergence. We omit the details.
\end{proof}

We can now proceed with the announced identification of the bulk density.

\begin{theorem}\label{thm: fsob}
Under the assumptions of Theorem~\ref{th: gamma} and assumption {\rm\ref{eq:ass-f3}} on the sequence $(f_j)$, 
let $f_{\rm sob}$ and $f_\infty$ be defined as in \eqref{eq:minprobSob} and \eqref{eq:fdef-gsbvbis}, respectively. Then, for all $u\in GSBV^{p(\cdot)}(\Omega;\R^m)$ we have that 
\begin{align}\label{eq: f_infty=f}
 f_\infty(x,\nabla u(x))  = f_{\rm sob}(x,\nabla u(x))\quad \textit{\emph{for $\mathcal{L}^d$-a.e.\ $x\in\Omega$}}.
\end{align}
\end{theorem}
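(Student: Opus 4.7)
The plan is to prove the two inequalities $f_\infty(x,\xi) \le f_{\rm sob}(x,\xi)$ and $f_{\rm sob}(x,\xi) \le f_\infty(x,\xi)$ at $\mathcal{L}^d$-a.e.\ $x\in\Omega$, for $\xi=\nabla u(x)$.  The first one is essentially tautological: every $W^{1,p(\cdot)}$ competitor for $\mathbf{m}_{F}^{1,p(\cdot)}(\bar{\ell}_\xi, B_\eps(x))$ has empty jump set and therefore also competes in $\mathbf{m}_{\mathcal{F}_\infty}(\bar{\ell}_\xi, B_\eps(x))$, while the two energies coincide on such maps because of \eqref{eq:restricSob}.  Passing to the $\limsup$ in \eqref{eq:fdef-gsbvbis} and \eqref{eq:minprobSob} yields $f_\infty(x,\xi)\le f_{\rm sob}(x,\xi)$ for every $\xi$.

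For the reverse inequality I would fix a point $x_0\in\Omega$ at which $\nabla u$ exists in the approximate sense, $p(\cdot)$ is continuous, $f_\infty(\cdot,\nabla u(\cdot))$ and $f_{\rm sob}(\cdot,\nabla u(\cdot))$ are approximately continuous, and the $\limsup$ in \eqref{eq:fdef-gsbvbis} is attained as a limit.  Set $\xi:=\nabla u(x_0)$.  For each $\eps>0$ I would select a near-minimizer $v_\eps\in GSBV^{p(\cdot)}(B_\eps(x_0);\R^m)$ with $v_\eps=\bar{\ell}_\xi$ near $\partial B_\eps(x_0)$ and $\mathcal{F}_\infty(v_\eps,B_\eps(x_0))\le \mathbf{m}_{\mathcal{F}_\infty}(\bar{\ell}_\xi,B_\eps(x_0))+\eps^{d+1}$.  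The lower bound in \ref{assH4} combined with the elementary upper bound $\mathcal{F}_\infty(\bar{\ell}_\xi,B_\eps(x_0))\le C\eps^d$ forces $\mathcal{H}^{d-1}(J_{v_\eps}\cap B_\eps(x_0))\le C\eps^d/\alpha$, which is $o(\eps^{d-1})$; so the jump set of $v_\eps$ is negligible at the surface scale.

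The heart of the argument is to transform $v_\eps$ into an admissible Sobolev competitor $w_\eps$ for $\mathbf{m}_{F}^{1,p(\cdot)}(\bar{\ell}_\xi,B_\eps(x_0))$ with $F(w_\eps,B_\eps(x_0))\le \mathcal{F}_\infty(v_\eps,B_\eps(x_0))+o(\eps^d)$.  I would apply Theorem~\ref{thm:fspoincare} componentwise (in the spirit of Remark~\ref{rem:gsbv}) to $v_\eps$ on $B_{(1-\theta)\eps}(x_0)$ with threshold $\theta_\eps$ chosen at an intermediate scale between $\mathcal{H}^{d-1}(J_{v_\eps})=O(\eps^d)$ and $\eps^{d-1}$, obtaining a partition $\{P_l^\eps\}$ and a piecewise constant map $z_{\eps,\rm pc}=\sum_l b_l^\eps \chi_{P_l^\eps}$ such that the spurious boundaries have $\mathcal{H}^{d-1}$-measure at most $\theta_\eps$ and $\|v_\eps-z_{\eps,\rm pc}\|_{L^\infty}\le c\,\theta_\eps^{-1}\|\nabla v_\eps\|_{L^1}\le c\,\eps$.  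Discarding a small tubular neighbourhood of $J_{v_\eps}\cup\bigcup_l\partial^*P_l^\eps$ (of Lebesgue measure negligible with respect to $\eps^d$), replacing $v_\eps$ there by a smooth interpolation, and finally gluing with $\bar{\ell}_\xi$ on the outer annulus $B_\eps(x_0)\setminus B_{(1-2\theta)\eps}(x_0)$ via Lemma~\ref{lemma: fundamental estimate}, produces the desired $w_\eps\in W^{1,p(\cdot)}$.

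The hard part will be to control the remainder terms produced by the fundamental estimate, which take the form $\int_{F_\eps}(|v_\eps-w_\eps|/(\delta\eps))^{p(x)}\,\mathrm{d}x$ over a thin annulus $F_\eps\subset B_\eps(x_0)$.  Only an $L^\infty$ bound $\|v_\eps-w_\eps\|_\infty\lesssim \eps$ is available from the piecewise Poincaré inequality, so converting it into a uniform modular estimate requires comparing the variable $p(x)$ with a frozen constant value on a ball of radius $\eps$.  This is precisely where the log-Hölder assumption \ref{assP2} enters, through Lemma~\ref{lem:dieni3.2}: the quantity $\mathcal{L}^d(B_\eps(x_0))^{p^-_\eps-p^+_\eps}$ remains bounded, preventing the oscillation $p^+_\eps-p^-_\eps$ from blowing up the ratio $|v_\eps-w_\eps|/(\delta\eps)$ after rescaling, and keeping the remainder of size $o(\eps^d)$.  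Once this estimate is in hand, letting $\eps\to 0$ and then $\theta\to 0$ yields $f_{\rm sob}(x_0,\xi)\le f_\infty(x_0,\xi)$, completing the proof of \eqref{eq: f_infty=f}.
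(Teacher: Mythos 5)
Your first inequality is fine in spirit, although the phrase ``the two energies coincide'' overstates what is available: a priori one only knows $\mathcal{F}_\infty\le F$ on $W^{1,p(\cdot)}$, which is the direction needed, so $f_\infty\le f_{\rm sob}$ does follow. The second inequality, however, has two genuine gaps.

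\emph{Working at the wrong level.} You build a Sobolev competitor for $\mathbf{m}_{F}^{1,p(\cdot)}(\bar\ell_\xi,B_\eps)$ starting from a near-minimizer $v_\eps$ of $\mathbf{m}_{\mathcal{F}_\infty}(\bar\ell_\xi,B_\eps)$, and then want $F(w_\eps,B_\eps)\le\mathcal{F}_\infty(v_\eps,B_\eps)+o(\eps^d)$. But $F$ has integrand $f_{\rm sob}$ and $\mathcal{F}_\infty$ has integrand $f_\infty$; the inequality $F\le\mathcal{F}_\infty$ on Sobolev maps is precisely the statement to be proved, so invoking it is circular. The paper avoids this by descending to the prelimit: it takes a $\Gamma$-recovery sequence $(u_j)$ for $\mathcal{F}(u,\cdot)=\mathcal{F}_\infty(u,\cdot)$, fixes $\eps_k\searrow 0$, passes to blow-ups $\hat u_k:=u_{j_k}^{\eps_k}$, and turns these into admissible Sobolev competitors for $\mathbf{m}_{\mathcal{F}_{j_k}}^{1,p(\cdot)}(\bar\ell_{\nabla u(x)},B_{\eps_k}(x))$; then \eqref{eq:minprobSob2} is used to recover $f_{\rm sob}$. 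This is an essential structural feature of the argument that your sketch bypasses.

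\emph{Wrong regularization tool.} Theorem~\ref{thm:fspoincare} gives a \emph{piecewise constant} approximant $z_{\eps,\rm pc}$, which is exactly the wrong object here: you need to land in $W^{1,p(\cdot)}$, not enlarge the jump set. Your ``discard a tubular neighbourhood and interpolate smoothly'' step carries no energy estimate at all; the gradient of any interpolation across a jump of height $O(1)$ over a thin layer is unbounded, and there is no reason for its contribution to be $o(\eps^d)$. The paper uses a Lusin-type Lipschitz truncation instead: it introduces the set $R_k^{t}$ where the maximal function of $\nabla\hat v_k$ and $D^s\hat v_k$ is controlled, shows via a Vitali covering argument that its complement has small measure, extends $\hat v_k|_{R_k^{t_k}}$ to a global Lipschitz map $\hat z_k$ with ${\rm Lip}(\hat z_k)\lesssim t_k$ for a carefully chosen $t_k$, and then invokes \cite[Lemma~1.2]{FMP} for equi-integrability. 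Theorem~\ref{thm:fspoincare} \emph{is} used in the paper, but in the surface identification (Theorem~\ref{thm:identsurf}, Step 2.3), where the target space is indeed $SBV_{\rm pc}$. Finally, the log-H\"older condition enters primarily at \eqref{eq:stimapk}--\eqref{eq:7.22quater}, to bound $t_k^{p_k^+-p_k^-}$ and keep $\|\nabla\hat z_k\|_{L^{p_k(\cdot)}}$ uniformly controlled, rather than in the fundamental-estimate remainder as you suggest (that remainder is handled by the modular convergence of $\hat w_k-\bar\ell_{\nabla u(x)}$, which follows from an $L^\infty$ bound plus convergence in measure).
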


\begin{proof}
We show the two inequalities in \eqref{eq: f_infty=f}. We first prove
\begin{equation}
f_\infty(x,\nabla u(x)) \leq f_{\rm sob}(x,\nabla u(x)) \quad  \mbox{ for $\mathcal{L}^d$-a.e.\ $x\in\Omega$\,.}   
\label{eq:inequalitybulk1}
\end{equation}
First,   in view of  \eqref{eq: general minimizationsgbv}  and \eqref{eq: general minimizationsSobolev}, we get $\mathbf{m}_{\mathcal{F}}(\bar{\ell}_{ \xi },B_{\eps}(x)) \le \mathbf{m}^{1,p(\cdot)}_{\mathcal{F}}(\bar{\ell}_{ \xi },B_{\eps}(x))$  for all $\xi \in \R^{m\times d}$,  where we recall the notation $\bar{\ell}_{\xi}$ introduced in \eqref{eq:affinecomp}.  Then \eqref{eq:fdef-gsbv} implies 
\begin{align}\label{es: step1-1}
f_\infty(x,\xi)&=\limsup_{\eps\to 0^+}\frac{\mathbf{m}_{\mathcal{F}}(\bar{\ell}_{\xi}, B_{\eps}(x))}{\gamma_d\eps^d}  \le \limsup_{\eps\to 0^+}\frac{\mathbf{m}^{1,p(\cdot)}_{\mathcal{F}}(\bar{\ell}_{\xi},B_{\eps}(x))}{\gamma_d\eps^d}\,,
\end{align}
while by \eqref{eq:minprobSob}  and \eqref{eq:restricSob}  we find 
\begin{align}\label{es: step1-2}
f_{\rm sob}(x,\xi) =   \limsup_{\eps \to 0^+}   \frac{\mathbf{m}^{1,p(\cdot)}_{\mathcal{F}}(\bar{\ell}_{\xi},B_{\eps}(x))}{\gamma_d\eps^d}. 
\end{align} 
Thus, since both $f_\infty$ and $f_{\rm sob}$ are continuous with respect to $\xi$ by \ref{eq:ass-f3}, combining \eqref{es: step1-1}--\eqref{es: step1-2} we obtain \eqref{eq:inequalitybulk1}.

We now prove the reverse inequality
\begin{equation}
f_{\rm sob}(x,\nabla u(x)) \leq f_\infty(x,\nabla u(x)) \quad  \mbox{ for $\mathcal{L}^d$-a.e.\ $x\in\Omega$\,.}   
\label{eq:inequalitybulk2}
\end{equation}

First, from the Radon-Nikod\'ym Theorem we have that
\begin{equation}
f_\infty(x,\nabla u(x))= \lim_{\eps\to 0^+} \frac{\mathcal{F}(u,B_\eps(x))}{\gamma_d\eps^d}<+\infty
\label{eq:6.3fps}
\end{equation}
holds for $\mathcal{L}^d$-a.e.\ $x\in\Omega$\,. Let $(u_j)$ be a sequence of measurable functions such that $u_j\in GSBV^{p(\cdot)}(\Omega;\R^m)$ 
\begin{equation*}
u_j\to u \mbox{ in measure on $\Omega$ } \quad \mbox{ and } \quad  \lim_{j\to+\infty} \mathcal{F}_j(u_j,\Omega)=\mathcal{F}(u,\Omega)\,. 
\end{equation*}
Since $u\in GSBV^{p(\cdot)}(\Omega;\R^d)$, by virtue of Lemma~\ref{lemma: approx-grad} the approximate gradient $\nabla u(x)$ exists for $\mathcal{L}^d$-a.e. $x\in\Omega$. Then, since \eqref{eq:inequalitybulk2} needs to hold for $\mathcal{L}^d$-a.e. $x\in\Omega$, we may assume that \eqref{eq:6.3fps} holds at $x$ and that $\nabla u(x)$ exists. Since $\mathcal{F}(u,\cdot)$ is a Radon measure, there exists a subsequence $(\eps_k)\subset(0,+\infty)$ with $\eps_k\searrow 0$ as $k\to+\infty$ such that $\mathcal{F}(u,\partial B_{\eps_k}(x))=0$ for every $k\in\N$ and such that \eqref{eq:minprobSob2} holds along $(\eps_k)$, namely
\begin{equation}
f_{\rm sob}(x,\nabla u(x)) = \mathop{\lim}_{k\to+\infty} \mathop{\lim\sup}_{j\to+\infty} \frac{\mathbf{m}_{\mathcal{F}_j}^{1,p(\cdot)}(\bar{\ell}_{\nabla u(x)}, B_{\eps_k}(x))}{\gamma_d \eps_k^d}\,.
\label{eq:6.6fps}
\end{equation}
Moreover, with fixed $\eta\in(0,1)$, since $(u_j)$ is a recovery sequence and $\mathcal{F}(u,\cdot)$ is a Radon measure, for every $k\in\N$ we can find $j_k\in\N$ (depending also on $\eta$) such that, for every $j\geq j_k$ it holds that
\begin{equation}
\frac{\mathcal{F}_j(u_j, B_{\eps_k}(x))}{\gamma_d \eps_k^d} \leq \frac{\mathcal{F}(u, B_{\eps_k}(x))}{\gamma_d \eps_k^d} + \eta\,. 
\label{eq:6.7fps}
\end{equation}

Now, we have to modify the sequence $(u_j)$ to construct a competitor for the minimization problem $\mathbf{m}_{\mathcal{F}_j}^{1,p(\cdot)}(\bar{\ell}_{\nabla u(x)}, B_{\eps}(x))$ which defines $f_{\rm sob}$.

We introduce the functions
\begin{equation*}
u_j^{\eps_k}(y):= \frac{u_j(x+\eps_ky)-u_j(x)}{\eps_k} \quad \mbox{ and } \quad u^{\eps_k}(y):= \frac{u(x+\eps_ky)-u(x)}{\eps_k} \quad \mbox{ for $y\in B_1$.}
\end{equation*}
Then, since $u_j\to u$ in measure on $\Omega$, we have that $u_j^{\eps_k}\to u^{\eps_k}$ in measure on $B_1$ as $j\to+\infty$. In addition, by a diagonal argument and up to passing to a larger $j_k\in\N$, we also have
\begin{equation}
\hat{u}_k:=u_{j_k}^{\eps_k} \to \bar{\ell}_{\nabla u(x)} \quad \mbox{ in measure on $B_1$ as $k\to+\infty$.}
\label{eq:6.8fps}
\end{equation}
By virtue of \eqref{eq:6.6fps}, we may choose $(j_k)_k$ such that also
\begin{equation}
f_{\rm sob}(x,\nabla u(x)) = \mathop{\lim}_{k\to+\infty} \frac{\mathbf{m}_{\mathcal{F}_{j_k}}^{1,p(\cdot)}(\bar{\ell}_{\nabla u(x)}, B_{\eps_k}(x))}{\gamma_d \eps_k^d}
\label{eq:6.11fps}
\end{equation}
holds. Finally, taking into account \eqref{eq:energy}, \eqref{eq:6.3fps}, \eqref{eq:6.7fps} and with a change of variables we find
\begin{equation}
\mathop{\lim\sup}_{k\to+\infty} \int_{B_1} f_{j_k}(x+\eps_ky,\nabla \hat{u}_k(y))\,\mathrm{d}y \leq \lim_{k\to+\infty} \frac{\mathcal{F}(u,B_{\eps_k}(x))}{\gamma_d\eps_k^d}+\eta = f_\infty(x,\nabla u(x))+\eta \,.
\label{eq:6.13fps}
\end{equation}

Let $\mathcal{I}_k$ be defined as in \eqref{eq:partialenergy}, with $f_{j_k}(x+\eps_ky,\cdot)$ in place of $f(x,\cdot)$, and set
\begin{equation*}
p_k(y):=p(x+\eps_k y)\,,\quad y\in B_1\,.
\end{equation*}
We define, accordingly, 
\begin{equation*}
p_k^+:=\sup_{y\in B_1} p_k(y)\,,\quad p_k^-:=\inf_{y\in B_1} p_k(y)\,.
\end{equation*}
Let $\lambda>|\nabla u(x)|$. Then, by virtue of Lemma~\ref{lem:cdmsz} there exists $\mu>\lambda$ such that, for every $k$, we can find a function $\hat{v}_k\in SBV^{p_k(\cdot)}(B_1;\R^d)\cap L^\infty(B_1;\R^d)$ such that $\hat{v}_k= \hat{u}_k$ $\mathcal{L}^d$-a.e. in $B_1\cap\{|\hat{u}_k|\leq \lambda\}$, $|\hat{v}_k|\leq\mu$ and
\begin{equation}
\mathcal{I}_k(\hat{v}_k, B_1) \leq \left(1+\eta\right) \mathcal{I}_k(\hat{u}_k, B_1) + \beta\mathcal{L}^d (B_1\cap\{|\hat{u}_k|\geq \lambda\})\,.
\label{eq:(7.8cdsz)}
\end{equation}
Moreover, with \eqref{eq:6.8fps} and the fact that $|\bar{\ell}_{\nabla u(x)}|\leq|\nabla u(x)|<\lambda$ in $B_1$, we get 
\begin{equation}
\hat{v}_k\to \bar{\ell}_{\nabla u(x)} \quad \mbox{ in measure on $B_1$ as $k\to+\infty$} 
\label{eq:6.15fps0}
\end{equation}
and $\mathcal{L}^d (B_1\cap\{|\hat{u}_k|\geq \lambda\})\leq \eps_k$ for $k$ large enough. \EEE Taking into account {\rm\ref{eq:bound1}}, {\rm\ref{eq:bound2}}, \eqref{eq:(7.8cdsz)} and a change of variables we get
\begin{equation*}
\begin{split}
&\alpha\int_{B_1} |\nabla \hat{v}_k|^{p_k(y)}\,\mathrm{d}y \leq \frac{1+\eta}{\eps_k^d}\int_{B_{\eps_k}(x)}f_{j_k}(y,\nabla u_{j_k}(y))\,\mathrm{d}y + \beta \eps_k\,, \\
& \frac{\alpha}{\eps_k}\mathcal{H}^{d-1}(J_{\hat{v}_k}\cap B_1) \leq  \frac{\alpha}{\eps_k^d}\mathcal{H}^{d-1}(J_{{u}_{j_k}}\cap B_{\eps_k}(x)) \leq \frac{1}{\eps_k^d}\int_{J_{{u}_{j_k}}\cap B_{\eps_k}(x)} g_{j_k}(y, [u_{j_k}], \nu_{u_{j_k}})\,\mathrm{d}\mathcal{H}^{d-1}\,,
\end{split}
\end{equation*}
for $k$ large enough. Then, with \eqref{eq:6.3fps} and \eqref{eq:6.7fps}, 
we can find a constant $M>0$ independent of $k$ and $\eta$ such that
\begin{equation}
\int_{B_1} |\nabla \hat{v}_k|^{p_k(\cdot)}\,\mathrm{d}y \leq M \quad \mbox{ and } \quad \mathcal{H}^{d-1}(J_{\hat{v}_k}\cap B_1)\leq M \eps_k\,, 
\label{eq:7.13}
\end{equation}
for $k$ large enough, and
\begin{equation}
|D^s \hat{v}_k|(B_1)\leq 2\mu M\eps_k\,.
\label{eq:7.14}
\end{equation}

Now, we regularize the sequence $(\hat{v}_k)$ in order to obtain a sequence $\hat{w}_k\in W^{1,p_k(\cdot)}(B_1;\R^m)$ such that
\begin{equation}
\int_{B_1} f_{j_k}(x+\eps_ky,\nabla \hat{w}_k(y))\,\mathrm{d}y \leq \int_{B_1} f_{j_k}(x+\eps_ky,\nabla \hat{v}_k(y))\,\mathrm{d}y + \eta\,. 
\label{eq:7.25cdmsz}
\end{equation}

For this, we may adapt to the variable exponent setting the argument for the proof of \cite[Theorem~5.2(b), Step~1]{CDMSZ}, devised for a constant exponent $q$. We just provide the main steps of this adaptation. 

For fixed $t>0$, we first define the sets
\begin{equation*}
\begin{split}
R^t_k & :=\left\{y\in B_1:\,\, \frac{|D^s \hat{v}_k|(\overline{B_r(y)})}{\mathcal{L}^d(B_r(y))}\leq t \quad \mbox{ for every $r>0$ with }\overline{B_r(y)}\subset B_1 \right \}\,, \\
S^t_k & := J_{\hat{v}_k} \cup \left\{ y\in B_1:\,\, |\nabla \hat{v}_k(y)|\geq \frac{t}{2} \right\}\,.
\end{split}
\end{equation*}
We claim that
\begin{equation*}
\begin{split}
\mathcal{L}^d(B_1\backslash R^t_k) & \leq \frac{2\cdot 5^d}{t} \left( |D^s \hat{v}_k|(B_1) + \int_{S_k^t} |\nabla \hat{v}_k(y)|\,\mathrm{d}y\right) \\
& \leq \frac{2\cdot 5^d}{t} |D^s \hat{v}_k|(B_1) + 2^{p^+_k+1}\cdot 5^d \int_{S_k^t} \left(\frac{|\nabla \hat{v}_k(y)|}{t} \right)^{p_k(y)}\,\mathrm{d}y\,,
\end{split}
\end{equation*}
Indeed, the first inequality follows from the Vitali Covering Lemma, arguing exactly as in \cite[Theorem~5.2(b), Step~1]{CDMSZ}. The second inequality follows from the first one, using that $\frac{2|\nabla \hat{v}_k(y)|}{t}\geq 1$ on $S_k^t$. Now, taking into account \eqref{eq:7.13}, we get
\begin{equation}
\begin{split}
\mathcal{L}^d(B_1\backslash R^t_k) &  \leq \frac{2\cdot 5^d}{t} |D^s \hat{v}_k|(B_1) + \frac{2^{p^+_k+1}\cdot 5^d M}{\min\{t^{p^+_k}, t^{p^-_k}\}}\,.
\end{split}
\label{eq:7.21}
\end{equation}

Choosing
\begin{equation*}
t_k:= (2\mu M \eps_k)^{-\frac{1}{p^-_k-1}}
\end{equation*}
we have $t_k\geq1$ for $k$ large enough and, taking into account \eqref{eq:7.14}, from \eqref{eq:7.21} we obtain
\begin{equation*}
\begin{split}
t_k^{p^-_k}\mathcal{L}^d(B_1\backslash R^{t_k}_k) &  \leq {2\cdot 5^d}+ {2^{p^+_k+1}\cdot 5^d M} \,.
\end{split}
\end{equation*}
By virtue of Lemma~\ref{lem:dieni3.2}, and since $p^-_k-1\ge p^--1>0$, it holds now
\begin{equation*}
t_k^{p^+_k-p^-_k} \leq {\gamma_d^{-(p^-_k-p^+_k)}}[\mathcal{L}^d(B_{\eps_k})]^{\frac{p^-_k-p^+_k}{d(p^-_k-1)}} \leq C
\end{equation*}
for $k$ large. We then conclude that
\begin{equation}
\begin{split}
t_k^{p^+_k}\mathcal{L}^d(B_1\backslash R^{t_k}_k) \leq C({2\cdot 5^d}+ {2^{p^++1}\cdot 5^d M})=:\widetilde{M} \,,
\end{split}
\label{eq:7.22}
\end{equation}
whence, in particular, since $\eps_k<1$ for $k$ large enough, we get
\begin{equation}
\begin{split}
\mathcal{L}^d(B_1\backslash R^{t_k}_k) \leq \frac{\widetilde{M}}{t_k^{p^+_k}}= \widetilde{M}(2\mu M \eps_k)^{\frac{{p^+_k}}{p^-_k-1}}\leq \overline{M}_{p}\eps_k^{\frac{{p^-}}{p^+-1}} \,.
\end{split}
\label{eq:7.22bis}
\end{equation}

Now, by a Lusin's type approximation argument (see, e.g., \cite{EG}), one can construct a sequence of Lipschitz functions $(\hat{z}_k)$ on $B_1$, with Lip$(\hat{z}_k) \le c_d t_k$ for some constant $c_d$ depending only on the dimension, such that $\hat{z}_k=\hat{v}_k$ $\mathcal{L}^d$-a.e. in $R_k^{t_k}$. Setting $\bar{p}:=p(x)$, we claim that $(\hat{z}_k)$ are bounded in $W^{1,\bar{p}}(B_1;\R^d)$. Indeed, with \eqref{eq:7.22} and \eqref{eq:7.13} we first have
\begin{equation}
\begin{split}
\int_{B_1} |\nabla \hat{z}_k|^{p_k(y)}\,\mathrm{d}y & \leq 2^{p^+_k-1}\left( \int_{R_k^{t_k}} |\nabla \hat{v}_k|^{p_k(y)}\,\mathrm{d}y + \max\{c_d^{p^+_k},c_d^{p^-_1}\} t_k^{p^+_k}\mathcal{L}^d(B_1\backslash R^{t_k}_k)\right) \\
 & \leq 2^{p^+-1} (M+ \max\{c_d^{p^+},c_d^{p^-}\}\widetilde{M})=:M_{d,p}\,.
\end{split}
\label{eq:stimapk}
\end{equation}
Note that $\bar{p}=p_k(0)$ for every $k\in\N$. Then, since $(\bar{p}-p_k(y))^+\leq p_k^+-p_k^-$ and $t_k\geq1$ for $k$ large enough, with \eqref{eq:7.22bis} for every $y\in B_1$ we get
\begin{equation}
|\nabla \hat{z}_k|^{(\bar{p}-p_k(y))^+} \leq c_d^{p_k^+-p_k^-}t_k^{p^+_k-p^-_k}\leq C\,.
\label{eq:7.22tris}
\end{equation}
Finally, with \eqref{eq:stimapk} and \eqref{eq:7.22tris}, by a simple inequality we obtain
\begin{equation}
\begin{split}
\int_{B_1} |\nabla \hat{z}_k|^{\bar{p}}\,\mathrm{d}y &\leq \mathcal{L}^d(B_1) + \int_{B_1} |\nabla \hat{z}_k|^{(\bar{p}-p_k(y))^+}|\nabla \hat{z}_k|^{p_k(y)}\,\mathrm{d}y \\
 & \leq \mathcal{L}^d(B_1) + CM_{d,p}\,. 
\end{split}
\label{eq:7.22quater}
\end{equation}
Then, by applying \cite[Lemma~1.2]{FMP} to $(\hat{z}_k)$, we find a sequence of Lipschitz functions $(\hat{w}_k)$ which satisfy $\hat{w}_k\in W^{1,\bar{p}}(B_1;\R^m)$, $|\nabla \hat{w}_k|^{\bar{p}}$ equi-integrable uniformly with respect to $k$, and $\mathcal{L}^d(\{\hat{z}_k\neq \hat{w}_k\})\to0$ as $k\to+\infty$. Since $|\hat{z}_k|\leq \mu$ in $B_1$, we may assume also that $|\hat{w}_k|\leq \mu$ $\mathcal{L}^d$-a.e. in $B_1$. An inspection to the proof of \cite[Lemma~1.2]{FMP} shows that  $(\hat{w}_k)$ can be chosen in such a way that
\begin{equation}
\mathrm{Lip}(\hat{w}_k)\leq c_d\,\mathrm{Lip}(\hat{z}_k)
\label{eq:lipconst}
\end{equation}
holds.

 We claim that $(|\nabla \hat{w}_k|^{p_k(\cdot)})$ is equi-integrable on $B_1$ uniformly with respect to $k$. Indeed, arguing as for \eqref{eq:7.22tris} we first get, for every $y\in B_1$,
\begin{equation}
|\nabla \hat{w}_k|^{(p_k(y)-\bar{p})^+} \leq \widetilde{c_d}^{p_k^+-p_k^-}t_k^{p^+_k-p^-_k}\leq C\,.
\label{eq:7.23}
\end{equation}
Then, for every fixed $E\subseteq B_1$, arguing as for \eqref{eq:7.22quater} and taking into account \eqref{eq:7.23} we obtain
\begin{equation}
\begin{split}
\int_{E} |\nabla \hat{w}_k|^{p_k(y)}\,\mathrm{d}y &\leq \mathcal{L}^d(E) + \int_{E} |\nabla \hat{w}_k|^{(p_k(y)-\bar{p})^+}|\nabla \hat{w}_k|^{\bar{p}}\,\mathrm{d}y \\
 & \leq \mathcal{L}^d(E) + C\int_{E} |\nabla \hat{w}_k|^{\bar{p}}\,\mathrm{d}y\,. 
\end{split}
\label{eq:7.23bis}
\end{equation}
This and the equi-integrability of $|\nabla \hat{w}_k|^{\bar{p}}$ imply the claim.

Moreover, from \eqref{eq:7.22bis}, and since by \eqref{eq:6.15fps0} the equibounded sequence $(\hat{w}_k-\bar{\ell}_{\nabla u(x)})$ tends to 0 in measure on $B_1$, we have
\begin{equation}
\mathcal{L}^d(\{\hat{w}_k\neq \hat{v}_k\})\leq \overline{M}_{p}\eps_k^{\frac{{p^-}}{p^+-1}}\,, \quad \mbox{ and } \quad \int_{B_1} |\hat{w}_k-\bar{\ell}_{\nabla u(x)} |^{p_k(y)}\,\mathrm{d}y\to0
\label{eq:6.15fps}
\end{equation}
as $k\to+\infty$. 

In order to prove \eqref{eq:7.25cdmsz}, we notice that
\begin{equation*}
\int_{B_1} f_{j_k}(x+\eps_ky,\nabla \hat{w}_k(y))\,\mathrm{d}y \leq \int_{B_1} f_{j_k}(x+\eps_ky,\nabla \hat{v}_k(y))\,\mathrm{d}y + \int_{\{\hat{w}_k\neq \hat{v}_k\}}f_{j_k}(x+\eps_ky,\nabla \hat{w}_k(y))\,\mathrm{d}y\,.
\end{equation*}
Now, taking into account the equi-integrability of $(|\nabla \hat{w}_k|^{p_k(\cdot)})$, the upper bound {\rm\ref{eq:bound1}} and \eqref{eq:6.15fps}, for $\eps_k$ small enough we get
\begin{equation*}
\int_{\{\hat{w}_k\neq \hat{v}_k\}}f_{j_k}(x+\eps_ky,\nabla \hat{w}_k(y))\,\mathrm{d}y<\eta\,, 
\end{equation*}
whence \eqref{eq:7.25cdmsz} follows.

Finally, we have to modify the sequence $(\hat{w}_k)$ in such a way that it attains the boundary datum $\bar{\ell}_{\nabla u(x)}$ in a neighborhood of $\partial B_1$. 
We know that the functionals $\mathcal{I}_k(u,A)$ above for $u\in W^{1,p_k(\cdot)}(A;\R^m)$ and $A\in\mathcal{A}(\Omega)$ satisfy uniformly the Fundamental Estimate proved in Lemma~\ref{lemma: fundamental estimate}. Namely, corresponding to the fixed $\eta$ above, 
there exist a constant $C_\eta>0$ and a sequence $(\hat{\rm w}_k)$ in $W^{1,p_k(\cdot)}(B_1;\R^m)$ with $\hat{\rm w}_k=\bar{\ell}_{\nabla u(x)}$ in a neighborhood of $\partial B_1$ for all $k\in\N$ such that
\begin{equation}
\mathcal{I}_k(\hat{\rm w}_k,B_1) \leq (1+\eta)\left(\mathcal{I}_k(\hat{w}_k,B_1) + \mathcal{I}_k(\bar{\ell}_{\nabla u(x)},B_1\backslash \overline{B_{1-\eta}})\right) + C_\eta \int_{B_1} |\hat{w}_k - \bar{\ell}_{\nabla u(x)}|^{p_k(y)}\,\mathrm{d}y + \gamma_d\eta\,.
\label{eq:fundestbulk}
\end{equation}
\EEE
Now, taking into account {\rm\ref{eq:bound1}}, \eqref{eq:6.15fps} and the fact that $\mathcal{L}^d(B_1\backslash \overline{B_{1-\eta}})\leq d\eta$, we get
\begin{equation}
\mathop{\lim\sup}_{k\to+\infty}\mathcal{I}_k(\hat{\rm w}_k,B_1) \leq (1+\eta) \mathop{\lim\sup}_{k\to+\infty}\mathcal{I}_k(\hat{w}_k,B_1) + d\eta(1+\eta)\beta (1+|\nabla u(x)|^{\bar{p}})+\gamma_d\eta\,. 
\label{eq:6.17fps}
\end{equation}
Then, with \eqref{eq:6.13fps}, \eqref{eq:7.25cdmsz} and recalling the definition of $\mathcal{I}_k$, we obtain
\begin{equation}
\mathop{\lim\sup}_{k\to+\infty}\mathcal{I}_k(\hat{\rm w}_k,B_1) \leq (1+\eta)( f_\infty(x,\nabla u(x)) +\eta) + d\eta(1+\eta)\beta (1+|\nabla u(x)|^{\bar{p}})+\gamma_d\eta\,. 
\label{eq:6.18fps}
\end{equation}
Setting
\begin{equation*}
\widetilde{\rm w}_k(y):= \eps_k \hat{\rm w}_k((y-x)/\eps_k) + \bar{\ell}_{\nabla u(x)}x \quad \mbox{ for } y\in B_{\eps_k}(x)\,,  
\end{equation*}
we have $\widetilde{\rm w}_k\in W^{1,p(\cdot)}(B_{\eps_k}(x);\R^m)$ and
\begin{equation}
\mathcal{I}_k(\hat{\rm w}_k,B_1) = \frac{1}{\eps_k^d} \int_{B_{\eps_k}(x)} f_{j_k}(y, \nabla \widetilde{\rm w}_k(y))\,\mathrm{d}y\,.
\label{eq:6.19fps}
\end{equation}
Moreover, since $\hat{\rm w}_k=\bar{\ell}_{\nabla u(x)}$ in a neighborhood of $\partial B_1$, it follows that $\widetilde{\rm w}_k=\bar{\ell}_{\nabla u(x)}$ in a neighborhood of $\partial B_{\eps_k}(x)$. Then, with \eqref{eq: general minimizationsSobolev}, \eqref{eq:restricSob} and \eqref{eq:6.19fps} we obtain
\begin{equation*}
 \frac{\mathbf{m}_{\mathcal{F}_{j_k}}^{1,p(\cdot)}(\bar{\ell}_{\nabla u(x)}, B_{\eps_k}(x))}{\gamma_d \eps_k^d} \leq \mathcal{I}_k(\hat{\rm w}_k,B_1)\,,
\end{equation*}
whence passing to the limsup as $k\to+\infty$, recalling \eqref{eq:6.18fps}, and then letting $\eta\to0^+$, 
we get
\begin{equation*}
\mathop{\lim\sup}_{k\to+\infty}  \frac{\mathbf{m}_{\mathcal{F}_{j_k}}^{1,p(\cdot)}(\bar{\ell}_{\nabla u(x)}, B_{\eps_k}(x))}{\gamma_d \eps_k^d} \leq f_\infty(x,\nabla u(x))\,.
\end{equation*}
The assertion \eqref{eq:inequalitybulk2} then follows from \eqref{eq:6.11fps}.
\end{proof}

\subsection{Identification of the surface density}\label{sec:identsurfdens}

We conclude our analysis with the identification of the surface density. We will prove that it coincides with the asymptotic surface density of functionals $\mathcal{F}_j$ when restricted to the space $SBV_{\mathrm{pc}}(A,\R^m)$ of those functions $u\in SBV(A,\R^m)$ such that $\nabla u=0$ $\mathcal{L}^d$-a.e. in $A$ and $\mathcal{H}^{d-1}(J_u)<+\infty$.

In order to do that, we consider the sequence of surface energies
\begin{equation}
G_j(u,A) := 
\begin{cases}
\ds \int_{J_u\cap A}g_j(x,[u],\nu_u)d \mathcal{H}^{d-1} &\text{if} \; u|_A\in GSBV^{p(\cdot)}(A,\R^m),\\
+\infty \quad & \mbox{otherwise in}\; L^0(\R^d,\R^m), 
\end{cases}
\label{eq:surfenergies}
\end{equation}
and, correspondingly, we define the sequence of minimum problems
\begin{equation}
{\bf m}^{{PC}}_{G_j} (u_{x,\zeta,\nu},A) := \inf\left\{G_j(u,A):  u\in L^0(\R^d,\R^m),\ u|_A\in SBV_{\mathrm{pc}}(A,\R^m),\ u=u_{x,\zeta,\nu} \textrm{ near }\partial A \right\}\,,
\label{eq:surfaceminprobl}
\end{equation}
where $u_{x,\zeta,\nu}$ coincides with $u_{x,\zeta,0,\nu}$ defined in \eqref{eq: jump competitor}.

Since, to the best of our knowledge, a $\Gamma$-convergence result for functionals $G_j$ whose densities $g_j$ explicitly depend on the jump $[u]$ is still missing in literature, with Theorem~\ref{thm:identsurf} below we will show directly that
\begin{equation*}
 g_{\infty}(x,[u](x),\nu_u) = \limsup_{\e \to 0^+} \lim_{j \to + \infty} \frac{
{\bf m}_{G_j}^{PC}(u_{x,[u](x),\nu_u}, B_\e(x))}{\gamma_{d-1}\e^{d-1}}\,,\quad \mbox{ for $\mathcal{H}^{d-1}$-a.e. $x\in J_u$.}
\end{equation*}
We also remark that, in the proof below, Theorem \ref{thm:fspoincare} allows for a quick construction in Step 2.3 of an optimal sequence of piecewise constant functions (cfr. the more involved arguments in \cite[Theorem 5.2, (c)-(d)]{CDMSZ}, whose compliance with the present setting was not investigated).

\begin{theorem}\label{thm:identsurf}
Let $\Omega \subset \R^d$ be open and $p:\Omega\to(1,+\infty)$ be a continuous variable exponent. Let $(f_j)_j$ and $(g_j)_j$ be sequences functions satisfying {\rm\ref{ass-f1}}-{\rm\ref{eq:ass-f3}} and {\rm\ref{ass-g1}}, {\rm\ref{ass-g4}},  {\rm\ref{eq:bound2}}, {\rm\ref{ass-g7}} and {\rm\ref{ass-g2}}, respectively.  Let $g_\infty$ be defined by  \eqref{eq:gdef-gsbvbis}.  Then, for all $u\in GSBV^{p(\cdot)}(\Omega,\R^m)$ we have that 
\begin{equation}\label{eq: g_infty=g}
g_\infty(x,[u](x),\nu_u(x))
=  g_{\rm pc}(x,[u](x),\nu_u(x))\,,\quad \mbox{for $\mathcal{H}^{d-1}$-a.e. $x\in J_u$,}
\end{equation}
where
\begin{equation}
 g_{\rm pc}(x,\zeta,\nu):= \limsup_{\e \to 0^+} \lim_{j \to + \infty} \frac{
{\bf m}_{G_j}^{PC}(u_{x,\zeta,\nu}, B_\e(x))}{\gamma_{d-1}\e^{d-1}}\,.
\label{eq:gpc}
\end{equation}
\end{theorem}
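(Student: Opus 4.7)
The proof of \eqref{eq: g_infty=g} splits into the two inequalities $g_\infty(x_0,\zeta,\nu) \le g_{\rm pc}(x_0,\zeta,\nu)$ and $g_{\rm pc}(x_0,\zeta,\nu) \le g_\infty(x_0,\zeta,\nu)$, which I prove at $\mathcal{H}^{d-1}$-a.e.\ $x_0 \in J_u$, with $\zeta := [u](x_0)$, $\nu := \nu_u(x_0)$, and $u_0 := u_{x_0,\zeta,\nu}$. By Theorem \ref{th: gamma} (specifically formula \eqref{eq:gdef-gsbvbis}), each direction amounts to a quantitative comparison between $\mathbf{m}_{\mathcal{F}_\infty}(u_0,B_\e(x_0))$ and $\mathbf{m}^{PC}_{G_j}(u_0,B_\e(x_0))$ in the blow-up $\e\to 0$.

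For the upper bound I fix $\theta\in(0,1)$ and $\e>0$, and for each $j$ select an almost optimal $v_j\in SBV_{\rm pc}$ for $\mathbf{m}^{PC}_{G_j}(u_0,B_{(1-\theta)\e}(x_0))$, extended by $u_0$ to the outer annulus $B_\e(x_0)\setminus\overline{B_{(1-\theta)\e}(x_0)}$. A componentwise projection onto a sufficiently large ball in $\R^m$ truncates $v_j$ while preserving piecewise constancy and the boundary datum; the two-sided bound \ref{eq:bound2} ensures this increases $G_j(v_j,\cdot)$ by at most a factor $\beta/\alpha$, yielding uniform $L^\infty$ and $\mathcal{H}^{d-1}(J_{v_j})$ bounds. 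Ambrosio's $SBV_{\rm pc}$ compactness then furnishes a limit $v\in SBV_{\rm pc}(B_\e(x_0),\R^m)$ with $v=u_0$ on the annulus, admissible for $\mathbf{m}_{\mathcal{F}_\infty}(u_0,B_\e(x_0))$. Splitting $\mathcal{F}_j(v_j) = \int f_j(\cdot,0)\,dy + G_j(v_j)$ and using $\Gamma$-liminf yields
\[
\mathbf{m}_{\mathcal{F}_\infty}(u_0,B_\e(x_0)) \le \beta\gamma_d\e^d + \liminf_j\mathbf{m}^{PC}_{G_j}(u_0,B_{(1-\theta)\e}(x_0)) + \beta\gamma_{d-1}\e^{d-1}\bigl(1-(1-\theta)^{d-1}\bigr),
\]
and dividing by $\gamma_{d-1}\e^{d-1}$, letting first $\e\to 0$ and then $\theta\to 0$, gives $g_\infty \le g_{\rm pc}$.

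For the lower bound I take an almost optimal $v\in GSBV^{p(\cdot)}$ for $\mathbf{m}_{\mathcal{F}_\infty}(u_0,B_{(1-\theta)\e}(x_0))$, extended by $u_0$, and a recovery sequence $v_j\to v$ in measure with $\limsup_j\mathcal{F}_j(v_j,B_\e(x_0))\le \mathcal{F}_\infty(v,B_\e(x_0))\le C\e^{d-1}$. Coercivity \ref{eq:bound1} combined with H\"older and the strict superlinearity $p^->1$ from \ref{assP1} gives $\|\nabla v_j\|_{L^1(B_\e(x_0))}\le C\e^{d-1/p^-}$, an $L^1$ bulk-gradient bound of strictly lower order than the surface scale $\e^{d-1}$. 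Lemma \ref{lem:cdmsz} truncates $v_j$ to a bounded $\hat v_j\in SBV^{p(\cdot)}$ with essentially unchanged energy, and Theorem \ref{thm:fspoincare} applied to $\hat v_j$ with parameter $\tau_j\to 0$ chosen so that $\tau_j = o(\e^{d-1})$ and $\tau_j^{-1}\|\nabla \hat v_j\|_{L^1}\to 0$ produces a piecewise constant $\tilde v_j\in SBV_{\rm pc}$ close to $\hat v_j$ in $L^\infty$ with only $\beta\tau_j = o(\e^{d-1})$ extra surface energy. Continuity \ref{ass-g2} of $g_j$ in $\zeta$ yields $G_j(\tilde v_j)\le(1+\omega_2(\|\tilde v_j-\hat v_j\|_\infty))G_j(\hat v_j)$, and a final application of Lemma \ref{lemma: fundamental estimate} matches $\tilde v_j$ to $u_0$ near $\partial B_\e(x_0)$ at negligible cost, producing an admissible competitor for $\mathbf{m}^{PC}_{G_j}(u_0,B_\e(x_0))$. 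Letting $j\to\infty$, then $\e\to 0$ and $\theta\to 0$ gives $g_{\rm pc}\le g_\infty$.

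The main obstacle is the delicate three-parameter diagonal bookkeeping in the lower bound, where the truncation, the piecewise constant approximation, the boundary adjustment, and the recovery indices must be coordinated so that every vanishing error remains negligible at the surface scale $\gamma_{d-1}\e^{d-1}$. The decisive ingredient is the bound $\|\nabla \hat v_j\|_{L^1}\le C\e^{d-1/p^-}$: the strict superlinear growth $p^->1$ built into \ref{assP1} generates an extra power of $\e$ that beats the $\tau^{-1}$ blow-up in Theorem \ref{thm:fspoincare}$(ii)$ and so enforces the separation of bulk and surface contributions responsible for the decoupling.
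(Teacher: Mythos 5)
Your overall architecture matches the paper's (two inequalities via cell formulas, with a $\liminf_j / \limsup_j$ sandwich implicitly validating the inner limit in \eqref{eq:gpc}), but there are two concrete gaps.

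\textbf{Upper bound.} You use a componentwise projection onto a large ball to truncate the almost-minimizers $v_j$ of $\mathbf{m}^{PC}_{G_j}$ and accept a loss of a multiplicative factor $\beta/\alpha$ in the surface energy. That factor does not vanish, so the resulting estimate is $g_\infty \leq (\beta/\alpha)\,g_{\rm pc}$, not $g_\infty \leq g_{\rm pc}$. The paper instead invokes Lemma~\ref{lem:cdmsz}, which produces a truncation $\hat u_j$ (still piecewise constant, by the chain rule, and still equal to $u_{x,\zeta,\nu}$ near the boundary since $|\zeta|<\lambda$) whose surface energy exceeds $G_j(v_j,\cdot)$ only by a factor $(1+\eta)$ plus an $\eta$-small volume term. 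That $(1+\eta)$ is the crucial replacement for $\beta/\alpha$: it can be sent to zero and yields the sharp inequality.

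\textbf{Lower bound.} Your final step applies Lemma~\ref{lemma: fundamental estimate} to the piecewise constant approximant $\tilde v_j$ and the boundary datum $u_0$. But the construction of that lemma produces $w=\varphi\,\tilde v_j+(1-\varphi)\,u_0$ with a smooth cutoff $\varphi$, and the convex combination of two piecewise constant functions with a smooth $\varphi$ is not piecewise constant wherever $\nabla\varphi\neq 0$ and $\tilde v_j\neq u_0$. Such a $w$ is then inadmissible for $\mathbf{m}^{PC}_{G_j}$, so the conclusion does not follow. The paper avoids this by reversing the order of operations: it first adjusts the boundary values via the fundamental estimate while working in $SBV^{p(\cdot)}$ (Step~2.1, producing $\hat v_j^\e$ equal to $u_{0,\zeta,\nu}$ near $\partial B_\eta$), and only \emph{afterwards} applies Theorem~\ref{thm:fspoincare} (Step~2.3) to produce a piecewise constant $w_j^\e$ that retains the boundary datum (\eqref{eq:8.28}); the two extra error terms from the piecewise constant replacement are then controlled via \eqref{eq:8.29}--\eqref{eq:8.31} and the modulus $\omega_2$ from \ref{ass-g2}. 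Your identification of the decisive scaling $\|\nabla v_j\|_{L^1}\lesssim\e^{d-1/p^-}$ from strict superlinearity and \ref{eq:bound1} is correct and is precisely what allows the parameter $\theta$ in Theorem~\ref{thm:fspoincare} to be chosen so the extra surface measure is $o(\e^{d-1})$, but that insight alone does not repair the ordering problem.
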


\begin{proof}
For every $x \in \R^d$, $\zeta \in \R_0^m$, and $\nu \in \mathbb{S}^{d-1}$
we define 
\begin{align}
g' (x, \zeta ,  \nu) &: = \limsup_{\e \to 0^+} \liminf_{j \to + \infty} \frac{
{\bf m}_{G_j}^{PC}(u_{x,\zeta,\nu}, B_\e(x))}{\gamma_{d-1}\e^{d-1}} ,  \label{eq:g'}\\
g'' (x, \zeta ,  \nu) &: = \limsup_{\e \to 0^+} \limsup_{j \to + \infty} \frac{ 
{\bf m}_{G_j}^{PC}(u_{x,\zeta,\nu}, B_\e(x))}{\gamma_{d-1}\e^{d-1}}. \label{eq:g''}
\end{align}
\emph{Step 1.} We start with the proof of the inequality 
\begin{equation}
g_\infty(x,\zeta,\nu) \le g'(x,\zeta ,\nu)\,.
\label{eq:gineq1}
\end{equation}

For this, we fix a triple $(x,\zeta,\nu)\in \R^d\times \R^m_0 \times \Sph^{d-1}$ and $0<\eta<1$. By the definition of ${\bf m}^{PC}_{G_j}$ (see \eqref{eq:surfaceminprobl}), for every $j$ there exists $u_j \in L^0(\R^d,\R^m)$, with $u_j|_{B_\e( x)}\in SBV_{\mathrm{pc}}(B_\e( x), \R^m)$, such that $u_j=u_{x,\zeta ,\nu }$ in a neighborhood of $\partial B_\e( x)$ and
\begin{equation}\label{eq:8.1}
G_j(u_j,B_\e( x)) \leq {\bf m}^{PC}_{G_j}(u_{x,\zeta ,\nu },B_\e( x))+ \eta\, \e^{d-1}.
\end{equation}
Now, given $\lambda> |\zeta|$, 
by virtue of Lemma~\ref{lem:cdmsz}, 
 for every $j$ there exists $\hat{u}_j$ such that
\begin{equation*}
\mathcal{F}_j(\hat{u}_j,B_\e( x))\le (1+\eta) \mathcal{F}_j(u_j,B_\e( x))+\beta\mathcal{L}^d(B_\e( x)\cap\{|u_j|\ge \lambda\})\,.
\end{equation*}
Moreover, $\hat{u}_j=u_{x,\zeta ,\nu }$ in a neighborhood of $\partial  B_\e( x)$, $|\hat{u}_j|\le \mu$ in $\R^d$ and, from the chain rule, $\nabla \hat{u}_j=0$ $\mathcal{L}^d$-a.e.\ in $B_\e( x)$. Consequently, the functions $v_j$ defined for every $j\in\N$ as
\begin{equation} \label{eq:8.2}
v_j:= \begin{cases}
\hat{u}_j & \text{in }\; B_\e( x)
\cr
u_{x,\zeta ,\nu } & \text{in }\; \R^d\setminus B_\e( x)
\end{cases}
\end{equation}
satisfy $v_j|_A\in SBV_{\mathrm{pc}}(A, \R^m)$ for every $A\in\mathcal{A}(\Omega)$ and, from the definition, also the uniform bound
\begin{equation}\label{eq:8.3}
|v_j|\le \mu\quad\hbox{in }\R^d\,.
\end{equation}
Now, arguing as for the proof of \cite[eq.~(8.4)]{CDMSZ}, with \ref{eq:bound2}, \ref{ass-g7} and \ref{ass-g3} (which holds with $c=\frac{\beta}{\alpha}$) we find that for every $j$
\begin{equation}\label{eq:8.4}
\mathcal{H}^{d-1}(J_{v_j}\cap B_\e(x))  \leq M_d \e^{d-1},
\end{equation}
where $M_d:= \frac{\beta}{\alpha^2}(\beta \gamma_{d-1} + 1)$.

Since $v_j \in SBV_{\mathrm{pc}}(B_\e(x), \R^m)$ 
and \eqref{eq:8.3}-\eqref{eq:8.4} hold, we can apply the compactness result \cite[Theorem 4.8]{Ambrosio-Fusco-Pallara:2000} to deduce the existence of a function $v\in SBV_{\mathrm{pc}}(B_\e(x),\R^m)\cap L^\infty(B_\e(x),\R^m)$ and a subsequence (not relabelled) converging in measure to $v$ on $B_\eps(x)$. 
We extend $v$ to $\R^d$ by setting $v=u_{x,\zeta ,\nu }$ in $\R^d\setminus B_\e(x)$ and observe that $v|_A\in SBV_{\mathrm{pc}}(A, \R^m)$ for every $A\in\mathcal{A}(\Omega)$. 
Moreover, by the definitions of $v_j$ and $v$ and by \eqref{eq:8.3}, the convergence in measure on $B_\e(x)$ 
implies that $|v|\le \mu\ \;\mathcal{L}^d\text{-a.e.\ in }\; \R^d$.

In particular, for $A=B_{(1+\eta)\e}(x)$ we have $v|_{B_{(1+\eta)\e}(x)}\in SBV_{\mathrm{pc}}(B_{(1+\eta)\e}(x),\R^m)$ and $v=u_{x,\zeta ,\nu }$ in $B_{(1+\eta)\e}(x)\setminus B_\e(x)$, which combined with the $\Gamma$-convergence of $\mathcal{F}_j(\cdot, B_{(1+\eta)\e}(x))$ to $\mathcal{F}(\cdot,B_{(1+\eta)\e}(x))$ with respect to the convergence in measure gives 
\begin{equation}\label{eq:8.37}
m_{\mathcal{F}}(u_{x,\zeta ,\nu },B_{(1+\eta)\e}(x ))\le \mathcal{F}(v,B_{(1+\eta)\e}(x)) \leq \liminf_{j\to+\infty} {} \mathcal{F}_j(v_j,B_{(1+\eta)\e}(x))\,.
\end{equation}

Taking into account the upper bounds in \ref{eq:bound1}, \ref{eq:bound2}, and \eqref{eq:8.1}-\eqref{eq:8.2} 
we obtain
\begin{align*}
\mathcal{F}_j(v_j&,B_{(1+\eta)\e}(x )) \le \mathcal{F}_j(v_j,B_\e(x )) +\mathcal{F}_j(u_{x,\zeta ,\nu },B_{(1+\eta)\e}(x )\setminus \overline B_\e(x )) 
\\
 &\le (1+\eta) \mathcal{F}_j(u_j,B_\e( x)) + \beta\gamma_d(1+2^d)\e^d + G_j(u_{x,\zeta ,\nu },B_{(1+\eta)\e}(x )\setminus \overline B_\e(x )) 
\\
&\le (1+\eta)G_j(u_j,B_\e(x ))  + \beta\gamma_d(3+2^d)\e^d + \beta\gamma_{d-1}((1+\eta)^{d-1}-1)\e^{d-1}
\\
&\le  (1+\eta)\, {\bf m}^{PC}_{G_j}(u_{x,\zeta ,\nu },B_\e(x))  + \beta\gamma_d(3+2^d)\e^d +C_d\eta \e^{d-1}
\end{align*}
where $C_d:=2+\beta\gamma_{d-1}(2^{d-1}-1)$. This inequality, together with \eqref{eq:8.37}, gives 
$$
{\bf m}_{\mathcal{F}}(u_{x,\zeta ,\nu },B_{(1+\eta)\e}(x))\le  (1+\eta)\liminf_{k\to+\infty} {\bf m}^{PC}_{G_j}(u_{x,\zeta ,\nu },B_\e(x)) + \beta\gamma_d(3+2^d)\e^d +C_d\eta
\e^{d-1}\,.
$$
Now, dividing both the sides by $\gamma_{d-1}\e^{d-1}$, taking the limsup as $\e\to 0^+$, and recalling \eqref{eq:g'} and \eqref{eq:gdef-gsbvbis}, we obtain 
$$
(1+\eta)^{d-1} g_\infty(x,\zeta,\nu)
\le   (1+\eta)g'(x,\zeta ,\nu)+\frac{\eta}{\gamma_{d-1}} C_d\,,
$$
whence by taking the limit as $\eta\to 0^+$ we get \eqref{eq:gineq1}. \\
\noindent
\emph{Step 2.} We now prove
\begin{equation}
g''(x,[u](x),\nu_u(x))\leq g_\infty(x,[u](x),\nu_u(x))
\label{eq:gineq2}
\end{equation}
for $\mathcal{H}^{d-1}$-a.e. $x\in J_u\cap A$.

We will prove \eqref{eq:gineq2} for functions $u$ which belong to $SBV^{p(\cdot)}(A,\R^m)\cap L^\infty(A,\R^m)$, while the general case of (unbounded) functions in $GSBV^{p(\cdot)}(A,\R^m)$ can be obtained from the previous case by constructing a sequence of truncations of function $u$ as in the Step~5 of \cite[Proof of Theorem~5.2(d)]{CDMSZ}.
\smallskip

Let $A\in\mathcal{A}(\Omega)$, $u\in SBV^{p(\cdot)}(A,\R^m)\cap L^\infty(A,\R^m)$. Let $\eta\in(0,1)$. We fix $x\in J_u$ such that, by setting $\zeta:=[u](x)$ and $\nu:=\nu_u(x)$, we have
\begin{eqnarray}
\label{eq:8.10}
&\ds \zeta\neq 0,
\\
\label{eq:8.11}
&\ds 
\lim_{\e \to 0^+} \frac{1}{(\eta\e)^d}\int_{B_{\eta\e}(x)}|u(y)-u_{x,\zeta,\nu}(y)|^{p(y)}\,\mathrm{d}y = 0 , 
\\
\label{eq:8.12}
&\ds g_\infty(x,\zeta,\nu) = \lim_{\e \to 0^+} \frac{\mathcal{F}(u, B_{\eta\e}(x))}{\gamma_{d-1}(\eta\e)^{d-1}}\,.
\end{eqnarray}
Note that \eqref{eq:8.10} and \eqref{eq:8.11} are satisfied for $\hs^{d-1}$-a.e.\ $x\in J_u$ for $p(\cdot)\equiv1$ (see, e.g., \cite[Definition 3.67 and Theorem~3.78]{Ambrosio-Fusco-Pallara:2000}). This, combined with the boundedness of both $u$ and $u_{x,\zeta,\nu}$, implies the \eqref{eq:8.11} for any variable exponent such that $p^-\geq1$ and $p^+<+\infty$. Also \eqref{eq:8.12} holds for $\hs^{d-1}$-a.e.\ $x\in J_u$, thanks to a generalized version of the Besicovitch Differentiation Theorem (see \cite{Mor} and \cite[Sections~1.2.1-1.2.2]{FonLeo}).

We extend $u$ to $\R^d$ by setting $u=0$ on $\R^d\setminus A$. By the $\Gamma$-convergence of $\mathcal{F}_j (\cdot,A)$ to $\mathcal{F} (\cdot,A)$ there exists a sequence $(u_j)$ converging to $u$ in $L^0(\R^d,\R^m)$ such that
$$
 \lim_{k\to +\infty}\mathcal{F}_j(u_j,A)=\mathcal{F}(u,A)\,.
$$

Since $\mathcal{F}(u,\cdot)$ is a finite Radon measure, we have that
$\mathcal{F}(u,\partial B_{\eta\e}(x))=0$
for all $\e>0$ such that $B_{\eta\e}(x)\subset A$, except for a countable set. As a consequence $(u_j)$ is a recovery sequence for $\mathcal{F}(u,\cdot)$ also in $B_{\eta\e}(x)$\ie
\begin{equation}\label{e:bordo}
\lim_{k\to +\infty}\mathcal{F}_j(u_j,B_{\eta\e}(x))=\mathcal{F}(u,B_{\eta\e}(x)),
\end{equation}
for all $\e>0$ except for a countable set. Let $\e$ be such that \eqref{e:bordo} holds.

We now fix 
$\lambda> \max\{ \|u\|_{L^\infty(\R^d,\R^m)}, |\zeta| \}$ and $\mu$ as in Lemma~\ref{lem:cdmsz}.
Then for every $j$ there exists $v_j$ such that
\begin{equation} \nonumber
\mathcal{F}_j(v_j,B_{\eta\e}(x))\le (1+\eta) \mathcal{F}_j(u_j,B_{\eta\e}(x))+\beta\mathcal{L}^d(B_{\eta\e}(x)\cap\{|u_j|\ge\lambda\})\,,
\end{equation}
and $ |v_j|\le \mu$ in $\R^d$. We deduce that $v_j\to u$ in $L^{p(\cdot)}_{\mathrm{loc}}(\R^d,\R^m)$ as well as
\begin{equation*} 
 \limsup_{j\to +\infty} \mathcal{F}_j(v_j,B_{\eta\e}(x))\le (1+\eta) \mathcal{F} (u,B_{\eta\e}(x)).
\end{equation*}
Hence there exists $j_0(\e)>0$ such that whenever $j \ge  j_0(\e)$
\begin{align}\label{estimate:1}
 \mathcal{F}_j(v_j, B_{\eta\e}(x))
\le (1+\eta) \mathcal{F} (u,B_{\eta\e}(x))+ (\eta\e)^d\,.
\end{align}
We now modify each $v_j$ in order to obtain a function $z_j$ which is an admissible competitor in the $j$-th minimization problem defining $g''(x,\zeta,\nu)$.   

\medskip

\noindent\textit{Step 2.1.} 
We first define the blow-up function $v_j^\e$ at $x$ as 
\begin{equation*}
v_j^\e(y):= v_j(x+\e y)\quad\hbox{for }y \in B_\eta\,,
\end{equation*}
and the blow-up variable exponent at $x$ as 
\begin{equation*}
p_\eps(y):= p(x+\e y)\quad\hbox{for }y \in B_\eta\,.
\end{equation*}
Now, we modify  $v_j^\e$ so that it agrees with the boundary datum
$u_{0,\zeta,\nu}$ in a neighbourhood of $\partial B_\eta$. To this end, we apply the Fundamental Estimate (Lemma~\ref{lemma: fundamental estimate}) to the functionals $\mathcal{F}_{j,\e}\colon \big(SBV^{p_\eps(\cdot)}(B_\eta, \R^m)\cap L^\infty(B_\eta, \R^m)\big){\times} \mathcal{A}(B_\eta)\to[0,+\infty)$ defined as \begin{equation}\label{eq:8.15}
\mathcal{F}_{j,\e}(v,A):= \int_A f_j (x+\e y,\nabla v(y) ) \mathrm{d}y 
+ \int_{J_v\cap A} g_j (x+\e y,[v](y),\nu_v(y) ) \mathrm{d}\mathcal{H}^{d-1}(y)\,,
\end{equation}
where $\mathcal{A}(B_\eta)$ denotes the class of open subsets in $B_\eta$.

Let $K_\eta \subset B_\eta$ be a compact set such that
\begin{equation}\label{eq:8.16}
 \beta\left(\mathcal{L}^{d}(B_\eta\setminus K_\eta) + \mathcal{H}^{d-1}(\Pi^\nu_0\cap(B_\eta\setminus K_\eta))\right) < \eta^d\,.
\end{equation}
Then, the argument of the proof of Lemma~\ref{lemma: fundamental estimate} allows us to deduce the existence of a constant $M_\eta>0$ and a finite family of cut-off functions 
$\varphi_1,\dots, \varphi_N\in C_c^\infty(B_\eta)$ such that $0\leq \varphi_i\leq 1$ in $B_\eta$, $\varphi_i=1$ in a neighbourhood of $K_\eta$, and 
\begin{align}\nonumber
\mathcal{F}_{j,\e}(\hat{v}^\e_j,B_\eta) \leq {}&(1+\eta)\big(\mathcal{F}_{j,\e}(v^\e_j, B_\eta) + \mathcal{F}_{j,\e}(u_{0,\zeta,\nu},B_\eta\setminus K_\eta)\big)
\\
&+ M_\eta \int_{B_\eta}|v^\e_j(y) - u_{0,\zeta,\nu}(y)|^{p_\eps(y)}\,\mathrm{d}y + \gamma_d\eta^{d+1},\label{eq:8.17}
\end{align}   
where  $\hat{v}^\e_j:= \varphi_{i_j}v^\e_j + (1-\varphi_{i_j})u_{0,\zeta,\nu}$ for a suitable $i_ j\in \{1,\dots,N\}$. It is clear from the definition that
\begin{equation}\label{eq:8.18}
|\hat{v}^\e_j| \le \mu\quad\hbox{in }\,B_\eta
\end{equation}
and  $\hat{v}^\e_j=u_{0,\zeta,\nu}$ in a neighborhood of $\partial B_\eta$. 
By the upper bounds in \ref{eq:bound1} and \ref{eq:bound2}, and by \eqref{eq:8.16}, we deduce that 
\begin{align*}
\mathcal{F}_{j,\e}(u_{0,\zeta,\nu},B_\eta\setminus K_\eta) <\eta^d\,.
\end{align*}
Since $v_j \to u$ in $L^{p(\cdot)}(B_{\eta\e}(x),\R^m)$, it follows that 
\begin{equation} \label{eq:8.19}
v^\e_j (\cdot) = v_j(x+\e\,\cdot) \to u(x+\e\,\cdot) \quad \textrm{in  } \,\,L^{p_\eps(\cdot)}(B_\eta,\R^m) \  \textrm{ as } \,\, j \to +\infty.
\end{equation}
Therefore, from \eqref{eq:8.17} and \eqref{eq:8.19} we have 
\begin{align}\nonumber
\limsup_{j\to +\infty}\mathcal{F}_{j,\e}(\hat{v}^\e_j,B_\eta) \leq{}& (1+\eta)\Big(\limsup_{j\to +\infty}\mathcal{F}_{j,\e}(v^\e_j, B_\eta) + \eta^d\Big)
\\
& + M_\eta  \int_{B_\eta}|u(x+\e\,y) - u_{0,\zeta,\nu}(y)|^{p_\eps(y)}\,\mathrm{d}y + \gamma_d\eta^{d+1}\,.
\label{eq:8.20}
\end{align}

\medskip

\noindent\textit{Step 2.2.} 
We now show that $\nabla \hat{v}_j^\e$ is small in $L^{p^-_\eps}$-norm for $j$ large and $\e$ small. By the definition of $\hat{v}^\e_j$ we have
\begin{align}
\|\nabla \hat{v}_j^\e\|_{L^{p^-_\eps}(B_\eta, \mathbb{R}^{m\times d})} &\leq C_\eta \| v_j^\e - u_{0,\zeta,\nu}\|_{L^{p^-_\eps}(B_\eta, \R^m)} +  \|\nabla v_j^\e\|_{L^{p^-_\eps}(B_\eta, \mathbb{R}^{m\times d})},\label{eq:8.21}
\end{align}
where the constant $C_\eta>0$ is an upper bound for $\|\nabla \varphi_{i_j}\|_{L^\infty(B_\eta, \R^m)}$. 

We now estimate separately the two terms in the right-hand side of \eqref{eq:8.21}. Concerning the first term, by \eqref{eq:8.19} we can find $j_1(\e) \geq  j_0(\e)$ such that, for $j \geq j_1(\e)$ and from \eqref{eq:8.11}, we have 
\begin{align}\nonumber
\| v^\e_j &- u_{0,\zeta,\nu}\|_{L^{p^-_\eps}(B_\eta, \R^m)}
\\
\label{eq:8.22}
&\leq  \|v^\e_j(\cdot) - u(x+\e\,\cdot)\|_{L^{p^-_\eps}(B_\eta,\R^m)} + \|u(x+\e\,\cdot) - u_{0,\zeta,\nu}(\cdot)\|_{L^{p^-_\eps}(B_\eta, \R^m)}
\leq \omega_I (\e),
\end{align}
where $\omega_I(\e)$ is independent of $j$ and $\omega_I(\e) \to 0$ as $\e \to 0^+$.
 
As for the second term in \eqref{eq:8.21}, by the definition of $v_j^\e$, the lower bound in \ref{eq:bound1}, and the positivity of $g_j$, for $\e$ small enough we have that 
\begin{align}\label{eq:8.23}
\int_{B_\eta}|\nabla v_j^\e|^{p_\eps(y)} \mathrm{d}y & \leq 
\e^{p^-_\eps-d} \int_{B_{\eta\e}(x)}|\nabla v_j |^{p(y)} \,\mathrm{d}y \nonumber \\
& \leq \frac{\e^{p^-_\e-d}}{\alpha} \int_{B_{\eta\e}(x)} f_j (y,\nabla v_j )\,\mathrm{d}y  \\
& \leq \frac{\e^{p^-_\e-1}}{\alpha}\cdot\frac{\mathcal{F}_j(v_j, B_{\eta\e}(x))}{\e^{d-1}} \nonumber \,.
\end{align}

Now, by \eqref{eq:8.12} there exists $\e_0>0$ such that for every $0<\e<\e_0$ satisfying \eqref{e:bordo} we can find $j_2(\e)\ge  j_1(\e)$ such that, taking into account also \eqref{eq:8.23}, we have
\begin{equation}\label{eq:8.24}
\int_{B_\eta}|\nabla v_j^\e|^{p_\eps(y)} \mathrm{d}y  \leq \frac{\gamma_{d-1}\eta^{d-1}\e^{p^-_\e-1}}{\alpha}(g_\infty(x,\zeta,\nu)+1)
\end{equation}
for  every $j \geq j_{2}(\e)$. Finally, collecting \eqref{eq:8.21}, \eqref{eq:8.22}, and \eqref{eq:8.24} we conclude that 
\begin{equation}\label{eq:8.25}
\|\nabla \hat{v}_j^\e\|_{L^{p^-_\e}(B_\eta, \mathbb{R}^{m\times d})} \leq \omega_{II}(\e)
\end{equation}
for every $0<\e<\e_0$ satisfying \eqref{e:bordo} and every $j \geq j_{ 2 }(\e)$, 
where $\omega_{II}(\e)$ is independent of $j$ and $\omega_{II}(\e) \to 0$ as $\e \to 0^+$.

\medskip

\noindent\textit{Step 2.3.} As a next step, we need to modify $\hat{v}_{j}^\e$ to make it piecewise constant.

Let $\zeta_1,\dots,\zeta_d$ be the coordinates of $\zeta$.
By \eqref{eq:8.10} for every $0<\e<\e_0$ satisfying \eqref{e:bordo} there exists an integer $N_{\e}>0$, with $\frac{1}{N_\e}< \mu$ and $\frac{1}{N_\e}<  |\zeta_i|$ for every $i$ with $\zeta_i\neq 0$, such that,
\begin{equation}\label{eq:8.26}
N_{\e} \to +\infty \quad\text{and}\quad \omega_{II}(\e) \, N_\e \to 0^+ \quad \textrm{as } \, \e \to {0^+}.
\end{equation}
Note that, by \eqref{eq:8.18}, we have $|\hat v_j^\e|<2\mu- \frac{1}{N_\e}$ in $B_\eta$.

Since by \eqref{eq:8.25} the functions $\hat{v}_j^\e$ are equibounded in $L^1(B_\eta;\R^m)$ for every fixed $\e$, by virtue of Theorem~\ref{thm:fspoincare} applied with $\theta:=N_\e \|\nabla \hat{v}_j^\e\|_{L^{1}(B_\eta, \mathbb{R}^{m\times d})}$ we can find a partition $(P_l^{\e,j})_{l=1}^\infty$ of $B_\eta$ made of sets of finite perimeter and a piecewise constant function $w_j^\e:=\sum_{l=1}^\infty b_l \chi_{P_l^{\e,j}}$ such that the following properties hold: for every $0<\e<\e_0$ satisfying \eqref{e:bordo} and for every $j \geq j_{ 2 }(\e)$
\begin{align}
&w_j^\e = u_{0,\zeta,\nu} \text{ in a neighborhood of  }\partial B_\eta,\label{eq:8.28} \\
&\|w_j^\e - \hat v_j^\e\|_{L^\infty(B_\eta,\R^m)} \leq \frac{1}{N_\e} < \mu, \label{eq:8.29} \\
&\|w_j^\e \|_{L^\infty(B_\eta,\R^m)} \leq  2\mu, \label{eq:8.30} \\
&\mathcal{H}^{d-1}((J_{w_j^\e}\setminus J_{\hat v_j^\e})\cap B_\eta) \leq \omega_{III} (\e), \label{eq:8.31}
\vphantom{ \frac{2\sqrt m}{N_\e}}
\end{align}
where $\omega_{III}(\e):=c(d,p)\omega_{II}(\e)N_\e$ is independent of $j$ and  $\omega_{III}(\e)\to 0^+$ as $\e\to 0^+$. Note that \eqref{eq:8.29} and \eqref{eq:8.31} follow from Theorem~\ref{thm:fspoincare}$(ii)$ and $(i)$, respectively.

\medskip

\noindent \textit{Step 2.4.} 
Recalling the definition of $\mathcal{F}_{j,\e}(\hat v_j^\e,B_\eta)$ (see \eqref{eq:8.15}) and taking into account \eqref{eq:8.20}, we have 
\begin{align}\label{eq:8.32}
\limsup_{j\to +\infty} & \int_{J_{\hat v_j^\e}\cap B_\eta} 
g_j (x+\e y,[\hat v_j^\e] (y),\nu_{\hat v_j^\e} (y) )
\,\mathrm{d}\mathcal{H}^{d-1} (y)\nonumber\\
&\leq (1+\eta)\Big(\limsup_{j\to +\infty}\mathcal{F}_{j,\e}(v^\e_j, B_\eta) + \eta^d\Big)
+ M_\eta  \int_{B_\eta}|u(x+\e\,y) - u_{0,\zeta,\nu}(y)|^{p_\eps(y)}\,\mathrm{d}y + \gamma_d\eta^{d+1}\,.
\end{align}
Moreover, with the upper bound in \ref{eq:bound1} and \eqref{eq:8.24}, the volume integral in the right hand side of \eqref{eq:8.32} can be estimated as 
\begin{equation*} 
\int_{B_\eta}f_j (x+\e y, \nabla v_j^\e (y) ) \,\mathrm{d}y \leq \beta\int_{B_\eta} (1+ |\nabla v_j^\e|^{p_\e(\cdot)})\,\mathrm{d}y \leq \beta\Big(\gamma_d \eta^d + \frac{\gamma_{d-1}\eta^{d-1}\e^{p^-_\e-1}}{\alpha}(g_\infty(x,\zeta,\nu)+1)\Big)
\end{equation*}
for every $0<\e<\e_0$ satisfying \eqref{e:bordo}  and every $j \geq j_{ 2 }(\e)$.

By \eqref{eq:8.15} again, this inequality and \eqref{eq:8.32} yield in particular that 
\begin{align}\nonumber
\limsup_{j\to +\infty} & \int_{J_{\hat v_j^\e}\cap B_\eta} 
g_j (x+\e y,[\hat v_j^\e] (y),\nu_{\hat v_j^\e} (y))\,\mathrm{d}\mathcal{H}^{d-1} (y)\\\nonumber
&\leq  (1+\eta) \limsup_{j \to + \infty}\int_{J_{ v_j^\e}\cap B_\eta}
g_j (x+\e y,[v_j^\e] (y),\nu_{v_j^\e} (y))\,\mathrm{d}\mathcal{H}^{d-1} (y) \\\label{eq:8.33}
& + 2 \beta\Big(\gamma_d \eta^d+ \frac{\gamma_{d-1}\eta^{d-1}\e^{p^-_\e-1}}{\alpha}(g_\infty(x,\zeta,\nu)+1)\Big) \\
& + M_\eta  \int_{B_\eta}|u(x+\e\,y) - u_{0,\zeta,\nu}(y)|^{p_\eps(y)}\,\mathrm{d}y  + c(d)\eta^d \,, \nonumber
\end{align}
where $c(d):= 2+\gamma_d$.

Now, rewriting in terms of $v_j$ the surface integral in the right hand side and combining
with \eqref{estimate:1} and \eqref{eq:8.33} we obtain
\begin{align}
&\limsup_{j \to +\infty} \int_{J_{\hat v_j^\e}\cap B_\eta} 
g_j (x+\e y,[\hat v_j^\e] (y),\nu_{\hat v_j^\e} (y)) d\mathcal{H}^{d-1} (y)
\nonumber \\
&\leq (1+\eta)^2 \frac{1}{\e^{d-1}}\,\mathcal{F}(u, B_{\eta\e}(x))
+ 2 \eta^{d}\e   + 2\beta\Big(\gamma_d \eta^d+ \frac{\gamma_{d-1}\eta^{d-1}\e^{p^-_\e-1}}{\alpha}(g_\infty(x,\zeta,\nu)+1)\Big)
\label{eq:8.34} \\
&+ M_\eta  \int_{B_\eta}|u(x+\e\,y) - u_{0,\zeta,\nu}(y)|^{p_\eps(y)}\,\mathrm{d}y +  c(d)\eta^d\,. \nonumber
\end{align}
We now estimate the left-hand side in \eqref{eq:8.34}. Exploiting the assumptions  \ref{eq:bound2}, \ref{ass-g7}, \ref{ass-g2}, and the properties of $\hat v_j^\e$ and $w_j^\e$ we claim that 
\begin{align}\label{eq:8.36}
&\int_{J_{w_j^\e}\cap B_\eta}g_j (x+\e y,[w_j^\e](y),\nu_{w_j^\e}(y))\,\mathrm{d}\mathcal{H}^{d-1}(y)\nonumber\\
&\leq \int_{J_{\hat v_j^\e}\cap B_\eta}
g_j (x+\e y,[\hat v_j^\e](y),\nu_{\hat v_j^\e}(y) )\,\mathrm{d}\mathcal{H}^{d-1}(y)+\omega_{IV}(\e)+\omega_{V}(\e)\,, \nonumber\\
\end{align}
where  $\omega_{IV}(\e)$ and $\omega_{V}(\e)$ are independent of $j$ and tend to  $0^+$ as $\e \to 0^+$. There, the key estimate is 
\begin{align*}
& |g_j (x+\eps y,[\hat v_j^\eps](y),\nu_{\hat v_j^\eps}(y)) - g_j (x+\eps y,[w_j^\eps](y),\nu_{w_j^\eps}(y))| \\
&\leq \omega_2(|[\hat v_j^\eps] (y) -[w_j^\eps] (y)|)
\big(g_j (x+\eps y,[\hat v_j^\eps] (y),\nu_{\hat v_j^\eps} (y)) 
+ g_j (x+\eps y,[w_j^\eps] (y),\nu_{w_j^\eps} (y))\big) \\
&\leq 4 \beta^2 \omega_2(2\|\hat v_j^\eps - w_j^\eps\|_{L^{\infty}(B_\eta\!,\R^m)})\,, 
\end{align*}
for $\hs^{n-1}$-a.e. $y \in J_{\hat v_j^\eps}\cap J_{w_j^\eps}$. The claim follows then from \eqref{eq:8.29}, \eqref{eq:8.31} and the bounds on $g_j$ .

Now, \eqref{eq:8.36} together with \eqref{eq:8.34} gives 

\begin{align}
\limsup_{j\to +\infty} & \int_{J_{w_j^\e}\cap B_\eta} 
g_j (x+\e y,[w_j^\e] (y),\nu_{w_j^\e} (y))\,\mathrm{d}\mathcal{H}^{d-1} (y)\nonumber\\
&\leq  (1+\eta)^2 \frac{1}{\e^{d-1}}\,\mathcal{F}(u, B_{\eta\e}(x))
 + 2 \eta^d \e  + \omega_{IV}(\e)+\omega_{V}(\e) \label{eq:8.36bis}\\
& + 2\beta\Big(\gamma_d \eta^d+ \frac{\gamma_{d-1}\eta^{d-1}\e^{p^-_\e-1}}{\alpha}(g_\infty(x,\zeta,\nu)+1)\Big) \nonumber \\
&+ M_\eta  \int_{B_\eta}|u(x+\e\,y) - u_{0,\zeta,\nu}(y)|^{p_\eps(y)}\,\mathrm{d}y + c(d)\eta^d\,. \nonumber
\end{align}
Defining $z_j^\e(y):= w_j^\e((y-x)/\e)$ for every $y \in B_{\eta\e} (x)$, we clearly have that $z_j^\e \in SBV_{\mathrm{pc}} (B_{\eta\e} (x),\R^m)$ and $z_j^\e = u_{x, \zeta, \nu}$ in a neighborhood of $\partial B_{\eta\e} (x)$.
Then, rewriting \eqref{eq:8.36bis} in terms of the functions $z_j^\e$ we find

\begin{align*} 
\limsup_{j \to +\infty} \frac{1}{(\eta\e)^{d-1}} &{\bf m}^{PC}_{G_j}(u_{x,\zeta,\nu}, B_{\eta\e}(x)) 
\le \limsup_{j \to +\infty} \frac{1}{(\eta\e)^{d-1}} {\bf m}^{PC}_{G_j}(u_{x,\zeta,\nu}, B_{\eta\e}(x)) \nonumber\\
& \leq \limsup_{j\to +\infty} \frac{1}{(\eta\e)^{d-1}}\int_{J_{z_j^\e}\cap B_{\eta\e}(x)} 
g_j (y,[z_j^\e] (y),\nu_{z_j^\e} (y))\,\mathrm{d}\mathcal{H}^{d-1} (y)\nonumber\\
&\leq  (1+\eta)^2 \frac{1}{(\eta\e)^{d-1}}\,\mathcal{F}(u, B_{\eta\e}(x))
 + 2 \eta\e  + \frac{\omega_{IV}(\e)}{\eta^{d-1}} + \frac{\omega_{V}(\e)}{\eta^{d-1}} \nonumber\\
& +2\beta\Big(\gamma_d \eta+ \frac{\gamma_{d-1}\e^{p^-_\e-1}}{\alpha}(g_\infty(x,\zeta,\nu)+1)\Big) \\
& + \frac{M_\eta}{\eta^{d-1}}  \int_{B_1}|u(x+\e\,y) - u_{0,\zeta,\nu}(y)|^{p_\eps(y)}\,\mathrm{d}y + c(d)\eta\,. \nonumber
\end{align*}
Finally, dividing by $\gamma_{d-1}$, taking the limsup as $\e \to 0^+$ and using \eqref{eq:g''}, 
\eqref{eq:8.11}, and \eqref{eq:8.12}, 
 we obtain
$$
g'' (x,\zeta,\nu) \leq (1+\eta)^2  g_\infty (x,\zeta,\nu)  + C \eta,
$$
with $C:= (2\beta \gamma_d + c(d))/\gamma_{d-1}$.
Recalling the definition of $\zeta$ and $\nu$, we obtain that
\begin{equation*} 
g'' (x,[u](x),\nu_{u}(x)) \leq (1+\eta)^2 g_\infty (x,[u](x),\nu_u(x))  
+ C \eta 
\end{equation*}
holds true for $\hs^{n-1}$-a.e.\ $x\in J_u \cap A$. Taking the limit as $\eta\to 0^+$ 
we get 
\begin{equation*} 
g'' (x,[u](x),\nu_{u}(x)) \leq g_\infty(x,[u](x),\nu_u(x)) 
\end{equation*}
for $\hs^{n-1}$-a.e.\ $x\in J_u \cap A$,
thus proving \eqref{eq:gineq2} for $u\in SBV^{p(\cdot)}(A,\R^m)\cap L^{\infty}(A,\R^m)$.

Finally, since by definition $g'\leq g''$, combining \eqref{eq:gineq1} and \eqref{eq:gineq2} we get \eqref{eq: g_infty=g}-\eqref{eq:gpc}. This concludes the proof.

\end{proof}

\EEE

\appendix \section{A $\Gamma$-convergence result with weaker growth conditions from above} \label{sec:appendix}

In this section we will prove a $\Gamma$-convergence result for energies whose surface densities satisfy a weaker assumption than {\rm\ref{eq:bound2}} of Section~\ref{sec:gammaconv}. To do this, we will also take advantage an integral representation result on $SBV^{p(\cdot)}$ (see Theorem~\ref{thm: int-representation-sbv}) via a perturbation argument.

Let $(f_j)_{j\in\N}$ and $(g_j)_{j\in\N}$ be sequences of functions satisfying {\rm\ref{ass-f1}}-{\rm\ref{eq:bound1}} and {\rm\ref{ass-g1}}, {\rm\ref{ass-g4}}, {\rm\ref{ass-g7}}, respectively. In place of {\rm\ref{eq:bound2}}, we require each $g_j$ to comply with the additional property
\begin{enumerate}[font={\normalfont},label={($g3^\prime$)}]
\item(lower and upper bound) for every $x\in \R^d$, $\zeta\in \R^d_0$, and $\nu \in \Sph^{d-1}$
\begin{equation*}
\alpha \le g(x,\zeta,\nu) \le \beta (1+|\zeta|)\,,
\end{equation*}\label{eq:bound2bis}
\end{enumerate}
together with {\rm\ref{ass-g3}}.

Correspondingly, we define the functionals $\mathcal{E}_j: L^0(\Omega;\R^m)\times\mathcal{A}(\Omega)\to[0,+\infty]$ as
\begin{equation}
\mathcal{E}_j(u,A):=
\begin{cases}
\displaystyle \int_A f_j\big(x,   \nabla u(x)   \big)\, {\rm d}x +\int_{J_u\cap A}g_j(x,[u](x),\nu_u(x))\, {\rm d}\mathcal{H}^{d-1}(x)\,, & \mbox{ if $u\lfloor_{A}\in GSBV^{p(\cdot)}(A;\R^m)$,}  \\
+\infty\,, & \mbox{ otherwise.}
\end{cases}
\label{eq:unperturben}
\end{equation}

\subsection{Integral representation: the $SBV^{p(\cdot)}$ case}

In this section we discuss the minor modifications needed in order to obtain an integral representation result for functionals $\mathcal{F}\colon SBV^{p(\cdot)}(\Omega;\R^m) \times \mathcal{B}(\Omega) \to  [0,+\infty)$, satisfying assumptions \ref{assH1}-\ref{assH3} and the following
\begin{enumerate}[font={\normalfont},label={(${\rm H}_4'$)}]
\item  there exist $0 < \alpha  < \beta $ such that for any $u \in SBV^{p(\cdot)}(\Omega;\R^m)$ and $B \in \mathcal{B}(\Omega)$  we have
\begin{equation}
\begin{split}
\alpha \bigg(\int_{ B  } |\nabla u|^{p(x)}  \dx    +   \int_{J_u \cap B}(1+|[u]|)\,\mathrm{d}\mathcal{H}^{d-1}\bigg) & \le \mathcal{F}(u,B) \\
& \le \beta \bigg(\int_{ B } (1 + |\nabla u|^{p(x)})    \dx  +  \int_{J_u \cap B}(1+|[u]|)\,\mathrm{d}\mathcal{H}^{d-1}\bigg)\,.
\end{split}
\end{equation} \label{assH4'}
\end{enumerate}

For every $u \in SBV^{p(\cdot)}(\Omega;\R^m)$ and $A \in \mathcal{A}(\Omega)$ we define
\begin{equation}\label{eq: general minimizationsbv}
\mathbf{m}_{\mathcal{F}}(u,A) = \inf_{v \in SBV^{p(\cdot)}(\Omega;\R^m)} \  \lbrace \mathcal{F}(v,A): \ v = u \ \text{ in a neighborhood of } \partial A \rbrace\,.
\end{equation}

The main result of this section is the following integral representation theorem.

\begin{theorem}[Integral representation in $SBV^{p(\cdot)}$]\label{thm: int-representation-sbv}
Let $\Omega \subset \R^d$ be open, bounded with Lipschitz boundary, let $m \in \N$. Let $p:\Omega\to(1,+\infty)$ be a variable exponent complying with {\rm\ref{assP1}}-{\rm\ref{assP2}}, and suppose that  $\mathcal{F}\colon SBV^{p(\cdot)}(\Omega;\R^m)  \times \mathcal{B}(\Omega) \to [0,+\infty)$ satisfies {\rm\ref{assH1}}--{\rm\ref{assH3}} and {\rm\ref{assH4'}}. Then 
$$\mathcal{F}(u,B) = \int_B f\big(x,u(x),\nabla u(x)\big)  \, {\rm d}x +    \int_{J_u\cap  B} g\big(x,u^+(x),u^-(x),\nu_u(x)\big)\,  {\rm d}  \mathcal{H}^{d-1}(x)$$
for all $u \in  SBV^{p(\cdot)}(\Omega;\R^m)$  and   $B \in \mathcal{B}(\Omega)$, where $f$ is given  by
\begin{align}\label{eq:fdef-sbv}
f(x_0,u_0,\xi) = \limsup_{\eps \to 0} \frac{\mathbf{m}_{\mathcal{F}}(\ell_{x_0,u_0,\xi},B_\eps(x_0))}{\gamma_d\eps^{d}}
\end{align}
for all $x_0 \in \Omega$, $u_0 \in \R^m$, $\xi \in \mathbb{R}^{m \times d}$ and $\ell_{x_0,u_0,\xi}$ as in \eqref{eq: elastic competitor}, $g$ is given by 
\begin{align}\label{eq:gdef-sbv}
g(x_0,a,b,\nu) = \limsup_{\eps \to 0} \frac{\mathbf{m}_{\mathcal{F}}(u_{x_0,a,b,\nu},B_\eps(x_0))}{\gamma_{d-1}\eps^{d-1}}
\end{align}
for all $ x_0  \in \Omega$,  $a,b \in \R^m$, $\nu \in \mathbb{S}^{d-1}$ and $u_{x_0,a,b,\nu}$ as in \eqref{eq: jump competitor}, and $\mathbf{m}_{\mathcal{F}}$ is defined in \eqref{eq: general minimizationsbv}.
\end{theorem}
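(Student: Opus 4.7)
\smallskip

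\textbf{Plan of the proof.} The strategy mirrors very closely that of Theorem~\ref{thm: int-representation-gsbv}, but with the weaker surface bound \ref{assH4'} and restricted to $SBV^{p(\cdot)}$. The guiding idea is still the \emph{global method for relaxation}: identify $f$ and $g$ as blow-up limits of the Dirichlet infima $\mathbf{m}_{\mathcal{F}}$ on shrinking balls, via (i) a fundamental estimate, (ii) the identity $\mathcal{F}=\mathbf{m}^*_{\mathcal{F}}$, and (iii) the replacement of $u$ by its linear/piecewise-constant blow-up competitors $\bar u^{\rm bulk}_{x_0}$ and $\bar u^{\rm surf}_{x_0}$ without changing the asymptotic minimum.

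\smallskip

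\textbf{Step 1: Fundamental estimate in $SBV^{p(\cdot)}$.} I would revisit Lemma~\ref{lemma: fundamental estimate} with $u,v\in SBV^{p(\cdot)}$ and the new upper bound in \ref{assH4'}. The construction $w_i=\varphi_i u+(1-\varphi_i)v$ is the same; the only new point is the surface integrand: on $J_{w_i}\subset J_u\cup J_v$ one has the pointwise bound $|[w_i]|\le|[u]|+|[v]|$, so the estimate of $\mathcal{F}(w_i,I_i)$ picks up an extra term $\beta\int_{J_{u}\cup J_v\cap I_i}(1+|[u]|+|[v]|)\,\mathrm{d}\mathcal{H}^{d-1}$, which is absorbed into $c\alpha^{-1}(\mathcal{F}(u,I_i)+\mathcal{F}(v,I_i))$ using the lower bound in \ref{assH4'}. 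The gradient term, and the remainder $M\int_F(|u-v|/\delta)^{p(x)}\,\mathrm{d}x$, are unchanged. The scaling assertion \eqref{eq: constant-scal} goes through verbatim.

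\smallskip

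\textbf{Step 2: The identity $\mathcal{F}=\mathbf{m}^*_{\mathcal{F}}$ and the derivative formula.} I would reprove Lemma~\ref{lemma: F=m*}, checking that the gluing $v^\delta=\sum v^\delta_i\chi_{B^\delta_i}+u\chi_{N^\delta_0}$ belongs to $SBV^{p(\cdot)}$. The $GSBV$ construction in the original proof used Ambrosio's compactness and Ioffe lower semicontinuity for the bulk modular; the additional ingredient here is the bound
\[
\int_{J_{v^\delta}\cap A}(1+|[v^\delta]|)\,\mathrm{d}\mathcal{H}^{d-1}\le \alpha^{-1}\bigl(\mathbf{m}^\delta_{\mathcal{F}}(u,A)+\delta(1+\mathcal{L}^d(A))\bigr),
\]
which follows from \ref{assH4'} and \eqref{eq: to show-estimate1}--\eqref{eq: to show-estimate2}, and guarantees $v^\delta\in SBV^{p(\cdot)}$. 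The convergence in measure $v^\delta\to u$ is obtained as in \eqref{eq:convinmeasure} through the Poincar\'e inequality on each $B^\delta_i$, since $|Dw^{\delta,M}|(A)$ is controlled by the gradient plus the (now finite) $\mathcal{H}^{d-1}$-measure of the jumps. Then Lemma~\ref{lemma: F=m} follows exactly as in \cite{BFLM}, yielding the Radon--Nikod\'ym identity
\[
\tfrac{\mathrm{d}\mathcal{F}(u,\cdot)}{\mathrm{d}\mu}(x_0)=\lim_{\eps\to0}\tfrac{\mathbf{m}_{\mathcal{F}}(u,B_\eps(x_0))}{\mu(B_\eps(x_0))}.
\]

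\smallskip

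\textbf{Step 3: Replacement by blow-up competitors.} Lemmas~\ref{lemma: sameminbulk} and~\ref{lemma: sameminsurf} go through with the same proofs because Lemmas~\ref{lem:lemma2fonseca} and~\ref{lem:Lemma3fon} already supply, for $u\in SBV^{p(\cdot)}$, the extra pieces of information $(i)',(iii)'$ and \eqref{eq:25bis}, \eqref{eq:(7.8)_2}. Concretely, in the surface case I would re-estimate the annular term of \eqref{eq:6.24}: under \ref{assH4'}, it becomes
\[
\mathcal{F}(\bar u_\eps,C_{\eps,\theta}(x_0))\le\beta\!\int_{C_{\eps,\theta}(x_0)}(1+|\nabla\bar u_\eps|^{p(x)})\,\mathrm{d}x+\beta\!\int_{J_{\bar u_\eps}\cap C_{\eps,\theta}(x_0)}(1+|[\bar u_\eps]|)\,\mathrm{d}\mathcal{H}^{d-1};
\]
thanks to \eqref{eq:(7.8)_2} applied with $E=C_{1,\theta}(x_0)$, the last integral, divided by $\gamma_{d-1}\eps^{d-1}$, converges to $\beta(1+|[\bar u^{\rm surf}_{x_0}]|)\,\mathcal{H}^{d-1}(\Pi_0\cap C_{1,\theta}(x_0))$, which vanishes as $\theta\to0$. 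The companion computation for $\mathcal{F}(\bar u^{\rm surf}_{x_0},C_{\eps,\theta}(x_0))$ is immediate because $\bar u^{\rm surf}_{x_0}$ is piecewise constant with fixed jump $|[u](x_0)|$. The rest of the comparison, including \eqref{eq:6.21bis} and \eqref{eq:6.22}, is unchanged. The bulk analogue is even easier: Lemma~\ref{lem:lemma2fonseca}$(i)',(iii)'$ controls the $L^1$-error and the total jump opening of $u_\eps$, so the extra surface contribution in $\mathcal{F}(u_\eps,C_{\eps,\theta}(x_0))$ is already $o(\eps^d)$.

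\smallskip

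\textbf{Assembly and main obstacle.} With Steps~1--3 in hand, the conclusion is formally identical to the proof of Theorem~\ref{thm: int-representation-gsbv}: the Besicovitch derivation theorem and \ref{assH4'} reduce the statement to proving that the Radon--Nikod\'ym derivatives of $\mathcal{F}(u,\cdot)$ with respect to $\mathcal{L}^d$ and to $\mathcal{H}^{d-1}\lfloor_{J_u}$ are given by \eqref{eq:fdef-sbv} and \eqref{eq:gdef-sbv} respectively, which is exactly the content of Step~3 combined with Step~2. I expect the most delicate point to be the bookkeeping in Step~3 at jump points, because now the surface remainder terms no longer vanish trivially by $\mathcal{H}^{d-1}$-control alone: one really needs the identity \eqref{eq:(7.8)_2} to guarantee that $|[\bar u_\eps]|$ does not concentrate in the gluing annulus $C_{\eps,\theta}(x_0)$, so that the $\beta(1+|[\bar u_\eps]|)$ contribution is controllable as $\theta\to0$. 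Everything else is a routine $p(\cdot)$-adaptation of the arguments already detailed in Sections~\ref{sec:fundamental}--\ref{sec:surf}.
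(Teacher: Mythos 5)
Your proposal follows the same route as the paper: it adapts the global method from the $GSBV^{p(\cdot)}$ case by redoing the fundamental estimate, the identity $\mathcal{F}=\mathbf{m}^*_{\mathcal{F}}$, and the blow-up replacement lemmas under \ref{assH4'}, and it correctly pinpoints the new ingredients needed, namely $(i)',(iii)'$ from Lemma~\ref{lem:lemma2fonseca} and \eqref{eq:25bis}, \eqref{eq:(7.8)_2} from Lemma~\ref{lem:Lemma3fon}, with \eqref{eq:(7.8)_2} controlling the jump-opening contribution in the gluing annulus. The only details the paper spells out that you leave implicit are the use of the modified measure $\mu=\mathcal{L}^d\lfloor_\Omega+(1+|[u]|)\mathcal{H}^{d-1}\lfloor_{J_u}$ in the Radon--Nikod\'ym step and, for the ``$\geq$'' inequalities, the extra Fubini-type selection of $\eps^{-d}\int_{\partial B_{s\eps}(x_0)}|u^+-\bar u^-_\eps|\,\mathrm{d}\mathcal{H}^{d-1}\to0$ (and its $\eps^{-(d-1)}$ analogue at jump points), which are needed because gluing across the sphere now incurs a jump-dependent surface cost; both follow from the very properties you cite.
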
 

The proof of Theorem~\ref{thm: int-representation-sbv} can be obtained by adapting the argument of Theorem~\ref{thm: int-representation-gsbv}, which concerns with $GSBV^{p(\cdot)}$ functions. For this, Lemma~\ref{lemma: F=m}, Lemma~\ref{lemma: sameminbulk} and Lemma~\ref{lemma: sameminsurf} are replaced by the corresponding $SBV^{p(\cdot)}$ versions, Lemma~\ref{lemma: F=msbv}, Lemma~\ref{lemma: sameminbulksbv} and Lemma~\ref{lemma: sameminsurfsbv} below, respectively. We will briefly list the main changes in the proofs due to the different assumption \ref{assH4'}.

\begin{lemma}\label{lemma: F=msbv}
Let $p:\Omega\to(1,+\infty)$ be a variable exponent satisfying {\rm\ref{assP1}}-{\rm\ref{assP2}}. Suppose that $\mathcal{F}$ satisfies {\rm\ref{assH1}}--{\rm\ref{assH3}} and {\rm\ref{assH4'}}. Let $u \in SBV^{p(\cdot)}(\Omega;\R^m)$ and  $\mu$ be defined as
\begin{equation}
\mu:= \mathcal{L}^d\lfloor_{\Omega}\,\, +\,\, (1+|u^+-u^-|)\mathcal{H}^{d-1}\lfloor_{J_u \cap \Omega}\,. 
\label{eq: measuremubis}
\end{equation}
Then for $\mu$-a.e.\ $x_0 \in \Omega$ we have
 $$\lim_{\eps \to 0}\frac{\mathcal{F}(u,B_\eps(x_0))}{\mu(B_\eps(x_0))} =  \lim_{\eps \to 0}\frac{\mathbf{m}_{\mathcal{F}}(u,B_\eps(x_0))}{\mu(B_\eps(x_0))}.$$
\end{lemma}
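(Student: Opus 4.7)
The proof mirrors that of Lemma~\ref{lemma: F=m}, which combined the identification $\mathcal{F}(u,A) = \mathbf{m}^*_{\mathcal{F}}(u,A)$ from Lemma~\ref{lemma: F=m*} with a Besicovitch-type differentiation argument in the spirit of \cite[Lemmas 5 and 6]{BFLM}. The plan is therefore to first adapt Lemma~\ref{lemma: F=m*} to the present setting, replacing $GSBV^{p(\cdot)}$ by $SBV^{p(\cdot)}$, the growth condition \ref{assH4} by \ref{assH4'}, and the measure in \eqref{eq: measuremu} by the weighted measure $\mu$ defined in \eqref{eq: measuremubis}.

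First I would show $\mathcal{F}(u,A) = \mathbf{m}^*_{\mathcal{F}}(u,A)$ for every $A\in\mathcal{A}(\Omega)$. The inequality $\mathbf{m}^*_{\mathcal{F}}(u,A) \leq \mathcal{F}(u,A)$ is immediate from \ref{assH1} and $\mathbf{m}_{\mathcal{F}}(u,B) \leq \mathcal{F}(u,B)$. For the converse, fix $\delta>0$, select pairwise disjoint balls $(B^\delta_i)_i \subset A$ of diameter $\leq\delta$ with $\mu(A\setminus\bigcup_i B^\delta_i)=0$ realizing $\mathbf{m}^\delta_{\mathcal{F}}(u,A)$ up to an error $\delta$, and, inside each $B^\delta_i$, pick $v^\delta_i \in SBV^{p(\cdot)}(B^\delta_i;\R^m)$ with $v^\delta_i = u$ in a neighborhood of $\partial B^\delta_i$ and $\mathcal{F}(v^\delta_i,B^\delta_i) \leq \mathbf{m}_{\mathcal{F}}(u,B^\delta_i) + \delta\mathcal{L}^d(B^\delta_i)$. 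Define $v^{\delta,n}$ and $v^\delta$ as in \eqref{eq: def of vdelta}. Thanks to \ref{assH4'}, now one obtains the uniform bound
\[
\sup_{n\in\N}\left(\int_\Omega |\nabla v^{\delta,n}|^{p(x)}\,\mathrm{d}x + \int_{J_{v^{\delta,n}}}(1+|[v^{\delta,n}]|)\,\mathrm{d}\mathcal{H}^{d-1}\right) <+\infty\,,
\]
so that \cite[Theorem~2.2]{Amb} upgrades directly to $v^\delta \in SBV(\Omega;\R^m)$, and Ioffe's lower semicontinuity gives $v^\delta\in SBV^{p(\cdot)}(\Omega;\R^m)$. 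The analogue of \eqref{eq: to show-flaviana2} remains valid with the same proof.

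The main obstacle is verifying that $w^\delta := u - v^\delta \to 0$ in measure on $A$ as $\delta\to 0$. I would follow the truncation argument used for \eqref{eq:convinmeasure}: on each $B^\delta_i$ the function $w^\delta$ has trace zero on $\partial B^\delta_i$, so for every $M>0$ the Poincar\'e inequality on $w^{\delta,M} := (-M\vee w^\delta)\wedge M$ yields
\[
\|w^{\delta,M}\|_{L^1(B^\delta_i;\R^m)} \leq C\delta|Dw^{\delta,M}|(B^\delta_i)\,,
\]
hence, summing over $i$ and using that
\[
|Dw^{\delta,M}|(A) \le \int_A(|\nabla v^\delta|+|\nabla u|)\,\mathrm{d}x + 2M(\mathcal{H}^{d-1}(J_{v^\delta}\cap A) + \mathcal{H}^{d-1}(J_u\cap A))
\]
is uniformly bounded in $\delta$, we obtain $w^{\delta,M}\to 0$ in $L^1(A;\R^m)$ as $\delta\to 0$ for each $M>0$, and therefore $w^\delta\to 0$ in measure. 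This argument is insensitive to the weighting of $\mathcal{H}^{d-1}\lfloor_{J_u}$ by $(1+|u^+-u^-|)$, since the estimates use only $\mathcal{H}^{d-1}$-finiteness of the jump set. Combining with the lower semicontinuity \ref{assH2} and \eqref{eq: to show-flaviana2} yields $\mathcal{F}(u,A) \leq \mathbf{m}^*_{\mathcal{F}}(u,A)$ in the limit $\delta\to 0$.

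With $\mathcal{F}(u,\cdot) = \mathbf{m}^*_{\mathcal{F}}(u,\cdot)$ at hand, Lemma~\ref{lemma: F=msbv} follows from the Besicovitch differentiation argument of \cite[Lemmas 5 and 6]{BFLM}: both set functions $\mathcal{F}(u,\cdot)$ and $\mathbf{m}^*_{\mathcal{F}}(u,\cdot)$ are Borel measures absolutely continuous with respect to $\mu$ by \ref{assH4'}, and a comparison of the approximating quantities $\mathbf{m}^\delta_{\mathcal{F}}(u,\cdot)/\mu(B_\eps(x_0))$ with $\mathcal{F}(u,B_\eps(x_0))/\mu(B_\eps(x_0))$ on small balls shows that their Radon--Nikodym derivatives with respect to $\mu$ coincide $\mu$-a.e. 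The substitution of the unweighted measure by the weighted one in \eqref{eq: measuremubis} causes no additional difficulty, because $\mu$ remains a finite Radon measure on $\Omega$ thanks to the $SBV^{p(\cdot)}$ regularity of $u$, and the Besicovitch differentiation theorem \cite[Theorem~2.22]{Ambrosio-Fusco-Pallara:2000} applies verbatim. I therefore omit further details.
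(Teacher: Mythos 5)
Your proposal follows essentially the same route as the paper: adapt Lemma \ref{lemma: F=m*} to $SBV^{p(\cdot)}$ using the strengthened bound from \ref{assH4'} to get equiboundedness of $\int_{J_{v^{\delta,n}}}(1+|[v^{\delta,n}]|)\,\mathrm{d}\mathcal{H}^{d-1}$, apply Ambrosio's compactness to land in $SBV^{p^-}$, upgrade to $SBV^{p(\cdot)}$ via Ioffe, carry over the $w^\delta\to 0$ convergence-in-measure argument unchanged, and conclude via the Besicovitch differentiation as in \cite[Lemmas 5 and 6]{BFLM}. One small inaccuracy: you cite \cite[Theorem~2.2]{Amb}, which is the $GSBV$ compactness theorem and only controls $\mathcal{H}^{d-1}(J_v)$; the appropriate reference here is \cite[Theorem~2.1]{Amb} (the $SBV$ closure/compactness theorem), which is exactly what the extra uniform bound on $\int_{J_v}|[v]|\,\mathrm{d}\mathcal{H}^{d-1}$ from \ref{assH4'} makes available, and which is the citation the paper uses.
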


\begin{proof}
The only needed modification concerns the proof of Lemma~\ref{lemma: F=m*}. Indeed, under assumption \ref{assH4'}, for $u\in SBV^{p(\cdot)}(\Omega;\R^m)$ the same construction provides a sequence $v^{\delta,n}$ in $SBV^{p(\cdot)}(\Omega;\R^m)$ such that 
\begin{equation}
 \sup_{n \in \N} \left(\int_\Omega |\nabla v^{\delta,n}(x)|^{p(x)}\,\mathrm{d}x  + \int_{J_{ v^{\delta,n} }}|[v^{\delta,n}]|\,\mathrm{d}\mathcal{H}^{d-1}\right)<+\infty\,.
\label{eq: equibddvdeltabis}
\end{equation}
Then, an analogous compactness argument, based on \cite[Theorem~2.1]{Amb} yields $v^\delta \in SBV^{p^-}(\Omega;\R^m)$, which can be improved to $v^\delta \in SBV^{p{(\cdot)}}(\Omega;\R^m)$ by using Ioffe's theorem and the weak convegence of the gradients, exactly as in Lemma~\ref{lemma: F=m*}. Finally, assumption \ref{assH4'} does not change \eqref{eq: to show-flaviana2-NNN}.
\end{proof}

Note that the Fundamental estimate \eqref{eq: assertionfund}, proven with Lemma~\ref{lemma: fundamental estimate}, still holds if we replace \ref{assH4} by \ref{assH4'}.

\begin{lemma}\label{lemma: sameminbulksbv}
Let $p:\Omega\to(1,+\infty)$ be a Riemann-integrable variable exponent satisfying {\rm\ref{assP1}}. Suppose that $\mathcal{F}$ satisfies {\rm\ref{assH1}} and  {\rm\ref{assH3}},{\rm\ref{assH4'}}  and let $u \in SBV^{p(\cdot)}(\Omega;\R^m)$.  Then for $\mathcal{L}^{d}$-a.e.\ $x_0 \in \Omega$  we have
\begin{align}\label{eq: samemin-bulksbv}
  \lim_{\eps \to 0}\frac{\mathbf{m}_{\mathcal{F}}(u,B_\eps(x_0))}{\gamma_{d}\eps^{d}} =  \limsup_{\eps \to 0}\frac{\mathbf{m}_{\mathcal{F}}(\bar{u}^{\rm bulk}_{x_0},B_\eps(x_0))}{\gamma_{d}\eps^{d}}. 
 \end{align}
\end{lemma}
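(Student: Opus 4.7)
The plan is to mirror the two–inequality scheme used for the $GSBV^{p(\cdot)}$ version, i.e.\ Lemmas \ref{lem:bulk1} and \ref{lem:bulk2}, the only novelty being that under the weaker upper bound \ref{assH4'} the functional $\mathcal{F}$ carries an additional jump–opening term $\int_{J_u\cap B}|[u]|\,\mathrm{d}\mathcal{H}^{d-1}$, which must be shown to be of order $o(\eps^d)$ at the bulk scale. I would fix $x_0\in\Omega$ so that simultaneously: the approximate gradient $\nabla u(x_0)$ exists; $x_0$ is a Lebesgue point of $|\nabla u|^{p(\cdot)}$ in the sense of Theorem \ref{thm:lebpoint}; the Radon–Nikodym derivative of $\mathcal{F}(u,\cdot)$ with respect to $\mathcal{L}^d$ coincides with the corresponding derivative of $\mathbf{m}_{\mathcal{F}}(u,\cdot)$ by Lemma \ref{lemma: F=msbv}; and finally
$$
\lim_{\eps\to 0}\eps^{-d}\int_{J_u\cap B_\eps(x_0)}(1+|[u]|)\,\mathrm{d}\mathcal{H}^{d-1}=0,
$$
which holds for $\mathcal{L}^d$-a.e.\ $x_0$ because the measure $(1+|[u]|)\mathcal{H}^{d-1}\lfloor_{J_u}$ is singular with respect to $\mathcal{L}^d$. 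All such properties are satisfied off an $\mathcal{L}^d$-negligible set.

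For the inequality ``$\le$'' I would repeat the construction of Lemma \ref{lem:bulk1}: given $\eta,\theta\in(0,1)$ and the sequence $u_\eps$ provided by Lemma \ref{lem:lemma2fonseca} (in its $SBV^{p(\cdot)}$ refinement, which yields additionally properties $(i)'$ and $(iii)'$), pick an almost-optimal $z_\eps$ for $\mathbf{m}_{\mathcal{F}}(\bar{u}_{x_0}^{\rm bulk},B_{(1-3\theta)\eps}(x_0))$, extend it by $\bar{u}_{x_0}^{\rm bulk}$ outside, and glue with $u_\eps$ on the annulus $C_{\eps,\theta}(x_0)$ via Lemma \ref{lemma: fundamental estimate} with the same choice of $D'_{\eps,x_0},\,D''_{\eps,x_0},\,E_{\eps,x_0}$ as in \eqref{eq:choiceofsets}. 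The remainder term $M\int(|z_\eps-u_\eps|/\eps)^{p(x)}\,\mathrm{d}x$ is $o(\eps^d)$ by \eqref{eq:(16)}$(ii)$, exactly as in the $GSBV^{p(\cdot)}$ case. The new point is the estimate of $\mathcal{F}(u_\eps, C_{\eps,\theta}(x_0))$: under \ref{assH4'} it now carries the extra term $\beta\int_{J_{u_\eps}\cap C_{\eps,\theta}(x_0)}|[u_\eps]|\,\mathrm{d}\mathcal{H}^{d-1}$; this is $o(\eps^d)$ thanks to $(iii)'$ in Lemma \ref{lem:lemma2fonseca}, since $J_{u_\eps}\subset J_u$ and $|[u_\eps]|\le|[u]|$. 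Together with the bulk part already estimated in \eqref{eq:stimaintermedia}, this shows $\eps^{-d}\mathcal{F}(u_\eps, C_{\eps,\theta}(x_0))\le \beta\mathcal{L}^d(C_{1,\theta})(1+2^{p^+-1}|\nabla u(x_0)|^{\widetilde{p}})+o(1)$. Plugging everything in and letting $\eps\to 0$, then $\eta,\theta\to 0$, gives \eqref{eq:bulkineq1}.

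For the inequality ``$\ge$'' I would repeat the argument of Lemma \ref{lem:bulk2}: by Fubini combined with \eqref{eq:25} and the singular-measure property recalled above, one can select $\sigma\in(1-4\theta,1-3\theta)$ such that
$$
\mathcal{H}^{d-1}(\partial B_{\sigma\eps}(x_0)\cap (J_{u_\eps}\cup J_u))=0,\qquad \eps^{-d}\int_{(\{u\ne u_\eps\}\cup J_u)\cap \partial B_{\sigma\eps}(x_0)}(1+|[u]|)\,\mathrm{d}\mathcal{H}^{d-1}\to 0,
$$
the second fact being needed to control, in the application of \ref{assH4'} in \eqref{eq:6.29}, the surface mass produced on $\partial B_{s\eps}$ when one extends a near-optimal $z_\eps$ for $\mathbf{m}_{\mathcal{F}}(u,B_{s\eps}(x_0))$ by $u_\eps$ on the complement. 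The rest of the argument, including the glueing via Lemma \ref{lemma: fundamental estimate} and the bookkeeping of the annulus contribution $\mathcal{F}(\bar{u}_{x_0}^{\rm bulk},C_{\eps,\theta}(x_0))$ (which carries no jump term because $\bar{u}_{x_0}^{\rm bulk}$ is affine), goes through verbatim.

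The main obstacle is precisely this: absorbing the new factor $|[u]|$ in \ref{assH4'}. For the annulus estimates on the blow-up sequence $u_\eps$, I rely on the reinforcement $(iii)'$ of Lemma \ref{lem:lemma2fonseca}, valid because $u\in SBV^{p(\cdot)}$; and for the trace terms on $\partial B_{s\eps}(x_0)$ I rely on the singularity with respect to $\mathcal{L}^d$ of the measure $(1+|[u]|)\mathcal{H}^{d-1}\lfloor_{J_u}$ to choose $\sigma$ via a Fubini argument so that this measure is negligible at the scale $\eps^d$. Once these two items are in place, the blow-up scheme of the $GSBV^{p(\cdot)}$ case applies with only cosmetic changes.
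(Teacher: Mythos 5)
Your treatment of the ``$\le$'' inequality is correct and matches the paper: the extra term coming from the weaker upper bound \ref{assH4'} on the annulus $C_{\eps,\theta}(x_0)$ is $\beta\int_{J_{u_\eps}\cap C_{\eps,\theta}(x_0)}|[u_\eps]|\,\mathrm{d}\mathcal{H}^{d-1}$, and this is $o(\eps^d)$ by $(iii)'$ of Lemma~\ref{lem:lemma2fonseca}, which is exactly what the paper does. The issue is in your treatment of the ``$\ge$'' inequality.

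When $z_\eps$ (almost optimal for $\mathbf{m}_{\mathcal{F}}(u,B_{s\eps}(x_0))$, equal to $u$ near $\partial B_{s\eps}(x_0)$ from inside) is extended by $u_\eps$ outside, the new discontinuity created on the sphere $\partial B_{s\eps}(x_0)$ has opening $u^+-u^-_\eps$ (outer trace of $u$ minus inner trace of $u_\eps$), not $[u]$; off $J_u$ the quantity $|[u]|$ simply isn't defined, so the displayed condition $\eps^{-d}\int_{(\{u\ne u_\eps\}\cup J_u)\cap \partial B_{\sigma\eps}(x_0)}(1+|[u]|)\,\mathrm{d}\mathcal{H}^{d-1}\to 0$ does not express what needs to be controlled. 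Under \ref{assH4'} the new surface cost at the sphere is $\beta\int_{\{u\ne u_\eps\}\cap\partial B_{s\eps}(x_0)}\bigl(1+|u^+-u^-_\eps|\bigr)\,\mathrm{d}\mathcal{H}^{d-1}$: the ``$1$'' part is handled by \eqref{eq:5.15}, and the opening part requires a trace estimate $\eps^{-d}\int_{\partial B_{s\eps}(x_0)}|u^+-u^-_\eps|\,\mathrm{d}\mathcal{H}^{d-1}\to 0$. This follows by Fubini from Lemma~\ref{lem:lemma2fonseca}$(i)'$, i.e.\ from $\eps^{-(d+1)}\int_{B_\eps(x_0)}|u_\eps-u|\,\mathrm{d}x\to0$, not from the singularity with respect to $\mathcal{L}^d$ of the measure $(1+|[u]|)\mathcal{H}^{d-1}\lfloor_{J_u}$. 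That singularity is a bulk-scale fact about the Radon--Nikod\'ym derivative on balls $B_\eps(x_0)$ and does not by itself yield a sphere-integral estimate for a quantity ($|u-u_\eps|$) unrelated to $[u]$; indeed, $\mathcal{H}^{d-1}(J_u\cap\partial B_{s\eps}(x_0))=0$ for a.e.\ $s$ anyway, so a Fubini argument starting from the jump measure gives triviality, not what you need. Also, the reference should be to \eqref{eq:5.19} (the bulk gluing estimate in Lemma~\ref{lem:bulk2}) rather than \eqref{eq:6.29}, which belongs to the surface-density Lemma~\ref{lem:jump2}. Once you replace your second selection condition with $\eps^{-d}\int_{\partial B_{s\eps}(x_0)}|u^+-u^-_\eps|\,\mathrm{d}\mathcal{H}^{d-1}\to0$, justified by $(i)'$ and Fubini, the argument closes as in the paper.
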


\begin{proof}
The proof of ``$\leq$'' inequality in \eqref{eq: samemin-bulksbv} can be obtained with the same construction of Lemma~\ref{lem:bulk1} applied to the sequence $(u_\eps)$ complying with Lemma~\ref{lem:lemma2fonseca}$(i)$-$(iii)$ and $(i)'$, $(iii)'$.

Applying the Fundamental estimate with the same choice of sets as in \eqref{eq:choiceofsets} and by assumption \ref{assH4'}, we get
\begin{equation*}
\mathcal{F}(u_\eps,  C_{\eps,\theta}(x_0)) \le \beta \int_{C_{\eps,\theta}(x_0)} (1+  |\nabla u_\eps|^{p(x)})\,\mathrm{d}x + \beta \int_{J_{u_\eps} \cap  C_{\eps,\theta}(x_0)} (1+|[u_\eps]|) \,\mathrm{d}\mathcal{H}^{d-1}\,,
\end{equation*}
whence by Lemma~\ref{lem:lemma2fonseca}$(iii)$, $(iii)'$, \eqref{eq:stimaintermedia} we obtain the analogous of \eqref{eq:5.14}, and this concludes the proof of the first inequality in \eqref{eq: samemin-bulksbv}.

The reverse inequality in \eqref{eq: samemin-bulksbv} can be proved following the argument of Lemma~\ref{lem:bulk2}. For this, we first notice that since $u_\eps$ satisfies Lemma~\ref{lem:lemma2fonseca}$(i)'$, in addition to \eqref{eq:5.15} we may require that
\begin{equation}
\lim_{\eps\to0} \eps^{-d} \int_{\partial B_{s\eps}(x_0)} |u^+-u^-_\eps|\,\mathrm{d}\mathcal{H}^{d-1}=0\,,
\label{eq:7.7}
\end{equation}
where $u^-_\eps$ and $u^+$ denote the inner and outer traces at $\partial B_{s\eps}(x_0)$ of $u_\eps$ and $u$, respectively. Then, estimates \eqref{eq:5.14} and \eqref{eq:5.19} (with the additional term $\beta \int_{\partial B_{s\eps}(x_0)} |u^+-u^-_\eps|\,\mathrm{d}\mathcal{H}^{d-1}$ in the left hand side) can be established. Finally, combining \eqref{eq:5.14}, \eqref{eq:5.15}, \eqref{eq:7.7} and the fact that $s\eps\leq (1-3\theta)\eps$, we obtain also \eqref{eq:5.20}. This will suffice to conclude the argument of Lemma~\ref{lem:bulk2} and then the proof of the inequality ``$\geq$'' in \eqref{eq: samemin-bulksbv}.
\end{proof}

\begin{lemma}\label{lemma: sameminsurfsbv}
Let $p:\Omega\to(1,+\infty)$ be a variable exponent satisfying {\rm\ref{assP1}}-{\rm\ref{assP2}}. Suppose that $\mathcal{F}$ satisfies {\rm\ref{assH1}} and  {\rm\ref{assH3}},{\rm\ref{assH4'}}  and let $u \in SBV^{p(\cdot)}(\Omega;\R^m)$.  Then for $\mathcal{H}^{d-1}$-a.e.\ $x_0 \in J_u$  we have
\begin{align}\label{eq: samemin-surfsbv}
  \lim_{\eps \to 0}\frac{\mathbf{m}_{\mathcal{F}}(u,B_\eps(x_0))}{\gamma_{d-1}\eps^{d-1}} =  \limsup_{\eps \to 0}\frac{\mathbf{m}_{\mathcal{F}}(\bar{u}^{\rm surf}_{x_0},B_\eps(x_0))}{\gamma_{d-1}\eps^{d-1}}. 
  \end{align}
\end{lemma}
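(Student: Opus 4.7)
The plan is to mimic closely the two-sided argument for the $GSBV^{p(\cdot)}$ surface density in Lemma~\ref{lem:jump1} and Lemma~\ref{lem:jump2}, using the stronger assumption \ref{assH4'} in combination with the additional properties \eqref{eq:25bis} and \eqref{eq:(7.8)_2} of the blow-up sequence $\bar{u}_\eps$ from Lemma~\ref{lem:Lemma3fon}, which are available precisely because $u\in SBV^{p(\cdot)}(\Omega;\R^m)$. Throughout the argument, fix $x_0\in J_u$ in such a way that all the conclusions of Lemma~\ref{lem:Lemma3fon} hold, together with the analogue of \eqref{eq:52} for the measure $\mu$ of \eqref{eq: measuremubis} (which is available by Lemma~\ref{lemma: F=msbv}), so that $\lim_{\eps\to 0}(\gamma_{d-1}\eps^{d-1})^{-1}\mu(B_\eps(x_0)) = 1+|[u](x_0)|$ and the Radon--Nikodym derivative of $\mathcal F(u,\cdot)$ with respect to $\mu\lfloor_{J_u}$ at $x_0$ exists and is finite.

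For the inequality ``$\le$'', I would reproduce verbatim the construction of $\bar w_\eps$ in Lemma~\ref{lem:jump1}: fix $\theta\in(0,1)$ and $\eta\in(0,1)$, pick a near-optimal $\bar z_\eps$ for $\mathbf{m}_{\mathcal F}(\bar u^{\rm surf}_{x_0}, B_{(1-3\theta)\eps}(x_0))$ extended by $\bar u^{\rm surf}_{x_0}$, and apply the fundamental estimate (Lemma~\ref{lemma: fundamental estimate}) with $u=\bar z_\eps$, $v=\bar u_\eps$ on the triple $D'_{\eps,x_0}, D''_{\eps,x_0}, E_{\eps,x_0}$ as in \eqref{eq:choiceofsets}. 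The remainder term of the fundamental estimate is still controlled by \eqref{eq:24}(iii), so one obtains \eqref{eq:6.22}. The only change is in the estimates of the two pieces: to bound $\mathcal F(\bar u^{\rm surf}_{x_0},C_{\eps,\theta}(x_0))$ and $\mathcal F(\bar u_\eps,C_{\eps,\theta}(x_0))$ one now has a $\beta(1+|[\cdot]|)\mathcal H^{d-1}$ contribution from \ref{assH4'}. For the first, $|[\bar u^{\rm surf}_{x_0}]|=|[u](x_0)|$ is constant and $\mathcal H^{d-1}(C_{\eps,\theta}(x_0)\cap \Pi_0)=\eps^{d-1}\mathcal H^{d-1}(C_{1,\theta}\cap\Pi_0)$ vanishes as $\theta\to 0$. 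For the second, the bulk piece vanishes via \eqref{eq:24}(ii) and the surface piece splits as $\mathcal H^{d-1}(J_{\bar u_\eps}\cap C_{\eps,\theta}(x_0))+\int_{J_{\bar u_\eps}\cap C_{\eps,\theta}(x_0)}|[\bar u_\eps]|\,\mathrm d\mathcal H^{d-1}$; the first summand is bounded by $\mathcal H^{d-1}(J_u\cap C_{\eps,\theta}(x_0))$ and its rescaled limit is $\mathcal H^{d-1}(C_{1,\theta}\cap\Pi_0)$ by rectifiability, while the second is handled by \eqref{eq:(7.8)_2} applied with $E=C_{1,\theta}$, yielding $|[u](x_0)|\,\mathcal H^{d-1}(C_{1,\theta}\cap\Pi_0)$. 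Both contributions vanish as $\theta\to 0$, reproducing the inequality in the limit.

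For the inequality ``$\ge$'' I would follow Lemma~\ref{lem:jump2}. The only additional input needed is to sharpen the Fubini-type selection of the radius $\sigma\in(1-4\theta,1-3\theta)$: in addition to \eqref{eq:6.25}, I would require, using \eqref{eq:25bis}, that $\eps^{-d}\int_{\partial B_{\sigma\eps}(x_0)}|u-\bar u_\eps|\,\mathrm d\mathcal H^{d-1}\to 0$, which holds for $\mathcal L^1$-a.e.\ $\sigma$ by a standard integration in polar coordinates. After choosing a near-optimal $z_\eps$ for $\mathbf m_{\mathcal F}(u,B_{\sigma\eps}(x_0))$, extending it by $\bar u_\eps$ outside, and gluing it with $\bar u^{\rm surf}_{x_0}$ via Lemma~\ref{lemma: fundamental estimate}, the surface energy contribution on $\partial B_{\sigma\eps}(x_0)$ now carries the weight $(1+|[u\text{ vs.\ }\bar u_\eps]|)$; however this is controlled exactly by the refined choice of $\sigma$ just mentioned, plus rectifiability of $J_u$. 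The remaining terms in the annulus $C_{\eps,\theta}(x_0)$ are estimated as in the ``$\le$'' direction using \eqref{eq:24}(ii) and \eqref{eq:(7.8)_2}. Sending $\eta\to 0$ and $\theta\to 0$ yields \eqref{eq: samemin-surfsbv}.

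The main obstacle, and the only genuinely new point compared with the $GSBV^{p(\cdot)}$ argument, lies in handling the $|[u]|$-weighted surface contribution in the thin annulus $C_{\eps,\theta}(x_0)$ for the blow-up sequence $\bar u_\eps$: the pointwise bound $|[\bar u_\eps]|\le |[u]|$ (which is automatic in the $GSBV$ case for a constant upper bound on $g$) is now insufficient to produce a quantity that is infinitesimal as $\theta\to 0$, and one must really invoke \eqref{eq:(7.8)_2} in Lemma~\ref{lem:Lemma3fon} to identify the rescaled limit as $|[u](x_0)|\mathcal H^{d-1}(C_{1,\theta}\cap\Pi_0)$. This is precisely why the fine-tuned construction of $\bar u_\eps$ via symmetric truncations of $w$ and $z$ around the jump was set up in Lemma~\ref{lem:Lemma3fon}.
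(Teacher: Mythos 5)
Your proposal reproduces the paper's own argument: for the upper inequality, repeat Lemma~\ref{lem:jump1} and use \eqref{eq:(7.8)_2} together with \eqref{eq:24}$(ii)$ and \eqref{eq:stimapersalto} to get the analogue of \eqref{eq:6.24} with $\beta$ replaced by $\beta(1+|[\bar u^{\rm surf}_{x_0}]|)$; for the lower inequality, repeat Lemma~\ref{lem:jump2} with a refined Fubini selection of the radius using \eqref{eq:25bis}. You also correctly identify the genuinely new ingredient, namely that the pointwise bound $|[\bar u_\eps]|\le|[u]|$ is no longer enough and \eqref{eq:(7.8)_2} must be invoked to identify the rescaled limit of the $|[\bar u_\eps]|$-weighted surface measure in the annulus.

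One small but real imprecision: in the reverse inequality, the Fubini-type selection should be of the form
\begin{equation*}
\eps^{-(d-1)}\int_{\partial B_{\sigma\eps}(x_0)}|u^+-\bar u_\eps^-|\,\mathrm d\mathcal H^{d-1}\to 0\,,
\end{equation*}
not $\eps^{-d}$ as you wrote. Polar integration of \eqref{eq:25bis} gives
\begin{equation*}
\eps^{-d}\int_{B_\eps(x_0)}|\bar u_\eps-u|\,\mathrm dx=\int_0^1\Big(\eps^{-(d-1)}\int_{\partial B_{\sigma\eps}(x_0)}|\bar u_\eps-u|\,\mathrm d\mathcal H^{d-1}\Big)\,\mathrm d\sigma\to 0\,,
\end{equation*}
so the integrand (with the $\eps^{-(d-1)}$ normalization) averages to zero, which is exactly the scaling needed once you divide the extra boundary term $\beta\int_{\partial B_{s\eps}(x_0)}|u^+-\bar u_\eps^-|\,\mathrm d\mathcal H^{d-1}$ by $\gamma_{d-1}\eps^{d-1}$. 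The stronger claim with $\eps^{-d}$ does not follow from \eqref{eq:25bis}. Also, strictly speaking the a.e.-$\sigma$ statement requires passing to a subsequence in $\eps$ (or selecting $s=s(\eps)$ for each $\eps$ by a mean-value argument, as the paper does). With these adjustments your proof coincides with the paper's.
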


\begin{proof}
The construction of Lemma~\ref{lem:jump1} can be performed using the sequence $(\bar{u}_\eps)$ which complies with Lemma~\ref{lem:Lemma3fon}$(i)$-$(iv)$, \eqref{eq:25bis} and \eqref{eq:(7.8)_2}, thus obtaining the analogous of estimates \eqref{eq:6.22} and \eqref{eq:6.23} where the constant $\beta$ is replaced by $\beta(1+|[\bar{u}^{\rm surf}_{x_0}]|)$. Now, taking into account \ref{assH4'}, \eqref{eq:24}$(ii)$, \eqref{eq:stimapersalto} and \eqref{eq:(7.8)_2}, we obtain the analogous of \eqref{eq:6.24}; i.e.,
\begin{equation*}
\mathop{\lim\sup}_{\eps\to0}\frac{\mathcal{F}(\bar{u}_\eps,  C_{\eps,\theta}(x_0))}{\gamma_{d-1}\eps^{d-1}} \le \beta(1+|[\bar{u}^{\rm surf}_{x_0}]|)(1-(1-4\theta)^{d-1})\,.
\end{equation*}  
With this, we can easily infer the upper inequality in \eqref{eq: samemin-surfsbv}.

As for the reverse inequality, given $(\bar{u}_\eps)$ as above, by \eqref{eq:25bis} we may require, in addition to \eqref{eq:6.25}, also the property
\begin{equation}
\lim_{\eps\to0} \eps^{-(d-1)} \int_{\partial B_{s\eps}(x_0)} |u^+-\bar{u}^-_\eps|\,\mathrm{d}\mathcal{H}^{d-1}=0\,,
\label{eq:7.15}
\end{equation}
where $u^+$ and $\bar{u}_\eps^-$ have the same meaning as in Lemma~\ref{lemma: sameminbulksbv}. Then we repeat the argument of Lemma~\ref{lem:jump2}, where \eqref{eq:6.29} is now replaced by
\begin{equation*}
\begin{split}
\mathcal{F}(z_\eps, B_{(1-\theta)\eps}(x_0))\leq & \mathbf{m}_{\mathcal{F}}\big(u,B_{\sigma_\eps}(x_0)\big) + \gamma_{d-1}\eps^{d} + \beta \mathcal{H}^{d-1}\left((\{\bar{u}_\eps\neq u\}\cup J_u\cup J_{\bar{u}_\eps})\cap \partial B_{\sigma_\eps}(x_0)\right) \\
& + \beta  \int_{\partial B_{s\eps}(x_0)} |u^+-\bar{u}^-_\eps|\,\mathrm{d}\mathcal{H}^{d-1} +\mathcal{F}(\bar{u}_\eps, C_{\eps,\theta}(x_0))\,. 
\end{split}
\end{equation*}
Now, as a consequence of \eqref{eq:6.25}, \eqref{eq:7.14}, \eqref{eq:7.15} and the fact that $\sigma\leq (1-3\theta)$ we then obtain
\begin{equation*}
\begin{split}
\mathop{\lim\sup}_{\eps\to0} \frac{\mathcal{F}(z_\eps, B_{(1-\theta)\eps}(x_0))}{\gamma_{d-1}\eps^{d-1}} \leq & (1-3\theta)^{d-1}\mathop{\lim\sup}_{\eps\to0} \frac{ \mathbf{m}_{\mathcal{F}}\big(u,B_{\eps}(x_0)\big)}{\gamma_{d-1}\eps^{d-1}}\\
& + \beta(1+|[\bar{u}^{\rm surf}_{x_0}]|)(1-(1-4\theta)^{d-1})\,,
\end{split}
\end{equation*} 
which corresponds to \eqref{eq:6.29}. The estimate \eqref{eq:6.31} now reads
\begin{equation*}
\mathop{\lim\sup}_{\eps\to0}\frac{\mathcal{F}(\bar{u}^{\rm surf}_{x_0}, C_{\eps,\theta}(x_0))}{\gamma_{d-1}\eps^{d-1}} \leq \beta(1+|[\bar{u}^{\rm surf}_{x_0}]|)(1-(1-4\theta)^{d-1})\,,
\end{equation*} 
whence the conclusion follows exactly in the same way as in Lemma~\ref{lem:jump2}. We omit further details.
\end{proof}

\subsection{$\Gamma$-convergence}

Let $\sigma>0$. We define the family of perturbed functionals $\mathcal{E}_j^\sigma: L^0(\Omega;\R^m)\times\mathcal{A}(\Omega)\to[0,+\infty]$, $j\in\N$, as
\begin{equation}
\mathcal{E}_j^\sigma(u,A):=
\begin{cases}
\displaystyle \int_A f_j\big(x,   \nabla u(x)   \big)\, {\rm d}x +\int_{J_u\cap A}g_j^\sigma(x,[u](x),\nu_u(x))\, {\rm d}\mathcal{H}^{d-1}(x)\,, & \mbox{ if $u\lfloor_{A}\in SBV^{p(\cdot)}(A;\R^m)$,}  \\
+\infty\,, & \mbox{ otherwise, }
\end{cases}
\label{eq:perturben}
\end{equation}
where 
\begin{equation}
g_j^\sigma(x,\zeta,\nu):=g_j(x,\zeta,\nu)+\sigma|\zeta|\,.
\label{eq:gjsigma}
\end{equation}

First, we prove a $\Gamma$-convergence result for the perturbed functionals $\mathcal{E}_j^\sigma$.

\begin{theorem}[$\Gamma$-convergence of perturbed functionals]\label{th: gammasbv}
 Let $\Omega \subset \R^d$ be open.  Let $(f_j)_j$ and $(g_j)_j$ be sequences of functions satisfying {\rm\ref{ass-f1}}-{\rm\ref{eq:bound1}} and {\rm\ref{ass-g1}}, {\rm\ref{ass-g4}}, {\rm\ref{eq:bound2bis}}, {\rm\ref{ass-g7}}, {\rm\ref{ass-g3}}, respectively.  Let $\sigma>0$ and $\mathcal{E}_j^\sigma \colon SBV^{p(\cdot)}(\Omega;\R^m) \times \mathcal{A}(\Omega) \to [0,+\infty)$ be the sequence of functionals given in \eqref{eq:perturben}.  Then, there exists a functional $\mathcal{E}^\sigma\colon  SBV^{p(\cdot)}(\Omega;\R^m)\times \mathcal{A}(\Omega) \to [0,+\infty)$ and a subsequence (not relabeled) such that
$$\mathcal{E}^\sigma(\cdot,A) =\Gamma\text{-}\lim_{j \to \infty} \mathcal{E}_j^\sigma(\cdot,A) \ \ \ \ \text{with respect to convergence in measure on $A$} $$
for all $A \in  \mathcal{A}(\Omega) $. 
Let $f_\infty^\sigma$ and $g_\infty^\sigma$ be defined as
\begin{align}\label{eq:fsigmadef-sbv}
f_\infty^\sigma(x_0,u_0,\xi) = \limsup_{\eps \to 0} \frac{\mathbf{m}_{\mathcal{E}^\sigma}(\ell_{x_0,u_0,\xi},B_\eps(x_0))}{\gamma_d\eps^{d}}
\end{align}
for all $x_0 \in \Omega$, $u_0 \in \R^m$, $\xi \in \mathbb{R}^{m \times d}$,  and 
\begin{align}\label{eq:gsigmadef-sbv}
g_\infty^\sigma(x_0,\zeta,\nu) = \limsup_{\eps \to 0} \frac{\mathbf{m}_{\mathcal{E}^\sigma}(u_{x_0,\zeta,0,\nu},B_\eps(x_0))}{\gamma_{d-1}\eps^{d-1}}
\end{align}
for all $ x_0  \in \Omega$,  $\zeta \in \R^m$, and $\nu \in \mathbb{S}^{d-1}$, where $\mathbf{m}_{\mathcal{E}^\sigma}$ is as in \eqref{eq: general minimizationsbv} with $\mathcal{F}=\mathcal{E}^\sigma$.

Then, for every $u\in SBV^{p(\cdot)}(\Omega;\R^m)$ and  $A\in \mathcal{A}(\Omega)$ we have that
\begin{align}\label{eq: representationsbv}
\mathcal{E}^\sigma(u,A)= \int_A f_{\infty}^\sigma\big(x,u(x),   \nabla u(x)   \big)\, {\rm d}x +\int_{J_u\cap A}g_{\infty}^\sigma(x,[u](x),\nu_u(x))\, {\rm d}\mathcal{H}^{d-1}(x)\,.
\end{align}
\end{theorem}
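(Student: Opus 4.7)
The strategy closely parallels that of Theorem~\ref{th: gamma}, with two adjustments: the ambient space is $SBV^{p(\cdot)}$ rather than $GSBV^{p(\cdot)}$, and the surface growth condition \ref{eq:bound2bis} is weaker than \ref{eq:bound2}. The additive perturbation $\sigma|\zeta|$ in \eqref{eq:gjsigma} is precisely what restores a two-sided bound suitable for integral representation in $SBV^{p(\cdot)}$. The plan proceeds in three main steps.

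First, I would observe that $g_j^\sigma$ satisfies the two-sided bound
\[
\min\{\alpha,\sigma\}(1+|\zeta|) \le g_j^\sigma(x,\zeta,\nu) \le (\beta+\sigma)(1+|\zeta|),
\]
while inheriting \ref{ass-g4} and \ref{ass-g7} from $g_j$. Consequently, each $\mathcal{E}_j^\sigma(\cdot,A)$ is finite exactly on $SBV^{p(\cdot)}(A;\R^m)$ and satisfies assumption \ref{assH4'} uniformly in $j$, with constants $\tilde\alpha=\min\{\alpha,\sigma\}$ and $\tilde\beta=\beta+\sigma$. This is the point at which the perturbation plays its essential role, as the lower bound on the jump part would otherwise fail.

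Next, I would establish compactness of $\Gamma$-convergence via the localization method of \cite[Chapters~14--16]{DalMaso:93}, mimicking Theorem~\ref{th: gamma} and its key Lemma~\ref{eq: liminflimsup-prop}. The only delicate point is the subadditivity of the $\Gamma$-liminf and $\Gamma$-limsup analogous to \eqref{eq:4.35FPS}. It rests upon the fundamental estimate of Lemma~\ref{lemma: fundamental estimate}, whose cut-and-paste construction $\varphi_i u + (1-\varphi_i)v$ preserves $SBV^{p(\cdot)}$ when $u,v$ are bounded, together with the truncation Lemma~\ref{lem:cdmsz}, which applies verbatim. A minor adaptation is required in the proof of Lemma~\ref{lemma: fundamental estimate}: under \ref{assH4'}, the surface part of $\mathcal{F}(w_i,I_i)$ is controlled through the pointwise inequality $|[w_i]| \le |[u]|+|[v]|$ on $J_u \cup J_v$, giving a contribution bounded by $\beta\int_{(J_u\cup J_v)\cap I_i}(1+|[u]|+|[v]|)\,\mathrm{d}\mathcal{H}^{d-1}$, which is then reabsorbed into $(\beta/\tilde\alpha)(\mathcal{F}(u,I_i)+\mathcal{F}(v,I_i))$. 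The resulting $\Gamma$-limit $\mathcal{E}^\sigma$ inherits \ref{assH1}-\ref{assH3} by standard arguments from \cite{DalMaso:93} and \ref{assH4'} from the uniform bounds on the sequence.

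Finally, I would apply Theorem~\ref{thm: int-representation-sbv} to represent $\mathcal{E}^\sigma$ in the form
\[
\mathcal{E}^\sigma(u,A) = \int_A f^\sigma(x,u(x),\nabla u(x))\,\mathrm{d}x + \int_{J_u\cap A} g^\sigma(x,u^+(x),u^-(x),\nu_u(x))\,\mathrm{d}\mathcal{H}^{d-1}(x),
\]
with densities $f^\sigma$, $g^\sigma$ given by the cell formulas \eqref{eq:fdef-sbv}-\eqref{eq:gdef-sbv}. Since each $\mathcal{E}_j^\sigma$ is invariant under constant translations in $u$, so is $\mathcal{E}^\sigma$ as a $\Gamma$-limit, whence $f^\sigma(x_0,u_0,\xi)=f_\infty^\sigma(x_0,\xi)$ and $g^\sigma(x_0,a,b,\nu)=g_\infty^\sigma(x_0,a-b,\nu)$ in the notation of \eqref{eq:fsigmadef-sbv}-\eqref{eq:gsigmadef-sbv}, yielding the claimed representation \eqref{eq: representationsbv}. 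The main obstacle I anticipate is precisely the verification of the fundamental estimate under the weaker surface growth \ref{eq:bound2bis}: one has to ensure that the extra jump mass produced by the partition of unity in the cut-off construction does not spoil the reabsorption step, and the bound $|[w_i]|\le|[u]|+|[v]|$ combined with \ref{assH4'} is what makes this go through.
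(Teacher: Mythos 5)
Your proposal is correct and follows essentially the same route as the paper: verify the two-sided growth bound $\min\{\alpha,\sigma\}(1+|\zeta|)\le g_j^\sigma\le(\beta+\sigma)(1+|\zeta|)$ (so that \ref{assH4'} holds uniformly in $j$), run the localization method of Dal~Maso via the fundamental estimate and the truncation lemma to get a $\Gamma$-limit $\mathcal{E}^\sigma$ satisfying \ref{assH1}--\ref{assH3} and \ref{assH4'}, then invoke Theorem~\ref{thm: int-representation-sbv} together with translation invariance of the $\Gamma$-limit. You actually spell out one point the paper only asserts, namely that the fundamental estimate survives replacing \ref{assH4} by \ref{assH4'}: your observation that $|[w_i]|\le|[u]|+|[v]|$ on $J_u\cup J_v$, giving $\int_{J_{w_i}\cap I_i}(1+|[w_i]|)\,\mathrm{d}\mathcal{H}^{d-1}\le\int_{J_u\cap I_i}(1+|[u]|)\,\mathrm{d}\mathcal{H}^{d-1}+\int_{J_v\cap I_i}(1+|[v]|)\,\mathrm{d}\mathcal{H}^{d-1}$, is exactly the reabsorption step needed. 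The one place you gloss over is the lower bound in the analogue of Lemma~\ref{eq: liminflimsup-prop}(ii): deducing $\min\{\alpha,\sigma\}\mathcal{H}(u,A)\le(\mathcal{E}^\sigma)'(u,A)$ from a bounded recovery sequence requires Ambrosio's $SBV$ closure theorem and the lower semicontinuity of $\int_{J_u}\theta(|[u]|)\,\mathrm{d}\mathcal{H}^{d-1}$ for concave $\theta$ (applied with $\theta(t)=1+t$), which the paper records explicitly; your phrase ``from the uniform bounds on the sequence'' is a shortcut rather than a genuine gap.
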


\begin{proof}
The proof of Theorem~\ref{th: gammasbv} can be obtained along the lines of the argument of Theorem~\ref{th: gamma}. We then briefly sketch the proof, referring the reader to Theorem~\ref{th: gamma} for further details.

 We start by observing that some  properties of the $\Gamma$-liminf and $\Gamma$-limsup with respect to the topology of the convergence in measure, established in Lemma~\ref{eq: liminflimsup-prop} for functionals $\mathcal{F}_j$, still hold true for $\mathcal{E}_j^\sigma$. To this end, we define 
\begin{align}\label{eq: liminf-limsupsbv}
(\mathcal{E}^\sigma)'(u,A)&:=\Gamma-\liminf_{n \to \infty} \mathcal{E}_j^\sigma(u,A)   = \inf \big\{ \liminf_{j \to \infty} \mathcal{E}_j^\sigma(u_j,A):   \  u_j \to  u \hbox{ in measure on }  A  \big\}, \notag \\
(\mathcal{E}^\sigma)''(u,A) &:= \Gamma-\limsup_{n \to \infty} \mathcal{E}_j^\sigma(u,A)  = \inf \big\{  \limsup_{j \to \infty} \mathcal{E}^\sigma_j(u_j,A):   \ u_j \to  u \hbox{ in measure on }  A \big\}
\end{align} 
for all $u \in SBV^{p(\cdot)}(\Omega;\R^m)$ and $A \in \mathcal{A}(\Omega)$.

Then the analogous of assertions $(i)$, $(iii)$ and $(iv)$ of Lemma~\ref{eq: liminflimsup-prop} still hold true for $(\mathcal{E}^\sigma)'$ and $(\mathcal{E}^\sigma)''$, since the arguments are based on Lemma~\ref{lem:cdmsz} and Lemma~\ref{lemma: fundamental estimate}. 
Setting
$$ \mathcal{H}(u,A):= \int_A |\nabla u|^{p(\cdot)}\,\mathrm{d}x+ \int_{J_u \cap A}(1+|[u]|)\,\mathrm{d}\mathcal{H}^{d-1}\,,$$
we only have to check that 

\begin{equation}
\begin{split}
& \min\{\alpha,\sigma\} \mathcal{H}(u,A) \le (\mathcal{E}^\sigma)'(u,A) \leq (\mathcal{E}^\sigma)''(u,A) \le (\beta+\sigma) \mathcal{H}(u,A) + \beta \mathcal{L}^d(A)\,.
\end{split}
\label{eq:(ii)'}
\end{equation}
The upper bound for $(\mathcal{E}^\sigma)''$ in \eqref{eq:(ii)'} can be inferred choosing the constant sequence $u_j=u$ in \eqref{eq: liminf-limsupsbv} and taking into account {\rm\ref{eq:bound1}}, the definition of $g_j^\sigma$ (equation \eqref{eq:gjsigma}) together with {\rm\ref{eq:bound2bis}}.
For what concerns the lower bound in \eqref{eq:(ii)'}, we consider an (almost) optimal sequence $(v_j)_j$ in \eqref{eq: liminf-limsupsbv}. Then, with {\rm\ref{eq:bound1}}, \eqref{eq:gjsigma} and {\rm\ref{eq:bound2bis}} we get
\begin{equation*}
 \sup_{j \in \N} \min\{\alpha,\sigma\} \mathcal{H}(v_j,A)<+\infty\,.
\end{equation*}
Now, since $v_j\to u$ in measure on $A$, we may appeal to the closure property of $SBV$ (see, e.g., \cite[Theorem~4.7]{Ambrosio-Fusco-Pallara:2000}). Then, by arguing as in the proof of Lemma~\ref{lemma: F=m*} and exploiting the lower semicontinuity inequalities
\begin{equation*}
\begin{split}
\int_A |\nabla u(x)|^{p(x)}\,\mathrm{d}x &\leq \displaystyle\mathop{\lim\inf}_{j\to+\infty} \int_A|\nabla v_j(x)|^{p(x)}\,\mathrm{d}x <+\infty\,, \\
\int_{J_{u} \cap A}\theta(|[u]|)\,\mathrm{d}\mathcal{H}^{d-1} & \leq \displaystyle\mathop{\lim\inf}_{j\to+\infty}\int_{J_{v_j} \cap A}\theta(|[v_j]|)\,\mathrm{d}\mathcal{H}^{d-1}\,,
\end{split}
\end{equation*}
for any concave function $\theta:(0,+\infty)\to(0,+\infty)$, we easily obtain the lower bound.

The existence of the $\Gamma$-limit is still a consequence of the abstract result \cite[Theorem 16.9]{DalMaso:93}, in view of the inner regularity of both  $(\mathcal{E}^\sigma)'$ and  $(\mathcal{E}^\sigma)''$. Since \eqref{eq:(ii)'} implies \ref{assH4'}, the functional $\mathcal{E}^\sigma=(\mathcal{E}^\sigma)'=(\mathcal{E}^\sigma)''$ satisfies all the assumptions of Theorem~\ref{thm: int-representation-sbv}. This concludes the proof.
\end{proof}

Now, we are in position to deduce the $\Gamma$-convergence result for the family of functionals $\mathcal{E}_j$, defined in \eqref{eq:unperturben}. The argument of the proof is analogous to that of \cite[Theorem~5.1]{CDMSZ}, but with some simplifications due to the fact that, by virtue of Theorem~\ref{th: gammasbv}, we do not need to use the $\Gamma$-convergence of the restrictions to $L^{p(\cdot)}$ of our functionals.

\begin{theorem}\label{thm:perturbedgamma}
Let $\Sigma$ be a countable subset of $(0,+\infty)$, with $0\in\overline{\Sigma}$. Assume that for every $\sigma\in \Sigma$ there exists a functional  $\mathcal{E}^\sigma: L^0(\Omega;\R^m)\times\mathcal{A}(\Omega)\to[0,+\infty]$ such that for every $A\in\mathcal{A}(\Omega)$ the sequence  $\mathcal{E}_j^\sigma(\cdot,A)$ defined in \eqref{eq:perturben} $\Gamma$-converges to $\mathcal{E}^\sigma(\cdot,A)$ in $L^0(\R^d;\R^m)$. Let $f_\infty^\sigma$ and $g_\infty^\sigma$ be the functions defined in \eqref{eq:fdef-sbv} and \eqref{eq:gdef-sbv}, respectively. Let $f_\infty^0:\R^d\times\R^{m\times d}\to[0,+\infty]$ and $g_\infty^0:\R^d\times\R^{m}_0\times\mathbb{S}^{d-1}\to[0,+\infty]$ be the functions defined as
\begin{align}
f_\infty^0(x,\xi) & := \inf_{\sigma\in\Sigma} f_\infty^\sigma(x,\xi)= \lim_{\substack{\sigma\to0^+\\ \sigma\in\Sigma}} f_\infty^\sigma(x,\xi)\,,      \label{eq:5.1cdmsz} \\
g_\infty^0(x,\zeta,\nu) & := \inf_{\sigma\in\Sigma} g_\infty^\sigma(x,\zeta,\nu)= \lim_{\substack{\sigma\to0^+\\ \sigma\in\Sigma}}  g_\infty^\sigma(x,\zeta,\nu)\,. \label{eq:5.2cdmsz}
\end{align}
Then, the functionals $\mathcal{E}_j(\cdot,A)$ defined in \eqref{eq:unperturben} $\Gamma$-converge in $L^0(\R^d;\R^m)$ to the functional $\mathcal{E}^0(\cdot,A)$ given by
\begin{equation}
\mathcal{E}^0(u,A)= \int_A f_{\infty}^0\big(x,u(x),   \nabla u(x)   \big)\, {\rm d}x +\int_{J_u\cap A}g_{\infty}^0(x,[u](x),\nu_u(x))\, {\rm d}\mathcal{H}^{d-1}(x)\,,
\label{eq:unperturbedlimit}
\end{equation}
for every $A\in\mathcal{A}(\Omega)$ and $u\in GSBV^{p(\cdot)}(A;\R^m)$.
\end{theorem}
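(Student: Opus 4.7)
The plan is to establish the two $\Gamma$-convergence inequalities separately, by comparing $\mathcal{E}_j$ with the perturbed family $\mathcal{E}_j^\sigma$, whose $\Gamma$-limits $\mathcal{E}^\sigma$ are known by hypothesis. The key observation is a monotonicity in $\sigma$: since $g_j^\sigma = g_j + \sigma|\zeta|$ is increasing in $\sigma$, we have $\mathcal{E}_j^\sigma \leq \mathcal{E}_j^{\sigma'}$ for $\sigma \leq \sigma'$, and this monotonicity passes to the $\Gamma$-limits by standard comparison of recovery sequences. Through the cell formulas \eqref{eq:fsigmadef-sbv}--\eqref{eq:gsigmadef-sbv} this gives $f_\infty^\sigma \leq f_\infty^{\sigma'}$ and $g_\infty^\sigma \leq g_\infty^{\sigma'}$, so the infima in \eqref{eq:5.1cdmsz}--\eqref{eq:5.2cdmsz} are monotone limits. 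Combining with the integral representation \eqref{eq: representationsbv} and monotone convergence yields $\mathcal{E}^\sigma(u,A)\downarrow \mathcal{E}^0(u,A)$ as $\sigma\to 0^+$ in $\Sigma$.

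For the $\Gamma$-limsup, I would use a diagonal argument. For each $\sigma\in\Sigma$ fix a recovery sequence $u_j^\sigma\to u$ in measure with $\limsup_j \mathcal{E}_j^\sigma(u_j^\sigma,A)=\mathcal{E}^\sigma(u,A)$. Since $\mathcal{E}_j\leq \mathcal{E}_j^\sigma$, also $\limsup_j \mathcal{E}_j(u_j^\sigma,A)\leq \mathcal{E}^\sigma(u,A)$. Picking $\sigma_k\to 0^+$ in $\Sigma$ and performing a standard diagonal extraction, we obtain $v_j:=u_j^{\sigma_{k(j)}}\to u$ in measure with $\limsup_j \mathcal{E}_j(v_j,A)\leq \lim_k \mathcal{E}^{\sigma_k}(u,A)=\mathcal{E}^0(u,A)$.

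The $\Gamma$-liminf is the more delicate step. Given $u_j\to u$ in measure with $\liminf_j \mathcal{E}_j(u_j,A)<+\infty$, fix $\sigma\in\Sigma$, $\eta\in(0,1)$, $\lambda>0$, and apply Lemma~\ref{lem:cdmsz} to obtain bounded truncations $\hat u_j\in SBV^{p(\cdot)}(A;\R^m)\cap L^\infty(A;\R^m)$ with $|\hat u_j|\leq \mu$, $\hat u_j=u_j$ a.e.\ on $\{|u_j|\leq \lambda\}$, and
\begin{equation*}
\mathcal{E}_j(\hat u_j,A)\leq (1+\eta)\mathcal{E}_j(u_j,A)+\beta\mathcal{L}^d(A\cap\{|u_j|\geq \lambda\}).
\end{equation*}
The bound $|[\hat u_j]|\leq 2\mu$ together with $g_j\geq \alpha$ from \ref{eq:bound2bis} give
\begin{equation*}
\mathcal{E}_j^\sigma(\hat u_j,A)\leq \mathcal{E}_j(\hat u_j,A)+2\sigma\mu\,\mathcal{H}^{d-1}(J_{\hat u_j}\cap A)\leq \bigl(1+\tfrac{2\sigma\mu}{\alpha}\bigr)\mathcal{E}_j(\hat u_j,A).
\end{equation*}
Up to a subsequence, $\hat u_j\to \tilde u$ in measure on $A$, and since $\hat u_j=u_j$ on $\{|u_j|\leq \lambda\}$ and $u_j\to u$ in measure, $\tilde u=u$ a.e.\ on $A_\lambda:=A\cap\{|u|<\lambda\}$. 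The $\Gamma$-liminf inequality for $\mathcal{E}_j^\sigma$ yields $\mathcal{E}^\sigma(\tilde u,A)\leq \liminf_j \mathcal{E}_j^\sigma(\hat u_j,A)$, and restricting the representation \eqref{eq: representationsbv} to $A_\lambda$ gives
\begin{equation*}
\int_{A_\lambda}\! f_\infty^\sigma(x,\nabla u)\,dx + \int_{J_u\cap A_\lambda}\! g_\infty^\sigma(x,[u],\nu_u)\,d\mathcal{H}^{d-1}\leq \bigl(1+\tfrac{2\sigma\mu}{\alpha}\bigr)\Bigl((1+\eta)\liminf_j \mathcal{E}_j(u_j,A)+\beta\mathcal{L}^d(A\cap\{|u|\geq \lambda\})\Bigr).
\end{equation*}

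The proof is completed by letting $\sigma\to 0^+$ in $\Sigma$ (monotone convergence, exploiting that $f_\infty^\sigma\downarrow f_\infty^0$ and $g_\infty^\sigma\downarrow g_\infty^0$), then $\lambda\to +\infty$, and finally $\eta\to 0^+$. The main obstacle is this truncation step combined with the interchange of limits: one must carefully identify the subsequential limit $\tilde u$ of the truncated sequence with $u$ on an exhausting family of sublevel sets, so that the monotone convergence applied in the limit $\sigma\to 0^+$ actually recovers the densities $f_\infty^0$ and $g_\infty^0$ pointwise along $u$; the $GSBV^{p(\cdot)}$ nature of the target function, together with the $SBV^{p(\cdot)}$-valued range of the truncated sequence, is precisely what enables the transition from Theorem~\ref{th: gammasbv} to the present $GSBV^{p(\cdot)}$ statement.
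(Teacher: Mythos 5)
Your overall blueprint (monotonicity in $\sigma$, monotone convergence $\mathcal{E}^\sigma\downarrow\mathcal{E}^0$, truncation via Lemma~\ref{lem:cdmsz}) is the right one and matches the paper's strategy. There is, however, a genuine gap in the $\Gamma$-limsup step.

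Your diagonal argument establishes $\mathcal{E}''(u,A)\le\lim_{\sigma\to 0^+}\mathcal{E}^\sigma(u,A)$, but this is only useful when $\mathcal{E}^\sigma(u,A)<+\infty$ for some $\sigma$. By the coercivity bound for the $\Gamma$-limit of the perturbed family (the lower bound in \eqref{eq:(ii)'}), one has
\[
\mathcal{E}^\sigma(u,A)\ \ge\ \min\{\alpha,\sigma\}\Big(\int_A|\nabla u|^{p(\cdot)}\,\mathrm{d}x+\int_{J_u\cap A}(1+|[u]|)\,\mathrm{d}\mathcal{H}^{d-1}\Big),
\]
so $\mathcal{E}^\sigma(u,A)=+\infty$ for every $\sigma>0$ whenever $u\in GSBV^{p(\cdot)}\setminus SBV^{p(\cdot)}$, i.e.\ whenever $\int_{J_u\cap A}|[u]|\,\mathrm{d}\mathcal{H}^{d-1}=+\infty$. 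For such $u$ the right-hand side $\mathcal{E}^0(u,A)$ computed from \eqref{eq:unperturbedlimit} may very well be finite (e.g.\ when $g_\infty^0$ is bounded, as is the case when the $g_j$ themselves are bounded), so the diagonal argument proves nothing. The paper closes this exactly by an extra truncation step applied to $u$ itself: by Lemma~\ref{lem:cdmsz} one produces bounded $SBV^{p(\cdot)}$ truncations $u_j\to u$ in measure with $\mathcal{E}^0(u_j,A)\le(1+\sigma)\mathcal{E}^0(u,A)+\beta\mathcal{L}^d(A\cap\{|u|\ge j\})$, applies the already-proved $SBV^{p(\cdot)}$ case to each $u_j$, and then uses lower semicontinuity of the $\Gamma$-limsup to pass to the limit. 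You need this additional step.

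On the $\Gamma$-liminf side, your route differs from the paper's. The paper first proves the inequality for $u\in L^\infty$ (where, after fixing $\lambda>\|u\|_\infty$, the truncated recovery sequence still converges to $u$ itself), and only afterwards passes to general $u\in GSBV^{p(\cdot)}$ by truncating $u$ and exploiting that the traces are preserved on $J_u\cap\{|u^\pm|\le j\}$. You instead truncate the recovery sequence for a general $u$ directly, obtain a subsequential limit $\tilde u$, and restrict the integral representation of $\mathcal{E}^\sigma(\tilde u,A)$ to $A_\lambda:=A\cap\{|u|<\lambda\}$. This works in principle, but you must be careful: the substitution of $\tilde u$ by $u$ in the surface integral is only licit on the set of density-one points of $A_\lambda$, and you should choose $\lambda$ (away from an at most countable set) so that $\mathcal{H}^{d-1}(J_u\cap\{|u^+|=\lambda\ \text{or}\ |u^-|=\lambda\})=0$ and $\mathcal{L}^d(A\cap\{|u_j|\ge\lambda\})\to\mathcal{L}^d(A\cap\{|u|\ge\lambda\})$. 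With these precautions your liminf argument is a valid and somewhat more compressed alternative to Steps~2 and 4 of the paper's proof; the two-step bounded-then-general route is cleaner, yours avoids proving the bounded case separately at the cost of the above measure-theoretic bookkeeping. Either way, do spell out that the order of limits is $\sigma\to 0^+$ first (so the factor $1+2\sigma\mu/\alpha$, whose $\mu$ depends on $\lambda$, disappears before $\lambda\to+\infty$), then $\lambda\to+\infty$, then $\eta\to 0^+$.
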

\begin{proof}
It follows from \eqref{eq:fdef-sbv} and \eqref{eq:gdef-sbv}  that  $f^{\sigma_1}_\infty\le f^{\sigma_2}_\infty$ and $g^{\sigma_1}_\infty\le g^{\sigma_2}_\infty$ for $0<\sigma_1<\sigma_2$.
Then, by the Monotone Convergence Theorem we have
\begin{equation}\label{eq:5.3dm}
\mathcal{E}^{0}(u,A)= \lim_{\substack{\sigma\to0^+\\\sigma\in \Sigma}}\mathcal{E}^{\sigma}(u,A)
\end{equation}
for every $A\in\A(\Omega)$ and every $u\in L^0(\R^d,\R^m)$ with $u|_A\in SBV^{p(\cdot)}(A,\R^m)$.

Let $\mathcal{E}'$, $\mathcal{E}''\colon L^0(\R^d,\R^m){\times} \A(\Omega) \to [0,+\infty]$ be defined by
\begin{align*}
\mathcal{E}'(\cdot,A) :=\Gamma\hbox{-}\liminf_{j\to +\infty} \mathcal{E}_{j}(\cdot,A)\quad&\text{and}\quad \mathcal{E}''(\cdot,A):=\Gamma\hbox{-}\limsup_{j\to +\infty} \mathcal{E}_{j}(\cdot,A),
\end{align*}
where we use the topology of $L^0(\R^d,\R^m)$. We subdivide the rest of the proof into steps. \\
\noindent
{\emph{Step 1:}} First, for every $A\in\A(\Omega)$, $u\in L^0(\R^d,\R^m)$ with $u|_A\in SBV^{p(\cdot)}(A,\R^m)$ and for every $\sigma\in \Sigma$ we have
$\mathcal{E}''(u,A)\le \mathcal{E}^{\sigma}(u,A)$, whence by \eqref{eq:5.3dm} we immediately get
\begin{equation}\label{eq:5.4dm}
\mathcal{E}''(u,A)\le \mathcal{E}^0(u,A)\,.
\end{equation}
\noindent
{\emph{Step 2:}} We claim that
\begin{equation}\label{eq:5.5dm}
\mathcal{E}^0(u,A)\le \mathcal{E}'(u,A)
\end{equation}
for every $A \in \A(\Omega)$ and every $u\in L^\infty(\R^d,\R^m)$.

With fixed $A$ and $u$ as above, by $\Gamma$-convergence there exists a sequence $(u_j)$ converging to $u$ in $L^0(\R^d,\R^m)$ such that
\begin{equation}\label{eq:5.6dm}
\mathcal{E}'(u,A)=\liminf_{k\to+\infty}\mathcal{E}_j(u_j,A).
\end{equation}
Let us fix $\lambda>\|u\|_{L^\infty(\R^d\!,\,\R^m)}$ and $\sigma>0$.  
By Lemma~\ref{lem:cdmsz} there exist $\mu>\lambda$, independent of $j$, and a sequence $(v_j)\subset L^\infty(\R^d,\R^m)$, converging to $u$ in measure on bounded sets, such that for every $j$ we have
\begin{align}
&\|v_j\|_{L^\infty(\R^d\!,\,\R^m)}\le \mu,
\label{eq:5.7dm}
\\
&v_j=u_j \quad\mathcal{L}^d\hbox{-a.e.\ in }\{|u_j|\le\lambda\},
\label{eq:5.8dm}
\\
&\mathcal{E}_j(v_j,A)\le (1+\sigma) \mathcal{E}_j(u_j,A) + \beta\mathcal{L}^d(A\cap\{|u_j|\ge\lambda\}).
\label{eq:5.9dm}
\end{align}
If $\mathcal{E}_j(u_j,A)<+\infty$, by the lower bounds in \ref{eq:bound1}, \ref{eq:bound2bis}, and \eqref{eq:5.9dm} the function $v_j$ belongs to $GSBV^{p(\cdot)}(A,\R^m)$  and 
\begin{equation}\label{eq:5.10dm}
\mathcal{H}^{d-1}(J_{v_j}\cap A)\le  \frac{(1+\sigma)}{\alpha}\,\mathcal{E}_j(u_j,A)+  \frac{\beta}{\alpha} \,\mathcal{L}^d(A\cap\{|u_j|\ge\lambda\})\,.
\end{equation}
By \eqref{eq:perturben} and \eqref{eq:5.7dm} this implies that 
$$
\mathcal{E}^{\sigma}_j(v_j,A)\le \mathcal{E}_j(v_j,A)+2\sigma\mu\mathcal{H}^{d-1}(J_{v_j}\cap A),
$$ 
which, in its turn, by \eqref{eq:5.9dm} and \eqref{eq:5.10dm}, leads to
$$
\mathcal{E}^{\sigma}_j(v_j,A)\le (1+\sigma) \left(1 +\frac{2\sigma\mu}{\alpha}\right) \mathcal{E}_j(u_j,A) + \beta \left(1 +\frac{2\sigma\mu}{\alpha}\right) \mathcal{L}^d(A\cap\{|u_j|\ge\lambda\})\,.
$$
This inequality trivially holds also when $\mathcal{E}_j(u_j,A)=+\infty$.
Therefore, using  \eqref{eq:5.6dm} and the inequality $\|u\|_{L^\infty(\R^d\!,\,\R^m)}<\lambda$, by $\Gamma$-convergence we get
$$
\mathcal{E}^{\sigma}(u,A)\le (1+\sigma) \left(1 +\frac{2\sigma\mu}{\alpha}\right) \mathcal{E}'(u,A)
$$
for every $\sigma\in \Sigma$. By \eqref{eq:5.3dm}, passing to the limit as $\sigma\to0^+$ we obtain \eqref{eq:5.5dm} whenever $u\in L^\infty(\R^d,\R^m)$. \\
\noindent
{\emph{Step 3:}} We now prove that
\begin{equation}\label{eq:5.11dm}
\mathcal{E}''(u,A)\le \mathcal{E}^0(u,A) \quad\hbox{for every }u\in L^0(\R^d,\R^m)\hbox{ and every }A\in \A(\Omega).
\end{equation}
Let us fix $u$ and $A$. It is enough to prove the inequality when $u|_A\in GSBV^{p(\cdot)}(A,\R^m)$. By Lemma~\ref{lem:cdmsz} for every $\sigma>0$ and for every integer $j\ge 1$ there exists $u_j \in  L^\infty(\R^d,\R^m)$, with $u_j|_A\in SBV^{p(\cdot)}(A,\R^m)$, such that $u_j=u$ $\mathcal{L}^d$-a.e.\ in $\{|u|\le j\}$ and 
$$
\mathcal{E}^0(u_j,A)\le (1+\sigma) \mathcal{E}^0(u,A) + \beta\mathcal{L}^d(A\cap\{|u|\ge j\}).
$$
By \eqref{eq:5.4dm} we have
$
\mathcal{E}''(u_j,A)\le \mathcal{E}^0(u_j,A)
$, hence
$$
\mathcal{E}''(u_j,A)\le (1+\sigma) \mathcal{E}^0(u,A) + \beta\mathcal{L}^d(A\cap\{|u|\ge j\}).
$$
Since  $u_j\to u$ in measure on bounded sets, passing to the limit as $j\to +\infty$, by the lower semicontinuity of the $\Gamma$-limsup we deduce
$$
\mathcal{E}''(u,A)\le (1+\sigma) \mathcal{E}^0(u,A).
$$
Thus, letting $\sigma\to0^+$ we obtain \eqref{eq:5.11dm}.  \\
\noindent
{\emph{Step 4:}} We now prove that
\begin{equation}\label{eq:5.13dm}
\mathcal{E}^0(u,A)\le \mathcal{E}'(u,A) \quad\hbox{for every }u\in L^0(\R^d,\R^m)\hbox{ and every }A\in \A(\Omega)\,.
\end{equation}
Given an open set $A$, it is enough to prove the inequality for a function $u$ such that $u|_A\in GSBV^{p(\cdot)}(A,\R^m)$, since otherwise $\mathcal{E}'(u,A)=+\infty$ due to the lower bounds in \ref{eq:bound1} and \ref{eq:bound2bis}. By Lemma~\ref{lem:cdmsz} for every $\sigma>0$ and every integer $j\ge 1$ there exists $u_j \in  L^\infty(\R^d,\R^m)$, with $u_j|_A\in SBV^{p(\cdot)}(A,\R^m)$, such that 
\begin{equation}
\begin{split}
& u_j=u  \mbox{\,\, $\mathcal{L}^d$-a.e.\ in\,\,} \{|u|\le j\}\,, \\
& u^\pm_j=u^\pm \mbox{\,\, $\hs^{d-1}$-a.e.\ in \,\,} J_u\cap \{|u^\pm|\le j\}\,, \\
& \mathcal{E}'(u_j,A)\le (1+\sigma) \mathcal{E}'(u,A) + \beta\mathcal{L}^d(A\cap\{|u|\ge j\})\,.
\end{split}
\label{eq:5.13bisdm}
\end{equation}
By \eqref{eq:5.5dm} we have $\mathcal{E}^0(u_j,A)\le \mathcal{E}'(u_j,A)$, which combined with \eqref{eq:5.13bisdm} gives
$$
\int_{A\cap \{|u|\le j\}} \!\!\!\!\!\!\!\! f^0_\infty(x,\nabla u)\dx +
\int_{J_u\cap A\cap\{|u^+|\le j\}\cap \{|u^-|\le j\}} \!\!\!\! \!\!\!\!\!\!\!\!\!\!\!\!\!\!\!\!\!\!\!\!\!\!\!\!\!\!\!\!\!\!\!\!\!\!\!\! g^0_\infty(x,[u],\nu_u)\,\mathrm{d}\hs^{d-1}\le (1+\sigma) \mathcal{E}'(u,A) + \beta\mathcal{L}^d(A\cap\{|u|\ge j\})\,.
$$
Letting $j\to+\infty$ we get
$$
\mathcal{E}^0(u,A)= \int_A f^0_\infty(x,\nabla u)\dx +
\int_{J_u\cap A}
g^0_\infty(x,[u],\nu_u)\,d\hs^{d-1}\le (1+\sigma) \mathcal{E}'(u,A),
$$
and then sending $\sigma\to0^+$ we obtain \eqref{eq:5.13dm}. 

The $\Gamma$-convergence of $\mathcal{E}_j(\cdot,A)$ to $\mathcal{E}^0(\cdot,A)$ in  $L^0(\R^d,\R^m)$ follows from 
\eqref{eq:5.11dm} and \eqref{eq:5.13dm}. This concludes the proof. 
\end{proof}

\section*{Acknowledgements} 

The authors are members of Gruppo Nazionale per l'Analisi Matematica, la Probabilit\`a e le loro Applicazioni (GNAMPA) of INdAM.
The authors have been supported by the project STAR PLUS 2020 – Linea 1 (21‐UNINA‐EPIG‐172) ``New perspectives in the Variational modeling of Continuum Mechanics''. The work of FS is part of the project “Variational methods for stationary and evolution problems with singularities and interfaces” PRIN 2017 financed by the Italian Ministry of Education, University, and Research.

\bibliographystyle{siam}

\end{document}